\theoremstyle{plain}
\numberwithin{equation}{section}
\numberwithin{figure}{section}
\def\expo_#1{{\rm e}^{#1}}
\def\rh{\rho}
\def\si{\sigma}
\def\f{\varphi}
\def\t{\theta}
\def\R{{\mathbb R}}
\def\C{{\mathbb C}}
\def\D{{\mathbb D}}
\def\T{{\mathbb T}}
\def\Z{{\mathbb Z}}
\def\S{{\mathbb S}}
\def\N{{\mathbb N}}
\def\s{\vskip 0.25cm\noindent}
\def\pa{\partial}
\def\build#1_#2^#3{\mathrel{\mathop{\kern 0pt#1}\limits_{#2}^{#3}}}
\def\td_#1,#2{\mathrel{\mathop{\build\longrightarrow_{#1\rightarrow #2}^{}}}}
\def\e{\varepsilon}
\newcommand{\ben}{\begin{equation}}
\newcommand{\een}{\end{equation}}
\newcommand{\beno}{\begin{eqnarray*}}
\newcommand{\eeno}{\end{eqnarray*}}
\newtheorem{theorem}{Theorem}
\newtheorem{corollary}{Corollary}
\newtheorem{proposition}{Proposition}
\newtheorem{lemma}{Lemma}
\newtheorem{remark}{Remark}
\newtheorem{definition}{Definition}
\date{August 27, 2015}
\begin{document}
\title{The cubic Szeg\H{o} equation and Hankel operators}
\author{Patrick G\'erard}
\address{Universit\'e Paris-Sud XI, Laboratoire de Math\'ematiques
d'Orsay, CNRS, UMR 8628, et Institut Universitaire de France} \email{{\tt Patrick.Gerard@math.u-psud.fr}}

\author[S. Grellier]{Sandrine Grellier}
\address{F\'ed\'eration Denis Poisson, MAPMO-UMR 6628,
D\'epartement de Math\'ematiques, Universit\'e d'Orleans, 45067
Orl\'eans Cedex 2, France} \email{{\tt
Sandrine.Grellier@univ-orleans.fr}}

\subjclass{35B15, 47B35, 37K15}

\begin{abstract}

This monograph  is an expanded version of the preprint  \cite{GG6}
It is devoted to the dynamics on Sobolev spaces of the cubic Szeg\H{o} equation on the circle $\S ^1$,
$$ i\pa _t u=\Pi (\vert u\vert ^2u)\ .$$
Here $\Pi $ denotes the orthogonal projector from $L^2(\S ^1)$ onto the subspace $L^2_+(\S ^1)$ of functions with nonnegative Fourier modes.
We  construct a nonlinear Fourier transformation on $H^{1/2}(\S ^1)\cap L^2_+(\S ^1)$ allowing to describe explicitly the solutions of this equation
with data in $H^{1/2}(\S ^1)\cap L^2_+(\S ^1)$. This explicit description implies  almost-periodicity of every solution in $H^{\frac 12}_+$. Furthermore, it allows to display the following turbulence phenomenon. For a dense $G_\delta $ subset of initial data in $C^\infty (\S ^1)\cap L^2_+(\S ^1)$, the solutions tend to infinity in $H^s$ for every $s>\frac 12$ with  super--polynomial growth on some sequence of times, while they go back to their  initial data on another sequence of times tending to infinity. This  transformation is defined by solving a general inverse spectral problem involving singular values  of a Hilbert--Schmidt Hankel operator and of its shifted Hankel operator.  
\end{abstract}

\begin{altabstract}
Cette monographie est une version étendue de la prépublication  \cite{GG6}.\\
Elle est consacrée à l'étude de la dynamique, dans les espaces de Sobolev, de l'équation de Szeg\H{o} cubique sur le cercle $\S^1$,
 $$ i\pa _t u=\Pi (\vert u\vert ^2u)\ ,$$
où $\Pi$ désigne le projecteur orthogonal de $L^2(\S ^1)$ sur le sous-espace  $L^2_+(\S ^1)$ des fonctions à modes de Fourier positifs ou nuls.
On construit une transformée de Fourier non linéaire sur  $H^{1/2}(\S ^1)\cap L^2_+(\S ^1)$ permettant de résoudre explicitement cette équation avec données initiales dans  $H^{1/2}(\S ^1)\cap L^2_+(\S ^1)$. Ces formules explicites entraînent la presque périodicité des solutions dans $H^{\frac 12}_+$. Par ailleurs, elles permettent de mettre en évidence le phénomène de turbulence suivant. Pour un $G_\delta$ dense de données initiales de $C^\infty (\S ^1)\cap L^2_+(\S ^1)$, les solutions tendent vers l'infini à vitesse sur--polynomiale en norme $H^s(\S ^1)$  pour tout $s>\frac 12$ sur une suite de temps, alors qu'elles retournent vers  leur donnée initiale sur une autre suite de temps tendant vers l'infini.
Cette transformation  est définie {\it via} la résolution d'un problème spectral inverse lié aux valeurs singulières d'un opérateur de Hankel Hilbert-Schmidt et de son opérateur décalé.

\end{altabstract}

\keywords{Cubic Szeg\H{o} equation, integrable system, NLS, Hankel operator, spectral analysis}
\thanks { The authors are grateful to Z.Hani, T. Kappeler, S. Kuksin, V. Peller, A. Pushnitski and N. Tzvetkov for valuable discussions and comments about earlier versions of  this work, to L. Baratchart for pointing them references \cite{AAK} and \cite{Bar}, and to A. Nazarov for references concerning Bateman--type formulae. The first author is supported by ANR ANAE 13-BS01-0010-03.}

\maketitle
\tableofcontents
\mainmatter
\chapter{Introduction} \label{intro}

The  large time behavior of solutions to Hamiltonian partial differential equations is an important problem in mathematical physics. 
In the case of finite dimensional Hamiltonian systems, many features of the large time behavior of trajectories are described using the topology of the phase space. For a given infinite dimensional system, several natural phase spaces, with different topologies, can be chosen, and the large time properties may strongly depend on the choice of such topologies. For instance, it is known that the cubic defocusing nonlinear Schr\"odinger equation 
$$i\pa _tu+\Delta u=\vert u\vert ^2u$$
posed on a Riemannian manifold $M$ of dimension $d=1,2,3$ with sufficiently uniform properties at infinity, defines a global flow on the Sobolev spaces  $H^s(M)$ for every $s\ge 1$. In this case, a typical large time behavior of interest is the boundedness of trajectories. On the energy space $H^1(M)$, the  conservation of energy trivially implies that all the trajectories are bounded. On the other hand, the existence of unbounded  trajectories in $H^s(M)$ for $s>1$ as well as bounds of the growth in time of the $H^s(M)$ norms is a long standing problem \cite{Bo1}. As a way to detect and to measure the transfer of energy to small scales, this problem is naturally connected to wave turbulence. In \cite{Bo2} and \cite{St}, it has been established that big $H^s$ norms grow at most polynomially
in time. Existence of unbounded trajectories only recently \cite{HPTV} received a positive answer in some very special cases, while several instability results were already obtained \cite{CKSTT}, \cite{GuK}, \cite{H}, \cite{HP}, \cite{GHP}. Natural model problems for studying these phenomena seem to be those for which the calculation of solutions is the most explicit, namely integrable systems. Continuing with the example of nonlinear Schr\"odinger equations, a typical example is the one dimensional cubic nonlinear Schr\"odinger defocusing equation (\cite{ZS}). However, in this case, the set of conservation laws is known to control the whole regularity of the solution, so that all the trajectories of $H^s(M)$ are bounded in $H^s(M)$ for every nonnegative integer $s$. 
In fact, when the equation is posed on the circle, the recent results of \cite{GrK} show that, for every such $s$, the trajectories in $H^s(M)$ are almost periodic in $H^s(M)$. 
\s
The goal of this monograph is to study an integrable infinite dimensional system, connected to a nonlinear wave equation, with a dramatically different large time behavior of its trajectories according to the regularity of the phase space. 
\s
Following \cite{MMT} and \cite{ZGPD}, it is natural to change the dispersion relation by considering the family of equations, $\alpha< 2$,
$$i\partial_t u-|D|^\alpha u=|u|^2u$$
posed on the circle $\S ^1$, where the operator $\vert D\vert^\alpha $ is defined by
$$\widehat {\vert D\vert^\alpha u} (k)=\vert k\vert^\alpha \hat u(k)$$
for every distribution $u\in \mathcal D'(\S ^1)$. 
Numerical simulations in \cite{MMT} and \cite{ZGPD} suggest weak turbulence. For $1<\alpha<2$, it is possible to recover at most polynomial growth of the Sobolev norms (\cite{T}).
In the case $\alpha=1$, the so-called half-wave equation
\begin{equation}\label{halfwave}
i\pa _tu-\vert D\vert u=\vert u\vert ^2u
\end{equation}
defines a global flow on $H^s(\S ^1)$\index{$H^s$|see{Sobolev spaces}} for every $s\ge \frac 12$ ( \cite{GG5} --- see also \cite{Po2}). In that case, the only available bound of the $H^s$-norms is
${\rm e}^{ct^2}$ (\cite{T}), due to the lack of dispersion. Therefore, the special case $\alpha=1$ seems to be a more favorable framework for displaying wave turbulence effects.\\
Notice that this equation can be reformulated as a system, using the Szeg\H{o} projector $\Pi $\index{Szeg\H{o} projector} \index{$\Pi$|see{Szeg\H{o} projector}} defined on $\mathcal D'(\S ^1)$ as
$$\widehat {\Pi u} (k)={\bf 1}_{k\ge 0} \hat u(k)\ .$$
Indeed, setting $u_+:=\Pi u\ ,u_-:=(I-\Pi )u\ ,$ equation (\ref{halfwave})  is equivalent to the system
$$
\left \{ \begin{array}{ll} i(\pa _t+\pa _x)u_+&=\Pi (\vert u\vert ^2u) \\  i(\pa _t-\pa _x)u_-&=(I-\Pi )(\vert u\vert ^2u) \end{array} \right .
$$
Furthermore, if the initial datum $u_0$ satisfies $u_0=\Pi u_0$ and belongs to $H^s(\S ^1)$, $s>1$, with a small norm $\e $, 
then the corresponding solution $u$ is approximated  by the solution $v$ of 
\begin{equation}\label{szegotrans}
 i(\pa _t+\pa _x)v=\Pi (\vert v\vert ^2v)\ ,
 \end{equation}
for $\vert t\vert \le \frac 1{\e ^2} \log \frac 1\e \ $ \cite{GG5} \cite{Po2}. In other words, the first nonlinear effects in the above system arise through a decoupling of the two equations. Notice that an elementary change of variable in equation (\ref{szegotrans}) reduces it to
\begin{equation}\label{szegoIntro}
i\pa _tu=\Pi (\vert u\vert ^2u)\ .
\end{equation} 
Equation (\ref{szegoIntro}) therefore appears as a model evolution for describing the dynamics of the half-wave equation (\ref{halfwave}). 
We introduced it in \cite{GG1} under the name {\sl cubic Szeg\H{o} equation}\index{cubic Szeg\H{o} equation}.  In this reference, global wellposedness of the initial value problem was established on 
$$H^s_+(\S ^1):=H^s(\S ^1)\cap L^2_+(\S ^1)\ ,\ s\ge \frac 12 \ ,$$
\index{Sobolev spaces}
where $L^2_+(\S ^1)$\index{$L^2_+(\S ^1)$} denotes the range of $\Pi $ on $L^2(\S ^1)$, namely the $L^2$ functions on the circle with only nonnegative Fourier modes.
Notice that $L^2_+(\S ^1)$ can be identified to the Hardy space of holomorphic functions on the unit disc $\D $ with $L^2$ traces on the boundary.
Similarly, the phase space $H^{\frac 12}_+(\S ^1)$ can be identified to the Dirichlet space of holomorphic functions on $\D $ with derivative in $L^2(\D )$ for the Lebesgue measure. In the sequel, we will freely adopt either the representation on $\S ^1$ or the one on $\D $. 
\s
The new feature   discovered in \cite{GG1} is that equation (\ref{szegoIntro}) enjoys a Lax pair structure\index{Lax pair}, in connection with Hankel operators \index{Hankel operator} --- see the end of the introduction for a definition of these operators. As a first consequence, we proved that the $H^s$ norms, for $s>1$, evolve at most with exponential growth. 

Using this Lax pair structure, we also proved that  (\ref{szegoIntro}) admits finite dimensional invariant symplectic manifolds of arbitrary dimension, made of rational functions of the variable $z$ in $\D $. Furthermore, the dynamics on these manifolds is integrable in the sense of Liouville. In \cite{GG2}, we introduced action angle variables on open dense subsets of these invariant manifolds and on a dense $G_\delta $ subset of $H^{1/2}_+(\S ^1)$. Finally, in \cite{GG3}, we established an explicit formula for all solutions of (\ref{szegoIntro}) which allowed us to prove quasiperiodicity \index{quasiperiodicity}of all trajectories made of rational functions. In particular, this implies that every such trajectory is bounded in all the $H^s$ spaces. However, the explicit formula in \cite{GG3} was not adapted to study almost periodicity of non rational solutions nor boundedness of non-rational $H^s$-trajectories for $s>\frac 12$.
\s
The main purpose of this monograph is to construct a global nonlinear Fourier transform\index{Non linear Fourier transform} on the space $H^{1/2}_+(\S ^1)$ allowing to describe more precisely the cubic Szeg\H{o} dynamics. As a consequence, we obtain the following results. First we introduce a convenient notation. We denote by $Z$ the nonlinear evolution group defined by (\ref{szegoIntro}) on $H^{1/2}_+(\S ^1)$. In other words, for every $u_0\in H^{1/2}_+(\S ^1)$, $t\mapsto Z(t)u_0$ \index{$Z(t)$}is the solution $u\in C(\R ,H^{1/2}_+)$ of equation (\ref{szegoIntro}) such that $u(0)=u_0$. We also set $C^\infty_+(\S ^1):=C^\infty (\S ^1)\cap L^2_+(\S^1).$\index{$C^\infty_+(\S ^1)$}
\begin{theorem}\label{dynamicIntro}
\text{ }
\begin{enumerate}
\item For every $u_0\in H^{1/2}_+(\S ^1)$, the mapping $$t\in \R \mapsto Z(t)u_0\in H^{1/2}_+(\S ^1)$$ is almost periodic\index{almost periodicity}.
\item There exists initial data $u_0\in C^\infty _+(\S ^1)$ and sequences $({\overline t}_n)$, $({\underline t}^n)$ tending to infinity such that $\forall s>\frac 12\ ,\forall M\in \Z_+ $, 
$$\frac{\Vert Z({\overline t}_n)u_0\Vert _{H^s}}{|{\overline t}_n|^M}\td_n,\infty \infty \ ,$$
and $Z({\underline t}^n)u_0\td_n,\infty u_0$ in $C^\infty _+$. Furthermore, the set of such initial data is a dense $G_\delta $ subset of $C^\infty_+(\S ^1)$. 
\end{enumerate}
\end{theorem}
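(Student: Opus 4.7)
The plan is to deduce both parts of the theorem from the nonlinear Fourier transform $\chi:H^{1/2}_+\to \mathcal S$ constructed in the body of the monograph, which conjugates $Z(t)$ to an explicit rotation flow on spectral data extracted from the Hankel operator $H_u$ and its shifted Hankel operator $K_u$. Under $\chi$, the singular values $(s_j)$ are constants of motion, while each associated angular variable $\psi_j$ rotates at a frequency $\omega_j$ depending only on $(s_j)$.

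For part (1), I first observe that the orbit $\{\chi(Z(t)u_0):t\in\R\}$ sits inside a compact torus $\mathbb T_{u_0}\subset \mathcal S$ (a possibly infinite product of circles labelled by the active angles), on which the conjugated flow is the restriction of a linear translation. Such a translation is Bohr almost periodic, since the closure of its orbit is a compact abelian subgroup on which it acts by translation. As $\chi^{-1}$ is continuous from $\mathbb T_{u_0}$ into $H^{1/2}_+$, the pullback $t\mapsto Z(t)u_0$ inherits almost periodicity in $H^{1/2}_+$. The two points to verify are precisely this continuity of $\chi^{-1}$ on $\mathbb T_{u_0}$ and the compactness of $\mathbb T_{u_0}$ in the relevant $\mathcal S$-topology; both are part of the properties of $\chi$ established earlier.

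For part (2), I combine the explicit inverse formula with a Baire category argument on $C^\infty_+(\S^1)$ equipped with its Fr\'echet topology. The mechanism for turbulence is that $\chi^{-1}$ involves Cauchy-type determinants built from the rotating spectral data, which become small whenever two singular values of the pair $(H_u,K_u)$ come close under the rotation; such a near-degeneracy forces high-frequency coefficients of $u$ to beat against each other and produces a blow-up of $\sum_k k^{2s}|\hat u(k)|^2$ whose rate can be made super-polynomial when the orbit approaches the singular stratum sufficiently fast. For fixed $s>\tfrac12$, $M\in\Z_+$, and $R,N\in\N$, the set
$$\mathcal O_{s,M,R,N}=\left\{u_0\in C^\infty_+ : \exists\, t\ge N,\ \Vert Z(t)u_0\Vert_{H^s}>R|t|^M\right\}$$
is open by continuous dependence of the flow in $H^s$, and density follows by perturbing any given smooth datum on the spectral side so that its rotation orbit enters an arbitrarily thin neighbourhood of the singular stratum at a time $\ge N$. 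Analogously, Poincar\'e recurrence on the compact torus $\mathbb T_{u_0}$, together with the fact that the rotation flow preserves the weighted $\ell^{2}$-norms of the spectral data that encode $C^\infty$ regularity, provides open dense sets
$$\mathcal R_{k,N}=\left\{u_0\in C^\infty_+ : \exists\, t\ge N,\ \Vert Z(t)u_0-u_0\Vert_{H^k}<1/k\right\}.$$
The desired dense $G_\delta$ is then the intersection of all $\mathcal O_{s,M,R,N}$ and $\mathcal R_{k,N}$.

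The main obstacle, on which the whole argument hinges, is the quantitative control of the blow-up of $\Vert u\Vert_{H^s}$ as the rotating spectral data approach the singular stratum, together with a diophantine statement ensuring that the rotation orbit on $\mathbb T_{u_0}$ can be driven arbitrarily close to this stratum at arbitrarily late times. The first ingredient demands precise asymptotics of the inverse map $\chi^{-1}$ near its singularities, while the second amounts to a small-divisor assertion showing that, for a generic smooth initial datum, the frequencies $(\omega_j)$ are not simultaneously well-separated from the resonant walls where degeneracies of singular values occur.
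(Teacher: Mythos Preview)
Your approach to part~(1) is essentially the paper's: the Szeg\H{o} flow becomes a linear rotation on the angular components of $\Phi(u_0)$, the orbit lives in a compact product of circles, and continuity of $\Phi^{-1}$ on the stratum $\mathcal S^{(2)}_{(d_r)}$ transfers almost periodicity back to $H^{1/2}_+$. The paper phrases this via Bochner's compactness criterion rather than via Bohr's definition, but the content is the same.

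For part~(2), your Baire category scaffolding is correct and coincides with the paper's, but you have misidentified the instability mechanism, and this is a genuine gap. The singular values $(s_r)$ are conservation laws; they do \emph{not} ``come close under the rotation,'' and there are no ``resonant walls where degeneracies of singular values occur'' along a fixed orbit. On the torus attached to a datum whose singular values are well separated, the inverse map $\Phi^{-1}$ is uniformly regular in every $H^s$, so no blow-up can be produced by diophantine recurrence of the angles alone. The small-divisor statement you propose as the ``main obstacle'' is therefore not the relevant input, and your sketch gives no indication of how super-polynomial growth could arise on a fixed torus.

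What the paper actually does is quite different. Given a target $v\in C^\infty_+$, it approximates $v$ by a rational function $u({\bf s},{\bf \Phi})$ with ${\bf s}\in\Omega_{2q}$ and ${\bf \Phi}\in(\S^1)^{2q}$, and then \emph{adjoins $2N-1$ extra singular values}, all within $O(\e\delta)$ of a common small value $\delta$, with trivial Blaschke data equal to $1$. This perturbed datum $u^{\delta,\e}$ lives on a \emph{higher-dimensional} torus. The analytic core is a two-sided estimate for this explicit family: at time $0$ one has $\Vert u^{\delta,\e}\Vert_{H^s}\gtrsim \delta\,\e^{-(N-1)(2s-1)}$, because the nearly-collapsed block forces a zero of $\det\mathscr C^\#(z)$ to approach the point $z=1$ on the unit circle; at the explicit time $t=(2\e\delta^2)^{-1}$ the rotated angles make the extra block uniformly invertible and the solution is $C^\infty$-close to $u({\bf s},\tilde{\bf \Psi}_{\delta,\e})$ for some rotated $\tilde{\bf \Psi}_{\delta,\e}$. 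Taking $\e=\delta^2$, $N$ large enough that $(N-1)(2s-1)\ge 2M+1$, and a subsequence of $\delta\to 0$ along which $\tilde{\bf \Psi}_{\delta,\e}\to{\bf \Phi}$ then yields the approximating sequence $v^{(n)}$ and the times $\overline t_n$. The recurrence times $\underline t^n$ come for free because each $v^{(n)}$ is rational, hence genuinely quasi-periodic in $C^\infty_+$; no appeal to weighted $\ell^2$ control on infinite tori is needed. In short, the instability is engineered by the \emph{choice} of nearly-degenerate spectral data (which changes the torus), not by the dynamics on a fixed torus; the hard work consists of explicit Cauchy-determinant asymptotics for $u^{\delta,\e}$, and no diophantine argument enters.
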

\begin{remark}
\begin{itemize}
\item One could wonder about the influence of the Szeg\H{o} projector in the equation (\ref{szegoIntro}) and consider the equation without the projector $\Pi$
$$i\pa_tu = |u|^2u .$$ In that case, $u(t)={\rm e}^{-it|u_0|^2}u_0$ and 
$$\Vert u(t)\Vert_{H^s}\simeq C|t|^s.$$
Hence the action of the Szeg\H{o} projector both accelerates the energy  transfer to high frequencies, and facilitates the transition to low frequencies.
\item The above theorem is an expression of   some intermittency phenomenon. Notice that, on the real line, explicit examples with infinite limit at infinity are given in \cite{Po} with $\Vert Z(t)u_0\Vert _{H^s}\simeq \vert t\vert^{2s-1}$.
\item The exponential rate is expected to be optimal but the question remains open for the cubic Szeg\H{o} equation. However, for the perturbation
$$i\pa_t u=\Pi(|u|^2u)+\hat u(0),$$ 
explicit examples such that the Sobolev norms tends exponentially to infinity are given in \cite{X}.
\item Existence of unbounded trajectories for the half-wave equation (\ref{halfwave}) is an open problem.  However, unbounded trajectories have been recently exhibited in \cite{X2} for $$i\pa_tu+\frac{\pa^2}{\pa x^2}u-|D_y|u=|u|^2u, \; (x,y)\in \R^2$$
by adapting the method developed in \cite{HPTV}.
\end{itemize}
\end{remark}

We now give an overview of the nonlinear Fourier transform\index{Non linear Fourier transform}. First we introduce some additional notation. Given a positive integer $n$,
we set \index{$\Omega_n$}$$\Omega _n:=\{s_1>s_2>\dots>s_n>0\}\subset \R^n\ . $$ Given a nonnegative integer $d\ge 0$, we recall that a Blaschke product \index{Blaschke product}of degree $d$ is a rational function on $\C $ of the form
$$\Psi (z)=\expo_{-i\psi}\prod _{j=1}^d \frac{z-p_j}{1-\overline p_jz}\ ,\   \psi \in \T \ ,\ p_j\in \D\  .$$ 
Alternatively, $\Psi$ can be written as 
$$\Psi (z)=\expo_{-i\psi}\frac{P(z)}{z^d\overline P\left (\frac 1z\right )}\ ,$$
where $\psi \in \T $ is called the angle of $\Psi $ and $P$ is a monic polynomial of degree $d$ with all its roots in $\D $.  Such polynomials are called Schur polynomials. We denote by $\mathcal B_d$\index{$\mathcal B_d$|see{Blaschke product}} the set of Blaschke products of degree $d$. It is a classical result --- see {\it e.g.} \cite{He} or section \ref{Blaschke} --- that $\mathcal B_d$ is diffeomorphic to $\T \times \R ^{2d}$. 
\s
Given a $n$-tuple $(d_1,\dots ,d_n)$ of nonnegative integers, we set \index{$\mathcal S_{d_1,\dots ,d_n}$}
$$\mathcal S_{d_1,\dots ,d_n}:=\Omega _n\times \prod _{r=1}^n \mathcal B_{d_r}\ ,$$
endowed with the natural product topology.
Given a sequence $(d_r)_{r\ge 1}$ of nonnegative integers, we \index{$\mathcal S_{(d_r)}^{(2)}$} 
 denote by $ \mathcal S_{(d_r)}^{(2)}$ the set of pairs $((s_r)_{r\ge 1}, (\Psi _r)_{r\ge 1})\in \R ^\infty \times \prod _{r=1}^\infty \mathcal B_{d_r}$
such that
$$s_1>\dots >s_n>\dots >0\ ,\ \sum _{r=1}^\infty (d_r+1)s_r^2<\infty \ .$$
We also endow $ \mathcal S_{(d_r)}^{(2)}$ with the natural topology. \s
Finally, we denote by $\mathcal S_n$ \index{$\mathcal S_n$}the union of all $\mathcal S_{d_1,\dots ,d_n}$
over all the $n$-tuples $(d_1,\dots ,d_n)$, and by $\mathcal S_\infty ^{(2)}$ \index{$\mathcal S_\infty ^{(2)}$} the union of all $ \mathcal S_{(d_r)}^{(2)}$ over all the sequences $(d_r)_{r\ge 1}$. 
Given $ ({\bf s},{\bf \Psi})\in \mathcal S_n$ and $z\in \C $,  we define the matrix \index{$\mathscr C(z)$}$\mathscr C(z):=\mathscr C ({\bf s},{\bf \Psi})(z)$ as follows. If $n=2q$, the coefficients of $\mathscr C ({\bf s},{\bf \Psi})(z)$ are given by
\begin{equation}\label{matrix C}
c_{jk}(z):=\frac{s _{2j-1}-s _{2k}z\Psi _{2k}(z)\Psi _{2j-1}(z)}{s _{2j-1}^2-s _{2k}^2}\ ,\ j,k=1,\dots ,q\ .
\end{equation}
If $n=2q-1$, we use the same formula as above, with $s_{2q}=0$. 
\begin{theorem}\label{FT}\index{$u ({\bf s},{\bf \Psi})$}
For every $n\ge 1$, for every $ ({\bf s},{\bf \Psi})\in \mathcal S_n$, for every $z\in \overline {\D }$, the matrix $\mathscr C ({\bf s},{\bf \Psi})(z)$ is invertible. We set 
\ben \label{luminy}
u ({\bf s},{\bf \Psi})(z)=\langle {\mathscr C(z)}^{-1}(\Psi _{2j-1}(z))_{1\le j\le q}, {\bf 1} \rangle ,
\een
where 
$${\bf 1}:=\left (\begin{array}{l} 1 \\ .\\ .\\ .\\ 1\end{array}\right )\ ,\text{ and }\langle X,Y\rangle :=\sum _{k=1}^q X_kY_k .$$
 For every $ ({\bf s},{\bf \Psi})\in \mathcal S_\infty ^{(2)}$, the sequence $ (u_q)_{q\ge 1}$ with $$u_q:=u((s_1,\dots ,s_{2q}), (\Psi _1,\dots ,\Psi _{2q} )),$$ is strongly convergent in
 $H^{1/2}_+(\S ^1)$. We denote its limit by $u ({\bf s},{\bf \Psi})$. 
 
 The mapping 
 $$ ({\bf s},{\bf \Psi})\in \bigcup_{n=1}^\infty \mathcal S_n\cup \mathcal S^{(2)}_\infty \longmapsto u ({\bf s},{\bf \Psi})\in H^{1/2}_+\setminus \{ 0\} $$
 is bijective. Furthermore, its restriction to every $\mathcal S_{(d_1,\dots ,d_n)} $ and to $\mathcal S_{(d_r)}^{(2)}$ is a homeomorphism onto its range.

Finally, the solution at time $t$  of equation (\ref{szegoIntro}) with initial data $u_0=u ({\bf s},{\bf \Psi})$ is $u ({\bf s},{\bf \Psi}(t))$, where
$$\Psi _r(t)= \expo_{i(-1)^rs_r^2t}\Psi _r\ .$$
\end{theorem}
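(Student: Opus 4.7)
The plan is to interpret $(\mathbf{s},\mathbf{\Psi})$ as the spectral data of the pair of Hankel and shifted Hankel operators $H_u$, $K_u$ attached to $u\in L^2_+$. Concretely, the odd-indexed values $s_{2j-1}$ will be the distinct positive singular values of $H_u$, the even-indexed values $s_{2k}$ those of $K_u$, and each Blaschke product $\Psi_r$ will encode, via Adamyan--Arov--Krein (AAK) theory, the shift action on the corresponding singular subspace. The theorem then splits into a direct problem (build $u$ from $(\mathbf{s},\mathbf{\Psi})$ via (\ref{luminy})), an inverse problem (recover $(\mathbf{s},\mathbf{\Psi})$ from $u$), and a dynamical statement; the interleaving of odd and even indices, built into the condition $s_1>s_2>\cdots$, is precisely the spectral feature that makes the denominators of $\mathscr{C}(z)$ nonzero.

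For the direct map I would first establish invertibility of $\mathscr{C}(z)$ on $\overline{\mathbb D}$. On $|z|=1$ one has $|\Psi_r(z)|=1$, so up to a unimodular rescaling of rows $\mathscr{C}$ becomes a Cauchy-like matrix in the variables $s_r^2$, and positivity of the underlying Gram form forces non-degeneracy; analyticity in $z$ then propagates invertibility to all of $\overline{\mathbb D}$. Formula (\ref{luminy}) thus yields a function $u(\mathbf{s},\mathbf{\Psi})\in L^2_+$, and the key identity to verify is that $H_u$ and $K_u$, applied to the entries of $\mathscr{C}(z)^{-1}\mathbf{1}$, reproduce the prescribed singular values and Blaschke data; this is done by Bezout-type manipulations on the linear system. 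Combined with the trace identity
\[
\|u\|_{H^{1/2}}^2 \simeq \|H_u\|_{HS}^2+\|K_u\|_{HS}^2,
\]
whose right-hand side is bounded by $\sum_r(d_r+1)s_r^2$ (each $s_r$ carrying multiplicity $d_r+1$ through the Blaschke degree), this makes $(u_q)$ Cauchy in $H^{1/2}_+$ under the $\mathcal{S}^{(2)}_\infty$ condition, proving convergence and continuity of the direct map on each stratum.

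The inverse direction is the main obstacle. Given $u\in H^{1/2}_+\setminus\{0\}$, the operators $H_u$ and $K_u$ are Hilbert--Schmidt, so their singular values form decreasing sequences tending to $0$; they must be merged into the strictly decreasing sequence $(s_r)$, which requires the crucial interleaving property that no singular value of $H_u$ coincides with one of $K_u$, a delicate spectral argument. To each $s_r$, AAK theory attaches a Schmidt pair whose ratio, after a canonical normalization, is an inner function that I must show is a Blaschke product of some finite degree $d_r$ (finiteness coming from the Hilbert--Schmidt property and the finite multiplicity of each singular value). Injectivity of $u\mapsto(\mathbf{s},\mathbf{\Psi})$ and surjectivity onto $H^{1/2}_+\setminus\{0\}$ will then both reduce to showing that the direct reconstruction applied to these recovered data returns $u$; concretely, one identifies the Gram matrix of the Schmidt vectors with $\mathscr{C}(z)$, so the linear system (\ref{luminy}) is the Christoffel--Darboux-type identity that rebuilds $u$ from its singular decomposition. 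Bicontinuity on each stratum follows because the spectral data depend continuously on $u$ as long as the degrees $d_r$ do not jump.

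Finally, the dynamical statement is the cleanest part: the Lax pair structure of equation (\ref{szegoIntro}) recalled in the introduction implies that the singular values of $H_u$ and $K_u$ are conserved along the flow, and a direct computation with the Lax equation shows that the associated Blaschke products $\Psi_r$ evolve by multiplication by $\expo_{i(-1)^r s_r^2 t}$; feeding this evolution back into (\ref{luminy}) yields the stated formula for $Z(t)u_0$.
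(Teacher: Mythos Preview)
Your overall architecture is right --- interpret $(\mathbf{s},\mathbf{\Psi})$ as spectral data of the pair $(H_u,K_u)$, use AAK theory for the Blaschke products, and treat the dynamical statement via the Lax pair --- and the inverse direction (defining $\Phi:u\mapsto(\mathbf{s},\mathbf{\Psi})$, proving the interleaving, deriving formula~(\ref{luminy})) matches the paper closely. But your \emph{direct} direction contains a real gap.

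You claim that once $u(\mathbf{s},\mathbf{\Psi})$ is defined by~(\ref{luminy}), one can check by ``Bezout-type manipulations on the linear system'' that $H_u$ and $K_u$ have exactly the prescribed singular values and Blaschke data. This is the heart of the matter, and the paper explicitly declines to attempt it: the authors write that ``in view of the complexity of formula~(\ref{luminy}), it seems difficult to infer from them the spectral properties of $H_u$ and $K_u$.'' Instead they take an indirect route. They first show $\Phi$ is injective (this gives~(\ref{luminy}) as a \emph{consequence}), then prove surjectivity by showing that $\Phi_{d_1,\dots,d_n}$ is open, proper, and that its source $\mathcal{V}_{(d_1,\dots,d_n)}$ is nonempty. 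Openness uses a delicate construction: from $(\mathbf{s},\mathbf{\Psi})$ one builds abstract antilinear operators $H,K$ on a finite-dimensional space, proves the identity $S^*HS^*=H-(1\,|\,\cdot\,)u$, and then invokes a uniqueness lemma to force $H=H_u$. Nonemptiness for arbitrary multiplicities $(d_1,\dots,d_n)$ is a separate inductive argument in which three consecutive singular values are collapsed. None of this is captured by your sketch, and without it you do not know that formula~(\ref{luminy}) actually produces an element of $\mathcal{V}_{(d_1,\dots,d_n)}$.

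A secondary issue: your proposed proof of invertibility of $\mathscr{C}(z)$ on $\overline{\D}$ (``unimodular row rescaling reduces to a Cauchy-like matrix; positivity of a Gram form'') does not work as stated. The entries $c_{jk}(z)=(\rho_j-\sigma_k z\Psi_{2j-1}(z)\Psi_{2k}(z))/(\rho_j^2-\sigma_k^2)$ do not become a Cauchy matrix under any row or column rescaling, because the numerator mixes $j$ and $k$ nontrivially. The paper's argument for invertibility on the full disc is again indirect: it uses that $u$ already lies in $\mathcal{V}_{(d_1,\dots,d_n)}$, so a zero of $\det\mathscr{C}^\#(z)$ in $\overline{\D}$ would force the rank of $H_u$ to drop, a contradiction. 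For arbitrary $(\mathbf{s},\mathbf{\Psi})$, invertibility is only established \emph{after} surjectivity of $\Phi$ is known.
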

Using Theorem \ref{FT}, it is easy to prove the first assertion of Theorem \ref{dynamicIntro}. As for the second assertion of Theorem \ref{dynamicIntro}, it heavily relies on a new phenomenon, which is the loss of continuity of the map $ ({\bf s},{\bf \Psi})\mapsto u ({\bf s},{\bf \Psi})$ as several consecutive $s_r$'s are collapsing.
\s
Let us explain briefly how the nonlinear Fourier transform\index{Non linear Fourier transform} is related to spectral analysis. If $u\in H^{1/2}_+(\S ^1)$, recall that the Hankel operator\index{Hankel operator} of symbol $u$ is the operator $H_u:L^2_+(\S ^1)\rightarrow L^2_+(\S ^1)$ defined by 
$$H_u(h)=\Pi (u\overline h)\ .$$
It can be shown that  $H_u^2$ is a positive selfadjoint trace class operator. If $S$ is the shift operator defined by
$$Sh (z)=zh(z)\ ,$$
$H_u$ satisfies 
$$S^*H_u=H_uS=H_{S^*u}\ .$$
We denote by $K_u$ this new Hankel operator. Let us say that a positive real number $s$ is a singular value\index{singular value} associated to $u$ if $s^2$ is an eigenvalue of $H_u^2$ or $K_u^2$. The main point in Theorem \ref{FT} is that the list $s_1>\dots >s_r>\dots $ is the list of singular values associated to $u=u ({\bf s},{\bf \Psi})$, and that the corresponding list $\Psi_1,\dots ,\Psi _r,\dots $ describes the action of $H_u$ and of $K_u$ on the possibly multidimensional eigenspaces of $H_u^2$, $K_u^2$ respectively. This makes  more precise a theorem of Adamyan-Arov-Krein about the structure of Schmidt pairs \index{Schmidt pairs} of Hankel operators \cite{AAK}. We refer to section \ref{Blaschke} for more details.  As a consequence of Theorem \ref{FT}, we get inverse spectral theorems on Hankel operators, which generalize to singular values with arbitrary multiplicity the ones we had proved in \cite{GG2} and \cite{GG3} for simple singular values. Therefore the nonlinear Fourier transform can be seen as an inverse spectral transform. \s
We close this introduction by describing the organization of this monograph. In chapter 2 we recall the Lax pair structure discovered in \cite{GG1} and  its application to large time bounds for high Sobolev norms of solutions to the cubic Szeg\H{o} equation. Chapter 3 is devoted to the study of singular values of  Hankel operators $H_u$ and $K_u$. In particular we introduce the Blaschke products $\Psi $, which provide the key of the understanding of multiplicity phenomena. Chapter 4 contains the proof of the first part of Theorem \ref{FT}, establishing the one to one character and the continuity of the nonlinear Fourier transform. Chapter 5  completes the proof of Theorem \ref{FT} by describing the Szeg\H{o} dynamics through the nonlinear Fourier transform, and infers the almost periodicity \index{almost periodicity}property of solutions as claimed in the first part of Theorem \ref{dynamicIntro}. In Chapter 6, we establish the Baire genericity of unbounded trajectories in $H^
 s$ for every $s>\frac 12$ as stated in the second part of  Theorem \ref{dynamicIntro}. The main ingredient is an
 instability phenomenon induced by the collapse of consecutive singular values. Finally, Chapter 7 addresses various geometric aspects of our nonlinear Fourier transform in connection to the Szeg\H{o} hierarchy defined in \cite{GG1}. Specifically, we show how the nonlinear Fourier transform allows to define action angle variables on special invariant symplectic submanifolds of the phase space, and we discuss  the structure of the corresponding 
 Lagrangian tori obtained by freezing the action variables.\s
Let us mention that  part of these results was the content of the preliminary preprint \cite{GG6}, and that another part was announced in the proceedings paper \cite{GG7}.

\chapter{Hankel operators and the Lax pair structure}\label{chapter Lax pair}\index{Lax pair}

As we pointed out in the introduction, Hankel operators \index{Hankel operator}arise naturally to understand the cubic Szeg\H{o} equation.
In this chapter, we first give some basic definition and properties of Hankel operators and we recall the Lax pair structure of the cubic Szeg\H{o} equation.
Finally, we show how to use this structure to get large time bounds for high Sobolev norms of solutions to the cubic Szeg\H{o} equation.\index{Sobolev spaces}

 \section{Hankel operators $H_u$ and $K_u$}\index{Hankel operator}
 
Let $u\in H^{\frac 12}_+(\S^1)$. We denote by $H_u$\index{$H_u$|see {Hankel operator}} the $\C $--antilinear operator defined on $L^2_+(\S^1)$ as
$$H_u(h)=\Pi (u\overline h)\ ,\ h\in L^2_+(\S^1 )\ .$$
In terms of Fourier coefficients, this operator reads
$$\widehat{H_u(h)} (n)=\sum _{p=0}^\infty \hat u(n+p)\overline{\hat h(p)}\ .$$
In particular, its Hilbert--Schmidt norm 
$$\Vert H_u \Vert_{HS}=(\sum_{n,p\ge 0}\vert \hat u(n+p)\vert^2)^{1/2}= (\sum_{\ell\ge 0}(1+\ell)|\hat u(\ell)|^2)^{1/2}\simeq \Vert u\Vert _{H^{1/2}}$$ is finite for $u\in H^{\frac 12}_+(\S^1 )$ and 
\begin{equation}\label{borne}
\Vert H_u\Vert_{\mathcal L(L^2_+)}\le \Vert H_u\Vert_{HS}\ .
\end{equation}
We call $H_u$ the Hankel operator of symbol $u$. It is well known from  Kronecker's theorem, \cite{Kr}, \cite{N}, \cite{Pe2}, that $H_u$ is of finite rank if and only if $u$ is a rational function without poles in the closure of the unit disc. 

In fact, the definition of Hankel operators may be extended to a larger class of symbol.
By the  Nehari theorem (\cite{Ne}), $H_u$ is well defined and bounded on $L^2_+(\S^1)$ if and only if $u$ belongs to  $\Pi(L^\infty(\S^1))$ or equivalently to $BMO_+(\S^1)$.  Moreover, by the Hartman theorem (\cite{Ha}), it is a compact operator if and only if $u$ is the projection of a continuous function on the torus,  or equivalently if and only if it belongs to $VMO_+(\S^1)$\index{$VMO_+(\S^1)$} with equivalent norms. In the following, we will mainly consider the class of Hilbert-Schmidt Hankel operators. We will generalize part of our result to compact Hankel operators in section \ref{compact}.\\

Notice that this definition of Hankel operators is different from the standard ones used in references \cite{N}, \cite{Pe2}, where Hankel operators are rather defined as linear operators from $L^2_+$ into its orthogonal complement. The link between these two definitions can be easily established by means of the involution
$$f^\sharp ( {\rm e}^{ix})= {\rm e}^{-ix} \overline {f({\rm e}^{ix})}\ .$$
Notice also that $H_u$ satisfies the following self adjointness identity,
\begin{equation}\label{selfadjoint}
(H_u(h_1)\vert h_2)=(H_u(h_2)\vert h_1)\ ,\ h_1, h_2\in L^2_+(\S^1 )\ .
\end{equation}
In particular, $H_u$ is a $\R$-linear symmetric operator for the real inner product
$$\langle h_1,h_2\rangle:= {\rm Re} (h_1\vert h_2)\ .$$ 
A fundamental property of Hankel operators is their connection with the shift operator $S$, defined on $L^2_+(\S^1 )$ as
$$ Su ({\rm e}^{ix})= {\rm e}^{ix}u({\rm e}^{ix})\ .$$
This property reads
$$S^*H_u=H_uS=H_{S^*u}\ ,$$
where $S^*$ denotes the adjoint of $S$. We denote by $K_u$ \index{$K_u$|see {Shifted Hankel operator}} this operator, and call it the shifted Hankel operator\index{Shifted Hankel operator} of symbol $u$. Hence
\ben \label{Ku}
K_u:=S^*H_u=H_uS=H_{S^*u}\ .
\een
Notice that, for $u\in H^{1/2}_+(\S^1)$, $K_u$ is Hilbert--Schmidt and symmetric as well. As a consequence, operators $H_u^2$ and $K_u^2$ are $\C $--linear
trace class positive operators on $L^2_+(\S^1 )$. Recall that the singular values of $H_u$ and $K_u$ correspond to the square roots of the eigenvalues of the self-adjoint positive operators $H_u^2$ and $K_u^2$.  Moreover, operators $H_u^2$ and $K_u^2$ are related by the following important identity,
\begin{equation}\label{Ku2}
K_u^2=H_u^2-(\cdot \vert u)u\ .
\end{equation}
Let us consider the special case where $H_u$ is an operator of finite rank $N$. From identity (\ref{selfadjoint}), $H_u$ and $H_u^2$ have the same kernel, hence have the same rank. Then (\ref{Ku2}) implies that the rank of $K_u$ is $N-1$ or $N$. 
\begin{definition}\label{defV(d)}
If $d$ is a positive integer, we denote by $\mathcal V(d)$ \index{$\mathcal V(d)$}the set of symbols $u$ such that the sum of the rank of $H_u$ and of the rank of $K_u$ is $d$.
\end{definition}
The Kronecker theorem can be made more precise by the following statement, see the appendix of \cite{GG1}. The set $\mathcal V(d)$ is a complex K\"ahler $d$--dimensional submanifold of $L^2_+(\S ^1)$, which consists of functions of the form
$$u({\rm e}^{ix})=\frac{A({\rm e}^{ix})}{B({\rm e}^{ix})}\ ,$$
where $A, B$ are polynomials with no common factors, $B$ has no zero in the closed unit disc, $B(0)=1$, and
\begin{itemize}
\item If $d=2N$ is even, the degree of $A$ is at most $N-1$ and the degree of $B$ is exactly $N$.
\item If $d=2N-1$ is odd, the degree of $A$ is exactly $N-1$ and the degree of $B$ is at most $N-1$.
\end{itemize}

\section{The Lax pair structure}\label{Lax}\index{Lax pair}
In this section, we recall the Lax pairs associated to the cubic Szeg\H{o} equation, see \cite{GG1}, \cite{GG2}. 
First we introduce the notion of a Toeplitz operator. Given $b\in L^\infty (\S^1 )$, we define $T_b:L^2_+\to L^2_+$ as
\begin{equation}\label{Toeplitz}T_b(h)=\Pi (bh)\ ,\ h\in L^2_+\ .
\end{equation}
Notice that $T_b$ is bounded and $T_b^*=T_{\overline b}$. 
The starting point is the following lemma.
\begin{lemma}\label{abc}
Let $a, b, c\in H^s_+$, $s>\frac 12$. Then
$$H_{\Pi (a\overline b c)}=T_{a\overline b}H_c+H_aT_{b\overline c} -H_aH_bH_c\ .$$
\end{lemma}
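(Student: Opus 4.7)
The plan is to verify the identity by testing both sides on an arbitrary $h\in L^2_+(\S^1)$ and unwinding the definitions, exploiting systematically that the Szeg\H{o} projector $\Pi$ annihilates any function whose Fourier spectrum lies in $\{n\le -1\}$. Since $a,b,c\in H^s_+$ with $s>\tfrac12$, the algebra property of $H^s$ makes all triple products pointwise well defined in $H^s\subset L^2$, and every projection below is meaningful.

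First I would simplify the left-hand side. Decomposing $a\overline b c=\Pi(a\overline b c)+(I-\Pi)(a\overline b c)$, note that $(I-\Pi)(a\overline b c)\cdot \overline h$ has only strictly negative Fourier modes, since $(I-\Pi)(a\overline b c)$ has modes $\le -1$ and $\overline h$ has modes $\le 0$. Consequently $\Pi\bigl((I-\Pi)(a\overline b c)\,\overline h\bigr)=0$, and therefore $H_{\Pi(a\overline b c)}(h)=\Pi(a\overline b c\,\overline h)$.

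The work then reduces to the right-hand side. Splitting $c\overline h=\Pi(c\overline h)+(I-\Pi)(c\overline h)$ in the first term yields $T_{a\overline b}H_c h=\Pi(a\overline b c\overline h)-\Pi\bigl(a\overline b(I-\Pi)(c\overline h)\bigr)$, so it remains to show that $H_a T_{b\overline c}h-H_a H_b H_c h=\Pi\bigl(a\overline b(I-\Pi)(c\overline h)\bigr)$. Using additivity of the antilinear operator $H_a$, one first computes $T_{b\overline c}h-H_b H_c h=\Pi(b\overline c h)-\Pi\bigl(b\,\overline{\Pi(c\overline h)}\bigr)=\Pi\bigl(b\cdot\overline{(I-\Pi)(c\overline h)}\bigr)$. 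The function $b\cdot\overline{(I-\Pi)(c\overline h)}$ has Fourier support in $\{n\ge 1\}$, because $b$ has modes $\ge 0$ and $\overline{(I-\Pi)(c\overline h)}$ has modes $\ge 1$; hence $\Pi$ acts trivially there, and this expression reduces to $b\cdot\overline{(I-\Pi)(c\overline h)}$ itself. Applying $H_a$ then gives precisely $\Pi\bigl(a\overline b(I-\Pi)(c\overline h)\bigr)$, which closes the identity.

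The only real obstacle is the bookkeeping of the conjugations induced by the antilinearity of the three Hankel operators; conceptually the whole argument rests on a single mode-support principle, invoked twice, namely that $\Pi$ trivially detects whether a product has only nonnegative or only strictly negative Fourier frequencies.
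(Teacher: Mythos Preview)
Your proof is correct and follows essentially the same route as the paper. Both arguments reduce $H_{\Pi(a\overline b c)}(h)$ to $\Pi(a\overline b c\,\overline h)$, split off $T_{a\overline b}H_c(h)$ via the decomposition $c\overline h=\Pi(c\overline h)+(I-\Pi)(c\overline h)$, and then identify the remainder $\Pi\bigl(a\overline b(I-\Pi)(c\overline h)\bigr)$ as $H_a(g)$ with $g=b\,\overline{(I-\Pi)(c\overline h)}=T_{b\overline c}h-H_bH_ch$; your explicit Fourier-support justification that $g\in L^2_+$ is exactly the paper's observation that $g=\Pi(g)$.
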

\begin{proof}
Given $h\in L^2_+$, we have
\begin{eqnarray*}
H_{\Pi (a\overline b c)}(h)&=&\Pi (a\overline b c\overline h)=\Pi (a\overline b\Pi (c\overline h))+\Pi (a\overline b(I-\Pi )(c\overline h))\\
&=&T_{a\overline b}H_c(h)+H_a(g)\ ,\ g:=b\overline {(I-\Pi )(c\overline h)}\ .
\end{eqnarray*}
Since $g\in L^2_+$, 
$$g=\Pi (g)=\Pi (b\overline ch)-\Pi (b\overline {\Pi (c\overline h)})=T_{b\overline c}(h)-H_bH_c(h)\ .$$
This completes the proof.
\end{proof}
Using Lemma \ref{abc} with $a=b=c=u$, we get
\begin{equation}\label{Hu3}
H_{\Pi (\vert u\vert ^2u)}=T_{\vert u\vert ^2}H_u+H_uT_{\vert u\vert ^2}-H_u^3\ .
\end{equation}
\begin{theorem}\label{Lax pair}\index{Lax pair}
Let $u\in C^\infty (\R ,H^s_+), s>\frac 12, $ be a solution of (\ref{szegoIntro}). Then
\begin{eqnarray*}
\frac{dH_u}{dt}&=&[B_u,H_u]\ ,\ B_u:=\frac i2H_u^2-iT_{\vert u\vert ^2}\ ,\\
\frac{dK_u}{dt}&=&[C_u,K_u]\ ,\ C_u:=\frac i2K_u^2-iT_{\vert u\vert ^2}\ .
\end{eqnarray*}
\end{theorem}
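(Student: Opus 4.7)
The key observation is that the assignment $u\mapsto H_u$ is $\C$-linear (even though each $H_u$ is antilinear in its argument), so time differentiation yields $\frac{dH_u}{dt}=H_{\pa_t u}=-iH_{\Pi(|u|^2u)}$ by the equation (\ref{szegoIntro}). Applying Lemma~\ref{abc} with $a=b=c=u$ (that is, identity (\ref{Hu3})) rewrites this as
$$\frac{dH_u}{dt}=-iT_{|u|^2}H_u-iH_uT_{|u|^2}+iH_u^3.$$
To recognise the right-hand side as $[B_u,H_u]$, I would expand the commutator paying attention to the antilinearity of $H_u$: the scalar $i$ passes across $H_u$ with a sign change, so $H_u\circ(iA)=-i\,H_u\circ A$ for any $\C$-linear $A$. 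This yields $B_uH_u=\tfrac{i}{2}H_u^3-iT_{|u|^2}H_u$ and $H_uB_u=-\tfrac{i}{2}H_u^3+iH_uT_{|u|^2}$, whose difference matches the displayed formula.

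For the second identity, the natural move is to apply $S^*$ to the first. Since $S^*H_u=K_u$, one gets $\frac{dK_u}{dt}=S^*[B_u,H_u]$, and the question reduces to showing $S^*[B_u,H_u]=[C_u,K_u]$. Rearranging,
$$S^*[B_u,H_u]-[C_u,K_u]=\bigl(S^*B_uH_u-C_uK_u\bigr)-K_u(B_u-C_u),$$
and the computation splits into two rank-one corrections. From (\ref{Ku2}) we have $B_u-C_u=\tfrac{i}{2}(H_u^2-K_u^2)=\tfrac{i}{2}(\cdot\,|\,u)u$; and a direct Fourier-mode calculation gives the commutator identity $[S^*,T_{|u|^2}]f=\hat f(0)\cdot\Pi(\bar z\,|u|^2)=\hat f(0)\,K_u(u)$, where the last equality uses $K_u(u)=\Pi(u\,\overline{Su})=\Pi(\bar z\,|u|^2)$ and the fact that $\Pi(\bar z\bar u)=0$ since $\bar u$ has only non-positive modes.

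The main computational hurdle is to check that these two rank-one corrections cancel exactly. Evaluating at $h\in L^2_+$ and using $\widehat{H_u(h)}(0)=(u\,|\,h)$ together with $K_uH_u^2-K_u^3=K_u\bigl[(\cdot\,|\,u)u\bigr]$, while keeping track of the antilinearity of $K_u$, I would compute that both $(S^*B_uH_u-C_uK_u)(h)$ and $K_u(B_u-C_u)(h)$ equal $-\tfrac{i}{2}(u\,|\,h)\,K_u(u)$. Their difference then vanishes identically, yielding the Lax equation for $K_u$.
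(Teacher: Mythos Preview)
Your proof is correct. For the first Lax pair you follow the paper exactly: differentiate, apply identity~(\ref{Hu3}), and use antilinearity to regroup as $[B_u,H_u]$.

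For the second Lax pair your route differs slightly from the paper's. The paper computes $K_{\Pi(|u|^2u)}$ directly via $K_u=H_uS$, using the commutation relation $T_{|u|^2}S=ST_{|u|^2}+(\cdot\,|\,K_u(\cdot))$ applied to the right of (\ref{Hu3}); this produces a rank-one correction that combines with $H_u^2K_u$ to yield $K_u^3$ through (\ref{Ku2}). You instead apply $S^*$ on the left of the already-established equation $\frac{dH_u}{dt}=[B_u,H_u]$, using the dual commutator $[S^*,T_{|u|^2}]f=\hat f(0)\,K_u(u)$, and then check that the two resulting rank-one terms $(S^*B_uH_u-C_uK_u)(h)$ and $K_u(B_u-C_u)(h)$ coincide. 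The two approaches are mirror images of each other---one shifts on the right, one on the left---and involve the same algebraic cancellation. Your organization has the mild conceptual advantage of deriving the $K_u$ equation \emph{from} the $H_u$ equation rather than redoing the computation from scratch, making the parallel structure of the two Lax pairs more transparent.
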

\begin{proof} Using equation (\ref{szegoIntro}) and identity (\ref{Hu3}), 
$$\frac{dH_u}{dt}=H_{-i\Pi (\vert u\vert ^2u)}=-i H_{\Pi (\vert u\vert ^2u)}=-i(T_{\vert u\vert ^2}H_u+H_uT_{\vert u\vert ^2}-H_u^3)\ .$$
Using the antilinearity of $H_u$, this leads to the first identity. For the second one, we observe that
\begin{equation}\label{KPi}
K_{\Pi (\vert u\vert ^2u)}=H_{\Pi (\vert u\vert ^2u)}S=T_{\vert u\vert ^2}H_uS+H_uT_{\vert u\vert ^2}S-H_u^3S\ .
\end{equation}
Moreover, notice that
$$T_b(Sh)=ST_b(h)+(bSh\vert 1)\ .$$
In the case $b=\vert u\vert ^2$, this gives
$$T_{\vert u\vert ^2}Sh =ST_{\vert u\vert ^2}h+(\vert u\vert ^2Sh\vert 1)\ .$$
Moreover,
$$(\vert u\vert ^2Sh\vert 1)=(u\vert u\overline {Sh})= (u\vert K_u(h))\ .$$
Consequently,
$$H_uT_{\vert u\vert ^2}Sh=K_uT_{\vert u\vert ^2}h+(K_u(h)\vert u)u\ .$$
Coming back to (\ref{KPi}), we obtain
$$K_{\Pi (\vert u\vert ^2u)}=T_{\vert u\vert ^2}K_u+K_uT_{\vert u\vert ^2}-(H_u^2-(\cdot \vert u)u) K_u\ .$$
Using identity (\ref{Ku2}), this leads to
\begin{equation}\label{Ku3}
K_{\Pi (\vert u\vert ^2u)}=T_{\vert u\vert ^2}K_u+K_uT_{\vert u\vert ^2} - K_u^3\ .
\end{equation}
The second identity is therefore a consequence of antilinearity and of 
$$\frac{dK_u}{dt}=-iK_{\Pi (\vert u\vert ^2u)}\ .$$
\end{proof}
In the sequel, we denote by $\mathcal L(L^2_+)$ the Banach space  of bounded linear operators on $L^2_+$.
Observing that $B_u, C_u$ are linear and antiselfadjoint, we obtain, following a classical argument due to Lax \cite{L},
\begin{corollary}\label{UV}
Under the conditions of Theorem \ref{Lax pair}, define $U=U(t)$, $V=V(t)$ the solutions of the following linear ODEs
on $\mathcal L(L^2_+)$,\index{Lax pair}
$$\frac {dU}{dt}=B_uU\ ,\ \frac{dV}{dt}=C_uV\ ,\ U(0)=V(0)=I\ .$$
Then $U(t), V(t)$ are unitary operators and
$$H_{u(t)}=U(t)H_{u(0)} U(t)^*\ ,\ K_{u(t)}=V(t)K_{u(0)} V(t)^*\ .$$
\end{corollary}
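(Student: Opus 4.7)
My plan is to follow the classical Lax argument of \cite{L}, checking along the way that the $\C$-antilinearity of $H_u$ and $K_u$ does not cause any obstruction. First I would establish existence and uniqueness of the operator-valued solutions $U(t), V(t)$ in $\mathcal L(L^2_+)$. Since $u\in C^\infty (\R ,H^s_+)$ with $s>\frac 12$, the embedding $H^s\hookrightarrow L^\infty$ makes $t\mapsto T_{|u(t)|^2}$ continuous in operator norm, and $t\mapsto H_{u(t)}^2$ is continuous as a map into the trace class (so a fortiori into $\mathcal L(L^2_+)$). Hence $t\mapsto B_{u(t)}$ and $t\mapsto C_{u(t)}$ are continuous curves in the Banach algebra $\mathcal L(L^2_+)$, and the standard Picard theorem for linear ODEs in a Banach space yields a unique global solution to each of the Cauchy problems defining $U$ and $V$.

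The second step is to show that $U(t)$ and $V(t)$ are unitary. The key observation is that $B_u$ and $C_u$ are $\C$-linear and anti-selfadjoint: indeed, $H_u^2$ and $K_u^2$ are selfadjoint (as positive trace-class operators, already used in the chapter), while $T_{|u|^2}$ is selfadjoint because $|u|^2$ is real-valued; multiplying by $\pm i$ then produces anti-selfadjointness of $B_u=\frac i2 H_u^2-iT_{|u|^2}$ and of $C_u$. Consequently
\begin{equation*}
\frac{d}{dt}(U^* U)=(B_uU)^*U+U^*B_uU=U^*(B_u^*+B_u)U=0,
\end{equation*}
and since $U^*(0)U(0)=I$, we obtain $U^*U\equiv I$ on $\R$. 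Running the same computation on $UU^*$ (starting from the adjoint ODE $\dot U^*=-U^*B_u$) yields $UU^*\equiv I$, so $U(t)$ is unitary; the argument for $V$ is identical.

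Finally, to establish the intertwining identities, I would set $A(t):=U(t)^*H_{u(t)}U(t)$ and show it is constant in $t$. Differentiating and substituting $\dot U=B_uU$, $\dot U^*=-U^*B_u$, together with the Lax equation $\dot H_u=[B_u,H_u]$ of Theorem \ref{Lax pair}, gives
\begin{equation*}
\dot A=-U^*B_uH_uU+U^*(B_uH_u-H_uB_u)U+U^*H_uB_uU=0,
\end{equation*}
hence $A(t)\equiv A(0)=H_{u(0)}$, which rearranges exactly to $H_{u(t)}=U(t)H_{u(0)}U(t)^*$. The same computation with $K$ and $C_u$ in place of $H$ and $B_u$ produces the companion identity for $K$. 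The only point that deserves a pause is the antilinearity of $H_u$ and $K_u$; but since $B_u$ and $C_u$ are $\C$-linear, both the commutator $[B_u,H_u]$ and the expression $U^*H_uU$ are well-defined $\C$-antilinear operators, so each step above makes sense verbatim and no genuine obstacle arises.
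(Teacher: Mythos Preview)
Your proposal is correct and is precisely the classical Lax argument that the paper invokes without detail: the paper's proof consists only of the sentence ``Observing that $B_u, C_u$ are linear and antiselfadjoint, we obtain, following a classical argument due to Lax \cite{L},'' followed immediately by the statement. Your write-up simply makes this explicit, including the reasonable check that the $\C$-antilinearity of $H_u$ and $K_u$ causes no trouble since $B_u$, $C_u$, and hence $U$, $V$ are $\C$-linear.
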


In other words, the Hankel operators $H_u$ and $K_u$ remain unitarily equivalent to the Hankel operators associated to their initial data under the cubic Szeg\H{o} flow for initial datum in $H^s(\S^1)$, $s>1/2$.  
 In particular, the cubic Szeg\H{o} flow preserves the eigenvalues of $H_u^2$ and of $K_u^2$, hence the singular values of $H_u$ and $K_u$. By a standard continuity argument, this result extends to initial data in $H^{1/2}_+(\S^1)$. We recover that the Hilbert-Schmidt norm-- being the $\ell^2$-norm of the singular values of $H_u$-- hence the $H^{1/2}_+$ norm of the symbol is a conservation law. It is therefore natural to study the spectral properties of these Hankel operators\index{Hilbert-Schmidt}. 
 
As a special case, the ranks of $H_u$ and $K_u$ are conserved by the  cubic Szeg\H{o} flow, which therefore acts as a Hamiltonian flow on all the symplectic manifolds $\mathcal V(d)$.
 
\section{Application: an exponential bound for Sobolev norms}\index{Sobolev spaces}

In this short section, we show how the Lax pair structure, combined with harmonic analysis results on Hankel operators,
lead to an a priori bound on the possible long time growth of the Sobolev norms of the solution.
\s
From the work of Peller (\cite{Pe2}), a Hankel operator belongs to the Schatten class $S_p$, $1\le p<\infty$,  if and only if its symbol belongs to the Besov space $B^{1/p}_{p,p}(\S ^1)$, with
$${\rm Tr}(|H_u|^p)\simeq  \Vert u\Vert_{B^{1/p}_{p,p}}^p\ .$$
In particular, since, for every $s>1$,  $H^s_+(\S ^1)\subset B^1_{1,1}(\S ^1)\subset L^\infty (\S ^1)$, we obtain
\begin{equation}\label{Peller}
\Vert u\Vert_{L^\infty}\le A{\rm Tr}(|H_u|)\le B_s \Vert u\Vert_{H^s}, \; s>1.
\end{equation}
 This provides the following result, already observed in \cite{GG1}.
 \begin{theorem}\label{Sobbound}(\cite{GG1}).
 For every datum $u_0\in H^s_+$ for some $s>1$, the solution is bounded in $L^\infty $,  with
\begin{equation}\label{Linftybound}
\sup_{t\in \R}\Vert Z(t)u_0\Vert_{L^\infty}\le C_s \Vert u_0\Vert_{H^s}\ .
\end{equation}
Furthermore,\begin{equation}\label{Hsbound}
\Vert Z(t)u_0\Vert_{H^s}\le \Vert C_s'u_0\Vert_{H^s}{\rm e}^{C'_s\Vert u_0\Vert _{H^s}^2 \vert t\vert },\; s>1.
\end{equation} 
\end{theorem}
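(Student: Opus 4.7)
My plan is to deduce both estimates from two ingredients: (i) the unitary conjugation of $H_{u(t)}$ provided by Corollary \ref{UV}, and (ii) the Peller-type trace estimate (\ref{Peller}). The $L^\infty$ bound will be essentially immediate; the $H^s$ bound will follow by feeding the $L^\infty$ bound into a tame energy estimate and invoking Gronwall.

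For (\ref{Linftybound}), I first observe that if $u$ is a smooth solution, Corollary \ref{UV} gives $H_{u(t)}=U(t)H_{u_0}U(t)^*$ with $U(t)$ unitary. Consequently $|H_{u(t)}|=U(t)|H_{u_0}|U(t)^*$, so the trace class norm $\mathrm{Tr}(|H_{u(t)}|)$ is a conservation law of the Szeg\H{o} flow. Applying (\ref{Peller}) to $u(t)$ and $u_0$ respectively yields
\[
\Vert u(t)\Vert_{L^\infty}\le A\,\mathrm{Tr}(|H_{u(t)}|)=A\,\mathrm{Tr}(|H_{u_0}|)\le AB_s\Vert u_0\Vert_{H^s},
\]
which is (\ref{Linftybound}) with $C_s:=AB_s$. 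By a density/continuity argument this extends from data in $H^s_+$ with $s>1$ to the global flow, since the global wellposedness in $H^{1/2}_+$ recalled in the introduction guarantees propagation of higher regularity.

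For (\ref{Hsbound}), the natural step is to differentiate the $H^s$ energy along the flow:
\[
\tfrac{d}{dt}\Vert u(t)\Vert_{H^s}^2 = 2\,\mathrm{Re}\bigl(\partial_t u,u\bigr)_{H^s} = -2\,\mathrm{Im}\bigl(\Pi(|u|^2u),u\bigr)_{H^s},
\]
so by Cauchy--Schwarz and the fact that $\Pi$ is bounded on $H^s$,
\[
\bigl|\tfrac{d}{dt}\Vert u\Vert_{H^s}^2\bigr|\le 2\,\Vert |u|^2 u\Vert_{H^s}\Vert u\Vert_{H^s}.
\]
The key analytic input is the tame (Kato--Ponce) product estimate
\[
\Vert |u|^2u\Vert_{H^s}\le K_s\,\Vert u\Vert_{L^\infty}^2\,\Vert u\Vert_{H^s},\qquad s>\tfrac12,
\]
which is obtained by iterating the standard bilinear bound $\Vert fg\Vert_{H^s}\le K(\Vert f\Vert_{L^\infty}\Vert g\Vert_{H^s}+\Vert g\Vert_{L^\infty}\Vert f\Vert_{H^s})$. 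Combined with (\ref{Linftybound}) this gives
\[
\bigl|\tfrac{d}{dt}\Vert u(t)\Vert_{H^s}^2\bigr|\le 2K_s C_s^2\,\Vert u_0\Vert_{H^s}^2\,\Vert u(t)\Vert_{H^s}^2,
\]
and Gronwall's inequality yields (\ref{Hsbound}) with the constant $C_s':=\sqrt{K_s}\,C_s$ (absorbed into the prefactor and exponent as stated).

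The only nonroutine step is the conservation of $\mathrm{Tr}(|H_u|)$; the rest is standard energy analysis. The main subtlety is ensuring that the tame bound interacts correctly with the Szeg\H{o} projector---this is harmless since $\Pi$ is continuous on $H^s$ for all $s\in\R$, so inserting $\Pi$ does not affect the estimate on $|u|^2u$. Everything else reduces to Gronwall once the two a priori bounds are in place.
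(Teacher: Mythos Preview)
Your proof is correct and follows essentially the same approach as the paper: conservation of $\mathrm{Tr}(|H_u|)$ via the Lax pair, Peller's estimate (\ref{Peller}) for the $L^\infty$ bound, then a tame product estimate combined with Gronwall for the $H^s$ bound. The only cosmetic difference is that the paper uses the Duhamel formula to set up the Gronwall inequality, whereas you differentiate $\Vert u\Vert_{H^s}^2$ directly; these lead to the same conclusion.
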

\begin{proof}
The first statement is an immediate consequence of (\ref{Peller}) and of the conservation law
$${\rm Tr}(\vert H_{Z(t)u_0}\vert )={\rm Tr}(\vert H_{u_0}\vert )\ .$$
From Duhamel formula, $u(t):=Z(t)u_0$ is given by 
$$u(t):=u_0 -i\int_0^t \Pi(|u|^2u)(t') dt'\ ,$$ so that 
\begin{eqnarray*}\Vert u(t)\Vert_{H^s}&\le& \Vert u_0\Vert_{H^s}+\int_0^t \Vert |u(t')|^2u(t')\Vert_{H^s}dt'\\
&\le&\Vert u_0\Vert_{H^s}+D_s\int_0^t \Vert u(t')\Vert_{L^\infty}^2\Vert u(t')\Vert_{H^s}dt\\
&\le&\Vert u_0\Vert_{H^s}+D_sC'_s\int_0^t \Vert u_0\Vert_{H^s}^2\Vert u(t')\Vert_{H^s}dt\\
\end{eqnarray*}
and the bound (\ref{Hsbound}) follows from the Gronwall lemma.
\end{proof}

\chapter{Spectral analysis}\label{chapter spectral analysis}\index{Hankel operator}

In this chapter, we establish several properties of singular values of Hankel operators $H_u$ and $K_u$ introduced in Chapter 1. In particular, we prove -- see Lemma \ref{EF} ---  that, for  a given singular value of either $H_u$ or $K_u$,  the difference of the multiplicities is $1$. In section \ref{Bateman} we review trace and determinant formulae relating the singular values to other norming constants. In section \ref{sectionAAK}, we revisit two important theorems by Adamyan--Arov--Krein on the structure of Schmidt pairs \index{Schmidt pairs}for Hankel operators, and on approximation of arbitrary symbols by rational symbols. Section \ref{Blaschke} is the most important one of this chapter. Choosing special Schmidt pairs, we construct Blaschke products \index{Blaschke product}which allow to describe the action of $H_u$ and $K_u$ on the eigenspaces of $H_u^2$ and $K_u^2$.

\section{Spectral decomposition of the operators $H_u$ and $K_u$}\label{SpectralAnalysis}

This section is devoted to a precise spectral analysis of operators $H_u^2$ and $K_u^2$ on the  closed range of $H_u$. This spectral analysis is closely related to the construction of our non linear Fourier transform\index{Non linear Fourier transform}.

For every $s\ge 0$ and $u\in VMO_+(\S^1 ):=VMO(\S^1 )\cap L^2_+(\S^1 )$\index{$VMO_+(\S^1)$}, we set\index{$E_u(s)$} \index{$F_u(s)$}
\ben \label{propres}
E_u(s):=\ker (H_u^2-s^2I)\ ,\ F_u(s):=\ker (K_u^2-s^2I)\ .
\een
Notice that $E_u(0)=\ker H_u\ ,\  F_u(0)=\ker K_u$. Moreover, from the compactness of $H_u$, if $s>0$, $E_u(s)$ and $F_u(s)$ are finite dimensional.
Using (\ref{Ku}) and (\ref{Ku2}), one can show the following result.

\begin{lemma}\label{EF}
Let $u\in VMO_+(\S^1 )\setminus \{ 0\} $ and $s>0$ such that $$E_u(s)+ F_u(s)\ne \{ 0\}\ .$$
Then one of the following properties holds. 
\begin{enumerate}
\item $\dim E_u(s)=\dim F_u(s)+1$,  $u \not \perp E_u(s)$, and $F_u(s)=E_u(s)\cap u^\perp $.
\item $\dim F_u(s)=\dim E_u(s)+1$,  $u \not \perp F_u(s)$, and $E_u(s)=F_u(s)\cap u^\perp $.
\end{enumerate}
\end{lemma}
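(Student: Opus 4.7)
The plan is to exploit identity (\ref{Ku2}), $K_u^2 = H_u^2 - (\cdot|u)u$, together with the shift relation $K_u = S^* H_u$. A direct computation shows that for $h \in E_u(s)$ one has $K_u^2 h = s^2 h - (h|u) u$, and for $h \in F_u(s)$ one has $H_u^2 h = s^2 h + (h|u) u$. Intersecting with $u^\perp$ yields $E_u(s) \cap u^\perp = F_u(s) \cap u^\perp$, which I denote $W$.

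The crucial step is a mutual exclusion principle: $u$ cannot be non-orthogonal to both $E_u(s)$ and $F_u(s)$. Indeed, if some $q \in F_u(s)$ satisfies $(q|u) \neq 0$, the identity $H_u^2 q = s^2 q + (q|u)u$ rewrites as $(H_u^2 - s^2 I) q = (q|u) u$, placing $u$ in the range of the self-adjoint operator $H_u^2 - s^2 I$, hence in $E_u(s)^\perp$. By contraposition this yields $u \not\perp E_u(s) \Rightarrow u \perp F_u(s)$; the converse implication is symmetric. Consequently, if $u \not\perp E_u(s)$, then $F_u(s) = F_u(s) \cap u^\perp = W$, which has codimension $1$ in $E_u(s)$, giving all three assertions of case (1). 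The sub-case $u \not\perp F_u(s)$ yields case (2) by symmetry.

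It remains to exclude the degenerate scenario $u \perp E_u(s)$ and $u \perp F_u(s)$ with $E_u(s) + F_u(s) \neq \{0\}$. In this case the two identities above force $E_u(s) = F_u(s) =: V \neq \{0\}$, with $H_u^2 = K_u^2 = s^2 I$ on $V$. The antilinearity of $H_u$ combined with $s \in \R$ gives $H_u^2(H_u h) = s^2 H_u h$, so $V$ is stable under $H_u$, and similarly under $K_u$; both maps are bijective on $V$. From (\ref{selfadjoint}) one has $(H_u h | 1) = (H_u 1 | h) = (u|h)$, and combining with $SS^* = I - (\cdot|1)\cdot 1$ gives $H_u h = S K_u h + (u|h)\cdot 1$, which on $V$ reduces to $H_u h = S K_u h$. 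Consequently $S(V) = V$. But a nonzero finite-dimensional subspace of $L^2_+$ cannot be $S$-invariant: from $V \subseteq SL^2_+$ one gets $\hat h(0) = 0$ for every $h \in V$, and iterating with $(S|_V)^{-1} = S^*|_V$ yields $\hat h(k) = 0$ for all $k \geq 0$, so $V = \{0\}$, a contradiction.

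The main obstacle is this last step: recognizing that orthogonality of $u$ to the common eigenspace $V$ is precisely what makes $V$ invariant under the shift, thereby allowing the degenerate case to be ruled out by pure Hardy-space considerations. The remaining arguments are algebraic consequences of (\ref{Ku2}) and of the self-adjointness identity.
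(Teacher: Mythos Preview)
Your proof is correct and follows essentially the same route as the paper's. Both arguments hinge on the same two ingredients: the mutual exclusion of $u\not\perp E_u(s)$ and $u\not\perp F_u(s)$ via the range--kernel orthogonality of the self-adjoint operator $H_u^2-s^2I$ (or $K_u^2-s^2I$), and the elimination of the degenerate case $E_u(s)=F_u(s)\ne\{0\}$ by a shift-invariance argument. The only cosmetic differences are that you isolate the common subspace $W=E_u(s)\cap u^\perp=F_u(s)\cap u^\perp$ at the outset, and that in the degenerate step you prove $S(V)=V$ via the identity $H_u=SK_u+(u\vert\cdot)1$, whereas the paper proves $S^*(N)\subset N$ directly from $K_u=S^*H_u$; on a finite-dimensional space these two invariances are equivalent and lead to the same vanishing-of-Fourier-coefficients conclusion.
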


\begin{proof}
Let $s>0$ be such that $E_u(s)+F_u(s)\neq \{0\}$. We first claim that either $u\perp E_u(s)$ or $u\perp F_u(s)$. Assume first $u\not\perp E_u(s)$,  then there exists $h\in E_u(s)$ such that $(h\vert u)\neq 0$. From equation (\ref{Ku2}), $$-(h\vert u)u=(K_u^2-s^2I)h\in (F_u(s))^\perp\ ,$$ hence $u\perp F_u(s)$. Similarly, if $u\not\perp F_u(s)$, then $u\perp E_u(s)$.\\
 Assume $u\perp F_u(s)$. Then, for any $h\in  F_u(s)$, as $K_u^2=H_u^2-(\cdot\vert u)u$, $H_{u}^2(h)=K_u^2(h)=s^2 h$, hence  $  F_u(s)\subset   E_u(s)$. We claim that this inclusion is strict. Indeed, suppose it is an equality. Then $H_{u}$ and $K_{u}$ are both 
automorphisms of the vector space $$N:= F_u(s)=   E_u(s)\ .$$ 
Consequently, since $K_u=S^*H_u$, $S^*(N)\subset N$. 
On the other hand, 
 since every $h\in N$ is orthogonal to $u$, we have
$$0=(H_{u}(h)\vert u)=(1\vert H_{u}^2 h)=s ^2(1\vert h)\ ,$$
hence $N\perp 1$.  Therefore, for every $h\in N$, for every integer $k$,
$(S^*)^k(h)\perp 1$. Since $S^k(1)=z^k$, we conclude that all the Fourier coefficients of $h$ are $0$, hence $N=\{ 0\}$,
a contradiction. Hence, the inclusion of $F_u(s)$ in $E_u(s)$ is strict and, necessarily $u\not\perp E_u(s)$ and $F_u(s)=E_u(s)\cap u^\perp$. One also has $\dim E_u(s)=\dim F_u(s)+1$.\\
One proves as well that if $u\perp E_u(s)$ then $u\not\perp F_u(s)$, $E_u(s)=F_u(s)\cap u^\perp$ and $\dim F_u(s)=\dim E_u(s)+1$.
This gives the result.
\end{proof}
We define\index{$\Sigma_H(u)$}\index{$\Sigma_K(u)$}
\begin{equation}\label{SigmaH}
\Sigma_H(u):=\left\{s\ge 0;\; u\not  \perp  E_u(s)\right\}, 
\end{equation}
\begin{equation}\label{SigmaK}
\Sigma_K(u):=\left\{s\ge 0;\; u\not \perp F_u(s)\right\}.
\end{equation}
 Remark first that $0\notin \Sigma_H(u)$, since $u=H_u(1)$ belongs to the range of $H_u$ hence, is orthogonal to its kernel. 
 From Lemma \ref{EF}, if $s\in \Sigma_H(u)$ then $\dim E_u(s)=\dim F_u(s)+1$ and if $s\in \Sigma_K(u)\setminus\{0\}$, $\dim F_u(s)=\dim E_u(s)+1$. Consequently,  $\Sigma_H(u)$ and $\Sigma_K(u)$ are disjoint. We will use the following terminology.\\
If $s\in \Sigma_H(u)$, we say that $s$ is  {\sl $H$-dominant}. \\
If $s\in \Sigma_K(u)\setminus\{0\}$, we say that $s$ is  {\sl $K$-dominant}.\\
Elements of the set $\Sigma _H(u)\cup (\Sigma _K(u)\setminus \{ 0\} )$ are called {\sl singular values associated to $u$}. 
If $s$ is such a singular value, the {\sl dominant multiplicity} of $s$ is defined to be the maximum of $\dim E_u(s)$ and of $\dim F_u(s)$.

\begin{lemma}\label{rigidity}
\begin{enumerate}
\item $\Sigma_H(u)$ and $\Sigma_K(u)$ have the same cardinality.
\item If $(\rho_j)$ denotes the decreasing sequence of elements of $\Sigma _H(u)$, and $(\sigma_k)$ 
the decreasing sequence of  elements of  $\Sigma_K(u)$, then
 $$\rho_1>\sigma _1>\rho_2>\dots $$
 \end{enumerate}
 \end{lemma}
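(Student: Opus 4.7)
The strategy is to parametrize $\Sigma_K(u)$ as the nonnegative roots of a scalar secular equation and read both claims off its graph. For each $\rho_j\in\Sigma_H(u)$ set $p_j:=\|P_{E_u(\rho_j)}u\|^2>0$, and define
\[
\varphi(\lambda):=\sum_{\rho_j\in\Sigma_H(u)}\frac{p_j}{\rho_j^2-\lambda}.
\]
Because $u=H_u(1)\in\operatorname{Ran} H_u$ is orthogonal to $\ker H_u$, and because by Lemma \ref{EF} it is also orthogonal to $E_u(s)$ for every $s\in\Sigma_K(u)$, the decomposition $u=\sum_j P_{E_u(\rho_j)}u$ holds, with $\sum_j p_j=\|u\|^2$. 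Consequently $\varphi$ is meromorphic with simple poles at the $\rho_j^2$ and derivative $\varphi'(\lambda)=\sum_j p_j/(\rho_j^2-\lambda)^2>0$, so it is strictly increasing on each component of $\mathbb{R}\setminus\{\rho_j^2\}$.

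The next step is a Sherman--Morrison equivalence: $\sqrt{\lambda}\in\Sigma_K(u)$ with $\lambda\geq 0$ if and only if $\varphi(\lambda)=1$. If $\sigma\in\Sigma_K(u)$, the vector $g:=P_{F_u(\sigma)}u\neq 0$ satisfies $(H_u^2-\sigma^2I)g=\|g\|^2 u$ by (\ref{Ku2}); since $\sigma^2\neq\rho_j^2$ for every $j$ (by disjointness of $\Sigma_H(u)$ and $\Sigma_K(u)$) and $u\perp E_u(\sigma)$, a direct computation using the eigenbasis yields $\varphi(\sigma^2)=1$. Conversely, when $\varphi(\lambda)=1$ with $\lambda\neq\rho_j^2$, the vector $g:=\sum_j P_{E_u(\rho_j)}u/(\rho_j^2-\lambda)$ obeys $(H_u^2-\lambda I)g=u$, so (\ref{Ku2}) gives $K_u^2g=\lambda g$ with $(g\vert u)=\varphi(\lambda)=1$, forcing $g\neq 0$ and $\sqrt{\lambda}\in\Sigma_K(u)$.

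The interlacing in (2) now follows from the shape of $\varphi$: on $(\rho_1^2,+\infty)$ it is strictly negative, while on each bounded gap $(\rho_{j+1}^2,\rho_j^2)$ it increases from $-\infty$ to $+\infty$, producing exactly one root $\sigma_j^2\in(\rho_{j+1}^2,\rho_j^2)$. When $\Sigma_H(u)$ is infinite this already yields infinitely many $\sigma_j$'s and (1) is immediate. The delicate case is $\Sigma_H(u)=\{\rho_1>\dots>\rho_n\}$ finite, where we need one further root of $\varphi=1$ in the remaining interval $(-\infty,\rho_n^2)$, on which $\varphi$ increases from $0^+$ to $+\infty$.

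The main obstacle is to check that this extra root lies in $[0,\rho_n^2)$, that is, $\varphi(0)\leq 1$. I would establish the sharper identity $\varphi(0)=\|P_V 1\|^2$ with $V:=\overline{\operatorname{Ran} H_u}$. Setting $v:=\sum_j P_{E_u(\rho_j)}u/\rho_j^2\in V$ (so that $H_u^2(v)=u$) and $w:=P_V 1\in V$, the chain $H_u(1)=u=H_u^2(v)=H_u(H_u(v))$ combined with the injectivity of $H_u$ on $V$ (since $\ker H_u\cap V=\{0\}$) delivers $w=H_u(v)$; then the antilinear symmetry (\ref{selfadjoint}) gives $\|w\|^2=(H_u(v)\vert H_u(v))=(H_u^2(v)\vert v)=(u\vert v)=\varphi(0)$. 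Since $\|P_V 1\|^2\leq\|1\|^2=1$, if $\varphi(0)<1$ the extra root falls in $(0,\rho_n^2)$ and contributes a positive $\sigma_n\in\Sigma_K(u)$, while if $\varphi(0)=1$ the root is $0$ itself and $0\in\Sigma_K(u)$. Either way $|\Sigma_K(u)|=n=|\Sigma_H(u)|$.
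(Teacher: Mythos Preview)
Your proof is correct and follows a genuinely different route from the paper's. The paper treats the two parts separately: for (1) it observes that the cyclic subspaces $\langle u\rangle_{H_u^2}$ and $\langle u\rangle_{K_u^2}$ coincide (since $K_u^2$ differs from $H_u^2$ by a rank-one operator in the direction of $u$), and the common dimension of this cyclic space is simultaneously $|\Sigma_H(u)|$ and $|\Sigma_K(u)|$; for (2) it invokes the min-max characterization to get $\lambda_m(H_u^2)\ge\lambda_m(K_u^2)\ge\lambda_{m+1}(H_u^2)$ for all $m$, and then feeds in the multiplicity gap from Lemma~\ref{EF} to force strict alternation of $H$- and $K$-dominance down the spectrum. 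Your argument instead packages both claims into the secular (Weinstein--Aronszajn) function $\varphi$ of the rank-one perturbation and reads everything off its graph. A pleasant byproduct is that you obtain the characterization $\sigma\in\Sigma_K(u)\Longleftrightarrow\sum_j\|u_{\rho_j}\|^2/(\rho_j^2-\sigma^2)=1$ before the interlacing, whereas the paper proves this only afterwards as identity~(\ref{eqsigma}) in Lemma~\ref{rigidity2}; similarly, your endgame identity $\varphi(0)=\|P_{\overline{\mathrm{Ran}\,H_u}}1\|^2$ is exactly the computation that later reappears in the proof of Corollary~\ref{kernel}. So your approach is slightly more hands-on and effectively merges Lemmas~\ref{rigidity} and~\ref{rigidity2}, at the cost of the extra case analysis at $\lambda=0$ that the min-max/cyclic-space argument avoids.
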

 
\begin{proof}
Since $K_u^2=H_u^2-(\, .\, \vert u)u$, the cyclic spaces generated by $u$ under the action of $H_u^2$ and $K_u^2$, namely
$$\langle u\rangle _{H_{u}^2} :={\rm clos}\;{\rm span}{\{H_u^{2k},\;k\in \N\}}$$
$$\langle u\rangle _{K_{u}^2} :={\rm clos}\;{\rm span}{\{K_u^{2k},\;k\in \N\}}$$ are  equal.
By the spectral theory of $H_{u}^2$ and of $K_u^2$, we have the orthogonal decompositions,
$$L^2_+=\overline{\oplus_{s\ge 0}E_u(s)}=\overline{\oplus_{s\ge 0}F_u(s)}\ .$$
Writing $u$ according to these two orthogonal decompositions yields
$$u=\sum _{\rho \in \Sigma _H(u)}u_\rho =\sum _{\sigma  \in \Sigma _K(u)}u'_\sigma \ .$$
Consequently, the cyclic spaces are given by
$$\langle u\rangle _{H_{u}^2} =\overline {\oplus _{\rho \in \Sigma _H(u)}\C u_\rho }\ ,\ \langle u\rangle _{K_{u}^2} =\overline {\oplus _{\sigma \in \Sigma _K(u)}\C u'_\sigma }\ .$$
 This proves that $\Sigma _H(u)$ and $\Sigma _K(u)$ have the same --- possibly infinite --- number of elements. 
\s
Now, we show that the singular values are alternatively $H$-dominant and $K$-dominant. Recall  that the singular values of a bounded operator $T$ on a Hilbert space $\mathcal H$, are given by the following min-max formula. \index{min-max formula} For every $m\ge 1$, denote by $\mathcal F _m$ the set of linear subspaces of $\mathcal H$ of dimension at most $m$.  The $m$-th singular value of $T$ is given by
\begin{equation}\label{minmax}\lambda_m(T)=\min_{F\in \mathcal F_{m-1}}\max_{f\in F^\perp, \Vert f\Vert=1}\Vert T(f)\Vert.\end{equation} 
Using equation (\ref{Ku2}) and this formula, we get
$$\lambda_1(H_u^2)\ge \lambda_1(K_u^2)\ge \lambda_2(H_u^2)\ge \lambda_2(K_u^2)\ge \dots$$
Let $s_1^2=\lambda_1(H_u^2)$. We claim that $s_1$ is $H$-dominant. Indeed, denote by $m_1$ the dimension of $E_u(s_1)$. Hence, $$s_1^2=\lambda_1(H_u^2)=\dots=\lambda_{m_1}(H_u^2)>\lambda_{m_1+1}(H_u^2).$$  Since $$ \lambda_{m_1+1}(K_u^2)\le \lambda_{m_1+1}(H_u^2)<s_1^2,$$ the dimension of $F_u(s_1)$ is at most $m_1$. Hence by Lemma \ref{EF}, it is exactly $m_1-1$. Let $s_2^2=\lambda_{m_1}(K_u^2)$. Denote by $m_2$ the dimension of $F_u(s_2)$ so that $$s_2^2=\lambda_{m_1}(K_u^2)=\dots=\lambda_{m_1+m_2-1}(K_u^2)>\lambda_{m_1+m_2}(K_u^2)\ge \lambda_{m_1+m_2+1}(H_u^2).$$ As before, it implies that the dimension of $E_u(s_2)$ is at most $m_2$.  By Lemma \ref{EF}, the dimension of $E_u(s_2)$ is $m_2-1$ and $s_2$ is $K$-dominant. An easy induction argument allows to conclude.

\end{proof}

At this stage, we introduce special classes of functions  on the circle, connected to the properties of the associated singular values. Given a finite sequence $(d_1,\dots ,d_n)$ of nonnegative integers, we denote by $ \mathcal V_{(d_1,\dots ,d_n)}$  \index{$\mathcal V_{(d_1,\dots ,d_n)}$}the class of functions $u$ having $n$ associated singular values $s_1>\dots >s_n$, of dominant multiplicities $d_1+1,\dots , d_n+1$ respectively. In view of definition \ref{defV(d)}, we observe that
\begin{equation}\label{inclusionV}
 \mathcal V_{(d_1,\dots ,d_n)}\subset \mathcal V(d)\ ,\ d=n+2\sum_{r=1}^nd_r\ .
\end{equation}
Similarly, given a sequence $(d_r)_{r\ge 1}$ of nonnegative integers, 
$\mathcal V^{(2)}_{(d_r)_{r\ge 1}} $ the class of functions $u\in H^{\frac 12}(\S ^1)$ having a decreasing sequence of associated singular  values with dominant multiplicities $(d_r+1)_{ r\ge 1}$.

 \begin{lemma}\label{rigidity2}
Let  $\rho\in \Sigma_H(u)$ and $\sigma\in \Sigma_K(u)$; let $u_\rho$ and $u'_\sigma$ denote respectively the orthogonal projections of $u$ onto $E_u(\rho)$, and onto $F_u(\sigma)$,  then
\begin{enumerate}
 \item If $\rho\in \Sigma_H(u)$, \begin{equation}\label{urho}
u_\rho=\Vert u_\rho\Vert^2\sum_{\sigma\in \Sigma_K(u)}\frac{u'_\sigma}{\rho^2-\sigma^2}\ ,\end{equation}
 \item If $\sigma\in \Sigma_K(u)$,
\begin{equation}\label{usigma}u'_\sigma=\Vert u'_\sigma\Vert^2 \sum_{\rho\in \Sigma_H(u)}\frac{u_\rho}{\rho^2-\sigma^2}\ .\end{equation}
\item A nonnegative number $\sigma $ belongs to $\Sigma _K(u)$ if and only if it does not belong to $\Sigma _H(u)$ and
\begin{equation}\label{eqsigma}\sum_{\rho \in \Sigma _H(u)}\frac{\Vert u_\rho \Vert ^2}{\rho ^2-\sigma ^2}=1\ .\end{equation}
\end{enumerate}
\end{lemma}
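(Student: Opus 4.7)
The strategy is to exploit the identity $K_u^2=H_u^2-(\cdot|u)u$ from (\ref{Ku2}) to translate each spectral component of $u$ for $H_u^2$ into a combination of spectral components for $K_u^2$ (and conversely), and then to extract the scalar identity (\ref{eqsigma}) by matching the two expansions of $u$. Throughout I would use that, by orthogonality of the spectral decompositions, $(u_\rho|u)=\Vert u_\rho\Vert^2$ and $(u'_\sigma|u)=\Vert u'_\sigma\Vert^2$, and the alternance provided by Lemma \ref{rigidity}.

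For part (1), I would apply (\ref{Ku2}) to $u_\rho\in E_u(\rho)$ to get
\[ K_u^2 u_\rho = \rho^2 u_\rho-\Vert u_\rho\Vert^2 u, \]
and then project this equation onto each eigenspace $F_u(s)$ of $K_u^2$. For $s\ne\rho$ the formula $P_{F_u(s)}u_\rho=\Vert u_\rho\Vert^2 u'_s/(\rho^2-s^2)$ falls out at once, and vanishes when $s\notin\Sigma_K(u)$, since then $u'_s=0$. The only subtle projection is the one at $s=\rho$, which can occur even when $\rho\in\Sigma_H(u)$ (the value $\rho^2$ may still be an eigenvalue of $K_u^2$, necessarily with $u\perp F_u(\rho)$); here I would invoke Lemma \ref{EF}, which gives $F_u(\rho)\subset E_u(\rho)\cap u^\perp$, so every $f\in F_u(\rho)$ satisfies $(u_\rho|f)=(u|f)=0$ and this projection vanishes as well. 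Summing then reproduces (\ref{urho}). Part (2) is formally identical after rewriting (\ref{Ku2}) as $H_u^2=K_u^2+(\cdot|u)u$, applying it to $u'_\sigma$, and projecting onto each $E_u(\rho)$.

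For the direct implication of (3), I would insert (\ref{urho}) into $u=\sum_\rho u_\rho$, swap summations, and compare with $u=\sum_\sigma u'_\sigma$; uniqueness of the orthogonal expansion in the nonzero $u'_\sigma$'s then forces the coefficient of each $u'_\sigma$ to equal $1$, which is exactly (\ref{eqsigma}). For the converse, given $\sigma\ge 0$ with $\sigma\notin\Sigma_H(u)$ satisfying (\ref{eqsigma}), I would set
\[ v:=\sum_{\rho\in\Sigma_H(u)}\frac{u_\rho}{\rho^2-\sigma^2}. \]
A direct computation using $H_u^2 u_\rho=\rho^2 u_\rho$ gives $H_u^2 v=u+\sigma^2 v$, while (\ref{eqsigma}) reads exactly $(v|u)=1$; combining these yields $K_u^2 v=H_u^2 v-(v|u)u=\sigma^2 v$. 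Thus $v$ is a nonzero eigenvector of $K_u^2$ for the eigenvalue $\sigma^2$ not orthogonal to $u$, and $\sigma\in\Sigma_K(u)$.

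The main obstacles are twofold. The first is the ``diagonal'' projection $s=\rho$ in parts (1) and (2): this case is invisible to the bare spectral identity and really requires the rigidity provided by Lemma \ref{EF}. The second is the convergence in $L^2_+$ of the series defining $v$ in the converse of (3): when $H_u$ has finite rank the sum is finite and the issue is moot, but in the general Hilbert--Schmidt case one must use that the nonzero spectrum of $H_u$ can accumulate only at $0$, so that, under $\sigma\notin\Sigma_H(u)$ with $\sigma>0$, the denominators $|\rho^2-\sigma^2|$ admit a uniform positive lower bound.
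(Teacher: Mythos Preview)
Your argument is correct and takes a genuinely different route than the paper. The paper works with resolvents: for $\sigma\in\Sigma_K(u)$ it shows that $u'_\sigma/\Vert u'_\sigma\Vert^2=(H_u^2-\sigma^2 I)^{-1}(u)$ (constructed via the Fredholm alternative on $E_u(\sigma)^\perp$, with a separate hand-built argument at $\sigma=0$), and then reads off the inner product $\bigl(u'_\sigma/\Vert u'_\sigma\Vert^2\,\big\vert\, u_\rho/\Vert u_\rho\Vert^2\bigr)=1/(\rho^2-\sigma^2)$, from which (\ref{urho}) and (\ref{usigma}) follow by expanding along the two orthogonal decompositions of the common cyclic space $\langle u\rangle_{H_u^2}=\langle u\rangle_{K_u^2}$. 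You instead apply $K_u^2$ directly to $u_\rho$ and project onto each $F_u(s)$; the diagonal case $s=\rho$, which the resolvent formulation avoids automatically, is handled by Lemma~\ref{EF}. Your route is more elementary and sidesteps the special construction at $\sigma=0$ that the paper needs for the direct implication. On the other hand, the paper's resolvent identity makes (\ref{eqsigma}) transparent as the scalar condition $\bigl((H_u^2-\sigma^2 I)^{-1}u\,\big\vert\, u\bigr)=1$, and the converse of (3) for $\sigma>0$ is then immediate without assembling the series $v$. Your treatment of the converse is in fact more explicit than the paper's; the convergence issue for $v$ at $\sigma=0$ that you flag is genuine and is not addressed in the paper either.
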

\begin{proof}
Let us prove (\ref{urho}) and (\ref{usigma}). Observe that, by the Fredholm alternative, for $\sigma >0$, 
$H_{u}^2-\sigma ^2I$ is an automorphism of $ E_u(\sigma)^\perp $. Consequently, if moreover $\sigma \in \Sigma _K(u)$,
$u\in E_u(\sigma )^\perp $ and there exists $v\in E_u(\sigma )^\perp $ unique such that 
$$(H_u^2-\sigma ^2I)v=u\ .$$
We set $v:=(H_{u}^2-\sigma^2I)^{-1}(u)$. If $\sigma =0\in \sigma _K(u)$, of course $H_u^2$ is no more a Fredholm operator. However
let us prove that there still exists $w\in E_u(0)^\perp $ such that
$$H_u^2(w)=u\ .$$
Indeed, $E_u(0)=\ker H_u^2=\ker H_u$, $F_u(0)=\ker K_u^2=\ker K_u$, and since $K_u=S^*H_u$, therefore $E_u(0)\subset F_u(0)$. As $u=H_u(1)\in {\rm Ran}(H_u)\subset E_u(0)^\perp$, the hypothesis $u\not \perp F_u(0)$ implies that the inclusion $E_u(0)\subset F_u(0)$ is strict. This means that there exists $w\in F_u(0)\cap E_u(0)^\perp$, $w\neq 0$. It means $S^*H_u(w)=0$ hence $H_u w$ is a constant function which is not zero since $w\in E_u(0)^\perp$. Multiplying $w$ by a convenient complex number we may assume that $H_u(w)=1$, whence $H_u^2(w)=u$, and this characterizes $w\in E_u(0)^\perp$. Again we set $w:=(H_u^2)^{-1}(u)\ .$

For every $ \sigma \in \Sigma _K(u)$, the equation
$$K_{u}^2h=\sigma ^2h $$
is equivalent to
$$(H_{u}^2-\sigma^2I)h=(h\vert u)u\ ,$$
or
$h\in \C (H_{u}^2-\sigma^2I)^{-1}(u)\oplus  E_u(\sigma)\ ,$
with \begin{equation}\label{condsigma}  ((H_{u}^2-\sigma^2I)^{-1}(u)\vert u)=1\ . \end{equation}
We apply this property to $h=u'_\sigma$. Since $u'_\sigma \in E_u(\sigma )^\perp $, there exists $\lambda\in\C$ with $u'_\sigma=\lambda(H_{u}^2-\sigma^2I)^{-1}(u)$.
Applying equation (\ref{condsigma}), this  leads to
$$\frac { u'_{\sigma }}{\Vert u'_{\sigma }\Vert ^2}  =(H_{u}^2-\sigma^2I)^{-1}(u)\ .$$
In particular, if $\rho\in\Sigma_H(u),$ $\sigma\in \Sigma_K(u)$,
$$\left(\frac{u'_\sigma}{\Vert u'_\sigma\Vert^2}\Big \vert \frac{u_\rho}{\Vert u_\rho\Vert^2}\right)=\frac 1{\rho^2-\sigma^2}\ .$$
This leads to  equations (\ref{urho}) and (\ref{usigma}). Finally, equation (\ref{eqsigma}) is nothing but the expression of (\ref{condsigma})  in view of  equation (\ref{urho}).
\s
This completes the proof of Lemma \ref{rigidity2}.
\end{proof}
\section{Some Bateman-type formulae}\label{Bateman}

In this section, we establish some formulae linking the singular values of a Hankel operator and its shifted operator.\\
Let $u\in VMO_+(\S^1)$. For $\rho\in \Sigma_H(u)$, we denote by $\sigma(\rho)$ the biggest element of $\Sigma_K(u)$ which is smaller than $\rho$. When  ${\displaystyle \prod _{\rho\in \Sigma_H(u)}\frac{\si (\rho)^2}{\rh ^2}=0}$, we define $\rho(\sigma)$ to be the biggest element of $\Sigma_H(u)$ which is smaller than $\sigma$. This is always well defined for infinite sequences tending to zero. In the case of finite sequence, the hypothesis implies that $0\in\Sigma_K(u)$ and lemma \ref{EF} and lemma \ref{rigidity} allows to define $\rho(\sigma)$ in that case for any $\sigma\in\Sigma_k(u)\setminus\{0\}$.\\
In this section, we state and prove  several formulae connecting these sequences. These formulae are based on the special case of a general formula for the resolvent of a finite rank perturbation of an operator, which seems to be due to Bateman \cite{Ba} in the framework of Fredholm integral equations. Further references can be found in Chap. II, sect. 4.6 of  \cite{KK}, section 106 of \cite{AG} and \cite{Na}, from which we borrowed this information.
\begin{proposition}\label{formulae}
The following functions coincide respectively for $x$ outside the set $\{\frac 1{\rho^2}\}_{\rho\in \Sigma_H(u)}$ and outside the set $\{\frac 1{\sigma^2}\}_{\sigma\in \Sigma_K(u)\setminus\{0\}}$.
\begin{equation}\label{J(x)Appendix}
\prod_{\rho\in\Sigma_H(u)}\frac{1-x\sigma(\rho)^2}{1-x\rho^2}=1+x\sum_{\rho\in \Sigma_H(u)} \frac{\Vert u_\rho\Vert^2}{1-x\rho^2}
\end{equation}
\begin{equation}\label{1/J(x)Appendix}
\prod_{\rho\in\Sigma_H(u)}\frac{1-x\rho^2}{1-x\sigma(\rho)^2}=1-x\left (\sum_{\sigma\in \Sigma_K(u)}\frac{\Vert u'_\sigma\Vert ^2}{1-x\sigma^2}\right )\ .
\end{equation}

Furthermore, 
\begin{equation}\label{sommenu}
1-\sum _{\rho\in\Sigma_H(u)}\frac{\Vert u_\rho\Vert^2}{\rho^2}=\prod _{\rho\in \Sigma_H(u)}\frac{\si(\rho) ^2}{\rh ^2}\ ,
\end{equation}
and, if ${\displaystyle \prod _{\rho\in \Sigma_H(u)}\frac{\si(\rho) ^2}{\rh ^2}=0}$, 
\begin{equation}\label{sommenurho}
\sum _{\rho\in\Sigma_H(u)}\frac{\Vert u_\rho\Vert^2}{\rho^4}=\frac 1{\rh _{\rm max}^2}\prod _{\sigma\in \Sigma_K(u)\setminus\{0\}}\frac{\si ^2}{\rh (\sigma)^2}\ .
\end{equation}
Here $\rh _{\rm max}$ denotes the biggest element of $\Sigma_H(u)$.
For $\rho\in \Sigma_H(u)$, the $\Vert u_\rho\Vert^2$'s are given by 
\begin{equation}\label{tau}
\Vert u_\rho\Vert^2= \left(\rho^2-\sigma(\rho)^2\right)\prod_{\rho'\ne \rho}\frac{\rho^2-\sigma(\rho')^2}{\rho^2-{\rho'}^2}\ .\end{equation}
and the $\Vert u'_{\sigma(\rho)}\Vert^2$'s are given by 
\begin{equation}\label{kappa}
\Vert u'_{\sigma(\rho)}\Vert^2=(\rho^2-\sigma(\rho)^2)\prod_{\rho'\ne \rho}\frac{\sigma(\rho)^2-{\rho'}^2}{\sigma(\rho)^2-\sigma(\rho')^2}.
\end{equation}
The kernel of $K_u$ is non trivial if and only if ${\displaystyle \prod _{\rho\in \Sigma_H(u)}\frac{\si(\rho) ^2}{\rh ^2}=0}$, and in that case
$$\Vert u'_0\Vert^2=\rh _{\rm max}^2\prod _{\sigma\in \Sigma_K(u)\setminus\{0\}}\frac{\rh (\sigma)^2}{\si ^2}$$
\end{proposition}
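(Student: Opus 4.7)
The plan is to derive everything from the single master identity (\ref{J(x)Appendix}), together with its reciprocal (\ref{1/J(x)Appendix}), by reading off residues at the poles and asymptotics at infinity. The starting observation is that the right-hand side of (\ref{J(x)Appendix}) equals
$$J(x):=1+x\bigl((I-xH_u^2)^{-1}u\,\big|\,u\bigr),$$
expanded on the spectral decomposition of $H_u^2$ restricted to the cyclic subspace $\langle u\rangle_{H_u^2}=\langle u\rangle_{K_u^2}$ (see Lemma \ref{rigidity}).

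To prove (\ref{J(x)Appendix}) itself, I would work first in the finite-rank case, where $J$ is a genuine rational fraction, and identify its poles, zeros, and value at $0$. The poles are simple and lie at $x=1/\rho^2$, $\rho\in\Sigma_H(u)$. For the zeros, the equation $J(1/\sigma^2)=0$ rearranges, after clearing denominators, exactly to $\sum\Vert u_\rho\Vert^2/(\rho^2-\sigma^2)=1$, which by Lemma \ref{rigidity2}(3) characterises $\sigma\in\Sigma_K(u)\setminus\{0\}$. The interlacing of Lemma \ref{rigidity} ensures that this zero set is $\{1/\sigma(\rho)^2:\rho\in\Sigma_H(u)\}$, and $J(0)=1$ together with the matching degree fixes $J$ as the asserted product. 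Identity (\ref{1/J(x)Appendix}) then follows immediately from the Sherman--Morrison formula applied to the rank-one identity $I-xK_u^2=(I-xH_u^2)+x(\cdot\,|\,u)u$, which yields $\bigl((I-xK_u^2)^{-1}u\,|\,u\bigr)=(A^{-1}u|u)/(1+x(A^{-1}u|u))$ with $A:=I-xH_u^2$, hence $1-x\bigl((I-xK_u^2)^{-1}u\,|\,u\bigr)=1/J(x)$.

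Every remaining formula is then a corollary. Letting $x\to\infty$ in (\ref{J(x)Appendix}) and using $x/(1-x\rho^2)\to -1/\rho^2$ together with $(1-x\sigma^2)/(1-x\rho^2)\to \sigma^2/\rho^2$ yields (\ref{sommenu}); when this common limit vanishes, extracting the $1/x$ coefficient in the asymptotic expansion of both sides yields (\ref{sommenurho}). Formula (\ref{tau}) is obtained by equating the residues of $J$ at $x=1/\rho^2$ read off the two representations: the partial-fraction side gives $-\Vert u_\rho\Vert^2/\rho^4$, while the product side gives $-\rho^{-4}(\rho^2-\sigma(\rho)^2)\prod_{\rho'\neq\rho}(\rho^2-\sigma(\rho')^2)/(\rho^2-\rho'^2)$. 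An analogous residue computation for $1/J$ at $x=1/\sigma(\rho)^2$ produces (\ref{kappa}). Finally, $\ker K_u\neq\{0\}$ is equivalent to $0\in\Sigma_K(u)$, hence to some $\sigma(\rho)=0$, which is the stated product condition; in that case the formula for $\Vert u'_0\Vert^2$ comes out of the leading coefficient of (\ref{1/J(x)Appendix}) as $x\to\infty$.

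The main obstacle I expect is handling the infinite case rigorously. One must establish convergence of the infinite products and series, justify the residue and asymptotic computations at $\infty$ term by term, and confirm that Sherman--Morrison applies to $(I-xK_u^2)^{-1}$ on the cyclic subspace uniformly off the spectrum. Under the Hilbert--Schmidt hypothesis relevant to the rest of the paper this is routine, and a truncation argument via finite-rank approximations of $H_u^2$ on the cyclic subspace should suffice; in the more general $VMO_+$ setting one uses that the singular values tend to $0$, giving the trace-class control needed for the relevant rank-one perturbative manipulations.
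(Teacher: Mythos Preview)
Your argument is correct and takes a genuinely different route from the paper's. The paper establishes the product formula (\ref{J(x)Appendix}) by computing the trace of $(I-xH_u^2)^{-1}-(I-xK_u^2)^{-1}$ in two ways, obtaining $J'(x)/J(x)=\sum_\rho\bigl(\rho^2/(1-\rho^2x)-\sigma(\rho)^2/(1-\sigma(\rho)^2x)\bigr)$ and integrating this logarithmic derivative; it then extends to compact operators by direct control of the infinite product via the telescoping bound $\sum(\rho^2-\sigma(\rho)^2)<\infty$. You instead read off the zero set of $J$ from the characterization $\sum_\rho\Vert u_\rho\Vert^2/(\rho^2-\sigma^2)=1$ of Lemma~\ref{rigidity2}(3), and close by a degree count in the rational (finite rank) case --- more elementary, and it avoids trace class hypotheses entirely at that stage. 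Your use of Sherman--Morrison to get $1-x\bigl((I-xK_u^2)^{-1}u\,|\,u\bigr)=1/J(x)$ is cleaner than the paper, which simply asserts this identity holds ``as above''. The derivations of (\ref{sommenu})--(\ref{kappa}) via asymptotics at infinity and residues at $1/\rho^2$, $1/\sigma(\rho)^2$ are essentially the same in both treatments. One small point: in the finite rank case your degree argument implicitly needs that $J$ has no complex zeros beyond the real ones you have located; this follows because the number of known real zeros already equals the numerator degree (distinguishing the cases $0\in\Sigma_K(u)$ and $0\notin\Sigma_K(u)$), but it is worth making explicit. For the infinite rank case your proposed truncation is fine; the paper's direct product-convergence argument is an alternative that sidesteps approximation altogether.
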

\begin{proof}
For $x\notin \{\frac 1{\rho^2}\}_{\rho\in\Sigma_H(u)}$, we set $$J(x):=((I-xH_u^2)^{-1}(1)\vert 1).$$
We claim that
\begin{equation}\label{J}
J(x)=\prod_{\rho\in\Sigma_H(u)} \frac{1-x\sigma(\rho)^2}{1-x\rho^2}.
\end{equation}
Indeed, let us first assume that $H_u^2$ and $K_u^2$ are  of trace class and compute the trace of $(I-xH_u^2)^{-1}-(I-xK_u^2)^{-1}$. We  write
$$[(I-xH_u^2)^{-1}-(I-xK_u^2)^{-1}](f)=\frac x{J(x)}(f\vert (I-xH_u^2)^{-1}u)\cdot (I-xH_u^2)^{-1} u.$$
Consequently, taking the trace, we get
$${\rm Tr}[(I-xH_u^2)^{-1}-(I-xK_u^2)^{-1}]=\frac x{J(x)}\Vert (I-xH_u^2)^{-1}u\Vert^2.$$
Since, on the one  hand
$$\Vert (I-xH_u^2)^{-1} u\Vert^2= ((I-xH_u^2)^{-1}H_u^2(1)\vert 1)=J'(x)$$
and on the other hand
\begin{eqnarray*}
{\rm Tr}[(I-xH_u^2)^{-1}-(I-xK_u^2)^{-1}]&=&x{\rm Tr}[H_u^2(I-xH_u^2)^{-1}-K_u^2(I-xK_u^2)^{-1}]\\
&=&x \sum _{\rho\in \Sigma_H(u)}\left ( \frac{\rho^2}{1-\rho^2x}-\frac{\sigma(\rho)^2}{1-\sigma(\rho)^2x}\right )\ ,\end{eqnarray*}
where  we used  lemmas \ref{rigidity} and \ref{rigidity2}.
 On the other hand,
 \begin{equation}\label{traceformula}
  \sum _{\rho\in\Sigma_H(u)}\left ( \frac{\rho^2}{1-\rho^2x}-\frac{\sigma(\rho)^2}{1-\sigma(\rho)^2x}\right )=\frac{J'(x)}{J(x)}\ ,\ x\notin \left \{ \frac 1{\rho^2}, \frac 1{\sigma(\rho)^2} \right \}_{\rho\in\Sigma_H(u)}\ .
 \end{equation}

 This gives equality (\ref{J}) for $H_u^2$ and $K_u^2$ of trace class. To extend this formula to compact operators, we remark that $\sum _{\rho\in\Sigma_H(u)}(\rho^2-\sigma(\rho)^2)$ is summable since, in case of infinite sequences, we have $0\le \rho^2-\sigma(\rho)^2\le \rho^2-\rho(\sigma(\rho))^2$ so that it leads to a telescopic serie  which converges since $(\rho^2)$ tends to zero from the compactness of $H_u^2$. Hence the infinite product in formula (\ref{J}) and the above computation makes sense for compact operators.

On the other hand, for $x\notin \{\frac 1{\rho^2}\}$, 
\begin{eqnarray*}
J(x)&=&((I-xH_u^2)^{-1}(1)\vert 1)=1+x((I-xH_u^2)^{-1}(u)\vert u)\\
&=&1+x(\sum_\rho( I-xH_u^2)^{-1}(u_\rho)\vert u))=1+x\sum_{\rho\in\Sigma_H(u)} \frac{\Vert u_\rho\Vert^2}{1-x\rho^2}
\end{eqnarray*}
hence
\begin{equation}
\prod_{\rho\in\Sigma_H(u)} \frac{1-x\sigma(\rho)^2}{1-x\rho^2}=1+x\sum_{\rho\in\Sigma_H(u)}\frac{\Vert u_\rho\Vert^2}{1-x\rho^2}\ .
\end{equation}
Passing to the limit as $x$ goes to $-\infty $ in (\ref{J(x)Appendix}), we obtain (\ref{sommenu}). If we assume that
the left hand side of (\ref{sommenu}) cancels, then (\ref{J(x)Appendix}) can be rewritten as
$$\prod_{\rho\in\Sigma_H(u)}\frac{1-x\sigma(\rho)^2}{1-x\rho^2}=\sum_{\rho\in\Sigma_H(u)}\frac{\Vert u_\rho\Vert^2}{\rh ^2(1-x\rho^2)}.$$
Multiplying by $x$ and passing to the limit as $x$ goes to $-\infty $ in this new identity, we obtain (\ref{sommenurho}).
Multiplying  both terms of (\ref{J(x)Appendix}) by $(1-x\rho^2)$ and letting $x$ go to $1/\rho^2$. We get
$$\Vert u_\rho\Vert^2= \left(\rho^2-\sigma(\rho)^2\right)\prod_{\rho'\ne \rho}\frac{\rho^2-\sigma(\rho')^2}{\rho^2-{\rho'}^2} .$$
For Equality (\ref{1/J(x)}), we do almost the same analysis. First, we establish as above that
$$\frac{1}{J(x)}=1-x((I-xK_u^2)^{-1}(u)\vert u)=1-x\sum_{\sigma\in\Sigma_K(u)} \frac{\Vert u'_\sigma\Vert^2}{1-x\sigma^2}.$$

Identifying this expression with $$\frac 1{J(x)}=\prod_\rho\frac{1-x\rho^2}{1-x\sigma(\rho)^2}$$ we get, for $\rho\in\Sigma_H(u)$
$$\Vert u'_{\sigma(\rho)}(\rho)\Vert^2=(\rho^2-\sigma(\rho)^2)\prod_{\rho'\ne \rho}\frac{\sigma(\rho)^2-{\rh'}^2}{\sigma(\rho)^2-\sigma(\rho')^2}.$$
If $\sigma=0\in\Sigma_K(u)$, identifying the term in $x$ in both terms, one obtains $\Vert u'_0\Vert\ne 0$ if and only if ${\displaystyle \prod _{\rho\in \Sigma_H(u)}\frac{\si(\rho) ^2}{\rh ^2}=0}$, and in that case, identifying the corresponding factor of $x$ gives
$$\Vert u'_0\Vert^2=\rh _{\rm max}^2\prod _{\sigma\in \Sigma_K(u)\setminus\{0\}}\frac{\rh (\sigma)^2}{\si ^2}.$$

\end{proof}

As a consequence of the previous lemma, we get the following couple of corollaries.
\begin{corollary}For any $\rho,\rho'\in\Sigma_H(u)$, $\sigma,\sigma'\in \Sigma_K(u)\setminus\{0\}$, we have
 \begin{equation}\label{sommesimpletau}\sum_{\rho\in\Sigma_H(u)}\frac{\Vert u_{\rho}\Vert^2}{{\rho}^2-\sigma^2}=1\end{equation}
  \begin{equation}\label{sommesimplekappa}\sum_{\sigma \in\sigma_K(u)} \frac{\Vert u'_\sigma\Vert ^2}{\rho^2-\sigma^2}=1\end{equation}
 \begin{equation}\label{sommedoubletau}\sum_{\rho\in\Sigma_H(u)} \frac{\Vert u_{\rho}\Vert^2}{(\rho^2-\sigma^2)(\rho^2-{\sigma'}^2)}=\frac 1{\Vert u'_\sigma\Vert^2}\delta_{\sigma \sigma'}\end{equation}
  \begin{equation}\label{sommedoublekappa}\sum_ {\sigma \in\Sigma_K(u)}\frac{\Vert u'_\sigma\Vert^2}{(\sigma^2-\rho^2)(\sigma^2-{\rho'}^2)}=\frac {1}{\Vert u_\rho\Vert^2}\delta_{\rho\rho'}\end{equation}
\end{corollary}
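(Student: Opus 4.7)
The plan is to derive the four identities as direct consequences of the formulas already proved in Lemma~\ref{rigidity2} and Proposition~\ref{formulae}, using two distinct ideas: evaluation of rational identities at their poles for the single sums, and Parseval-type orthogonality arguments for the double sums.

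First, I would observe that (\ref{sommesimpletau}) is exactly the relation (\ref{eqsigma}) from Lemma~\ref{rigidity2}, so nothing new is required. Alternatively, it follows by substituting $x=1/\sigma^2$ into (\ref{J(x)Appendix}): by the strict interlacing of Lemma~\ref{rigidity} the value $1/\sigma^2$ is not a pole of either side, and since $\sigma=\sigma(\rho_0)$ for exactly one $\rho_0\in\Sigma_H(u)$, the left-hand product vanishes, giving
$$0=1+\frac{1}{\sigma^2}\sum_{\rho\in\Sigma_H(u)}\frac{\Vert u_\rho\Vert^2}{1-\rho^2/\sigma^2},$$
which rearranges to (\ref{sommesimpletau}). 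For (\ref{sommesimplekappa}) I would carry out the mirror argument using (\ref{1/J(x)Appendix}): substituting $x=1/\rho^2$ makes the left-hand product vanish (the factor $1-x\rho^2$ is zero), and the resulting identity rearranges to (\ref{sommesimplekappa}).

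For the double-sum identities, the plan is to exploit the orthogonality of the eigenspace projections $u_\rho\in E_u(\rho)$ of the selfadjoint operator $H_u^2$, and $u'_\sigma\in F_u(\sigma)$ of $K_u^2$. From (\ref{usigma}) we may rewrite
$$\frac{u'_\sigma}{\Vert u'_\sigma\Vert^2}=\sum_{\rho\in\Sigma_H(u)}\frac{u_\rho}{\rho^2-\sigma^2}.$$
Taking the inner product of this expansion with the analogous expansion of $u'_{\sigma'}/\Vert u'_{\sigma'}\Vert^2$ and using the pairwise orthogonality $(u_\rho\vert u_{\rho'})=\Vert u_\rho\Vert^2\delta_{\rho\rho'}$ collapses the double sum to a single one, yielding
$$\sum_{\rho\in\Sigma_H(u)}\frac{\Vert u_\rho\Vert^2}{(\rho^2-\sigma^2)(\rho^2-{\sigma'}^2)}=\left(\frac{u'_\sigma}{\Vert u'_\sigma\Vert^2}\Big\vert\frac{u'_{\sigma'}}{\Vert u'_{\sigma'}\Vert^2}\right)=\frac{\delta_{\sigma\sigma'}}{\Vert u'_\sigma\Vert^2},$$
which is (\ref{sommedoubletau}). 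Identity (\ref{sommedoublekappa}) is obtained symmetrically, starting from (\ref{urho}), decomposing $u_\rho/\Vert u_\rho\Vert^2$ on the eigenspaces of $K_u^2$, and using the trivial rewriting $(\rho^2-\sigma^2)(\rho'^2-\sigma^2)=(\sigma^2-\rho^2)(\sigma^2-\rho'^2)$.

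The only subtlety, and the main thing to verify, is the legitimacy of these manipulations when $\Sigma_H(u)$ and $\Sigma_K(u)$ are infinite: one needs absolute convergence of each series so that substitution at poles and rearrangement via Parseval are justified. Since $u\in H^{1/2}_+\subset L^2_+$ we have $\sum_\rho\Vert u_\rho\Vert^2=\sum_\sigma\Vert u'_\sigma\Vert^2=\Vert u\Vert^2<\infty$, and the strict interlacing $\rho_{k+1}<\sigma_k<\rho_k$ of Lemma~\ref{rigidity} bounds the denominators in each identity uniformly away from zero (once we fix the distinguished $\sigma$, $\sigma'$ or $\rho$, $\rho'$), so everything converges. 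No hard estimate is needed beyond these observations; the content of the corollary is entirely algebraic once Proposition~\ref{formulae} is in hand.
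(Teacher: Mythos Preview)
Your proof is correct. For the single sums (\ref{sommesimpletau}) and (\ref{sommesimplekappa}) you do exactly what the paper does: evaluate (\ref{J(x)Appendix}) and (\ref{1/J(x)Appendix}) at $x=1/\sigma^2$ and $x=1/\rho^2$ respectively. For the double sums, however, your route is genuinely different. The paper stays within the analytic framework of Proposition~\ref{formulae}: for the diagonal case $\sigma=\sigma'$ it performs the change of variable $y=1/x$ in (\ref{J(x)Appendix}), differentiates in $y$, and evaluates at $y=\sigma^2$ (and similarly differentiates (\ref{1/J(x)Appendix}) for (\ref{sommedoublekappa})); the off-diagonal cases are then read off algebraically from the already established single-sum identities. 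Your argument instead goes back to the operator-theoretic content of Lemma~\ref{rigidity2}, pairing the expansions (\ref{urho}) and (\ref{usigma}) via the orthogonality of eigenprojections of $H_u^2$ and $K_u^2$. This Parseval-style computation handles the diagonal and off-diagonal cases uniformly and is arguably more transparent, at the cost of invoking the spectral decomposition rather than pure function-theoretic manipulation. Both approaches are short; the paper's has the minor advantage of deriving everything from the single pair of identities (\ref{J(x)Appendix})--(\ref{1/J(x)Appendix}), while yours makes the structural reason for the Kronecker delta immediately visible.
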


\begin{proof}
The  first two equalities (\ref{sommesimpletau}) and (\ref{sommesimplekappa}) are obtained by making $x=\frac 1{\sigma^2}$ and $x=\frac 1{\rho^2}$ respectively  in formula (\ref{J(x)Appendix}) and formula (\ref{1/J(x)Appendix}).
For equality (\ref{sommedoubletau}) in the case $\sigma=\sigma'$, we first make the change of variable $y=1/x$ in formula (\ref{J(x)Appendix}) then differentiate both sides with respect to $y$ and make $y=\sigma^2$. 
Equality (\ref{sommedoublekappa}) in the case $\rho=\rho'$ follows by differentiating equation (\ref{1/J(x)Appendix}) and making $x=\frac 1{\rho^2}$. 
Both equalities in the other cases follow directly respectively from equality (\ref{sommesimpletau}) and equality (\ref{sommesimplekappa}).
\end{proof}

\begin{corollary}\label{kernel}
The kernel of $H_u$ is $\{ 0\} $ if and only if
$$\prod _{\rho\in \Sigma_H(u)}\frac{\sigma(\rho)^2}{\rh ^2}=0\ ,\ \prod _{\sigma\in\Sigma_K(u)}\frac{\sigma^2}{\rh(\sigma)^2}=\infty \ .$$
\end{corollary}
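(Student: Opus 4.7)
The plan is to translate each of conditions (a) and (b) into a concrete statement: (a) into an orthogonality relation between $1$ and $\ker H_u$, and (b) into the non-solvability in $L^2_+$ of the equation $H_u(w) = 1$. The equivalence then reduces to an interplay between $\ker H_u$ and $\ker K_u$ coming from the identities $K_u = S^*H_u = H_uS$.

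First I would reinterpret condition (a). Write $1 = 1_0 + 1_\perp$ with $1_0 \in \ker H_u$ and $1_\perp \in (\ker H_u)^\perp$, and decompose $1_\perp = \sum_{\rho\in \Sigma_H(u)} c_\rho$ along the spectral decomposition of $H_u^2$, so that $H_u(c_\rho) = u_\rho$ and $\|c_\rho\|^2 = \|u_\rho\|^2/\rho^2$. Summing gives $\|1_\perp\|^2 = \sum_\rho \|u_\rho\|^2/\rho^2$, and formula (\ref{sommenu}) then yields $\|1_0\|^2 = \prod_\rho \sigma(\rho)^2/\rho^2$. Hence (a) holds if and only if $1 \perp \ker H_u$.

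For the direction $\ker H_u = \{0\} \Rightarrow $ (a) and (b), condition (a) is trivial. Since $\ker K_u = S^{-1}(\ker H_u)$ and $S$ is injective, $\ker K_u = \{0\}$ as well. Suppose for contradiction that the product in (b) is finite; by formula (\ref{sommenurho}) this is equivalent to $\sum_\rho \|u_\rho\|^2/\rho^4 < \infty$, so $w := \sum_\rho u_\rho/\rho^2$ defines an element of $L^2_+$. Then $H_u^2(w) = u = H_u(1)$, and by antilinearity of $H_u$ we get $H_u(H_u(w)-1) = 0$, so $H_u(w) = 1$. But $K_u(w) = S^*H_u(w) = S^*(1) = 0$ puts $w$ in $\ker K_u = \{0\}$, contradicting $H_u(w) = 1$.

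For the reverse direction, assume (a) and (b) and suppose $\ker H_u \neq \{0\}$. By (a), every $h_0 \in \ker H_u$ satisfies $(h_0|1) = 0$, so $\ker H_u \subset zL^2_+$; write $h_0 = zh_1$ with $h_1 \in L^2_+$. From $K_u(h_1) = H_u(Sh_1) = H_u(h_0) = 0$ we get $h_1 \in \ker K_u$, and $H_u(h_1) \in \ker S^* = \mathbb C$, i.e.\ $H_u(h_1) = c$ for some constant $c$. Two cases arise. If one can choose $h_0$ with $c \neq 0$, then rescaling produces $w \in L^2_+$ with $H_u(w) = 1$; the projection of $w$ onto $(\ker H_u)^\perp$ equals $\sum_\rho u_\rho/\rho^2$ and so $\sum_\rho \|u_\rho\|^2/\rho^4 \leq \|w\|^2 < \infty$, which by (\ref{sommenurho}) contradicts (b). Otherwise $c = 0$ for every choice, meaning $h_1 \in \ker H_u$ again; thus $\ker H_u$ is stable under $h \mapsto h/z$, and iterating the inclusion $\ker H_u \subset zL^2_+$ gives $\ker H_u \subset \bigcap_{k\ge 0} z^k L^2_+ = \{0\}$, a contradiction.

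The main obstacle is the second alternative in the last step: one must avoid a naive dimension-count on $\ker K_u/\ker H_u$, which misbehaves in infinite dimensions (both kernels can be infinite-dimensional while the quotient is one-dimensional). The clean way around is to use that $\ker H_u$ is $S$-invariant and, under (a), also $S^{-1}$-stable in the forbidden case, which forces it to lie in the trivial intersection of all $z^kL^2_+$. Once this is settled, the rest follows by directly manipulating the spectral decomposition of $u$ under $H_u^2$ together with formulae (\ref{sommenu}) and (\ref{sommenurho}).
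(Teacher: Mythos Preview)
Your proof is correct. The underlying translation of (a) into ``$1\perp\ker H_u$'' (equivalently $1\in\overline{\mathrm{Ran}}(H_u)$) and of (b), given (a), into ``$1\notin\mathrm{Ran}(H_u)$'' via the series $\sum_\rho u_\rho/\rho^2$ is exactly what the paper uses, together with the same appeals to (\ref{sommenu}) and (\ref{sommenurho}).

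The difference is in how the equivalence ``$\ker H_u=\{0\}\Leftrightarrow 1\in\overline{\mathrm{Ran}}(H_u)\setminus\mathrm{Ran}(H_u)$'' is obtained. The paper does not prove it here: it cites Theorem~4 of \cite{GG3} as a black box. You instead prove both implications directly from the identities $K_u=S^*H_u=H_uS$: in the forward direction you exploit $\ker K_u=S^{-1}(\ker H_u)=\{0\}$ to kill any solution of $H_u(w)=1$; in the reverse direction you run the shift argument on $\ker H_u$ (either producing a preimage $w$ of $1$, contradicting (b), or forcing $\ker H_u\subset\bigcap_k z^kL^2_+=\{0\}$). This makes your argument self-contained within the present paper, at the modest cost of the case split; the paper's version is shorter only because it outsources this equivalence.
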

\begin{proof}
By the first part of theorem 4 in \cite{GG3} --- which is independent of multiplicity assumptions--- ,  the kernel of $H_u$ is $\{ 0\} $ if and only if $1\in  \overline{\rm Ran}(H_u)\setminus {{\rm Ran}}(H_u)$, where ${\rm Ran}(H_u)$
 denotes the range of $H_u$. On the other hand,
 $$u=\sum _{\rho\in \Sigma_H(u)} u_\rho=\sum _{\rho\in \Sigma_H(u)}\frac{H_u(H_u(u_\rho))}{\rh ^2}\ ,$$
 hence the orthogonal projection of $1$ onto $\overline{{\rm Ran}}(H_u)$ is
 $$\sum _{\rho\in \Sigma_H(u)}\frac{H_u(u_\rho)}{\rh ^2}\ .$$
 Consequently, $1 \in \overline{{\rm Ran}}(H_u)$ if and only if
 $$1=\sum _{\rho\in \Sigma_H(u)} \left \Vert \frac{H_u(u_\rho)}{\rh ^2}\right \Vert ^2=\sum _{\rho\in \Sigma_H(u)}\frac{\Vert u_\rho\Vert ^2}{\rh ^2}\ .$$
 Moreover, if this is the case,
 $$1=\sum _{\rho\in \Sigma_H(u)} \frac{H_u(u_\rho)}{\rh ^2}$$
 and $1\in {{\rm Ran}}(H_u)$ if and only if the series $$\sum _{\rho\in \Sigma_H(u)} \frac{u_\rho}{\rh^2}$$ converges, which is equivalent to
 $$\sum _{\rho\in \Sigma_H(u)} \frac{\Vert u_\rho\Vert^2}{\rh ^4}<\infty \ .$$
 Hence $1\in \overline{{\rm Ran}}(H_u) \setminus{{\rm Ran}}(H_u)$ if and only if
 $$\sum _{\rho\in \Sigma_H(u)}\frac{\Vert u_\rho\Vert^2}{\rh ^2}=1\ ,\ \sum _{\rho\in \Sigma_H(u)} \frac{\Vert u_\rho\Vert^2}{\rh ^4}=\infty \ ,$$
 which is the claim, in view of identities (\ref{sommenu}) and (\ref{sommenurho}).
\end{proof}

\section{Finite Blaschke products}\index{Blaschke product}

In this section, we recall the definition and the structure of finite Blaschke products which  appear in the spectral analysis of Hankel operators. A finite Blaschke product \index{Blaschke product}is an inner function of the form
$$\Psi (z)=\expo_{-i\psi}\prod _{j=1}^k \chi _{p_j}(z)\ ,\  \psi \in \T \ ,\ p_j\in \D\ ,\ \chi _p(z):=\frac{z-p}{1-\overline pz}\ ,\ p\in \D \ .$$
The integer $k$ is called the degree of $\Psi $. We denote by $\mathcal B_k$ the set of Blaschke products of degree $k$.\\

Alternatively, $\Psi\in\mathcal B_k$ can be written as 
$$\Psi (z)=\expo_{-i\psi}\frac{P(z)}{z^k\overline P\left (\frac 1z\right )}\ ,$$
where $\psi \in \T $ is called the angle of $\Psi $ and $P$ is a monic polynomial of degree $k$ with all its roots in $\D $.  Such polynomials are called Schur polynomials. We denote by $\mathcal O_d$ the open subset of $\C ^d$ made of $(a_1,\dots ,a_d)$ such that $P(z)=z^d+a_1z^{d-1}+\dots +a_d$ is a Schur polynomial. The following result implies that $\mathcal B_k$ is diffeomorphic to $\T \times \R ^{2k}$. It is connected to the Schur--Cohn criterion \cite{Sc}, \cite{Co}, and is classical  in control theory, see 
{\it e.g.} \cite{He} and references therein. For the sake of completeness, we give a self contained proof.

\begin{proposition}\label{Od}
For every $d\ge 1$ and $(a_1,\dots ,a_d)\in \C ^d$, the following two assertions are equivalent.
\begin{enumerate}
\item $(a_1,\dots ,a_d)\in \mathcal O_d$\ .
\item $\vert a_d\vert <1$ and $$\left (\frac{a_k-a_d\overline a_{d-k}}{1-\vert a_d\vert ^2}\right )_{1\le k\le d-1}\in \mathcal O_{d-1}\ .$$
\end{enumerate}
In particular, for every $d\ge 0$, $\mathcal O_d$ is diffeomorphic to $\R ^{2d}$.
\end{proposition}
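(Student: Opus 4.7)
The plan is to prove the equivalence using the classical Schur--Cohn reduction: given a monic $P(z)=z^d+a_1z^{d-1}+\dots+a_d$, introduce the reciprocal polynomial $P^*(z):=z^d\overline{P(1/\bar z)}=\overline{a_d}z^d+\overline{a_{d-1}}z^{d-1}+\dots+\overline{a_1}z+1$, and relate $P$ to the polynomial $\tilde P$ of degree $d-1$ whose coefficients are
$$b_k:=\frac{a_k-a_d\overline{a_{d-k}}}{1-|a_d|^2}\ ,\quad 1\le k\le d-1\ ,$$
well defined as soon as $|a_d|<1$. The central algebraic step that I will carry out first is to check by identification of coefficients the identity
$$P(z)=z\tilde P(z)+a_d\tilde P^*(z)\ ,$$
which encodes the entire Schur reduction. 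Indeed, expanding the right-hand side and matching the $z^{d-k}$ coefficient for $1\le k\le d-1$ yields exactly $a_k=b_k+a_d\overline{b_{d-k}}$, equivalent to the formula for $b_k$ above; the constant term is $a_d\cdot 1=a_d$, and the leading term is $1$.

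For the implication $(1)\Rightarrow(2)$, I would first observe that if $P$ has all its roots $p_1,\dots,p_d$ in $\mathbb{D}$ then $|a_d|=\prod|p_j|<1$, so that $\tilde P$ is well defined. To show $\tilde P\in\mathcal O_{d-1}$ I would apply Rouch\'e's theorem on $|z|=1$ to the pair $P$ and $-a_dP^*$: since $|P^*(z)|=|P(z)|$ on the unit circle, and $P$ has no zero there, we have $|a_dP^*(z)|=|a_d|\,|P(z)|<|P(z)|$, hence $P-a_dP^*$ has exactly $d$ zeros in $\mathbb{D}$. Because $P(0)-a_dP^*(0)=a_d-a_d=0$, one zero is $z=0$; dividing by $z$ and by the leading coefficient $1-|a_d|^2$ produces a monic polynomial of degree $d-1$ with all roots in $\mathbb{D}$, which by the identity must be $\tilde P$.

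For the converse $(2)\Rightarrow(1)$, I use the same identity $P(z)=z\tilde P(z)+a_d\tilde P^*(z)$ with the Rouch\'e argument run in the opposite direction. Set $f(z)=z\tilde P(z)$ and $g(z)=a_d\tilde P^*(z)$. On $|z|=1$ we have $|\tilde P^*(z)|=|\tilde P(z)|$, hence $|g(z)|=|a_d|\,|\tilde P(z)|<|\tilde P(z)|=|f(z)|$ (again $\tilde P$ does not vanish on $\mathbb{T}$ since its zeros lie in $\mathbb{D}$). Thus $P=f+g$ has the same number of zeros in $\mathbb{D}$ as $f$, namely $d$, so $P\in\mathcal O_d$.

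Finally, the equivalence shows that the map
$$\Phi_d:(a_1,\dots,a_d)\longmapsto\bigl(a_d,\;(b_1,\dots,b_{d-1})\bigr)$$
is a bijection $\mathcal O_d\to\mathbb{D}\times\mathcal O_{d-1}$, smooth with smooth inverse given explicitly by $a_k=b_k+a_d\overline{b_{d-k}}$, and therefore a diffeomorphism. Since $\mathcal O_0$ is reduced to a point (empty tuple) and $\mathbb{D}$ is diffeomorphic to $\mathbb{R}^2$, an immediate induction on $d$ identifies $\mathcal O_d$ with $\mathbb{D}^d$, hence with $\mathbb{R}^{2d}$. The only delicate point in the whole argument is the bookkeeping behind the identity $P=z\tilde P+a_d\tilde P^*$ (which makes the otherwise opaque change of variables transparent); once it is in hand, both Rouch\'e applications and the inductive diffeomorphism are essentially automatic.
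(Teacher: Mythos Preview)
Your proof is correct. The route, however, is genuinely different from the paper's.

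The paper works with the rational function $\chi(z)=P(z)/P^*(z)$ and its Schur transform $\tilde\chi(z)=(\chi(z)-\chi(0))/(1-\overline{\chi(0)}\chi(z))$, arguing that if $P\in\mathcal O_d$ then $\chi\in\mathcal B_d$, hence $|\chi|<1$ on $\D$ and $|\chi|=1$ on $\S^1$; the M\"obius map then forces $\tilde\chi$ to have the same mapping properties, which identifies it as $z\tilde P/\tilde P^*$ with $\tilde P\in\mathcal O_{d-1}$. The converse reverses the M\"obius map. Your argument bypasses Blaschke products entirely: you verify the polynomial identity $P-a_dP^*=(1-|a_d|^2)\,z\tilde P$ (equivalently $P=z\tilde P+a_d\tilde P^*$) by matching coefficients, and then apply Rouch\'e's theorem twice on $|z|=1$, using only that $|P^*|=|P|$ and $|\tilde P^*|=|\tilde P|$ there. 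Both approaches rely on the same algebraic reduction, but yours is slightly more self-contained---it needs only Rouch\'e and a coefficient check---while the paper's version is more conceptual in this context, since it exhibits the map $(a_k)\mapsto(b_k)$ as the Schur step on the associated Blaschke product, tying the proposition directly to the $\mathcal B_d$ framework used throughout the paper. The inductive diffeomorphism $\mathcal O_d\simeq\D\times\mathcal O_{d-1}$ is the same in both.
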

\begin{proof}
Consider the rational functions
$$\chi (z)=\frac{z^d+a_1z^{d-1}+\dots +a_d}{1+\overline a_1z+\dots +\overline a_dz^d}\ ,$$
and
$$\tilde \chi (z)=\frac{\chi (z)-\chi (0)}{1-\overline {\chi (0)}\chi (z)}=z\frac{z^{d-1}+b_1z^{d-2}+\dots +b_{d-1}}{1+\overline b_1z+\dots +\overline b_{d-1}z^{d-1}}\ ,\ b_k:=\frac{a_k-a_d\overline a_{d-k}}{1-\vert a_d\vert ^2}\ .$$
If (1) holds true, then $\chi \in \mathcal B_d$, which implies 
\begin{equation}\label{chi}
\forall z\in \D, \vert \chi (z)\vert <1 \ ,\ \vert \chi (\expo_{ix})\vert =1\ .\ 
\end{equation}
In particular, $\chi (0)=a_d\in \D $, and therefore the numerator and the denominator of $\tilde \chi $ have no common root. Moreover,
\begin{equation}\label{chitilde}
\forall z\in \D, \vert \tilde \chi (z)\vert <1 \ ,\ \vert \tilde \chi (\expo_{ix})\vert =1\ .
\end{equation}
This implies $\tilde \chi \in \mathcal B_d$, hence (2). Conversely, if (2) holds, then $\tilde \chi $ satisfies (\ref{chitilde}) and has degree $d$, hence 
$$\chi (z)=\frac{\tilde \chi (z)+a_d}{1+\overline a_d\tilde \chi(z)}$$
satisfies (\ref{chi}) and has degree $d$, whence (1). 
\s
The second statement follows from an easy induction argument on $d$, since $\mathcal O_1=\D $ is diffeomorphic to $\R ^2$.
\end{proof}

\section{Two results by Adamyan--Arov--Krein}\label{sectionAAK}\index{Hankel operator}

We recall the proof of two important results  by Adamyan--Arov--Krein\index{AAK theorem}. The proof is translated from \cite{AAK} into our representation of Hankel operators, and is given for the convenience of the reader.

\begin{theo}[Adamyan, Arov, Krein \cite{AAK}] \label{AAK}
Let $u\in VMO_+(\S^1)\setminus \{ 0\} $. Denote by $(\lambda_k(u))_{k\ge 0}$ the sequence of singular values of $H_u$, namely the eigenvalues of $\vert H_u\vert :=\sqrt{H_u^2}$, in decreasing order, and repeated according to their multiplicity. Let $k\ge 0, m\ge 1,$ such that
$$\lambda_{k-1}(u)>\lambda_k(u)=\dots =\lambda_{k+m-1}(u)=s>\lambda_{k+m}(u)\ ,$$
with the convention $\lambda_{-1}(u):=+\infty $. 
\begin{enumerate}
\item For every $h\in E_u(s)\setminus \{ 0\} $, there exists a polynomial $P\in \C _{m-1}[z]$ such that
$$\forall z\in \D\ ,\ \frac{sh(z)}{H_u(h)(z)}=\frac{P(z)}{z^{m-1}\overline P\left (\frac 1z\right )}\ .$$
\item There exists a rational function $r$ with no pole on $\overline \D$ such that ${\rm rk} (H_r)=k$ and
$$\Vert H_u-H_r\Vert =s\ .$$
\end{enumerate}
\end{theo}
\begin{proof}
We start with the case $k=0$. In this case the statement (2) is trivial, so we just have to prove (1). This is a consequence of the following lemma.
\begin{lemma}\label{tops}
Assume $s=\Vert H_u\Vert $. For every $h\in E_u(s)\setminus \{ 0\} $, consider the following inner outer decompositions,
$$h=ah_0\ ,\ s^{-1}H_u(h)=bf_0\ .$$
If $c$ is an arbitrary inner divisor of $ab$,  $ab=cc'$, then $ch_0\in E_u(s)$, with
\begin{equation} \label{ch0}
H_u(ch_0)=sc'f_0\ ,\ H_u(c'f_0)=sch_0\ .
\end{equation}
 In particular, $a, b$ are finite Blaschke products and 
\begin{equation}\label{abBla}
\deg (a)+\deg (b) +1\le \dim E_u(s)\ .
\end{equation}
Furthermore, there exists an outer function $h_0$ such that, if $m:=\dim E_u(s)$,
\begin{equation}\label{Eunorm}
E_u(s)=\C _{m-1}[z] h_0\ ,
\end{equation}
and there exists $\varphi \in \T $ such that, for every $P\in \C _{m-1}[z]$, 
\begin{equation}\label{Hunorm}
H_u(Ph_0)(z)=s\expo _{i\varphi}z^{m-1}\overline P\left (\frac 1z\right )h_0(z)\ .
\end{equation}
\end{lemma}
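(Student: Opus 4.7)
The plan is to derive every assertion from one multiplicative identity combined with a compactness argument.

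First, for any $\f\in H^\infty$ and any $h\in L^2_+$, I would establish
\[ H_u(\f h)=T_{\bar\f}(H_u h).\]
Indeed, writing $u\bar h=H_u(h)+w$ with $w:=(I-\Pi)(u\bar h)\in L^2_-$, the product $\bar\f\cdot w$ still has only strictly negative Fourier modes, so is killed by $\Pi$. Applied with $\f=a$ and $h=h_0$ this gives $T_{\bar a}(H_u h_0)=H_u(ah_0)=sbf_0$, whence $s\Vert h_0\Vert=\Vert sbf_0\Vert\le\Vert H_u h_0\Vert\le s\Vert h_0\Vert$ (the upper bound being $\Vert H_u\Vert=s$). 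Equality in $\Vert T_{\bar a}F\Vert\le\Vert F\Vert$, combined with $\vert a\vert=1$ on $\S^1$, forces $\bar a H_u(h_0)\in L^2_+$, so $\bar a H_u(h_0)=sbf_0$ and $H_u(h_0)=s\,ab\,f_0$. For any inner divisor $c$ of $ab=cc'$ the identity then yields
\[ H_u(ch_0)=T_{\bar c}(s\,ab\,f_0)=s\,\Pi(\bar c\cdot cc' f_0)=sc'f_0,\]
and symmetrically $H_u(c'f_0)=sch_0$. In particular $H_u^2(ch_0)=s^2 ch_0$, proving (\ref{ch0}).

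Compactness of $H_u$ then implies $\dim E_u(s)<\infty$. If $a$ were not a finite Blaschke product it would admit infinitely many linearly independent inner divisors---for example, the initial segments $\prod_{i=1}^n\chi_{\alpha_i}$ of its Blaschke part, or the fractional powers $S^\lambda$, $0<\lambda<1$, of any nontrivial singular factor. Since $h_0$ is outer and hence non-vanishing a.e.\ on $\S^1$, the map $c\mapsto ch_0$ preserves linear independence, producing infinitely many linearly independent elements of $E_u(s)$, a contradiction. The same reasoning gives $b\in\mathcal B_k$ for some $k$. Writing the finite Blaschke product $ab$ in the form $\expo_{-i\psi}Q(z)/(z^d\overline Q(1/z))$ with $d:=\deg(ab)$ and $Q$ a Schur polynomial, a short partial-fraction calculation identifies $\mathrm{span}\{ch_0:c\mid ab\}$ with a $(d+1)$-dimensional subspace of $E_u(s)$, which gives (\ref{abBla}).

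To obtain (\ref{Eunorm}) and (\ref{Hunorm}) I would then choose $h=ah_0\in E_u(s)\setminus\{0\}$ maximizing $\deg a+\deg b$. Any other vector $h'=a'h_0'\in E_u(s)$, with $H_uh'=sb'f_0'$, satisfies the analogous identities $H_u(h_0')=s\,a'b'\,f_0'$; combining these with the exchange formulas (\ref{ch0}) of the original pair and invoking the maximality of $\deg(ab)$ forces $h_0'\in\C h_0$. Hence every element of $E_u(s)$ is of the form $ch_0$ with $c$ a complex linear combination of inner divisors of $ab$, so $m=d+1$. After re-absorbing the non-vanishing factor $1/(z^d\overline Q(1/z))$ into $h_0$ (which remains outer), partial fractions identify this span with $\C_{m-1}[z]h_0$, proving (\ref{Eunorm}). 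Formula (\ref{Hunorm}) then follows by writing $c=P(z)/(z^{m-1}\overline Q(1/z))$, applying $H_u(ch_0)=sc'f_0$, and using the Schur form of $ab$ to identify $c'f_0$ with $\expo_{i\varphi}z^{m-1}\overline P(1/z)h_0$ up to the (absorbed) outer factor.

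The main obstacle is the rigidity statement that every extremal vector in $E_u(s)$ has the same outer factor up to scalar multiples: this is the delicate core of the Adamyan--Arov--Krein structure theorem, and requires a careful simultaneous exploitation of the exchange identities (\ref{ch0}) across all extremal vectors.
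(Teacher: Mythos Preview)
Your treatment of the first half---the identity $H_u(\f h)=T_{\bar\f}H_u(h)$, the norm squeeze, and the resulting formulae $H_u(ch_0)=sc'f_0$, $H_u(c'f_0)=sch_0$, together with the finiteness of $a,b$ and the inequality~(\ref{abBla})---is correct and essentially coincides with the paper's argument, only organised around the single outer vector $h_0$ rather than around an arbitrary inner divisor.

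The second half, however, contains a real gap, and you have identified it yourself: the assertion that maximality of $\deg(ab)$ forces every other vector $h'\in E_u(s)$ to share the outer factor $h_0$ is nowhere justified, and it is not clear how to derive it from the exchange identities alone. The paper avoids this rigidity question entirely by a much simpler device. Since $\dim E_u(s)=m$, there exists a nonzero $h\in E_u(s)$ whose first $m-1$ Fourier coefficients vanish, i.e.\ $h=z^{m-1}\tilde h$ with $\tilde h\in L^2_+$. Writing the inner--outer decompositions $\tilde h=ah_0$ and $s^{-1}H_u(h)=bf_0$, the inner part of $h$ is $z^{m-1}a$, so the inequality~(\ref{abBla}) gives $(m-1)+\deg a+\deg b+1\le m$, forcing $\deg a=\deg b=0$. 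Thus $\tilde h=h_0$ and $s^{-1}H_u(h)=f_0$ are both outer. Applying the first part with $ab=z^{m-1}$ and $c=z^\ell$ yields $H_u(z^\ell h_0)=sz^{m-1-\ell}f_0$ for $\ell=0,\dots,m-1$; these $m$ vectors are independent, hence $E_u(s)=\C_{m-1}[z]h_0=\C_{m-1}[z]f_0$, and comparing norms gives $f_0=\expo_{i\varphi}h_0$. This produces a single explicit generator without ever comparing outer factors of different extremal vectors.
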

Let us prove this lemma. We need a number of elementary properties of Toeplitz operators $T_b$ defined by equation (\ref{Toeplitz}), 
where $b$ is a function in $L^\infty _+:=L^2_+\cap L^\infty $, which we recall below. In what follows, $u\in BMO_+$.
\begin{enumerate}
\item 
$$H_uT_b=T_{\overline b}H_u=H_{T_{\overline b}u}\ .$$
\item If $\vert b\vert \le 1$ on $\S ^1$, 
$$H_u^2\ge T_{\overline b}H_u^2T_b\ .$$
\item If $\vert b\vert =1$ on $\S ^1$, namely $b$ is an inner function,  
$$\forall f\in L^2_+\ ,\ f=T_bT_{\overline b}f  \Longleftrightarrow \Vert f\Vert =\Vert T_{\overline b}f\Vert \ .$$
\end{enumerate}
Indeed, (1) is just equivalent to the elementary identities
$$\Pi (u\overline b\overline h)=\Pi (\overline b\Pi (u\overline h))=\Pi (\Pi (\overline bu)\overline h)\ .$$
As for (2),
we observe that $T_b^*=T_{\overline b}$ and 
$$\Vert T_{\overline b}h\Vert \le \Vert \overline bh\Vert \le \Vert h\Vert \ .$$
Hence, using (1),
$$(H_u^2h\vert h)-(T_{\overline b}H_u^2T_bh\vert h)=\Vert H_u(h)\Vert ^2-\Vert T_{\overline b}H_u(h)\Vert ^2\ge 0\ .$$
Finally, for (3) we remark that, if $b$ is inner,  $T_{\overline b}T_b=I$ and $T_bT_{\overline b}$ is the orthogonal projector onto the range of $T_b$, namely $bL^2_+$.  Since $\Vert T_{\overline b}f\Vert =\Vert T_bT_{\overline b}f\Vert $, (3) follows.
\s
Let us come back to the proof of Lemma \ref{tops}. Starting from
$$H_u(h)=sf\ ,\ H_u(f)=sh\ ,\ h=ah_0\ ,\ f=bf_0\ ,\ ab =cc'\ ,$$
we obtain, using property (1),
$$T_{\overline c'}H_u(ch_0)= H_u(cc'h_0)=T_{\overline b}H_u(h)=sf_0\ .$$
In particular, 
$$\Vert H_u(ch_0)\Vert \ge \Vert T_{\overline c'}H_u(ch_0)\Vert =s\Vert f_0\Vert =s\Vert f\Vert =s\Vert h\Vert =s\Vert ch_0\Vert \ .$$
Since $s=\Vert H_u\Vert$, all the above inequalities are equalities, hence $ch_0\in E_u(s)$, and, using (3), 
$$H_u(ch_0)=T_{c'}T_{\overline c'}H_u(ch_0)=sc'f_0\ .$$
The second identity in (\ref{ch0}) immediately follows.  Remark that, if $\Psi $ is an inner function of degree at least $d$, there exist $d+1$ linearly independent inner divisors of $\Psi $ in $L^\infty _+$. Then inequality (\ref{abBla}) follows. Let us come to the last part. Since $\dim E_u(s)=m$, there exists $h\in E_u(s)\setminus \{ 0\}$ such that the first $m-1$ Fourier coefficients of $h$ cancel, namely 
$$h=z^{m-1}\tilde h\ .$$
Considering the inner outer decompositions
$$\tilde h=ah_0\ ,\ H_u(h)=s bf_0\ ,$$
and applying the first part of the lemma, we conclude that $\deg (a)+\deg (b)=0$, hence, up to a slight change of notation, $a=b=1$,
and, for $\ell =0,1,\dots ,m-1$,
$$H_u(z^\ell h_0)=sz^{m-1-\ell}f_0\ ,\ H_u(z^{m-\ell-1} f_0)=sz^{\ell}h_0\ .$$
This implies
$$E_u(s)=\C _{m-1}[z] h_0=\C _{m-1}[z]f_0\ .$$
Since $\Vert h_0\Vert =\Vert h\Vert =\Vert f\Vert =\Vert f_0\Vert $, it follows that $f_0=\expo_{i\varphi} h_0$, and (\ref{Hunorm}) follows from the antilinearity of $H_u$. The proof of Lemma \ref{tops} is complete.
\s
Let us complete the proof of the theorem by proving the case $s<\Vert H_u\Vert $. The crucial  observation is the following.
\begin{lemma}\label{unimod}
There exists a function $\phi \in L^\infty $ such that $\vert \phi \vert =1$ on $\S ^1$, and such that the operators $H_u$ and $H_{s\Pi (\phi )}$ coincide on $E_u(s)$.
\end{lemma}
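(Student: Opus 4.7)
The plan is to reduce to the top-singular-value case (Lemma \ref{tops}) by solving a Nehari-type interpolation problem for Hankel symbols. Let $\mathcal F$ denote the affine subset of $L^\infty(\S^1)$ consisting of those $\tilde v$ for which $\Pi(\tilde v \bar h) = H_u(h)$ for every $h \in E_u(s)$. Since $u \in VMO_+(\S^1) = \Pi(C(\S^1))$, we may lift $u$ to a continuous function $\tilde u$; because $(I-\Pi)\tilde u \cdot \bar h$ has only strictly negative Fourier modes whenever $h \in L^2_+$, we have $\tilde u \in \mathcal F$, so $\mathcal F$ is non-empty, convex, weak-$*$ closed, and stable under addition of $L^\infty_-$.

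The central step is the interpolation identity $\inf_{\tilde v \in \mathcal F}\|\tilde v\|_\infty = s$, with the infimum attained. The lower bound $\geq s$ is immediate from $\|\tilde v\|_\infty \geq \|\Pi(\tilde v \bar h)\|_{L^2} = \|H_u h\|_{L^2} = s$ for any unit vector $h \in E_u(s)$. The upper bound $\leq s$ is a finite-dimensional Hankel interpolation theorem, which I would prove by a Hahn--Banach duality in $L^\infty \simeq (L^1)^*$: the annihilator in $L^1$ of the linear part of $\mathcal F$ is spanned by functions of the form $\bar h\, g$ with $h \in E_u(s)$ and $g \in L^2_+$, so the dual supremum takes the form $\sup |(H_u h \mid g)|$ over unit vectors $h \in E_u(s)$, $g \in L^2_+$, and this equals $\|H_u|_{E_u(s)}\| = s$. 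Weak-$*$ compactness of closed balls in $L^\infty$ then delivers an extremal $\psi^* \in \mathcal F$ with $\|\psi^*\|_\infty = s$.

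Set $v := \Pi \psi^*$. By Nehari and the lower bound, $\|H_v\| = s$; by construction $H_v = H_u$ on $E_u(s)$, so $E_u(s) \subset E_v(s)$ and $s$ is the top singular value of $H_v$, attained on $E_u(s)$. Lemma \ref{tops}, applied to $H_v$, yields the outer-factor description $E_v(s) = \C_{m'-1}[z]\, h_0$ with $m' = \dim E_v(s) \geq m$, together with the formula $H_v(Ph_0)(z) = s\,{\rm e}^{i\varphi} z^{m'-1} \overline{P}(1/z) h_0(z)$. Using $\overline{P}(1/z) = \overline{P(z)}$ on $\S^1$, a direct computation shows that the function $\phi(z) := {\rm e}^{i\varphi} z^{m'-1} h_0(z)/\overline{h_0(z)}$, which is unimodular a.e.\ on $\S^1$ (because $h_0$, being outer, is almost everywhere nonzero on $\S^1$), satisfies $\Pi(s\phi\, \bar h) = H_v(h)$ for every $h \in E_v(s)$. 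Restricting to $h \in E_u(s) \subset E_v(s)$ gives $H_{s\Pi\phi}(h) = H_v(h) = H_u(h)$, as required. The main obstacle is the upper bound in the interpolation identity; the subsequent extraction of the unimodular $\phi$ from the structure in Lemma \ref{tops} is then a direct calculation.
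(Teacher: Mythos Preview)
Your approach is quite different from the paper's, and it has a real gap at the step you yourself flag as the main obstacle.

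\textbf{The paper's proof.} The paper never solves an extremal problem. It simply \emph{defines} $\phi:=f/\overline h$ for any pair $(h,f)\in E_u(s)^2$ with $H_u(h)=sf$, $H_u(f)=sh$, proves the cross identity $f\overline{h'}=f'\overline h$ (a three-line Fourier computation using $H_uS=S^*H_u$), so that $\phi$ is independent of the pair; taking the pair $(f,h)$ shows $|\phi|=1$; and then $H_{s\Pi(\phi)}(h)=s\Pi(\phi\overline h)=sf=H_u(h)$. That is the entire argument.

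\textbf{The gap in your duality step.} Your Hahn--Banach reduction is correct, but your identification of the dual supremum with $\|H_u|_{E_u(s)}\|=s$ is not. The pre-annihilator of the linear part of $\mathcal F$ is the $L^1$-closure of $\overline{E_u(s)\cdot H^2}$, so the dual functional you must bound is
\[
\Bigl|\sum_i (H_u h_i\mid g_i)\Bigr|\quad\text{subject to}\quad \Bigl\|\sum_i h_i g_i\Bigr\|_{L^1}\le 1,
\]
with $h_i\in E_u(s)$, $g_i\in L^2_+$. This is \emph{not} the condition ``$h,g$ unit vectors in $L^2$''; the $L^1$-ball of products is strictly larger than the set $\{hg:\|h\|_2\|g\|_2\le 1\}$, so the operator-norm bound $|(H_u h\mid g)|\le s\|h\|_2\|g\|_2$ does not suffice. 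What you actually need is the pointwise identity $H_u h=s\phi\,\overline h$ on $\S^1$ for a unimodular $\phi$ (which would give $|\sum(H_u h_i\mid g_i)|=|\int s\phi\,\overline{\sum h_ig_i}|\le s\|\sum h_ig_i\|_1$). But that identity is precisely the content of the lemma you are trying to prove, so the argument is circular at this point. A Riesz-type factorization with the extra constraint $h\in E_u(s)$ would rescue it, but establishing that factorization again requires knowing the structure $E_u(s)=\C_{m-1}[z]\,h_0$, which you only obtain \emph{after} constructing $v$.

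Once you accept the paper's two-line Schmidt-pair identity $f\overline{h'}=f'\overline h$, your interpolation infimum \emph{is} $s$ and your unimodular $\phi$ coincides with the paper's; but at that point the extremal machinery, Lemma~\ref{tops} for $H_v$, and the final computation are all unnecessary.
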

Let us prove this lemma. For every pair $(h,f)$ of elements of $E_u(s)$ such that  $H_u(h)=sf, H_u(f)=sh$, we claim that the function
$$\phi :=\frac{f}{\overline h}\ ,$$
 does not depend on the choice of the pair $(h,f)$. Indeed, it is enough to check that, if $(h',f')$ is another such pair,
 \begin{equation}\label{modulePaireSchmidt}
 f\overline h'=f'\overline h\ .
 \end{equation}
 In fact, for every $n\ge 0$,
 \beno s(f\overline h'\vert z^n)&=&(H_u(h)\vert S^nh')=((S^*)^nH_u(h)\vert h')\\
 &=&(H_u(S^nh)\vert h')=(H_u(h')\vert S^nh)=s(f'\overline h\vert z^n)\ .
 \eeno
 Changing the role of $(h,h')$ and $(f,f')$, we get the claim. Finally the fact $\vert \phi \vert =1$ comes from applying the above identity to the pairs $(h,f)$ and $(f,h)$. Then we just have to check that, for every such pair,
 $$H_{s\Pi (\phi )}(h)=s\Pi (\Pi (\phi )\overline h)=s\Pi (\phi \overline h)=sf \ .$$
 This completes the proof of Lemma \ref{unimod}. 
 \s
 Let us first consider part (2) of the Theorem. Introduce 
 $$v:=s\Pi (\phi )\ .$$
 We are going to show that $r:=u-v$ is a rational function with no pole on $\overline{\D}$, ${\rm rk}(H_r)=k$ and 
 $$\Vert H_u-H_r\Vert =s\ .$$
 Since, for every $h\in L^2_+$,
 $$H_v(h)=s\Pi (\phi \overline h)\ ,$$
 we infer $\Vert H_v\Vert \le s$, and  from $E_u(s)\subset E_v(s)$, we conclude 
 $$\Vert H_v\Vert =s\ .$$
 Because of (\ref{Ku}), $H_u$ and $H_v$ coincide on the smallest shift invariant closed subspace of $L^2_+$ containing $E_u(s)$.
 By Beurling's theorem \cite{B}, this subspace is $aL^2_+$ for some inner function $a$. 
 Then $H_r =0$ on $aL^2_+$, hence the rank of $H_r$ is at most the dimension of $(aL^2_+)^\perp $. 
 Since 
$$\Vert H_u-H_r\Vert =\Vert H_v\Vert =s<\lambda_{k-1}(u)\ ,$$
the rank of $H_r$ cannot be smaller than $k$, and the result will follow by proving that the dimension of $(aL^2_+)^\perp $ is  $k$.

We can summarize the above construction as
 $$H_{T_{\overline a}u}=H_uT_a=H_vT_a=H_{T_{\overline a}v}\ .$$
 The above Hankel operator is compact and its norm is at most $s$. In fact, if $H_u(h)=sf\ ,\ H_u(f)=sh$, with $f=a \tilde f$, it is clear from property (1) above that
 $$H_{T_{\overline a}u}(h)=s\tilde f\ ,\ H_{T_{\overline a}u}(\tilde f)=sh\ .$$
 In particular, 
$$\Vert  H_{T_{\overline a}u}\Vert =s\ .$$
Applying property (\ref{ch0}) from Lemma \ref{tops}, we conclude that there exists an outer function $h_0$ such that $$ch_0\in E_{T_{\overline a}u}(s)$$  for every inner divisor $c$ of $a$. Moreover, $a$ is a Blaschke product of finite degree $d$\index{Blaschke product}. Since $h_0$ is outer, it does not vanish at any point of $\D $, therefore, it is easy to find $d$ inner divisors $c_1,\dots ,c_d$ of $a$ such that $c_1h_0,\dots ,c_dh_0$ are linearly independent and generate a vector subspace $\tilde E$ satisfying
$$\tilde E\cap aL^2_+=\{ 0\} \ .$$
Consequently, we obtain
$$\tilde E\oplus E_u(s)\subset E_{T_{\overline a}u}(s),$$
whence
\begin{equation}\label{borneinfdim}
d':=\dim E_{T_{\overline a}u}(s)\ge d+m\ .
\end{equation}
On the other hand, by property (2) above, we have
$$H_{T_{\overline a}u}^2=T_{\overline a}H_u^2T_a \le H_u^2\ ,$$
therefore, from the min-max formula\index{min-max formula},
$$\forall n, \lambda_n(T_{\overline a}u)\le \lambda_n(u)\ .$$
In particular, from the definition of $d'$,
$$s=\lambda_{d'-1}(T_{\overline a}u)\le \lambda_{d'-1}(u)\ ,$$
which imposes, in view of the assumption,
$d'-1\le k+m-1$, in particular, in view of (\ref{borneinfdim}),
$$d\le k\ .$$
Finally, notice that, since $a$ has degree $d$, the dimension of  $(aL^2_+)^\perp $ is $d$.
Hence, by the min-max formula again, 
$$\lambda_d(u)\le \sup _{h\in aL^2_+\setminus \{ 0\} }\frac{\Vert H_u(h)\Vert }{\Vert h\Vert}\le s<\lambda_{k-1}(u)\ .$$
This imposes $d\ge k$, and finally
$$d=k\ ,\ d'=k+m\ ,$$
and part (2) of the theorem is proved. 

In order to prove part (1), we apply 
properties (\ref{Eunorm}) and (\ref{Hunorm}) of Lemma \ref{tops}. We describe elements of $E_{T_{\overline a}u}(s)$ as 
$$h(z)=Q(z)h_0(z)\ ,$$
where $h_0$ is outer and $Q\in \C _{k+m-1}[z]$. Moreover, if $h\in E_u(s)$, then $h=a\tilde h$, $H_u(h)=sa\tilde f$, where $\tilde h, \tilde f\in E_{T_{\overline a}u}(s)$.
This reads 
$$Q(z)=a(z)\tilde Q(z)\ ,$$
If we set
$$a(z)=\frac{z^k\overline D\left (\frac 1z\right )}{D(z)}\ ,$$
where $D\in \C _k[z]$ and $D(0)=1$, and $D$ has no zeroes in $\D $, this implies
$$Q(z)=z^k\overline D\left (\frac 1z\right )P(z)\ ,\ P\in \C _{m-1}[z]\ .$$
Moreover, 
$$H_{T_{\overline a}u}(h)(z)=s\expo_{i\varphi }z^{m+k-1}\overline Q\left (\frac 1z\right )h_0(z)=s\expo_{i\varphi } D(z)z^{m-1}\overline P\left ( \frac 1z\right )h_0(z)\ ,$$
and
$$H_u(h)(z)=a(z)H_{T_{\overline a}u}(h)(z)=s\expo_{i\varphi }  z^k\overline D\left (\frac 1z\right )z^{m-1}\overline P\left ( \frac 1z\right )h_0(z)\ .$$
Changing $P$ into $P\expo _{-i\varphi /2}$, this proves part (1) of the theorem.
\end{proof}

\section{Multiplicity and Blaschke products} \label{Blaschke}

In this section, we prove a refinement of part $(1)$ of Theorem \ref{AAK} which allows to describe precisely the structure of the eigenspaces of a Hankel operator and its shifted. For a Blaschke product  $\Psi\in \mathcal B_k$ given by  $$\Psi(z)=\expo_{-i\psi}\frac{P(z)}{z^k\overline P\left (\frac 1z\right )},$$ we shall denote by
$$D(z)=z^k\overline P\left (\frac 1z\right )$$
the normalized denominator of $\Psi $.\\
Assume $u\in VMO_+(\S^1 )$ and $s\in \Sigma_H(u)$. Then $H_u$ acts on the finite dimensional vector space $E_u(s)$.
It turns out that this action can be completely described by an inner function. A similar fact holds for the action of $K_u$ onto $F_u(s)$ when $s\in \Sigma_K(u)$, $s\neq 0$.

\begin{proposition}\label{action}
Let $s>0$ and $u\in VMO_+(\S^1 )$.
\begin{enumerate}
\item Assume $s\in \Sigma_H(u)$ and $m:=\dim E_u(s)=\dim F_u(s)+1$. Denote by $u_s$ the orthogonal projection of $u$ onto $E_u(s)$. There exists an inner function $\Psi _s\in \mathcal B_{m-1}$ such that
\begin{equation}\label{sus}
su_s=\Psi _sH_u(u_s)\ ,
\end{equation}
and if $D$ denotes the normalized denominator of $\Psi _s$, 
\begin{eqnarray} \label{descEF1}
E_u(s)&=&\left \{ \frac fDH_u(u_s)\ ,\ f\in \C _{m-1}[z]\right \} \ ,\\
F_u(s)&=&\left\{ \frac gDH_u(u_s)\ ,\ g\in \C _{m-2}[z]\right \}, 
\end{eqnarray}
and, for $a=0,\dots,m-1\ ,\ b=0,\dots ,m-2$, 
\begin{eqnarray} \label{actionHu}
H_u\left (\frac {z^a}DH_u(u_s)\right )&=&s\expo_{-i\psi _s}\frac {z^{m-a-1}}DH_u(u_s)\ , \\
K_u\left (\frac {z^b}DH_u(u_s)\right )&=&s\expo_{-i\psi _s}\frac {z^{m-b-2}}DH_u(u_s)\ ,
\end{eqnarray}
where $\psi _s$ denotes the angle of $\Psi _s$.
\item Assume $s\in \Sigma_K(u)$ and $\ell :=\dim F_u(s)=\dim E_u(s)+1$. Denote by $u_s'$ the orthogonal projection of $u$ onto $F_u(s)$. There exists an inner function $\Psi _s\in \mathcal B_{\ell -1}$ such that
\begin{equation}\label{Kus}
K_u(u_s')=s\Psi _su_s'\ ,
\end{equation}
and if $D$ denotes the normalized denominator of $\Psi _s$, 
\begin{eqnarray} \label{descFE2}
F_u(s)&=&\left \{ \frac fDu_s'\ ,\ f\in \C _{\ell -1}[z]\right \} \ ,\\
E_u(s)&=&\left \{ \frac {zg}Du_s'\ ,\ g\in \C _{\ell -2}[z]\right \}, 
\end{eqnarray}
and, for $a=0,\dots,\ell -1\ ,\ b=0,\dots ,\ell-2$, 
\begin{eqnarray} \label{actionKu}
K_u\left (\frac {z^a}Du_s'\right )&=&s\expo_{-i\psi _s}\frac {z^{\ell -a-1}}Du_s'\ , \\
H_u\left (\frac {z^{b+1}}Du_s'\right )&=&s\expo_{-i\psi _s}\frac {z^{\ell-b-1}}Du_s'\ ,
\end{eqnarray}
where $\psi _s$ denotes the angle of $\Psi _s$.
\end{enumerate}
\end{proposition}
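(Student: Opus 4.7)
The plan is to combine the explicit eigenspace parametrization extracted from the proof of Theorem \ref{AAK} with an orthogonality computation involving Szeg\H{o} orthogonal polynomials, in order to pin down the position of $u_s$ (resp.\ $u_s'$) inside $E_u(s)$ (resp.\ $F_u(s)$). Part (2) will then follow by applying the same scheme to the shifted Hankel operator $K_u = H_{S^*u}$, using $F_u(s) = E_{S^*u}(s)$.

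For Part (1), the proof of Theorem \ref{AAK} produces a specific nonzero element $h_1 = z^k\overline{D_a}(1/z)h_0 \in E_u(s)$ (with $h_0$ outer and $a$ the Blaschke factor reducing the singular value to the top case) and an angle $\varphi \in \mathbb T$ such that
\[
E_u(s) = \{Qh_1 : Q \in \mathbb C_{m-1}[z]\}, \qquad H_u(Qh_1)(z) = se^{i\varphi}z^{m-1}\overline Q(1/z)\, h_1(z).
\]
Writing $u_s = P_*(z)h_1(z)$ and using $u = H_u(1)$ together with the self-adjointness identity (\ref{selfadjoint}),
\[
(u_s, z^j h_1) = (u, z^j h_1) = (H_u(z^j h_1), 1) = se^{i\varphi}\widehat{z^{m-1-j}h_1}(0) = \delta_{j,m-1}\, se^{i\varphi}\widehat{h_1}(0),
\]
with $\widehat{h_1}(0)\ne 0$ because $h_0(0)\ne 0$ and $z^k\overline{D_a}(1/z)$ has nonzero constant term. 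Recognising $(u_s, z^jh_1) = (P_*, z^j)_{L^2(d\mu)}$ with $d\mu := |h_1|^2 d\theta/(2\pi)$, one sees that $P_*$ is $L^2(d\mu)$-orthogonal to $\mathbb C_{m-2}[z]$, hence $P_* = c\,Q_{m-1}$ for some $c\ne 0$, where $Q_{m-1}$ is the monic Szeg\H{o} orthogonal polynomial of degree $m-1$ for $d\mu$. Since the density $|h_1|^2$ is positive a.e.\ (by F.\ and M.\ Riesz), Szeg\H{o}'s theorem ensures $Q_{m-1}$ is a Schur polynomial of exact degree $m-1$. Setting $D(z) := z^{m-1}\overline{Q_{m-1}}(1/z)$, one computes $H_u(u_s) = \overline c\, se^{i\varphi}D(z)h_1(z)$, whence
\[
\Psi_s := \frac{su_s}{H_u(u_s)} = e^{-i\psi_s}\frac{Q_{m-1}(z)}{D(z)} \in \mathcal B_{m-1}, \qquad e^{-i\psi_s} := e^{-i\varphi}\frac{c}{\overline c}.
\]
Substituting $h_1$ in terms of $H_u(u_s)/D$ yields $E_u(s) = \{(f/D)H_u(u_s) : f \in \mathbb C_{m-1}[z]\}$; the description of $F_u(s) = E_u(s)\cap u^\perp$ (Lemma \ref{EF}) comes from the parallel computation that $((f/D)H_u(u_s), u)$ vanishes iff $\deg f \le m-2$, and the action formulas for $H_u$ and $K_u = S^*H_u$ follow from the antilinearity of $H_u$ together with the identity $H_u(z^ah_1) = se^{i\varphi}z^{m-1-a}h_1$.

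For Part (2), the same scheme applies with $K_u$ in place of $H_u$, acting on $F_u(s) = E_{S^*u}(s)$: Theorem \ref{AAK} gives $F_u(s) = \mathbb C_{\ell-1}[z]h_1'$ with $K_u(Qh_1') = se^{i\varphi'}z^{\ell-1}\overline Q(1/z)h_1'$. Here we compute $(u, z^j h_1')$ for $1\le j\le \ell-1$ using $H_u\circ S = K_u$, which gives $H_u(z^jh_1') = K_u(z^{j-1}h_1') = se^{i\varphi'}z^{\ell-j}h_1'$ and hence $(u, z^jh_1') = \widehat{H_u(z^jh_1')}(0) = 0$. By contrast $(u, h_1')\ne 0$ since $s \in \Sigma_K(u)$. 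One therefore has the \emph{opposite} vanishing pattern from Part (1): $(P_*', z^j)_{L^2(d\mu')} = 0$ for $j = 1,\dots,\ell-1$ with $d\mu' := |h_1'|^2 d\theta/(2\pi)$, which characterises $P_*'$ up to a scalar as the reverse polynomial $\tilde Q_{\ell-1}(z) := z^{\ell-1}\overline{Q_{\ell-1}'}(1/z)$ of the monic Szeg\H{o} polynomial $Q_{\ell-1}'$ of degree $\ell-1$ for $d\mu'$ (a direct check shows $\tilde Q_{\ell-1}$ spans the orthogonal complement of $z\,\mathbb C_{\ell-2}[z]$ in $\mathbb C_{\ell-1}[z]$ under $L^2(d\mu')$). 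Being the reverse of a Schur polynomial, $\tilde Q_{\ell-1}$ is antiSchur, which is exactly why in Part (2) the roles of numerator and denominator in the Blaschke form of $\Psi_s$ are swapped: $D := \tilde Q_{\ell-1}$ plays the role of the normalized denominator, $\Psi_s := K_u(u_s')/(su_s') = e^{-i\psi_s}Q_{\ell-1}'/D \in \mathcal B_{\ell-1}$, and the remaining descriptions of $F_u(s)$, $E_u(s) = F_u(s)\cap u^\perp$, and the actions of $K_u$ and $H_u$ follow as in Part (1).

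The delicate step is the precise identification of $u_s$ (resp.\ $u_s'$) inside the eigenspace. Szeg\H{o}'s theorem on orthogonal polynomials on the unit circle is essential here, as it upgrades the Adamyan--Arov--Krein ratio from a rational inner function to a Blaschke product of the expected degree $m-1$ (resp.\ $\ell - 1$).
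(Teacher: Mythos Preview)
Your argument is correct and takes a genuinely different route from the paper's. The paper defines $\Psi_\rho := \rho u_\rho/H_u(u_\rho)$ directly as a ratio, shows it has modulus one on $\S^1$ via the Schmidt pair identity (\ref{modulePaireSchmidt}), and then proves it is inner by a Riesz representation argument on $E_u(\rho)$ showing that every zero of $H_u(u_\rho)$ in $\D$ is a zero of $u_\rho$ of at least the same order; the degree $m-1$ then follows from part~(1) of Theorem~\ref{AAK}, and the description of the eigenspaces from the computational Lemma~\ref{crucialHuGeneral}. You instead start from the AAK parametrization $E_u(s)=\C_{m-1}[z]\,h_1$, identify $u_s$ inside it by an orthogonality computation, and recognize the resulting polynomial $P_*$ as the degree $m-1$ Szeg\H{o} orthogonal polynomial for the measure $|h_1|^2\,d\theta/(2\pi)$; Szeg\H{o}'s classical theorem on the location of zeros then delivers the Schur property, and hence $\Psi_s\in\mathcal B_{m-1}$. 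Your route imports one extra classical ingredient (Szeg\H{o}'s theorem) but gives a pleasant structural explanation for the Blaschke product: it is governed by an orthogonal polynomial for a measure canonically attached to the eigenspace. The paper's route is more self-contained and stays entirely within Hankel-operator arguments. One small gap: your justification that $\widehat{h_1}(0)\ne 0$ (``$z^k\overline{D_a}(1/z)$ has nonzero constant term'') presupposes $\deg D_a=k$, i.e.\ $a(0)\ne 0$, which is not automatic from the AAK construction; it does hold, however, since $H_u(u_s)\in E_u(s)\subset aL^2_+$ and $(H_u(u_s),1)=\Vert u_s\Vert^2\ne 0$ force $a(0)\ne 0$.
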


\subsection{Case of $\rho \in \Sigma_H(u)$} 
Let $u\in VMO_+(\S^1)$.
Assume  that $\rho\in \Sigma_H(u)$  and $m:=\dim E_u(\rho)$. Recall  that $$E_u(\rho)=\ker (H_u^2-\rho^2I),\; F_u(\rho)=\ker(K_u^2-\rho^2I)$$and by Lemma \ref{EF}, $$F_u(\rho)=E_u(\rho)\cap u^\perp \ .$$ 
\subsubsection{Definition of $\Psi _\rho$.}
By definition $(\rho u_\rho,H_u(u_\rho)$ and $(H_u(u_\rho),\rho u_\rho)$ are Schmidt-pair for $H_u$. Hence, applying  equation (\ref{modulePaireSchmidt}) to $(u_\rho,H_u(u_\rho))$ and $(H_u(u_\rho),u_\rho)$, we obtain that, at every point of $\S^1$, 
$$\vert H_u(u_\rho)\vert ^2=\rho^2\vert u_\rho\vert ^2\ .$$

We  thus define
$$\Psi_\rho := \frac{\rho u_\rho}{H_u(u_\rho)}\ .$$
\subsubsection{The function $\Psi_\rho$ is an inner function.}
We know that $\Psi _\rho$ is of modulus $1$ at every point of $\S^1 $. Let  us show that $\Psi_\rho $ is in fact an inner function. By part 
$(1)$ of the Adamyan--Arov--Krein theorem, we already know that $\Psi _\rho$ is a rational function with no poles on the unit circle.
Therefore, it is enough to prove that $\Psi_\rho $ has no pole in the open unit disc. 
Assume that $q\in \D$ is a zero of $H_u(u_\rho)$, 
and let us show that $q$ is a zero of $u_\rho$ with at least the same multiplicity. 

Assume $H_u(u_\rho)(q)=0$ for some $q$, $|q|<1$. We prove that $u_\rho(q)=0$ (in fact $h(q)=0$ for any $h$ in $E_u(\rho)$).
Let us consider the continuous linear form 
$$\begin{array}{ccc} E_u(\rho)&\to &\C\\
h&\mapsto &h(q)\end{array}.$$ By the Riesz representation Theorem, there exists $f\in E_u(\rho)$ such that $h(q)=(h\vert f)$. Our aim is to prove that, if $H_u(u_\rho)(q)=0$ then $f=0$.
The assumption $H_u(u_\rho)(q)=0$ reads
$$0= (H_u(u_\rho)\vert f)= (H_u(f)\vert u_\rho)=(H_u(f)\vert u)=(1\vert H_u^2(f))=\rho^2(1\vert f).$$ Here we used the fact that, as $H_u(f)\in E_u(\rho)$ for any $f\in E_u(\rho)$, $$(H_u(f)\vert u_\rho)=(H_u(f)\vert u).$$ It implies that $H_u(f)\in u^\perp\cap E_u(\rho)=F_u(\rho)$, that $S^*f=\frac 1{\rho^2}S^*H_u^2f=\frac 1{\rho^2}K_uH_u(f)$ belongs to $F_u(\rho)$ and also $f=SS^*f$. Hence, 
$$\Vert f\Vert^2=(f\vert f)=f(q)=qS^*f(q)=q(S^*f\vert f)\le |q|\Vert S^*f\Vert \cdot \Vert f\Vert$$ 
 which is impossible except if $f=0$.
In particular $u_\rho(q)=0$. 
\s
A similar argument allows to show that if $q$ is a zero of order $r$ of $H_u(u_\rho)$, that is, if, for every $a\leq r-1$,
$$(H_u(u_\rho))^{(a)} (q)=0\ $$ then
the same holds for $u_\rho$ that is
 $u_\rho^{(a)}(q)=0$ for every $a\le r-1$.
\\

As a consequence of  the Adamyan--Arov--Krein theorem \ref{AAK}, we get that $\Psi_\rho $ is a Blaschke product of degree $m-1$, $m=\dim E_u(\rho)$.
\subsubsection{Description of $E_u(s)$, $F_u(s)$ and of the action of $H_u$ and of $K_u$ on them}

We start with proving the following lemma.
\begin{lemma}\label{crucialHuGeneral}
Let $f\in \mathbb H^\infty (\D )$ such that $\Pi (\Psi _\rho\overline f)=\Psi  _\rho\overline f\ .$
Then
$$H_u(fH_u(u _\rho))=\rho\Psi  _\rho\overline fH_u(u _\rho)\ .$$
\end{lemma}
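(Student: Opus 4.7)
The plan is to reduce the identity to a computation that uses only the intertwining property $H_u T_f = T_{\overline f}H_u$ (for $f\in H^\infty_+$) and the defining relation $\rho u_\rho=\Psi_\rho H_u(u_\rho)$, and then to check that the outer projector $\Pi$ acts trivially on the final product thanks to the hypothesis $\Pi(\Psi_\rho\overline f)=\Psi_\rho\overline f$.

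Concretely, I would first observe that for any $f\in\mathbb H^\infty(\D)$ and $h\in L^2_+$, multiplication by $f$ preserves $L^2_+$, so $T_f h=fh$; hence property (1) of Toeplitz operators recalled in the proof of Lemma \ref{tops} gives
\[
H_u(fh)=H_u T_f(h)=T_{\overline f}H_u(h)=\Pi\bigl(\overline f\,H_u(h)\bigr).
\]
I would apply this with $h:=H_u(u_\rho)$ (which belongs to $L^2_+$) to obtain
\[
H_u\bigl(fH_u(u_\rho)\bigr)=\Pi\bigl(\overline f\,H_u^2(u_\rho)\bigr)=\rho^2\,\Pi(\overline f\,u_\rho),
\]
using that $u_\rho\in E_u(\rho)$, i.e.\ $H_u^2(u_\rho)=\rho^2 u_\rho$.

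Next I would substitute the defining identity $\rho u_\rho=\Psi_\rho H_u(u_\rho)$ into the right-hand side, yielding
\[
H_u\bigl(fH_u(u_\rho)\bigr)=\rho\,\Pi\bigl(\overline f\,\Psi_\rho\,H_u(u_\rho)\bigr).
\]
To conclude, I must show that $\Pi$ is the identity on $\overline f\,\Psi_\rho\,H_u(u_\rho)$. The hypothesis says $g:=\Psi_\rho\overline f\in L^2_+$. Since $\Psi_\rho$ is inner and $f\in L^\infty$, $g$ is actually bounded, hence $g\,H_u(u_\rho)\in L^2$. Because the Fourier modes of the product of two functions with nonnegative Fourier modes are again nonnegative, $g\,H_u(u_\rho)\in L^2_+$, so $\Pi$ acts as the identity on it. This gives exactly $\rho\Psi_\rho\overline f\,H_u(u_\rho)$, the desired right-hand side.

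The computation is essentially mechanical; the only delicate point is the last step, where one must justify that $\Pi(gH_u(u_\rho))=gH_u(u_\rho)$. I would make that the focus, emphasizing that boundedness of $g$ (coming from $|\Psi_\rho|=1$ on $\S^1$ and $f\in H^\infty$) is what ensures the product lies in $L^2_+$ rather than merely in some weaker space. Once this is settled, the lemma drops out without further effort.
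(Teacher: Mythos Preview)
Your proof is correct and follows essentially the same computation as the paper: unfold $H_u(fH_u(u_\rho))$ to $\Pi(\overline f\,H_u^2(u_\rho))=\rho^2\Pi(\overline f\,u_\rho)$, substitute $\rho u_\rho=\Psi_\rho H_u(u_\rho)$, and drop $\Pi$ using the hypothesis. The paper compresses this into a one-line chain of equalities and leaves the final step (that $\Pi$ acts trivially on $\Psi_\rho\overline f\,H_u(u_\rho)$) implicit, whereas you spell it out; otherwise the arguments are identical.
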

The proof of the lemma is straightforward,
\begin{eqnarray*}
H_u(fH_u(u _\rho))&=&\Pi (u\overline f \overline {H_u(u _\rho)})=\Pi (\overline fH_u^2(u _\rho))=\rho^2\Pi (\overline fu _\rho)=\rho\Pi (\overline f \Psi _\rho H_u(u _\rho))\\
&=&\rho\Psi _\rho \overline fH_u(u _\rho)\ .
\end{eqnarray*}
Applying this lemma, we get, for any $0\le a\le m-1$,
 $$H_u\left(\frac{z^a }{D}H_u(u _\rho)\right)=\rho{\rm e}^{-i\psi}\frac{z^{m-1-a}}{D}H_u(u _\rho)\ .$$
Consequently, 
$\displaystyle E_u(\rho)=\frac{\C_{m-1}[z]}DH_u(u _\rho)$ and that the action of $H_u$ on $E_u(\rho)$ is as expected in Equation (\ref{actionHu}). It remains to prove that $$F_u(\rho)=\frac{\C_{m-2}[z]}DH_u(u _\rho) $$ and that the action of $K_u$ is described as in (\ref{actionHu}).
We have, for $0\le b\le m-2$,
\begin{eqnarray*}
K_u\left( \frac{z^b}D H_u(u _\rho)\right)&=&H_uS\left( \frac{z^b}D H_u(u _\rho)\right)= H_u\left( \frac{z^{b+1}}D H_u(u _\rho)\right)\\
&=&\rho{\rm e}^{-i\psi}\frac{z^{m-2-b}}{D} H_u(u _\rho)
\end{eqnarray*}
In particular, it proves that $\frac{\C_{m-2}[z]}DH_u(u _\rho)\subset F_u(\rho)$. As the dimension of $F_u(\rho)$ is $m-1$ by assumption, we get the equality.

\subsection{Case of $\sigma\in \Sigma_K(u)$}
The second part of the proposition, concerning the case of $\sigma\in \Sigma_K(u)$, can be proved similarly. The first step is to prove that $$\Psi_\sigma:=\frac{K_u(u'_\sigma)}{\sigma u'_\sigma}$$ is an inner function. The same argument as the one used above for the $K_u$ Schmidt pairs \index{Schmidt pairs} $(K_u(u'_\sigma),\sigma u'_\sigma)$ and $(\sigma u'_\sigma,K_u(u'_\sigma))$  gives that $\Psi_\sigma$ has modulus one. To prove that it is an inner function, we argue as before. Namely, using again part $(1)$ of the Adamyan-Arov--Krein theorem \ref{AAK}, for $S^*u$ in place of $u$, we prove that if $u'_\sigma$ vanishes at some $q\in \D$, $K_u(u'_\sigma)$ also vanishes at $q$ at the same order. 

Assume $u'_\sigma(q)=0$. As before, there exists $f\in F_u(\sigma)$ satisfying $h(q)=(h\vert f)$ for any $h\in F_u(\sigma)$. The assumption on $u'_\sigma$ reads $0=(u'_\sigma\vert f)=(u\vert f)$ hence $f$ belongs to $E_u(\sigma)=u^\perp\cap F_u(\sigma)$. The same holds for $H_u(f)$ and, in particular,  $0=(H_u(f)\vert u)=(1\vert H_u^2(f))=\sigma^2(1\vert f)$. Therefore, $f=SS^*f$ and $S^*f=K_uH_u(f)\in F_u(\sigma)$. One concludes as before since
$$\Vert f\Vert^2=f(q)=qS^*f(q)=q(S^*f\vert f)$$ which implies $f=0$.

One proves as well that if $q$ is a zero of order $r$ of $(u'_\sigma)$,  $K_u(u'_\sigma)$ vanishes at $q$ with the same order. Eventually, by theorem \ref{AAK}, $\Psi_\sigma\in\mathcal B_{\ell-1}.$ 
It remains to describe $E_u(\sigma)$ and $F_u(\sigma)$ and the action of $H_u$ and $K_u$ on them.
We start with a lemma analogous to Lemma \ref{crucialHuGeneral}.
\begin{lemma}\label{crucialKuGeneral}
Let $f\in \mathbb H^\infty (\D )$ such that $\Pi (\Psi_\sigma \overline f)=\Psi_\sigma \overline f$.Then
$$K_u(fu'_\sigma)=\sigma\Psi_\sigma \overline fu'_\sigma\ .$$
\end{lemma}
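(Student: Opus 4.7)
The plan is to mimic the proof of Lemma \ref{crucialHuGeneral} step by step, replacing $H_u$ by $K_u=H_{S^*u}$ and the identity $u_\rho=\frac1\rho\Psi_\rho H_u(u_\rho)$ by the defining relation $K_u(u'_\sigma)=\sigma\Psi_\sigma u'_\sigma$. Concretely, I would write
\[
K_u(fu'_\sigma)=H_{S^*u}(fu'_\sigma)=\Pi\bigl((S^*u)\,\overline{f}\,\overline{u'_\sigma}\bigr),
\]
then pull $\overline f$ out of the projection to obtain $\Pi\bigl(\overline f\,(S^*u)\,\overline{u'_\sigma}\bigr)=\Pi\bigl(\overline{f}\cdot K_u(u'_\sigma)\bigr)=\sigma\,\Pi\bigl(\overline f\,\Psi_\sigma u'_\sigma\bigr)$, and finally use the hypothesis $\Psi_\sigma\overline f\in L^2_+$ to drop the projection in the last expression.

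The two places where something must be checked are exactly those that appear in the proof of Lemma \ref{crucialHuGeneral}. First, to pass from $\Pi\bigl(\overline f\,(S^*u)\overline{u'_\sigma}\bigr)$ to $\Pi\bigl(\overline f\,K_u(u'_\sigma)\bigr)$, I would decompose
\[
(S^*u)\overline{u'_\sigma}=K_u(u'_\sigma)+(I-\Pi)\bigl((S^*u)\overline{u'_\sigma}\bigr),
\]
multiply by $\overline f$, and observe that the error term $\overline f\cdot(I-\Pi)\bigl((S^*u)\overline{u'_\sigma}\bigr)$ has only strictly negative Fourier modes: $\overline f$ has Fourier modes $\le 0$ because $f\in\mathbb H^\infty(\D)$, while the factor $(I-\Pi)(\cdot)$ has Fourier modes $\le -1$. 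Hence $\Pi$ kills the error, and the identity $\Pi\bigl(\overline f(S^*u)\overline{u'_\sigma}\bigr)=\Pi\bigl(\overline f K_u(u'_\sigma)\bigr)$ follows.

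Second, once we have reduced to $\sigma\,\Pi\bigl(\Psi_\sigma\overline f\cdot u'_\sigma\bigr)$, the hypothesis $\Pi(\Psi_\sigma\overline f)=\Psi_\sigma\overline f$ means that $\Psi_\sigma\overline f\in L^2_+$. Since $\Psi_\sigma$ is an inner function and $f\in\mathbb H^\infty(\D)$, the function $\Psi_\sigma\overline f$ is moreover bounded, hence lies in $H^\infty_+$. Multiplying by $u'_\sigma\in L^2_+$ yields an element of $L^2_+$, on which $\Pi$ acts as the identity. Thus $\Pi(\Psi_\sigma\overline f\, u'_\sigma)=\Psi_\sigma\overline f\, u'_\sigma$, and combining with the previous step gives
\[
K_u(fu'_\sigma)=\sigma\,\Psi_\sigma\,\overline f\, u'_\sigma,
\]
which is the claim.

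The only mildly delicate point — the ``main obstacle'' — is the bookkeeping of Fourier supports needed to justify pulling $\overline f$ through the projector and then dropping the projector at the end; everything else is a mechanical transcription of the computation in Lemma \ref{crucialHuGeneral}, with $(u,u_\rho,\Psi_\rho,H_u(u_\rho))$ replaced by $(S^*u, u'_\sigma,\Psi_\sigma,u'_\sigma)$.
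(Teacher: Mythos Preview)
Your proof is correct and is precisely the argument the paper has in mind when it says ``similar to the one of Lemma \ref{crucialHuGeneral}'': you replace $H_u$ by $K_u=H_{S^*u}$, use $K_u(u'_\sigma)=\sigma\Psi_\sigma u'_\sigma$ in place of $\rho u_\rho=\Psi_\rho H_u(u_\rho)$, and the two Fourier-support checks you spell out are exactly the ones needed.
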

The proof of the lemma is similar to the one of Lemma \ref{crucialHuGeneral}.
Write
$$\Psi _\sigma(z)={\rm e}^{-i\psi} \frac{z^{\ell-1}\overline D\left(\frac 1z\right)}{D(z)},$$ where $D$ is some normalized polynomial of degree $k$.
From the above lemma, for $0\le a\le \ell-1$,
\begin{eqnarray*}
K_u\left(\frac{z^a}{D}u'_\sigma\right)=\sigma{\rm e}^{-i\psi}\frac{z^{\ell-1-a}}Du'_\sigma\ .
\end{eqnarray*}
%
%
%

In order to complete the proof, we just need to describe $E_u(\sigma)$ as the subspace of $F_u(\sigma)$ made with functions which vanish at $z=0$, or equivalently are orthogonal to $1$. We already know that vectors of $E_u(\sigma)$ are orthogonal to $u$, and that $H_u$ is a bijection from $E_u(\sigma)$ onto $E_u(\sigma)$. We infer that vectors of $E_u(\sigma)$ are orthogonal to $1$. A dimension argument allows to conclude.

\chapter{The inverse spectral theorem}\label{chapter inverse spectral}

We  now come to the analysis of the non-linear Fourier transform\index{Non linear Fourier transform} introduced in Theorem \ref{FT}.  
Given $u\in H^{1/2}_+(\S^1)\setminus \{ 0\} $, one can define, according to Lemma \ref{rigidity}, a finite or infinite sequence $s=(s_1>s_2>\dots )$ such that
\begin{enumerate}
\item The $s_{2j-1}$'s   are the $H$-dominant singular values associated to $u$\index{singular value}.
\item The $s_{2k}$'s are the $K$-dominant singular values associated to $u$.
\end{enumerate}
For every $r\ge 1$, associate to each $s_r$ a Blaschke product $\Psi _r$ by means of Proposition \ref{action}. This defines a mapping\index{mapping $\Phi$}
$$\Phi : H^{1/2} _+(\S^1 )\setminus \{ 0\} \longrightarrow \mathcal S_\infty^{(2)}\cup\bigcup_{n=1}^\infty\mathcal S_n\ .$$
We are going to prove that $\Phi$ is bijective and coincides with the inverse mapping of the non linear Fourier transform introduced in Theorem \ref{FT}.
In other words, with the notation of Theorem \ref{FT}, $\Phi(u)= ({\bf s},{\bf \Psi})\in \mathcal S_n$ if and only if
\ben
u=u ({\bf s},{\bf \Psi})
\een
Furthermore, if $(s_r,\Psi _r)_{r\ge 1}\in \mathcal S_\infty^{(2)} $, then
its preimage $u$ by $\Phi $ is given by
\begin{equation}
u=\lim _{q\rightarrow \infty} u_q
\end{equation}
where the limit takes place in $H^{\frac 12}_+(\S ^1)$ and $$u_q:=u((s_1,\dots, s_{2q}),(\Psi_1,\dots,\Psi_{2q})).$$ 
\s
Let us briefly describe the structure of this chapter. Firstly, we concentrate on the case of finite rank operators. This part itself includes two important steps. The first step is to prove injectivity of $\Phi $ by producing an explicit formula for $u$, which coincides with the one for $u({\bf s},{\bf \Psi})$ in Theorem \ref{FT} of the Introduction. The second step establishes surjectivity by combining algebraic calculations and compactness arguments. This step includes in particular the construction of Hankel operators with arbitrary multiplicities thanks to a process based on collapsing three consecutive singular values. Section \ref{sectionHS} extends these results to Hilbert--Schmidt operators, and section \ref{compact} to compact Hankel operators.

\section{The inverse spectral theorem in the finite rank case} \label{finiterank}

In this section, we consider the case of finite rank Hankel operators. 
Consider a symbol $u$ with $H_u$ of finite rank. Then the sets $\Sigma _H(u)$ and $\Sigma _K(u)$ are finite. We set
$$q:=\vert \Sigma _H(u)\vert =\vert \Sigma _K(u)\vert \ .$$
If 
\begin{eqnarray*}
\Sigma _H(u):=\{ \rho _j, j=1,\dots ,q\},\  \rho _1>\dots >\rho _q>0\ ,\\
\ \Sigma _K(u):=\{ \sigma _j, j=1,\dots ,q\},\  \sigma _1>\dots >\sigma  _q\ge 0\ ,
\end{eqnarray*}
we know from Lemma (\ref{rigidity}) that
\begin{equation}\label{Sigmaordered}
\rho _1>\sigma _1>\rho _2>\sigma _2>\dots >\rho _q>\sigma _q\ge 0\ .
\end{equation}
We set $n:=2q$ if $\sigma _q>0$ and $n:=2q-1$ if $\sigma _q=0$. For $2j\le n$, we set
$$s_{2j-1}:=\rho _j\ ,\ s_{2j}:=\sigma _j\ ,$$
so that the positive elements in the list (\ref{Sigmaordered}) read
\begin{equation}\label{ineqs} s_1>s_2>\dots >s_n>0\ .\end{equation}
Using  Proposition \ref{action}, we define $n$ finite Blaschke products $\Psi _1,\dots ,\Psi _n$ by
$$\rho _ju_j=\Psi _{2j-1}H_u(u_j)\ ,\ K_u(u'_j)=\sigma _j\Psi _{2j}u'_j\ ,\ 2j\le n\ ,$$
where $u_j$ denotes the orthogonal projection of $u$ onto $E_u(\rho _j)$, and $u'_j$ denotes the orthogonal projection of $u$ onto $F_u(\sigma _j)$.\\
Given a finite sequence $(d_1,\dots ,d_n)$ of nonnegative integers, recall that  $\mathcal V_{(d_1,\dots ,d_n)}$  is the set of symbols $u$ such that\index{singular value}
\begin{enumerate}
\item The singular values $s_{2j-1}$, $j\ge 1$, of $H_u$ in $\Sigma_H(u)$, ordered decreasingly, have respective multiplicities 
$$d_1+1,d_3+1,\dots \ .$$
\item The singular values $s_{2j}$, $j\ge 1$, of $K_u$ in $\Sigma_K(u)$, ordered decreasingly, have respective multiplicities 
$$d_2+1,d_4+1,\dots \ .$$
\end{enumerate}
 Our goal is to prove the following statement.
\begin{theorem}\label{mainfiniterank} The mapping \index{$\Phi_{d_1,\dots ,d_n}$}
$$\begin{array}{rcl}\Phi_{d_1,\dots ,d_n}:\mathcal V_{d_1,\dots , d_n}&\longrightarrow & \mathcal S_{d_1,\dots ,d_n}\\
u&\longmapsto& \left((s_r)_{1\le r\le n},(\Psi_r)_{1\le r\le n}\right)\end{array}$$ is a homeomorphism and  $\Phi_{d_1,\dots ,d_n}(u)= ({\bf s},{\bf \Psi})$ is equivalent to  
$$u=u ({\bf s},{\bf \Psi})$$ with the formula given by \ref{luminy}.\\
Moreover, the mapping $\Phi _{d_1,\dots ,d_n}^{-1}: \mathcal S_{d_1,\dots ,d_n} \rightarrow \mathcal V(d)$, where $d$ is given by (\ref{inclusionV}),  is a smooth immersion. 
As a consequence, the set 
$\mathcal V_{(d_1,\dots ,d_n)}$
is a submanifold of $\mathcal V(d)$ of dimension $$\dim \mathcal V_{(d_1,\dots ,d_n)}=2n+2\sum _{r=1}^nd_r\ .$$ 
\end{theorem}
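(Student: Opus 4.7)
The plan breaks into three steps: derive the reconstruction formula (\ref{luminy}) starting from a symbol, establish surjectivity by a collapsing procedure, and deduce the smooth/topological statements. The main obstacle, as discussed below, will be the multiplicity control in the surjectivity step.

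For the derivation, which will yield injectivity and simultaneously exhibit the explicit inverse, I would start from $u\in\mathcal V_{d_1,\dots ,d_n}$ with $\Phi_{d_1,\dots ,d_n}(u)=({\bf s},{\bf\Psi})$, decompose $u=\sum_{k=1}^{q}u'_k$ along the $K$-dominant eigenspaces $F_u(s_{2k})$, and test the linear system $\mathscr C(z)X(z)=(\Psi_{2j-1}(z))_{j}$ with $X_k(z)=u'_k(z)$. Splitting $\sum_k c_{jk}(z)u'_k(z)$ into its ``Cauchy'' part and its $z\Psi_{2k}\Psi_{2j-1}$ part, the first part collapses to $s_{2j-1}u_j/\Vert u_j\Vert^2$ via (\ref{urho}) of Lemma \ref{rigidity2}, and then to $\Psi_{2j-1}H_u(u_j)/\Vert u_j\Vert^2$ via (\ref{sus}) of Proposition \ref{action}. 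For the second part, the elementary identity $z\cdot K_u(g)=H_u(g)-H_u(g)(0)$ (with $H_u(u'_k)(0)=\Vert u'_k\Vert^2$), combined with (\ref{Kus}) and with the expansion of $H_u(u'_k)$ obtained by applying $H_u$ to (\ref{usigma}), reduces everything to Bateman-type sums evaluated by (\ref{sommesimpletau}) and (\ref{sommedoublekappa}) of Proposition \ref{formulae}; the two contributions cancel and leave
\ben\label{plan:linsys}
\sum_{k=1}^{q}c_{jk}(z)u'_k(z)=\Psi_{2j-1}(z),\qquad z\in\overline\D.
\een
In particular $\mathscr C(z)$ is invertible on $\overline\D$, and summing the unique solution over $k$ recovers $u$ in the form (\ref{luminy}). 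This proves injectivity of $\Phi_{d_1,\dots ,d_n}$ and identifies its inverse with the closed-form (\ref{luminy}).

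For surjectivity, given arbitrary $({\bf s},{\bf\Psi})\in\mathcal S_{d_1,\dots ,d_n}$, I would approximate it by data $({\bf s}^{(\e)},{\bf\Psi}^{(\e)})$ with only simple multiplicities, whose singular values cluster in groups of $d_r+1$ around each $s_r$ and whose Blaschke products degenerate to $\Psi_r$. By the simple-multiplicity inverse spectral theorem from \cite{GG3}, each such datum is realised by a unique $u^{(\e)}\in\mathcal V(d)$, with $d$ as in (\ref{inclusionV}); Step~1 applied to $u^{(\e)}$ expresses it as $u({\bf s}^{(\e)},{\bf\Psi}^{(\e)})$, so the family stays in the finite-dimensional K\"ahler manifold $\mathcal V(d)$ and, by continuity of (\ref{luminy}), converges to the prescribed $u({\bf s},{\bf\Psi})\in\mathcal V(d)$. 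The hard remaining point is to verify that the limit has \emph{exactly} the prescribed multiplicity profile: for this I would combine Lemma \ref{EF} (forcing $|\dim E_u(s)-\dim F_u(s)|=1$ at each singular value), semicontinuity of eigenvalue multiplicities of $H_u^2$ and $K_u^2$, and the rank identity ${\rm rk}\,H_u+{\rm rk}\,K_u=d$ from (\ref{inclusionV}) to rule out overshooting, together with the ``three-consecutive-collapse'' mechanism announced at the head of the chapter, which is the minimal deformation that raises a dominant multiplicity by one without breaking the strict alternation of Lemma \ref{rigidity}. Proposition \ref{action} then matches the Blaschke products of the limit with the prescribed $\Psi_r$.

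Step~3 is then routine: formula (\ref{luminy}) depends smoothly on $({\bf s},{\bf\Psi})$ wherever $\mathscr C(z)$ is invertible, so by Step~1 it is smooth on all of $\mathcal S_{d_1,\dots ,d_n}$, and the existence of the intrinsic spectral left-inverse forces $\Phi^{-1}_{d_1,\dots ,d_n}$ to be an immersion. Continuity of $\Phi_{d_1,\dots ,d_n}$ itself follows from analytic perturbation theory for isolated eigenvalues of $H_u^2$ and $K_u^2$ together with the continuous Blaschke construction of Proposition \ref{action}. The dimension count
$$\dim\mathcal S_{d_1,\dots ,d_n}=\dim\Omega_n+\sum_{r=1}^{n}\dim\mathcal B_{d_r}=n+\sum_{r=1}^{n}(2d_r+1)=2n+2\sum_{r=1}^{n} d_r,$$
using Proposition \ref{Od}, then yields the submanifold structure and the asserted dimension inside $\mathcal V(d)$.
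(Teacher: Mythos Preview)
Your derivation of the linear system (\ref{plan:linsys}) and the resulting injectivity/explicit formula is essentially the paper's argument in section \ref{section explicit}, and your Step~3 matches the paper's smooth-immersion paragraph. The gap is in your surjectivity step.

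First, a dimension mismatch: if $s_r$ is $H$-dominant with dominant multiplicity $d_r+1$, then $\dim E_u(s_r)=d_r+1$ and $\dim F_u(s_r)=d_r$, so $s_r$ contributes $2d_r+1$ to $\mathrm{rk}\,H_u+\mathrm{rk}\,K_u$. Splitting it into simple singular values therefore requires $2d_r+1$ alternating values, not $d_r+1$; your ``groups of $d_r+1$'' cannot fill out $\mathcal V(d)$.

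More seriously, the phrase ``Blaschke products degenerate to $\Psi_r$'' is undefined: the approximating data $({\bf s}^{(\e)},{\bf\Psi}^{(\e)})$ live in $\mathcal S_{0,\dots,0}$ (each $\Psi^{(\e)}_m$ is a phase), while $({\bf s},{\bf\Psi})\in\mathcal S_{d_1,\dots,d_n}$; there is no natural map between these spaces along which ``continuity of (\ref{luminy})'' could be invoked. The matrix $\mathscr C^{(\e)}(z)$ has size $\lceil d/2\rceil$, not $q$, and the passage from the large-matrix formula to the small-matrix formula as singular values coalesce is precisely the delicate computation the paper carries out in Lemma \ref{non empty}. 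There one sees that the limiting Blaschke product $\tilde\Psi_1$ depends in a complicated nonlinear way on the approximating angles $(\psi_1,\theta_1,\varphi_2)$, and moreover convergence requires a \emph{non-resonance} choice of $\varphi_2$. Your plan gives no mechanism for hitting a \emph{prescribed} $\Psi_r$ through such a limit; you would need to invert this correspondence, which is nowhere addressed.

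The paper circumvents this entirely: instead of constructing a preimage for each target point, it proves that $\Phi_{d_1,\dots,d_n}$ is open (via a direct algebraic construction of antilinear operators $H,K$ on a model space satisfying $S^*HS^*=H-(1\vert\cdot)u$, then identifying $H=H_u$ by Lemma \ref{coroS*HS*}), that it is proper (a compactness argument on $\mathcal V(d)$), and that its domain is nonempty (the collapsing lemma, where one only needs \emph{some} limit in $\mathcal V_{(d_1,\dots,d_n)}$, not one with prescribed $\Psi$). Surjectivity then follows from connectedness of $\mathcal S_{d_1,\dots,d_n}$. This topological strategy is what lets the paper avoid the inverse problem that your direct-approximation approach cannot escape.
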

\begin{proof}
The proof of Theorem \ref{mainfiniterank} involves several steps. Firstly,  we prove the continuity of $\Phi _{d_1,\dots ,d_n}$, and more precisely that every point of $ \mathcal V_{(d_1,\dots ,d_n)}$ has an open neihgborhood in
$\mathcal V(d)$, where $d$ is given by (\ref{inclusionV}), on which $\Phi _{d_1,\dots ,d_n}$ can be extended as a smooth mapping $\tilde \Phi _n$ into some finite dimensional manifold including $\mathcal S_{d_1,\dots ,d_n}$. Then we prove that $\Phi _{d_1,\dots ,d_n}$ is a homeomorphism in the case $n$ even, along the following lines :
\begin{itemize}
\item $\Phi _{d_1,\dots ,d_n}$  is injective, with the formula $$\Phi_{d_1,\dots ,d_n}(u)= ({\bf s},{\bf \Psi})\Rightarrow u=u({\bf s},{\bf \Psi}) .$$
\item $\Phi _{d_1,\dots ,d_n}$ is an open mapping.
\item $\Phi _{d_1,\dots ,d_n}$ is a proper mapping.
\item $\mathcal V_{(d_1,\dots ,d_n)}$ is not empty. 
\end{itemize}
Since the target space $\mathcal S_{(d_1,\dots,d_n)}$ is connected, these four items trivially lead to 
the homeomorphism result. The case $n$ odd is derived from a simple limiting argument.
Then the right inverse formula proved in the first item implies that, for every $ ({\bf s},{\bf \Psi})\in \mathcal S_{d_1,\dots ,d_n}$, $u({\bf s}, {\bf \Psi})\in  \mathcal V_{(d_1,\dots ,d_n)}$ and 
$$\Phi _{d_1,\dots ,d_n}(u({\bf s},{\bf \Psi }))=({\bf s},{\bf \Psi}).$$
The use of the smooth mapping $\tilde \Phi _n$ combined to the smoothness of the mapping  $({\bf s}, {\bf \Psi})\mapsto u({\bf s}, {\bf \Psi})$ will then complete the proof.
 \s

\subsection{Continuity of $\Phi _{d_1,\dots ,d_n}$ and local smooth extension}\label{continuity}

Fix $u_0\in \mathcal V_{(d_1,\dots ,d_n)}$. 
Let $\rho\in \Sigma_H(u_0)$. The orthogonal projector $P_\rho$ on the eigenspace $E_{u_0}(\rho )$ is given by 
$$P_\rho=\int_{\mathscr C_\rho}(zI-H_{u_0}^2)^{-1} \frac{dz}{2i\pi}$$ where $\mathscr C_\rho$ is a circle, centered at $\rho^2$ whose radius is small enough so that the closed disc $\overline D_\rho $ delimited by  $\mathscr C_\rho$   is at positive distance to the rest of the spectrum  of $H_{u_0}^2$. For $u$ in a neighborhood $V_0$  of $u_0$ in $H^{1/2}_+$, $C_\rho $ does not meet the spectrum of $H_u^2$, and one may consider
$$P_\rho^{(u)}:=\int_{\mathscr C_\rho}(zI-H_{u}^2)^{-1} \frac{dz}{2i\pi}$$ which is  a finite rank orthogonal projector smoothly dependent on  $u$. Hence, $P_\rho^{(u)}(u) $ is well defined and smooth. Since this vector is not zero for $u=u_0$, it is still not zero for every $u$ in $V_0$. 
\s
We can do the same construction with any  $\sigma\in \Sigma_K(u_0)\setminus \{ 0\} $. We have therefore constructed $n$ smooth functions $u\in V_0\mapsto P_r^{(u)}\ ,\ r=1,\dots ,n,$ valued in the finite orthogonal projectors, and satisfying 
$$P_r^{(u)}(u)\ne 0\ ,\ r=1,\dots ,n\ .$$
Moreover, by continuity,
$${\rm rk} P_r^{(u)}={\rm rk}P_r^{(u_0)}:=d_r+1\ .$$
In the special case $u\in  \mathcal V_{(d_1,\dots ,d_n)}$, $P_r^{(u)}$ is precisely the orthogonal projector onto $E_u(s_r)+F_u(s_r)$. 
We then define a  map $\tilde \Phi _n$ on $ V_0\cap \mathcal V(d)$ by setting
$$\tilde \Phi _n(u)= ((s_r(u))_{1\le r\le n} ; (\Psi _r(u))_{1\le r\le n})\ ,$$
with
\begin{eqnarray*}
s_{2j-1}(u):=\frac{\Vert H_u(P^{(u)}_{2j-1}(u))\Vert }{\Vert P^{(u)}_{2j-1}(u)\Vert }\  &,& \   s_{2k}(u):=\frac{\Vert K_u(P^{(u)}_{2k}(u))\Vert }{\Vert P^{(u)}_{2k}(u)\Vert }\ ,\\
\Psi _{2j-1}(u):=\frac{s_{2j-1}(u)P^{(u)}_{2j-1}(u)}{H_u(P^{(u)}_{2j-1}(u))}\ &,&\ \Psi _{2k}(u)=\frac{K_u(P^{(u))}_{2k}(u)}{s_{2k}(u)P^{(u)}_{2k}(u)}\ .
\end{eqnarray*}
The mapping $\tilde \Phi _n$ is smooth from $V_0\cap \mathcal V (d)$ into $\Omega _n\times \mathcal R_d^n $ , where $\mathcal R_d$ denotes the manifold of rational functions with  numerators and denominators of degree at most $\left [\frac{d+1}2\right ]$. Moreover, from Proposition \ref{action}, the restriction of $\tilde \Phi _n$ to $V_0\cap \mathcal V_{(d_1,\dots, d_n)} $ coincides with $\Phi _{d_1,\dots ,d_n}$. This proves in particular that $\Phi _{d_1,\dots ,d_n}$ is continuous. For future reference, let us state more precisely what we have proved.
\begin{lemma}\label{Phismooth}
For every $u_0\in \mathcal V_{(d_1,\dots ,d_n)}$, there exists a neighborhood $V$ of $u_0$ in $\mathcal V(d)$, $d=n+2\sum _{r=1}^n d_r\ $, and a smooth mapping $\tilde \Phi _n$ from this neighborhood into some manifold, such that the restriction of $\tilde \Phi _n$  to $V\cap  \mathcal V_{(d_1,\dots ,d_n)}$ coincides with $\Phi _{d_1,\dots ,d_n}$.
\end{lemma}
\subsection{The explicit formula, case $n$ even.}\label{section explicit}
Assume that $n=2q$ is an even integer.\\
The fact that the mapping $\Phi _{d_1,\dots ,d_n}$ is one-to-one follows from an explicit formula giving $u$ in terms of $\Phi _{d_1,\dots ,d_n}(u)$, which we  establish
in this subsection.

We use the expected description of elements of $\Phi^{-1}(\mathcal S_n)$ suggested by the action of $H_u, K_u$ onto the orthogonal projections $u_j,u'_k$  of $u$ onto the 
corresponding eigenspaces of $H_u^2, K_u^2$ respectively, namely
\begin{equation}\label{crucial}
\rho _ju_j=\Psi_{2j-1}H_u(u_j)\ ,\ K_u(u'_k)=\sigma _k\Psi_{2k} u'_k\ ,\ j,k=1,\dots, q\ ,
\end{equation}
where the $\Psi_r$'s  are  Blaschke products.
\s
Using the identity $I=SS^*+(\, .\, \vert 1)\ ,$ we obtain
$$H_u(u_j)=SK_u(u_j)+\Vert u_j\Vert ^2\ .$$
We then use  the formula (\ref{urho}), in this setting
\begin{equation}\label{uj}
u_j=\tau _j^2\sum _{k=1}^q \frac 1{\rho _j^2-\sigma _k^2}u'_k\ ,\ \tau _j^2=\Vert u_j\Vert ^2\ ,
\end{equation}
to get
\begin{eqnarray*}
\Psi _{2j-1}(z)H_u(u_j)(z)&=&\tau_j ^2\Psi _{2j-1}(z)\left ( z\sum _{k=1}^q   \frac{K_u(u'_k)(z)}{\rho _j^2-\sigma _k^2}  +1 \right )\\
&=&\tau_j ^2\left ( \sum _{k=1}^q   \frac{\sigma _kz\Psi _{2j-1}(z)\Psi _{2k}(z)u'_k(z)}{\rho _j^2-\sigma _k^2}  +\Psi _{2j-1}(z) \right )\ ,
\end{eqnarray*}
after using the second identity of (\ref{crucial}). On the other hand, using again (\ref{uj}),
$$\rho _ju_j(z)=\tau_j ^2\sum _{j=1}^k \frac{\rho _j\, u_k'(z)}{\rho _j^2-\sigma _k^2}\ .$$
Applying the first identity in (\ref{crucial}), we conclude
$$\sum _{k=1}^q \frac{\rho _j\ -z\sigma _kz\Psi _{2j-1}(z)\Psi _{2k}(z)}{\rho _j^2-\sigma _k^2}u'_k(z)=\Psi _{2j-1}(z)\ ,$$
which is precisely
\begin{equation}\label{CUprime}
\mathscr C(z)\mathcal U'(z) =(\Psi _{2j-1}(z))_{1\le j\le q}\ ,
\end{equation}
where  
\begin{equation}\label{c}
c_{k\ell }(z):=\frac{\rho _k -\sigma _\ell z\Psi _{2k-1}(z)\Psi _{2\ell }(z)}{\rho _k ^2- \sigma _{\ell }^2}\ ,
\end{equation}
and $\mathcal U'(z):=(u'_k(z))_{1\le k\le q}$. Since 
$$u=\sum _{k=1}^q u'_k\ ,$$
we conclude, for every $z$ such that $\mathscr C(z)$ is invertible, 
$$u(z)=\langle \mathscr C(z)^{-1}(\Psi _{2j-1}(z))_{1\le j\le q},{\bf 1}\rangle \ ,$$
as claimed by formula (\ref{luminy}).

Similarly, writing $H_u(u'_k)=SK_u(u'_k)+\Vert u'_k\Vert ^2$, and using (\ref{usigma}) in this setting which in this setting reads
\begin{equation}\label{uprimek}
u_k'=\kappa _k^2\sum _{j=1}^q \frac 1{\rho _j^2-\sigma _k^2}u_j\ ,\ \kappa _k^2=\Vert u'_k\Vert ^2\ ,
\end{equation}
we obtain that 
$$u_j=\Psi _{2j-1}h_j\ ,\ {\rm or}\ h_j:=\frac{1}{\rho _j}H_u(u_j)\ ,$$
where
$$\mathcal H(z):=\left(h_j(z)\right)_{1\le j\le q}\ ,$$
is the solution of
\begin{equation}\label{CH}
^t\mathscr C(z)\mathcal H(z)={\bf 1}\ .
\end{equation}
Let us come to  the invertibility of matrices $\mathscr C(z)$. Since $\mathscr C(0)$ is invertible, $\mathscr C(z)$ is invertible for $z$ in a small disc centered at $0$. This is enough for proving the injectivity of $\Phi _{d_1,\dots ,d_n}$.  However, let us prove that this property holds for every $z$ in the closed unit disc. Recall that
\begin{equation}\label{PsiPD}
\Psi _r(z)={\rm e}^{-i\psi _r}\frac{P_r(z)}{D_r(z)}\ ,\ D_r(z):=z^{d_r}\overline P_r\left (\frac 1z\right )\ ,
\end{equation}
where $P_r$ is a monic polynomial of degree $d_r$. Introduce the matrix $\mathscr C^\# (z)=(c^\# _{k\ell }(z))_{1\le k,\ell \le q}$ 
as
\begin{equation}\label{cdiese}
c^\# _{k\ell }(z)=\frac{\rho _k D_{2\ell }(z)D_{2k -1}(z)-\sigma _\ell z{\rm e}^{-i(\psi _{2\ell }+\psi _{2k -1})} P_{2\ell }(z)P_{2k -1}(z)}{\rho _k ^2-\sigma _\ell ^2}\ .
\end{equation}
Introducing  the notation 
$${\rm diag}(\lambda_j)_{1\le j\le q}:=\left(\begin{array}{cccc}
\lambda_1&0&\dots& 0\\
0&\ddots&\ddots&\vdots\\
\vdots&\ddots&\ddots&0\\
0&\dots& 0&\lambda_q\end{array}\right)$$
we have 
$$\mathscr C(z)={\rm diag}\left (\frac{1}{D_{2k-1}(z)}\right )\mathscr C^\# (z){\rm diag}\left (\frac{1}{D_{2\ell}(z)}\right )\ ,$$
so that $u(z)$ can be written equivalently as
$$u(z)=\langle \mathscr C^\# (z)^{-1}({\rm e}^{-i\psi _{2j-1}}P_{2j-1}(z))_{1\le j\le q}, (D_{2k}(z))_{1\le k\le q}\rangle\ ,$$
for every $z$  such that $\det \mathscr C^\# (z)\ne 0$. Using Cramer's formulae for the inverse of a matrix, it is easy to see that the degree of the denominator of the above rational function is at most $N=N:=q+\sum _{r=1}^n d_r\ ,$ and that the degree of the numerator is at most $N-1$. If $\det \mathscr C^\# (z)= 0$ for some $z$ in the closed unit disc, then, in order to preserve the analyticity of $u$,  $u$ should be written as a quotient of polynomials  of degrees smaller than $N-1$ and $N$ respectively, so that the rank of $H_u$ would be smaller than it is. Consequently, 
$\det \mathscr C^\# (z)\ne 0$ for every $z$ in the close unit disc, so that formula (\ref{luminy}) holds for every such $z$.

\subsection{Surjectivity in the case $n$ even.} 

Our purpose is now to prove that the mapping $\Phi _{d_1,\dots ,d_n}$ is onto. Since we got a candidate from the formula giving $u$
in the latter section, it may seem natural to try to check that this formula indeed provides an element $u$ of $\mathcal V_{(d_1,\dots ,d_n)}$
with the required $\Phi _{d_1,\dots ,d_n}(u)$. However, in view of the complexity of  formula (\ref{luminy}) , it seems difficult to infer from them the spectral properties of $H_u$ and $K_u$. We shall therefore use an indirect method,  by  proving that the mapping $\Phi _{d_1,\dots ,d_n}$ on $\mathcal V_{(d_1,\dots ,d_n)}$ is open, closed, and that the source space $\mathcal V_{(d_1,\dots ,d_n)}$ is not empty. Since the target space $\mathcal S_{(d_1,\dots,d_n)}$ is clearly connected, this will imply the surjectivity. A first step in proving that $\Phi _{d_1,\dots ,d_n}$ is an open mapping, consists in the construction of  antilinears operators $H$ and $K$ satisfying the
required  spectral properties, and which will be finally identified as $H_u$ and $K_u$. 
\subsubsection{Construction of the operators $H$ and $K$}
\noindent Let $$\mathcal P=((s_r)_{1\le r\le n}, (\Psi_r)_{1\le r\le n})$$
be an arbitrary element of  $ \mathcal S_{(d_1,\dots,d_n)}$ for some non negative integers $ d_r\ .$ We look for $u\in \mathcal V_{(d_1,\dots, d_n)}$,
$\Phi _{d_1,\dots ,d_n}(u)=\mathcal P$. We set $\rho _j:=s_{2j-1}\ ,\ \sigma _k:=s_{2k}\ ,\ 1\le j,k\le q\ .$
\s
Firstly, we define matrices $\mathscr C(z)$ and $\mathscr C^\# (z)$ using formulae (\ref{c}), (\ref{PsiPD}), (\ref{cdiese}). We assume moreover  the following open properties,
\begin{equation}\label{hypo1}
Q(z):=\det \mathscr C^\#(z)\ne 0\ ,\ z\in \overline \D\ ,\ {\rm deg}(Q)=N:=q+\sum _{r=1}^n d_r\ .
\end{equation}
We then define $\mathcal H(z)=(h_j(z))_{1\le j\le q}$ and $\mathcal U'(z)=(u'_k(z))_{1\le k\le q}$ to be the solutions of (\ref{CH}) and (\ref{CUprime}) respectively. We then define $\tau _j, \kappa _k >0$ by 
\begin{equation}\label{J(x)}
\prod_{j=1}^q \frac{1-x\sigma_j^2}{1-x\rho_j^2}=1+x\sum_{j=1}^q \frac{\tau_j^2}{1-x\rho_j^2}
\end{equation}
\begin{equation}\label{1/J(x)}
\prod_{j=1}^q \frac{1-x\rho_j^2}{1-x\sigma_j^2}=1-x\left (\sum_{k=1}^q \frac{\kappa_k^2}{1-x\sigma_k^2}\right )
\end{equation}
As a consequence, we recall from section \ref{Bateman} that the matrix
$$\mathcal A:=\left (\frac{1}{\rho _j^2-\sigma _k^2}\right )_{1\le j,k\le q}$$
is invertible, with 
$$\mathcal A^{-1}={\rm diag}(\kappa _k^2)^t\mathcal A {\rm diag} (\tau _j^2)\ .$$
Similarly, the matrix 
$$\mathcal B:=\mathcal A {\rm diag} (\kappa _k^2)$$
satisfies
$$\mathcal B^{-1}=^t\mathcal A {\rm diag} (\tau _j^2)\ .$$
From these identities and
$$\mathscr C(z)={\rm diag}(\rho _j)\mathcal A-z{\rm diag}(\Psi _{2j-1}(z))\mathcal A{\rm diag}(\sigma _k\Psi _{2k}(z)),$$ we easily prove the following lemma.
\begin{lemma}\label{BCPsi}
For every $z\in \overline \D$,
$$\mathscr C(z)\ ^t\mathcal B\ {\rm diag}(\Psi _{2\ell -1}(z))_{1\le \ell \le q}={\rm diag}(\Psi _{2j-1}(z))_{1\le j\le q}\ \mathcal B\ ^t\mathscr C(z)\ .$$
\end{lemma}
In view of Lemma \ref{BCPsi} and of the definition of $\mathcal H(z)$, we infer
$$\mathscr C(z)^t\mathcal B {\rm diag} (\Psi _{2j-1}(z))\mathcal H(z)= {\rm diag}(\Psi _{2j-1}(z))_{1\le j\le q}\ \mathcal B ({\bf 1}) ={\rm diag}(\Psi _{2j-1}(z))_{1\le j\le q}$$
where we have used $\mathcal B({\bf 1}) ={\bf 1}$, which is a consequence of identity (\ref{1/J(x)}). We conclude, in view of the definition of $\mathcal U'(z)$, 
\begin{equation}\label{U'PsiH}
\mathcal U'(z)=^t\mathcal B {\rm diag} (\Psi _{2j-1}(z))\mathcal H(z),
\end{equation}
or
\begin{equation}\label{u'Psih} 
u'_k=\kappa _k^2\sum _{j=1}^q \frac{\Psi _{2j-1}h_j}{\rho _j^2-\sigma _k^2}\ ,\ \Psi _{2j-1}h_j=\tau _j^2\sum _{k=1}^q \frac{u'_k}{\rho _j^2-\sigma _k^2}\ .\end{equation}
We are going to define  antilinear operators $H$ eand $K$ on the space $$W=\frac{\C_{N-1}[z]}{Q(z)}.$$  Cramers' formulae show that
 \begin{equation}\label{hDR}
 h_j(z)=D_{2j-1}(z)R_{2j-1}(z)\ ,
 \end{equation}
where the numerator of $R_{2j-1}$ is a polynomial of degree at most $N-1-d_{2j-1}$, the denominator being $Q$. Similarly,
$$u'_k(z)=D_{2k}(z)R_{2k}(z)\ ,$$
where the numerator of $R_{2k}$ is a polynomial of degree at most $N-1-d_{2k}$, the denominator being $Q$.
\s
We can therefore define the following elements of $W$,
\begin{eqnarray*}
e_{2j-1, a}(z)&:=&\frac{z^a}{D_{2j-1}(z)}h_j(z),\  0\le a\le d_{2j-1}\ ,\\ e_{2k,b}(z)&:=&\frac{z^b}{D_{2k}(z)} u_{k}'(z)\ ,\ 1\le b\le d_{2k}\ ,
\end{eqnarray*}
for $1\le j,k\le q$. 
 Let 
 \begin{eqnarray*}
 \mathcal E&:=&\left ((e_{2j-1,a})_{0\le a\le d_{2j-1}}\ ,\ (e_{2k,b})_{1\le b\le d_{2k}}\right )_{1\le j,k\le q} \ ,\\
\mathcal E'&:=&\left ((e_{2j-1,a})_{1\le a\le d_{2j-1}}\ ,\ (e_{2k,b})_{0\le b\le d_{2k}}\right )_{1\le j,k\le q} \ .
\end{eqnarray*}
We need a second open assumption.
\begin{equation}\label{hypo2}
\mathcal E\text{ and } \mathcal E'\text{ are bases of }W.
\end{equation}

We then define  antilinear operators $H$ and $K$ on $W$  by
\begin{eqnarray*}
H(e_{2j-1,a})&:=&\rho _j{\rm e}^{-i\psi _{2j-1}}e_{2j-1,d_{2j-1}-a}\ ,\ 0\le a\le d_{2j-1}\ ,\\
H(e_{2k,b})&:=&\sigma _k{\rm e}^{-i\psi _{2k}}e_{2k,d_{2k}+1- b},\ 1\le b\le d_{2k}\ ,\\
K(e_{2j-1,a})&:=& \rho _j {\rm e}^{-i\psi _{2j-1}}e_{2j-1,d_{2j-1}-a-1}    \ 0\le a\le d_{2j-1}-1\ ,\\
K(e_{2k,b})&:=&\sigma _k{\rm e}^{-i\psi _{2k}}e_{2k,d_{2k}- b},\ 0\le b\le d_{2k}\  .
\end{eqnarray*}
for $1\le j,k\le q$. 
\s
Notice that $S^*$ acts on $W$. Indeed, if $$Q(z):= Q(0)(1-c_1z-c_2z^2-\dots-c_N z^N),$$ we have
$$S^*\left (\frac {1}{Q}\right )=\sum_{j=1}^N c_j  \frac{z^{j-1}}{ Q(z)}\ .$$
As a first step, we prove that $S^*H=K$. Indeed, we just have to check this identity on each vector of the basis $\mathcal E$. In view of the above definition of $H$ and $K$, the identity is trivial on the vectors
$$e_{2j-1,a}\ ,\ 0\le a\le d_{2j-1}-1\ ,\ e_{2k,b}\ ,\ 1\le b\le d_{2k}\ .$$
It remains to prove it for $e_{2j-1,d_{2j-1}}$, or equivalently for $\Psi _{2j-1}h_j$. We have
$$S^*H(\Psi _{2j-1}h_j)=\rho _jS^*h_j\ .$$
This quantity can be calculated by applying $S^*$ to the equation satisfied by $\mathcal H(z)$, namely
$$^t\mathcal A {\rm diag}(\rho _j) \mathcal H(z)-z{\rm diag}(\sigma _k\Psi _{2k}(z))^t \mathcal A{\rm diag}{\Psi _{2j-1}(z)}\mathcal H(z)={\bf 1}\ .$$
We obtain
\beno
^t\mathcal A {\rm diag}(\rho _j) S^*\mathcal H(z)&=&{\rm diag}(\sigma _k\Psi _{2k}(z))^t \mathcal A{\rm diag}{\Psi _{2j-1}(z)}\mathcal H(z)\\
&=&{\rm diag}\left (\frac{\sigma _k\Psi _{2k}(z)}{\kappa _k^2}\right )\mathcal U'(z)\ ,
\eeno
where we have used (\ref{U'PsiH}).  We obtain, by using the formula for $\mathcal A^{-1}$, 
\begin{equation}\label{S*hj}
S^*H(\Psi _{2j-1}h_j)(z)=\rho _jS^*h_j(z)=\tau _j^2\sum _{k=1}^q \frac{\sigma _k\Psi _{2k}u'_k}{\rho _j^2-\sigma _k^2}\ .
\end{equation}
On the other hand, using the second part of (\ref{u'Psih}), 
$$K(\Psi _{2j-1}h_j)=\tau _j^2\sum _{k=1}^q \frac{Ku'_k}{\rho _j^2-\sigma _k^2}=\tau _j^2\sum _{k=1}^q \frac{\sigma _k\Psi _{2k}u'_k}{\rho _j^2-\sigma _k^2}\ .$$
Therefore we have proved $S^*H(\Psi _{2j-1}h_j)=K(\Psi _{2j-1}h_j)$, and finally $S^*H=K$.
\s
As a next step, we show the following identity
\begin{equation}\label{S*H}
\forall h\in W\ ,\ S^*HS^*(h)=H(h)-(1\vert h)u\ ,\ u:=\sum _{k=1}^q u'_k\ .
\end{equation}
It is enough to check this identity on any vector of the basis $\mathcal E$. In view of the definition of $H$, it is trivially satisfied on the vectors
$$e_{2j-1,a}\ ,\ 1\le a\le d_{2j-1}\ ,\ e_{2k,b}\ ,\ 1\le b\le d_{2k}\ .$$
Therefore we just have to check it on $e_{2j-1,0}$, or equivalently on $h_j$. On the one hand, we have, from the identity (\ref{S*hj}),
$$S^*h_j(z)=\frac{\tau _j^2}{\rho _j}\sum _{k=1}^q \frac{\sigma _k\Psi _{2k}u'_k}{\rho _j^2-\sigma _k^2}\ .$$
Applying $S^*H=K$, we get
\beno
S^*HS^*h_j(z)&=&\frac{\tau _j^2}{\rho _j}\sum _{k=1}^q \frac{\sigma _k^2u'_k}{\rho _j^2-\sigma _k^2}\\
&=&\rho _j \Psi _{2j-1}(z)h_j(z)-\frac{\tau _j^2}{\rho _j}u(z)\ ,
\eeno
where we have used (\ref{u'Psih} ) again. We conclude by observing that $\rho _j\Psi _{2j-1}h_j=H(h_j)$, and that
$$(1\vert h_j)=\overline {h_j(0)} =\frac{\tau _j^2}{\rho _j}\ ,$$
in view of the equation on $\mathcal H(z)$ for $z=0$,
$$\mathcal A {\rm diag}(\rho _j)\mathcal H(0)={\bf 1}\ ,$$
and of the expression of $\mathcal A^{-1}={\rm diag}(\tau _j^2) \mathcal B$.
\s
Finally, we prove that an operator satisfying equality (\ref{S*H}) is actually a Hankel operator. 

\begin{lemma}\label{coroS*HS*}
Let $N$ be a positive integer. Let $$Q(z):= 1-c_1z-c_2z^2-\dots-c_N z^N$$ be a complex valued polynomial with no roots in the closed unit disc. 
Set $$W:=\frac{\C_{N-1}[z]}{Q(z)}\subset L^2_+(\S^1)\ .$$
Let $H$ be an antilinear operator on $W$ satisfying
$$S^*HS^*=H-(1\vert \cdot)u$$ on $W$, for some $u\in W$.
Then $H$ coincides with the Hankel operator of symbol $u$ on $W$.
\end{lemma}
\begin{proof}
Consider the operator $\tilde H:= H-H_u$, then $S^*\tilde HS^*=\tilde H$ on $W$ and hence, it suffices to show that, if $H$ is an antilinear operator on $W$ such that $S^*HS^*=H$, then $H=0$.\\

The family $(e_j)_{1\le j\le N}$ where $$e_0(z)=\frac 1{Q(z)}\ , \ e_j(z)=S^j e_0(z), \  j=1,\dots,N-1$$ is a basis of $W$. 
Using that $$S^*HS^*=H$$
we get  on the one hand that $He_k=(S^*)^k He_0$. On the other hand, since
$$S^*e_0=S^*\left (\frac 1{Q}\right )=\sum_{j=1}^N c_j  e_{j-1} \ ,$$
this implies  $$He_0=S^*HS^*e_0=\sum_{j=1}^N\overline{c_j} (S^*)^j He_0\ ,$$ hence $\overline Q (S^*)H(e_0)=0$ .
Observe that, by the spectral mapping theorem,  the spectrum of $\overline{Q}(S^*)$ is contained into
$\overline Q(\overline \D)$,  hence $\overline{Q}(S^*)$ is one-to-one. We conclude that $H(e_0)=0$, and finally that $H= 0$.

\end{proof}

Applying Lemma  \ref{coroS*HS*} to our vector space $W$,  we conclude  that $H=H_u$, and consequently $K=S^*H_u=K_u$. In view of the definition of 
$H$ and $K$, we conclude that  $\Phi _{d_1,\dots ,d_n}(u)=\mathcal P$. 
\subsubsection{The mapping $\Phi _{d_1,\dots ,d_n}$ is open from $\mathcal V_{(d_1,\dots ,d_n)}$ to $\mathcal S_{(d_1,\dots,d_n)}$.  }
Notice that we have not yet completed the proof of Theorem \ref{mainfiniterank} since the previous calculations were made under the assumptions
(\ref{hypo1}) and (\ref{hypo2}). In other words, we proved that an element $\mathcal P$ of the target space satisfying  (\ref{hypo1}) and (\ref{hypo2})
is in the range of $\Phi _{d_1,\dots ,d_n}$. On the other hand, in section \ref{section explicit}, we proved that these properties are satisfied by the elements of the range of $\Phi _{d_1,\dots ,d_n}$. Since these hypotheses are clearly open in the target space, we infer that the range of $\Phi _{d_1,\dots ,d_n}$ is open.

\subsubsection{The mapping $\Phi _{d_1,\dots ,d_n}$ is closed.}\label{closed}
Let $(u^\e )$ be a sequence of $ \mathcal V_{(d_1,\dots ,d_n)}$ such that
$\Phi _{d_1,\dots ,d_n}(u^\e ):=\mathcal P^\e $ converges to some $\mathcal P$ in $\mathcal S_{(d_1,\dots,d_n)}$ as $\e $ goes to $0$.
In other words, 
$$\mathcal P^\e =\left((s_r^\e )_{1\le r\le 2q}, (\Psi_r^\e)_{1\le r\le  2q}\right)
 \longrightarrow \mathcal P=\left ((s_r)_{1\le r\le 2q}, (\Psi_r)_{1\le r\le 2q} \right) $$
in $\mathcal S_{(d_1,\dots,d_n)}$ as $\e\to 0$. We have to find $u$ such that $\Phi(u)=\mathcal P$. Since 
$$\Vert u^\e \Vert _{H^{1/2}} \simeq {\rm Tr}(H_{u^\e }^2)=\sum_{r=1}^{2q} d_r (s_r^\e)^2+\sum _{j=1}^q (s _{2j-1}^\e )^2$$
is bounded, we may assume, up to extracting a subsequence,  that $u^\e $ is weakly convergent to some $u$ in $H^{1/2}_+$, and strongly convergent in $L^2_+$ by the Rellich theorem. Moreover,
the rank of $H_u$ is at most $N=q+\sum_{r=1}^{2q}d_r$.

Denote by $u^\e _j$ the orthogonal projection of $u^\e $ onto $\ker (H_u^2-(s _{2j-1}^\e)^2 I)$, $j=1,\dots ,q$, and by $(u^\e _k)' $  the orthogonal projection of $u^\e $ onto $\ker (K_u^2-(s _{2k}^\e)^2 I)$, $k=1,\dots , q$. Since all these functions are bounded in $L^2_+$, we may assume that, for the weak convergence in $L^2_+$,
$$ u^\e _j\rightharpoonup v_j\ ,\ (u^\e _k)' \rightharpoonup v'_k\ .$$
Taking advantage of the strong convergence of $u^\e $ in $L^2_+$, we can pass to the limit in
$$(u^\e \vert u^\e _j)=\Vert u^\e _j\Vert^2=:(\tau ^\e _j)^2 \ ,\ (u^\e \vert (u^\e _k)')=\Vert (u^\e _k)'\Vert^2=:(\kappa _k^\e)^2\ ,$$
and obtain, thanks to the explicit expressions (\ref{tau}), (\ref{kappa}) of $\tau_j^2, \kappa _k^2$ in terms of the $s_r$, 
$$(u\vert v_j)=\tau _j^2>0\ ,\ (u\vert v_k')=\kappa _k^2>0\ ,$$
in particular $v_j\ne 0, v_k'\ne 0$ for every $j,k$.

On the other hand, passing to the limit in
\begin{eqnarray*}
s ^\e _{2j-1}u^\e _{j}&=&\Psi_{2j}^\e\  H_{u^\e }u^\e _j\ ,\ H_{u^\e }^2(u^\e _j)=(s ^\e _{2j-1})^2u^\e _j\ ,\\
K_{u^\e }(u^\e _k)'&=&s ^\e _{2k}\Psi_{2k}^\e\ (u^\e _k)'\ ,\ K_{u^\e }^2(u^\e _k)'=(s ^\e _{2k})^2(u^\e _k)'\ ,\\
u^\e &=&\sum _{j=1}^q u^\e _j =\sum _{k=1}^q (u^\e_k)' \ ,
\end{eqnarray*}
we obtain
\begin{eqnarray*}
s  _{2j-1}v_j&=&\Psi_{2j}\  H_{u }v _j\ ,\ H_{u }^2(v _j)=s _{2j-1}^2v _j\ ,\\
K_{u }v_k'&=&s_{2k}\Psi_{2k} \ v _k' \ ,\ K_u^2(v'_k)=s_{2k}^2v'_k\ ,\\
u &=&\sum _{j=1}^q v _j =\sum _{k=1}^q v_k' \ .
\end{eqnarray*}
This implies that $u\in \mathcal V_{(d_1,\dots ,d_n)}$, $v_j=u_j, v'_k=u'_k$, and $\Phi(u)=\mathcal P$. The proof of Theorem \ref{mainfiniterank} is thus 
complete in the case $n=2q$, under the assumption that $ \mathcal V_{(d_1,\dots ,d_n)}$ is non empty. 

\subsection{$ \mathcal V_{(d_1,\dots ,d_n)}$ is non empty, $n$ even }

Let $n$ be a positive even integer. The aim of this section is to prove that $\mathcal V _{(d_1,\dots,d_n)}$ is not empty for any multi-index $(d_1,\dots,d_n)$ of non 
negative integers.

The preceding section implies that, as soon as $\mathcal V _{(d_1,\dots,d_n)}$ is non empty, it is homeomorphic to $\mathcal S_{(d_1,\dots,d_n)}$, via the explicit formula (\ref{luminy}). We argue by induction on the integer  $d_1+\dots
+d_n$. In  the generic case consisting of simple eigenvalues (see \cite{GG1}), we proved that for any positive integer $q$, $\mathcal V _{(0,\dots,0)}$ (which was denoted by $\mathcal V_{\rm gen}(2q)$ in \cite{GG1}) is non empty. As a consequence, to any given sequence $((s_r),(\Psi_r))\in \Omega_{2q}\times \T^{2q}$ corresponds a unique $u\in \mathcal V_{(0,\dots,0)}$, the $s_{2j-1}^2$ being the simple eigenvalues of $H_u^2$ and the $s_{2j}^2$ the simple eigenvalues of $K_u^2$. This gives the theorem in the case $(d_1,\dots, d_n)=(0,\dots,0)$ for every $n$, which is one of the main theorems of \cite{GG2}.
Let us turn to the induction argument, which  is clearly a consequence of the following lemma.

\begin{lemma}\label{non empty}
Let $n=2q$, $(d_1,\dots,d_n)$ and $1\le r\le n-1$.
Assume $$\mathcal V_{(d_1,\dots,d_r,0,0, d_{r+1},\dots, d_n)}\text{ is non empty, }$$ then $$\mathcal V _{(d_1,\dots,d_{r-1},d_r+1,d_{r+1},\dots,d_n)}\text{ is non empty.}$$
\end{lemma}

\begin{proof} The main idea is to collapse three consecutive singular values\index{singular value}. More precisely, we will construct elements of $\mathcal V _{(d_1,\dots,d_{r-1},d_r+1,d_{r+1},\dots,d_n)}$ as limits of sequences in $\mathcal V_{(d_1,\dots,d_r,0,0, d_{r+1},\dots, d_n)}$ such that $s_r, s_{r+1}, s_{r+2}$ converge to the same value. The difficulty is to make the sequence converge strongly, and we will see that this requires some non resonance assumption on the 
corresponding angles.
\s
We  consider the case $r=1$.  The proof in the other cases  follows the same lines. Write $m_j:=d_{2j-1}+1$ and $\ell_k:=d_{2k}+1$. From the assumption, $$\mathcal V:=\mathcal V_{(d_1,0,0, d_{2},\dots, d_n)}\text{ is non empty, }$$ hence $\Phi$ establishes a diffeomorphism from $\mathcal V$ into $$\mathcal S_{(d_1,0,0, d_2,\dots,d_n)}\ .$$ 
Therefore, given $\rh >\si _2>\rh _3>\si _3>\dots >\rh _{q+1}>\si _{q+1}>0$, and $\Psi _1,\theta _1,\f _2, \Psi _4, \dots ,\Psi _{n+2}$, for every $\e >0$ small enough,  we define
$$u^\e :=u((s_1^\e ,\dots ,s_{2(q+1)}^\e ),(\Psi _1 ,\dots ,\Psi _{2(q+1)} ))$$
with
\beno
s_1^\e &:=&\rh +\e \ ,\ s_2^\e :=\rh \ ,\ s_3^\e :=\rh -\e\ ,\ s_{2k}^\e :=\sigma _k\ ,\ 2\le k\le q+1\ ,\\ 
s_{2j-1}^\e &:=&\rh _j\ ,\ 3\le j\le q+1\ , \Psi _2 :={\rm e}^{-i\t _1}\ ,\ \Psi _3:={\rm e}^{-i\f _2}\ .
 \eeno
By making $\e $ go to $0$, we are going to construct $u$ in $\mathcal V _{(d_1+1,d_2,\dots,d_n)}$, such that  $s_1(u)=\rh$ is of multiplicity $m_1+1=d_1+2$, $s_{2j-1}(u)=\rho_{j+1}$, $j=2,\dots ,q,$ is of multiplicity $m_{j}$ and $s_{2k}(u)=\sigma_{k+1}$ for $k=1,\dots ,q$, is of multiplicity $\ell_{k}$.
\s
First of all, observe that $u^\e $ is bounded in $H^{1/2}_+$, since its norm is equivalent to $${\rm Tr}(H_{u^\e }^2)=(d_1+1)(\rh +\e )^2+(\rh -\e )^2+d_2\si _2^2+(d_3+1)\rh _3^2+\dots $$ Hence, by the Rellich theorem, up to extracting a subsequence, $u^\e $ strongly converges in $L^2_+$ to some $u\in H^{1/2}_+$. Similarly, the orthogonal projections $u^\e _j$ and $(u^\e_k)'$ are bounded in $L^2_+$, hence are weakly convergent to $v_j$, $v'_k$. Arguing as in the previous subsection, we have
\beno
(u\vert v_1)&=&\lim _{\e \rightarrow 0}\Vert u_1^\e \Vert ^2=\lim _{\e \rightarrow 0}\frac{(\rh +\e )^2-\rh ^2}{(\rh +\e )^2-(\rh -    \e )^2}
\frac{\prod _{k\ge 2}((\rh +\e )^2-\si _k^2)}{\prod _{k\ge 3}((\rh+\e )^2-\rh _k^2)}\\
&=&\frac 12\frac{\prod _{k\ge 2}(\rh ^2-\si _k^2)}{\prod _{k\ge 3}(\rh^2-\rh _k^2)},\\
(u\vert v_2)&=&\lim _{\e \rightarrow 0}\Vert u_2^\e \Vert ^2=\lim _{\e \rightarrow 0}\frac{(\rh -    \e )^2-\rh ^2}{(\rh -   \e )^2-(\rh + \e )^2}
\frac{\prod _{k\ge 2}((\rh -    \e )^2-\si _k^2)}{\prod _{k\ge 3}((\rh-    \e )^2-\rh _k^2)}\\
&=&\frac 12\frac{\prod _{k\ge 2}(\rh ^2-\si _k^2)}{\prod _{k\ge 3}(\rh^2-\rh _k^2)}, 
\eeno
\beno
(u\vert v_j)&=&\lim _{\e \rightarrow 0}\Vert u_j^\e \Vert ^2\ , j\ge 3,\\
&=&\lim _{\e \rightarrow 0}\frac{\rh _j^2-\rh ^2}{(\rh_j^2-(\rh -    \e )^2)(\rh _j^2-(\rh +\e )^2)}
\frac{\prod _{k\ge 2}(\rh _j^2-\si _k^2)}{\prod _{k\ge 3, k\ne j}(\rh _j^2-\rh _k^2)}\\
&=&\frac 1{\rh _j^2-\rh ^2}\frac{\prod _{k\ge 2}(\rh _j^2-\si _k^2)}{\prod _{k\ge 3, k\ne j}(\rh _j^2-\rh _k^2)}
\eeno
and 
\beno
(u\vert v'_1)&=&\lim _{\e \rightarrow 0}\Vert (u_1^\e)' \Vert ^2\\&=&\lim _{\e \rightarrow 0}(\rh ^2-(\rh +\e )^2)(\rh^2-(\rh -    \e )^2)
\frac{\prod _{k\ge 3}(\rh ^2-\rh _k^2)}{\prod _{k\ge 2}(\rh^2-\si _k^2)}=0,\\
(u\vert v'_k)&=&\lim _{\e \rightarrow 0}\Vert (u_k^\e)' \Vert ^2\ , \ k\ge 2\ \\
&=&\lim _{\e \rightarrow 0}\frac{(\si_k ^2-(\rh +\e )^2)(\si _k^2-(\rh -    \e )^2)}{\si _k^2-\rh ^2}
\frac{\prod _{j\ge 3}(\si_k ^2-\rh _j^2)}{\prod _{j\ge 2, j\ne k}(\si _k^2-\si _j^2)}\\
&=&\frac 1{\si_k^2-\rh ^2}\frac{\prod _{j\ge 3}(\si_k ^2-\rh _j^2)}{\prod _{j\ge 2, j\ne k}(\si _k^2-\si _j^2)}
\eeno
In view of these identities, we infer that $v_j, j\ge 1$ and $v'_k,k\ge 2$ are not $0$, while $v'_1=0$. Passing to the limit into the identities
\begin{eqnarray*}
s ^\e _{2j-1}u^\e _{j}&=&\Psi_{2j}\  H_{u^\e }u^\e _j\ ,\ H_{u^\e }^2(u^\e _j)=(s ^\e _{2j-1})^2u^\e _j\ ,\\
K_{u^\e }(u^\e _k)'&=&s ^\e _{2k}\Psi_{2k}\ (u^\e _k)'\ ,\ K_{u^\e }^2(u^\e _k)'=(s ^\e _{2k})^2(u^\e _k)'\ ,\\
u^\e &=&\sum _{j=1}^q u^\e _j =\sum _{k=1}^q (u^\e_k)' \ ,
\end{eqnarray*}
we obtain
\begin{eqnarray*}
s  _{2j-1}v_j&=&\Psi_{2j}\  H_{u }v _j\ ,\ H_{u }^2(v _j)=s _{2j-1}^2v _j\ ,\\
K_{u }v_k'&=&s_{2k}\Psi_{2k} \ v _k' \ ,\ K_u^2(v'_k)=s_{2k}^2v'_k\ ,\\
u &=&\sum _{j=1}^{q+1} v _j =\sum _{k=2}^{q+1} v_k' \ ,
\end{eqnarray*}
hence 
$$\dim E_u(\rh _j)\ge m_j=d_{2j-1}+1\ ,\ j\ge 3, \ \dim F_u(\si _k)\ge \ell _k=d_{2k}+1\ ,\ k\ge 2\ .$$
On the other hand, we know that
\beno {\rm rk} (H_{u^\e}) +{\rm rk}(K_{u^\e})&=&(2d_1+1)+1+1+(2d_2+1)+\dots +(2d_{2q}+1)\\
&=&[2(d_1+1)+1]+\sum _{r=2}^{2q}(2d_{r}+1),
\eeno
and, consequently, ${\rm rk} (H_{u}) +{\rm rk}(K_{u})$ is not bigger than the right hand side. In order to conclude that $u\in \mathcal V_{(d_1+1,d_2,\dots ,d_n)}$, it therefore remains to prove that 
$$\dim E_u(\rh )\ge m_1+1=d_1+2\ .$$
We use the explicit formulae obtained in section \ref{section explicit}, which read
$$u^\e _j(z)= \Psi _{2j-1}(z)\frac{\det \mathscr C_j^\e (z)}{\det \mathscr C^\e(z)}\ ,$$ where 
 $\mathscr C^\e (z)$ denotes the matrix  $(c_{k\ell }^\e(z))_{1\le k,\ell \le q+1}$, with 
 $$c_{jk}^\e (z)=\frac{\rho _j-\sigma _kz\Psi _{2j-1}(z)\Psi _{2k}(z)}{\rho _j^2-\sigma _k^2}\ ,\ \rho _1=\rho +\e ,\sigma _1=\rho ,\rho _2=\rho -    \e \ ,$$ 
 and $\mathscr C^\e _j(z)$ denotes the matrix deduced from $\mathscr C^\e (z)$ by replacing the line $j$ by the line $(1,\dots ,1)$.
Notice that elements $c _{11}^\e (z)$ and $c_{21}^\e (z)$ in formulae (\ref{c}) are of order $\e ^{-1}$, hence we compute 
\beno
\lim _{\e \rightarrow 0}2\e \det \mathscr C^\e (z)
&=& \left|\begin{array}{lll}1-z\expo_{-i\t _1}\Psi_1& \displaystyle {\frac{\rh -z\sigma _2\Psi_1 \Psi_{4}}{\rh ^2-\si _2 ^2}}& \dots \\
\displaystyle {-(1-z\expo _{-i(\t _1+\f _2)}) }&   \displaystyle {\frac{\rh -z\sigma _2\expo _{-i\f _2} \Psi_{4}}{\rh ^2-\si _2 ^2}}& \dots \\
0 & \dots &\dots  \end{array}\right|\ .
\eeno 
We compute this determinant by using the following formula
$$\left \vert  \begin{array}{llll} a_{11} & a_{12} &\dots & a_{1N}\\
a_{21} &a_{22}& \dots &a_{2N}\\
0 & a_{32} & \dots &a_{3N}\\       
0 & \dots & \dots &\dots\\ 
\vdots &\vdots &\vdots &\vdots \\
0 & \dots & \dots &\dots 
\end{array}      \right \vert =\left \vert  \begin{array}{lll} b_2  &\dots & b_{N}\\
 a_{32} & \dots &a_{3N}\\       
  \dots & \dots &\dots\\ 
\vdots &\vdots &\vdots \\
 \dots & \dots &\dots 
\end{array}      \right \vert \ ,\ b_k:=a_{11}a_{2k}-a_{21}a_{1k}\ ,\ k\ge 2\ .$$
Observe that
\begin{eqnarray*}
b_k&=&(1-z\expo_{-i\t _1}\Psi_1) {\frac{\rh -z\sigma _k\expo_{-i\f_2} \Psi_{2k}}{\rh ^2-\si _k ^2}}+ (1-z\expo _{-i(\t _1+\f _2)} ){\frac{\rh -z\sigma _k\Psi_1 \Psi_{2k}}{\rh ^2-\si _k ^2}}\\
&=&2(1-q(z)z)\left ({\frac{\rh -z\sigma _k \tilde \Psi_1  \Psi_{2k}}{\rh ^2-\si _k ^2}}\right)
\end{eqnarray*}
where 
$$q(z)=\expo_{-i\t_1}\frac{   \Psi_1(z)+\expo_{-i\f_2}}{ 2}$$ and 
$$\tilde \Psi_1(z)=\frac{z\expo_{-i(\pi+\t_1+\f_2)}\Psi _1(z)+\frac {\Psi _1(z)+\expo_{-i\f _2}}2}{1-q(z)z}.$$
We obtain 
$$ \lim _{\e \rightarrow 0}2\e \det \mathscr C^\e (z)=2(1-q(z)z)\det((\tilde c_{jk})_{2\le j, k\le q+1})$$
where, for $k\ge 2$, $j \ge 3$,
\beno
\tilde c_{2k}&=& \frac{\rh -z\si _k \tilde \Psi_1  (z)\Psi_{2k}(z)}{ (\rho ^2-\si _k^2)}\\
\tilde c_{jk}&=&c_{jk}= \frac{\rh _j -z\sigma _k \Psi_{2j -1}(z)\Psi_{2k}(z)}{\rh _j ^2-\si _k ^2}\ .
 \eeno
 
We know that $\Psi_1$ is a Blaschke product of degree $m_1-1$. Let us verify that it is possible to choose $\varphi_2$ so that $ \tilde \Psi_1  $ is a Blaschke product of degree $m_1$.  We first claim that it is possible to choose $\varphi_2$ so that $1-q(z)z\neq 0$ for $|z|\le 1$. Assume $1-q(z)z=0$.
Then
\begin{equation}\label{chi1}
1=\frac 12\left (\expo_{-i\t_1}\Psi_1(z)z+{\rm e}^{-i(\f_2+\t_1)}z\right )\ .
\end{equation}
First notice that this clearly imposes $\vert z\vert =1$. Furthermore, this implies  equality in the Minkowski inequality, therefore there exists $\lambda>0$ so that $\Psi_1(z)=\lambda{\rm e}^{-i\f_2}$ and, eventually, that $\Psi_1(z)={\rm e}^{-i\f_2}$ since $|\Psi_1(z)|=1$. Inserting this in equation (\ref{chi1}) gives $z={\rm e}^{i(\f_2+\t_1)}$ so that $\Psi_1({\rm e}^{i(\f_2+\t_1)})={\rm e}^{-i\f_2}$. If this equality holds true for any choice of $\f_2$, by analytic continuation inside the unit disc, we would have
 $$\Psi_1(z)=\frac{{\rm e}^{i\t_1}}z$$ which is not possible since $\Psi _1$ is  a holomorphic function in the unit disc. Hence, one can choose $\varphi_2$ in order to have $1-q(z)z\neq 0$ for any $|z|\le 1$. It implies that $ \tilde \Psi_1  $ is a holomorphic rational function in the unit disc. Moreover, if $\vert z\vert =1$, 
 $$\tilde \Psi _1(z)=\expo_{-i(\pi+\t_1+\f_2)}\Psi _1(z)\frac{z-\overline {q(z)}}{1-zq(z)}\ ,$$
 hence $\vert \tilde \Psi _1(z)\vert =1$. We conclude that $\tilde \Psi _1$  is a Blaschke product. Finally, its  degree is $\deg(\Psi _1)+1=m_1$.
\s
 Next, we perform the same calculation with the numerator of $\det \mathscr C_j^\e(z)$ for $j=1,2$.
 We compute
 \beno &&\lim _{\e \rightarrow 0} 2\e\det \mathscr C_1^\e(z)=\\
&=& \left|\begin{array}{ccc}0& 1 & \dots 1 \\
\displaystyle {-(1-z\expo _{-i(\t _1+\f _2)} )}&  \displaystyle {\frac{\rh -z\sigma _2\expo _{-i\f _2} \Psi_{4}}{\rh ^2-\si _2 ^2}}& \dots \\
0 & \dots & \dots \end{array}\right|\\
&=& (1-z\expo _{-i(\t _1+\f _2)} )   \det\left(\begin{array}{ccc} 1&\dots &1 \\
c_{j2} &\dots & c_{j\, q+1}\end{array}\right)_{j\ge 3}\eeno
and 
 \beno &&\lim _{\e \rightarrow 0}2\e \det \mathscr C_2^\e(z)=\\
&=& \left\vert \begin{array}{ccc} \displaystyle {1-z\expo _{-i\t _1} \Psi_1}& \displaystyle {\frac{\rh -z\sigma _2\Psi_1\Psi_{4}}{\rh ^2-\si _2 ^2}} & \dots \\
0 &  1& \dots 1 \\
0& \displaystyle {\frac{\rh _3-z\sigma _2\Psi_5\Psi_{4}}{\rh _3^2-\si _2 ^2}} & \dots  \end{array}\right \vert \\
&=& (1-z\expo _{-i\t _1} \Psi_1)\det\left(\begin{array}{ccc} 1&\dots &1 \\
c_{j2} &\dots & c_{j\, q+1}\end{array}\right)_{j\ge 3}\eeno

Hence we have, for the weak convergence in $L^2_+$, 
 \beno v_1(z)&:=&\lim _{\e \rightarrow 0}u_1^\e(z)=\Psi _1(z)\frac{(1-z\expo _{-i(\t _1+\f _2)} )}{2(1-q(z)z)}\cdot\frac{ \det\left(\begin{array}{ccc} 1&\dots &1 \\
c_{j2} &\dots & c_{j\, q+1}\end{array}\right)_{j\ge 3}}{
\det \left(( \tilde c_{jk})_{2\le j,k\le q+1}\right )}\\
 v_2(z) &:=&\lim _{\e \rightarrow 0}u_2^\e(z)=   {\rm e}^{-i\varphi_2}\frac{{(1- z\expo_{-i\t _1}\Psi_1(z))}}{2(1-q(z)z)}\cdot\frac{ \det\left(\begin{array}{ccc} 1&\dots &1 \\
c_{j2} &\dots & c_{j\, q+1}\end{array}\right)_{j\ge 3}}{\det \left( ( \tilde c_{jk})_{2\le j,k \le q+1}\right )}\ .
 \eeno
Furthermore,  if $D_1$ denotes the normalized denominator of $\Psi_1$, we have
 \begin{eqnarray*}
  H_{u^\e}^2\left(\frac {z^a}{D_1(z)}\frac{u_1^\e}{\Psi_1}\right)&=&(\rh+\e)^2\frac {z^a}{D_1(z)}\frac{u_1^\e}{\Psi_1}, \ 0\le a \le m_1 -1,\\ 
  H_{u^\e}^2(u_2^\e)&=&(\rh-   \e)^2u_2^\e,\\
\end{eqnarray*}
  Passing to the limit in these identities as $\e $ goes to $0$,  we get
  \begin{eqnarray*}
  H_{u}^2\left(\frac {z^a}{D_1(z)}\frac{v_1}{\Psi_1}\right)&=&\rh^2\frac {z^a}{D_1(z)}\frac{v_1}{\Psi_1}, \  0\le a\le m_1-1, \\ 
  H_{u}^2(v_2)&=&\rh^2v_2\ .
\end{eqnarray*}

  It remains to prove that the dimension of the vector space generated by $$v_2\ ,\ \frac {z^a}{D_1(z)}\frac{v_1}{\Psi_1}, \ 0\le a\le m_1-1,  $$  is $m_1+1$. From the expressions of $v_1$ and $v_2$, it is equivalent to prove that  the dimension of the vector space spanned by the functions $$\ (1-{\rm e}^{-i\t_1}z\Psi_1(z))\ ,\ \frac{z^a}{D_1(z)}(1-{\rm e}^{-i(\f_2+\t_1)}z), \ 0\le a\le m_1-1,$$
  is $m_1+1$. We claim that our choice of $\f_2$ implies that this family is free. Indeed, assume that for some  coefficients $\lambda_a$, $0\le a \le m_1-1$, we have
  $$\sum_{a=0}^{m_1-1}\lambda_a\frac{z^a}{D_1(z)}=\frac{1-{\rm e}^{-i\t_1}z\Psi_1(z)}{1-{\rm e}^{-i(\f_2+\t_1)}z}$$
  then, as the left hand side is a holomorphic function in $\overline \D$, it would imply $\Psi_1({\rm e}^{i(\f_2+\t_1)})={\rm e}^{-i\f_2}$ but $\f_2$ has been chosen so that this does not hold. \\
This completes the proof.
  \end{proof}

\subsection{The case $n$ odd.}

 The proof of the fact that  $\Phi _{d_1,\dots ,d_n}$ is one-to-one is the same as in the case $n$ even. One has to prove that $\Phi _{d_1,\dots ,d_n}$ is onto. We shall proceed by approximation from the case $n$ even. We define $q=\frac {n+1}2$.

\noindent Let 
$$\mathcal P=((\rho _1,\sigma _1, \dots ,\rho _q), (\Psi_r)_{1\le r\le n})$$
be an arbitrary element of $\mathcal S_{(d_1,\dots,d_n)}$. We look for $u\in \mathcal V _{(d_1,\dots,d_n)}$ such that
$\Phi _{d_1,\dots ,d_n}(u)=\mathcal P$. Consider, for every $\e $ such that $0<\e < \rho _q$,
$$\mathcal P_\e=((\rho _1,\sigma _1, \dots ,\rho _q,\e), ((\Psi_r)_{1\le r\le n}, 1))\in \mathcal S_{(d_1,\dots,d_n,0)}$$ 
 -- we take $\Psi_{2q}=1\in \mathcal B_0$. From Theorem \ref{mainfiniterank}, we get $u_\e\in \mathcal V_{(d_1,\dots,d_{n+1})}$ such that
$\Phi (u_\e)=\mathcal P_\e$. 
As before, we can prove by a compactness argument that a subsequence of $u_\e$ has a limit $u\in \mathcal V _{(d_1,\dots,d_n)}$ as $\e$ tends to $0$ with $\Phi _{d_1,\dots ,d_n}(u)=\mathcal P$. We leave the details to the reader.
\end{proof}
\subsection{$\mathcal V _{(d_1,\dots,d_n)}$ is a manifold}
Let $d=n+2\sum _rd_r$. We consider the map
$$ \begin{array}{lll} \Theta : \mathcal S_{(d_1,\dots,d_n)}  &\longrightarrow  &\mathcal V(d)\\
\ \ \ \ \ ({\bf s},{\bf \Psi})&\longmapsto & u({\bf s},{\bf \Psi})\end{array} $$
This map is well defined and $C^\infty $ on $\mathcal S_{(d_1,\dots,d_n)}$. Moreover, from the previous section, it is a homeomorphism onto its range $\mathcal V _{(d_1,\dots,d_n)}$. In order to prove that $\mathcal V _{(d_1,\dots,d_n)}$ is a submanifold of $\mathcal V(d)$,  it is enough to check that the differential of $\Theta $ is injective at every point. 
From Lemma \ref{Phismooth}, near every point $u_0\in \mathcal V _{(d_1,\dots,d_n)}$, there exists a smooth function $\tilde \Phi _n$, defined on a neighborhood $V$ on $u_0$ in $\mathcal V(d)$, such that $\tilde \Phi _n$  
coincides with $\Phi _{d_1,\dots ,d_n}$ on $V\cap \mathcal V _{(d_1,\dots,d_n)}$. Consequently, $\tilde \Phi _n\circ \Theta $ is the identity on a neighborhood of
$\mathcal P_0:= \Phi _{d_1,\dots ,d_n}(u_0)$. In particular, the differential of $\Theta $ at $ \mathcal P_0$ is injective.
Finally, the dimension of $\mathcal V _{(d_1,\dots,d_n)}$ is 
 $$\dim \mathcal S_{d_1,\dots ,d_n}=\dim \Omega _n +\sum _{r=1}^n \dim \mathcal B_{d_r}=2n+2\sum_{r=1}^nd_r ,$$
 using Proposition \ref{Od}.

\section{Extension to the Hilbert-Schmidt class}\label{sectionHS}

In this section, we consider infinite rank Hankel operators in the Hilbert-Schmidt class\index{Hilbert-Schmidt}.
We set $$\mathcal V^{(2)}_{(d_r)_{r\ge 1}}:=\Phi^{-1}(\mathcal S_{(d_r)_{r\ge 1}}^{(2)}) .$$
 \begin{theorem}\label{HilbertSchmidt}
The mapping
\begin{eqnarray*}
\Phi : \mathcal V^{(2)}_{(d_r)_{r\ge 1}} &\longrightarrow & \mathcal S _{(d_r)_{r\ge 1}}^{(2)} \\
u&\longmapsto& ((s_r)_{r\ge 1}, (\Psi_r)_{ r\ge 1})
\end{eqnarray*}
is a homeomorphism. Furthermore, if $(s_r,\Psi _r)_{r\ge 1}\in \mathcal S_\infty^{(2)} $, then
its preimage $u$ by $\Phi $ is given by
\begin{equation}\label{luminybis}
u(z)=\lim _{q\rightarrow \infty} u_q(z)
\end{equation}
where $$u_q:=u((s_1,\dots, s_{2q}),(\Psi_1,\dots,\Psi_{2q})).$$ 
\end{theorem}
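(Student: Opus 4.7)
The strategy is to promote the finite-rank result of Theorem \ref{mainfiniterank} to the infinite setting by proving that the truncations $u_q$ converge in $H^{1/2}_+$ and by identifying the limit through its spectral data. The starting point is the identification
$$\Vert u_q\Vert_{H^{1/2}}^2 \simeq \Vert H_{u_q}\Vert_{HS}^2 = \sum_{r=1}^{2q}(d_r+1)s_r^2,$$
which is bounded independently of $q$ because $((s_r),(\Psi_r))\in \mathcal S^{(2)}_{(d_r)}$. Hence $(u_q)$ is bounded in $H^{1/2}_+$, so up to a subsequence it converges weakly in $H^{1/2}_+$ and strongly in $L^2_+$ to some $u\in H^{1/2}_+$.

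Next I would identify the spectral data of $u$. Fix $r$ and $q$ large enough that $s_r$ lies in the list associated to $u_q$; the dominant eigenspace of $H_{u_q}^2$ (resp.\ $K_{u_q}^2$) at $s_r^2$ has dimension $d_r+1$ (resp.\ $d_r$), and one can isolate it by a small circle $\mathscr C_r$ centered at $s_r^2$ whose enclosed disc is disjoint from all other $s_{r'}^2$ for $r'\le 2q$. Since $H_{u_q}\to H_u$ in Hilbert--Schmidt norm on every finite-dimensional subspace (as $u_q\to u$ in $L^2_+$ and the $L^\infty$ bounds on Fourier tails are controlled by the summability of $(d_r+1)s_r^2$), the spectral projectors
$$P_r^{(q)} := \int_{\mathscr C_r}(zI-H_{u_q}^2)^{-1}\frac{dz}{2i\pi}$$
converge strongly to the corresponding projectors $P_r^{(u)}$ for $H_u^2$, and analogously for $K_u^2$. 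Passing to the limit in the relations
$$s_{2j-1}(u_q)_j = \Psi_{2j-1}H_{u_q}((u_q)_j)\ ,\quad K_{u_q}(u_q)'_k = s_{2k}\Psi_{2k}(u_q)'_k \ ,$$
which characterize the spectral data of $u_q$ as in Proposition \ref{action}, one obtains that $s_r$ is a singular value of $u$ of dominant multiplicity at least $d_r+1$, with associated Blaschke product $\Psi_r$.

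To strengthen the $L^2$ convergence into $H^{1/2}$ convergence and to rule out additional singular values, I would use the Hilbert--Schmidt identity: from the previous step,
$$\Vert H_u\Vert_{HS}^2 \ge \sum_{r=1}^\infty (d_r+1)s_r^2 = \lim_{q\to\infty}\Vert H_{u_q}\Vert_{HS}^2\ ,$$
while weak lower semicontinuity gives the reverse inequality. Thus $\Vert H_{u_q}\Vert_{HS}\to \Vert H_u\Vert_{HS}$, which combined with weak convergence in $H^{1/2}_+$ yields strong convergence $u_q\to u$ in $H^{1/2}_+$. Equality in the Hilbert--Schmidt norm also forces the spectrum of $H_u^2, K_u^2$ to be exhausted by the $s_r^2$ with exactly the expected multiplicities, so that $\Phi(u)=((s_r),(\Psi_r))$. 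This proves that $u\in \mathcal V^{(2)}_{(d_r)}$ and establishes both surjectivity of $\Phi$ and the explicit formula (\ref{luminybis}); injectivity is immediate since $\Phi(u)$ determines every finite truncation, hence $u$.

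The continuity of $\Phi^{-1}$ and of $\Phi$ on $\mathcal V^{(2)}_{(d_r)}$ follows by applying the same truncation and spectral-projector argument to a convergent sequence in $\mathcal S^{(2)}_{(d_r)}$ or $\mathcal V^{(2)}_{(d_r)}$, together with the continuity already proved in the finite-rank case (Theorem \ref{mainfiniterank}). The main technical obstacle is controlling the spectral projectors uniformly as $q\to\infty$: one must ensure that the contours $\mathscr C_r$ used to isolate $s_r^2$ remain bounded away from the limiting spectrum of $H_u^2$. This is guaranteed by the strict monotonicity of $(s_r)$ and by the fact that $\Vert H_{u_q}-H_u\Vert_{HS}\to 0$ on the orthogonal complement of the first few eigenspaces, which together with the min--max characterization prevents accumulation of the spectrum away from the prescribed list.
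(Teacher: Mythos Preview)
Your surjectivity argument is close in spirit to the paper's --- truncate, use the uniform $H^{1/2}$ bound, extract a limit, identify the spectral data --- though the paper works directly with the orthogonal projections $u_{N,j}=(u_N)_{\rho_j}$ and $u'_{N,k}$ rather than contour integrals, and pins down the identification $v_j=u_j$ by the neat norm trick $\Vert v_j\Vert^2=(u\vert v_j)=(u_j\vert v_j)=\tau_j^2=\Vert u_j\Vert^2$. Your Hilbert--Schmidt norm matching argument to rule out extra spectrum and upgrade to strong $H^{1/2}$ convergence is correct (modulo a minor slip: ${\rm Tr}(H_{u_q}^2)=\sum_j(d_{2j-1}+1)s_{2j-1}^2+\sum_k d_{2k}s_{2k}^2$, not $\sum_r(d_r+1)s_r^2$, since $\dim E_u(s_{2k})=d_{2k}$ for $K$-dominant values; the bound survives).

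The real gap is injectivity. You write ``$\Phi(u)$ determines every finite truncation, hence $u$'', but this is circular. The ``truncation'' $u_q=u((s_1,\dots,s_{2q}),(\Psi_1,\dots,\Psi_{2q}))$ is indeed determined by $\Phi(u)$, but you have only shown that the \emph{specific} limit $u$ constructed in your surjectivity step satisfies $u=\lim u_q$. Given an arbitrary $v$ with $\Phi(v)=\Phi(u)$, nothing in your argument forces $v=\lim u_q$; there is no a priori reason the $u_q$ approximate $v$ in any sense. The paper handles this by a genuinely different, independent argument: writing $u=\sum_j\Psi_{2j-1}h_j$ with $h_j=H_u(u_j)/\rho_j$, one derives the infinite system
\[
\mathcal H(z)=\mathcal F(z)+z\,\mathcal D(z)\mathcal H(z)
\]
in the weighted space $\ell^2_\tau$, and proves via the identities of section~\ref{Bateman} that $\mathcal D(z)\mathcal D(z)^*\le I$ on $\ell^2_\tau$ for $\vert z\vert=1$, hence $\Vert\mathcal D(z)\Vert\le 1$ on $\overline\D$. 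This contraction estimate determines all Taylor coefficients $\mathcal H^{(n)}(0)$ inductively from the data alone, which is what gives injectivity. You need either this argument or a substitute; the surjectivity construction does not supply one.
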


\begin{proof}
The fact that $\Phi $ is one-to-one follows from an explicit formula analogous to the one obtained in the finite rank case,
see section \ref{section explicit}. However, in this infinite rank situation, we have to proceed slightly differently, in order to deal with the continuity of infinite rank matrices
on appropriate $\ell ^2$ spaces. 
\s
Indeed, we still have
$$u=\sum_{j=1}^\infty \Psi_{2j-1} h_j$$
where $\mathcal H(z):=(h_j(z))_{j\ge 1}$ satisfies  the following  infinite dimensional system, for every $z\in \D $, 
\begin{equation}\label{eqHinfini}
\mathcal H(z)=\mathcal F(z)+z\mathcal D(z)\mathcal H(z)
\end{equation}
with
\begin{eqnarray*}
\mathcal F(z)&:=&\left (\frac{\tau _j^2}{\rho_j}\right )_{j\ge 1}\ ,\\
\mathcal D(z)&:=&\left (\frac{\tau _j^2}{\rho_j}\sum _{k=1}^\infty \frac {\kappa _k^2\sigma _k\Psi _{2k}(z)\Psi _{2\ell -1}(z)}{(\rho _j^2-\sigma _k^2)(\rho _\ell ^2-\sigma _k^2)}\right )_{j,\ell\ge 1}\ .
\end{eqnarray*}
In order to derive this system,  just  write 
\beno h_j(z)&=&\frac{1}{\rho _j}H_u(u_j)(z)=\frac{\tau _j^2}{\rho _j}+\frac{1}{\rho _j}SK_u(u_j)(z)\\
&=& \frac{\tau _j^2}{\rho _j}\left (1+z\sum _{k=1}^\infty \frac{K_u(u'_k)(z)}{\rho _j^2-\sigma _k^2}\right )
=\frac{\tau _j^2}{\rho _j}\left (1+ z\sum _{k=1}^\infty \frac{\sigma _k\Psi _{2k}(z)u'_k(z)}{\rho _j^2-\sigma _k^2}\right )\\
&=&\frac{\tau _j^2}{\rho _j}\left (1+ z\sum _{k=1}^\infty \sum _{\ell =1}^\infty \frac{\kappa _k^2\sigma _k\Psi _{2k}(z)\Psi _{2\ell -1}(z)h_\ell (z)}{(\rho _j^2-\sigma _k^2)(\rho _\ell ^2-\sigma _k^2)}\right )\ .
\eeno
Notice that  the coefficients of the infinite matrix $\mathcal D (z)$ depend holomorphically on $z\in \D$. 
We are going to prove that, for every $z\in \D $, $\mathcal D(z)$ defines a contraction  on the space $\ell ^2_\tau $ of sequences
$(v_j)_{j\ge 1}$ satisfying
$$\sum _{j=1}^\infty \frac{\vert v_j\vert ^2}{\tau _j^2}<\infty \ .$$
From the maximum principle, we may  assume that $z$ belongs to the unit circle. Then $z$ and $\Psi _r(z)$ have modulus $1$. We then compute $\mathcal D(z)\mathcal D(z)^*$, where the adjoint is taken  for the inner product associated to $\ell ^2_\tau $. We get, using identities (\ref{sommedoubletau}), (\ref{sommesimplekappa}) and (\ref{sommedoublekappa}),
\beno 
\left [\mathcal D(z)\mathcal D(z)^*\right ] _{jn}&=&\frac{\tau _j^2}{\rho_j\rho _n}\sum _{k,\ell ,m}
\frac {\kappa _k^2\sigma _k\Psi _{2k}(z)\tau _\ell ^2\kappa _m ^2\si _m\overline {\Psi _{2m}(z)}}{(\rho _j^2-\sigma _k^2)(\rho _\ell ^2-\sigma _k^2)(\rho _n^2-\sigma _m^2)(\rho _\ell ^2-\sigma _m^2)}\\
&=& \frac{\tau _j^2}{\rho_j\rho _n}\sum _{k}\frac{\kappa _k^2\si _k^2}{(\rho _j^2-\sigma _k^2)(\rho _n^2-\sigma _k^2)}\\
&=& -\frac{\tau _j^2}{\rho_j\rho _n}+\delta _{jn}\ .
\eeno
Since, from the identity (\ref{sommenu}), 
$$\sum _{j=1}^\infty \frac {\tau _j^2}{\rh _j^2}\le 1\ ,$$
we conclude that $\mathcal D(z)\mathcal D(z)^*\le I$ on $\ell ^2_\tau $, and consequently that
$$\Vert \mathcal D(z)\Vert _{\ell ^2_\tau \rightarrow \ell ^2_\tau }\le 1\ .$$
From the Cauchy inequalities, this implies
$$\Vert \mathcal D^{(n)}(0)\Vert _{\ell ^2_\tau \rightarrow \ell ^2_\tau }\le n!\ .$$
Coming back to equation (\ref{eqHinfini}), we observe that $\mathcal H(0)=\mathcal F(0)\in \ell ^2_\tau $, and that, for every $n\ge 0$,
$$\mathcal H^{(n+1)}(0)=(n+1)\sum _{p=0}^n \begin{pmatrix}n\\p\end{pmatrix}\mathcal D^{(p)}(0)\mathcal H^{(n-p)}(0)\ .$$
By induction on $n$, this determines $\mathcal H^{(n)}(0)\in \ell ^2_\tau $, whence the injectivity of $\Phi $.
\s
Next, we prove that $\Phi $ is onto. We pick an element $$\mathcal P\in\mathcal S_{(d_r)}^{(2)}$$ and we construct $u\in H^{1/2}_+$ so that $\Phi(u)=\mathcal P$.
Set $$\mathcal P=((\rho _1,\sigma _1, \rho _2,\dots), (\Psi _r)_{r\ge 1})$$
and consider, for any integer $N$,
$$\mathcal P_N:=((\rho _1,\sigma _1, \dots,\rho_N,\sigma_N), (\Psi_r)_{1\le r\le 2N})$$ in 
$$\mathcal S_{(d_1,\dots,d_{2N})}\ .$$
From Theorem \ref{mainfiniterank}, there exists $u_N\in \mathcal V _{(d_1,\dots,d_{2N})}$ with $\Phi(u_N)=\mathcal P_N$. As $${\rm Tr}(H_{u_N}^2)=\sum _{r=1}^{2N} d_{2r} s_{2r}^2+\sum _{j=1}^N (d_{2j-1}+1) s _{2j-1}^2\le \sum _{r=1}^\infty  d_r s_r^2+\sum _{j=1}^\infty s _{2j-1}^2<\infty\ ,$$
the sequence $(u_N)$ is bounded in $H^{1/2}_+$. Hence, there exists a subsequence converging weakly  to some $u$ in $H^{1/2}_+$. In particular, one may assume that $(u_N)$ converges strongly to $u$ in $L^2_+$.

Since $\Vert H_{u_N}\Vert =\rh _1$ is bounded, we infer the strong convergence of  operators,
$$\forall h\in L^2_+, H_{u_N}(h)\td_p,\infty H_u(h)\ .$$
We now observe that if $\rho^2$ is an eigenvalue of $H_{u_N}^2$ of multiplicity $m$ then $\rho^2$ is an eigenvalue of $H_u^2$ of multiplicity at most  $m$.
Let $(e_N^{(l)})_{1\le l\le m}$ be an orthonormal family of eigenvectors of $H_{u_N}^2$ associated to the eigenvector $\rho^2$. Let $h$ be in $L^2_+$ and write
$$
h=\sum_{l=1}^m (h\vert  e_N^{(l)})e_N^{(l)}+h_{0,N}$$ where $h_{0,N}$ is the orthogonal projection of $h$ on the orthogonal complement of $E_{u_N}(\rho)$ so that 
\begin{eqnarray*}
\Vert (H_{u_N}^2-\rho^2 I)h\Vert^2&=&\Vert (H_{u_N}^2-\rho^2 I)h_{0,N}\Vert^2\\
&\ge & d_{\rho^2}\Vert h_{0,N}\Vert ^2=d_{\rho^2}(\Vert h\Vert^2-\sum_{l=1}^m \vert (h\vert e_N^{(l)})|^2)\ ,
 \end{eqnarray*}
here $d_{\rho^2}$ denotes the distance to the other eigenvalues of $H_{u_N}^2$.
 By taking the limit as $N$ tends to $\infty$ one gets
 $$\Vert (H_{u}^2-\rho^2 I)h\Vert^2\ge d_{\rho^2}(\Vert h\Vert^2-\sum_{l=1}^m|(h\vert e^{(l)})|^2)$$
 where $e^{(l)}$ denotes a weak limit of $e_N^{(l)}$.  
 Assume now that the dimension of $E_u(\rho)$ is larger than $m+1$ then we could construct $h$ orthogonal to $(e^{(1)},\dots e^{(m)})$ with $H_u^2(h)=\rho^2h$, a contradiction. The same argument allows to obtain that if $\rho^2$ is not an eigenvalue of $H_{u_N}^2$, $\rho^2$ is not an eigenvalue of $H_u^2$.\\
 We now argue as in section \ref{closed} above. Let $u_{N,j}$ and $u'_{N,k}$ denote the orthogonal projections of $u_N$ respectively on $E_{u_N}(\rh _j)$ and on $F_{u_N}(\si _k)$ so that we have the orthogonal decompositions,
$$u_N=\sum_{j=1}^Nu_{N,j}=\sum_{k=1}^N u'_{N,k}\ .$$
As $(u_N)$ converges strongly in $L^2_+$,  $u_{N,j}$ and $u'_{N,k}$ converge in $L^2_+$ respectively to some $v^{(j)}$ and to some ${v'}^{(k)}$ with the identities
 $$\rh _jv_j=\Psi _{2j-1}H_u(v_j)\ ,\ H_u^2(v_j)=\rh _j^2v_j\ ,\ K_u(v'_k)=\si _k\Psi _{2k}v'_k\ ,\ K_u^2(v'_k)=\si _k^2v'_k\ .$$
 and  
 $$(u\vert v_j)=\tau _j^2\ ,\ (u\vert v'_k)=\kappa _k^2\ .$$
 This already implies that $v_j$, $v'_k$ are not $0$, and hence, in view of Lemmas \ref{crucialHuGeneral} and \ref{crucialKuGeneral}, that 
 $$\dim E_u(\rh _j)= m_j\ ,\ \dim F_u(\si _k)= \ell _k\ .$$
  We infer that $u\in \mathcal V_{(d_r)_{r\ge 1}}^{(2)}$ and that $\rho _j=s_{2j-1}(u)\ ,\ \si _k=s_{2k}(u)\ .$ It remains to identify $v_j$ with the orthogonal projection $u_j$  of $u$ onto $E_u(\rh _j)$,
 and $v'_k$ with the orthogonal projection $u'_k$ of $u$ onto $F_u(\si _k)$. The strategy of passing to the limit, as $N$ tends to infinity,  in the decompositions
 $$u_N=\sum _{j=1}^N u_{N,j}=\sum _{k=1}^Nu'_{N,k}\ $$
 is not easy to apply because of infinite sums. Hence we argue as follows. From the identity
 $$\Vert u_{N,j}\Vert^2=(u_N\vert u_{N,j})$$
 we get
 $$\Vert v_j\Vert ^2= (u\vert v_j)=(u_j\vert v_j)=\tau_j^2=\Vert u_j\Vert^2 ,$$
  hence 
$$\Vert v_j-u_j\Vert^2=0\ .$$
Similarly, $v'_k=u'_k$. This completes the proof of the surjectivity, and of the explicit formula (\ref{luminybis}).
Notice that the convergence is strong in $H^{1/2}$ since the norm of $u_N$ tends to the norm of $u$. 
\s
The continuity of $\Phi $ follows as in section \ref{continuity}. As for the continuity of $\Phi ^{-1}$, we argue exactly as for surjectivity
above.

\end{proof}
\section{Extension to compact Hankel operators}\label{compact}

The mapping $\Phi$ may be extended to $VMO_+$ which corresponds to the set of symbols of compact Hankel operators. Namely, let $\Omega_\infty$\index{$\Omega_\infty$} be the set of sequences $(s_r)_{r\ge 1}$ such that $$s_1>s_2>\dots >s_n\to 0 \ .$$ Given an arbitrary sequence $(d_r)_{r\ge 1}$ of nonnegative integers, we set 
$$\mathcal V_{(d_r)_{r\ge 1}}:=\Phi ^{-1}(\Omega_\infty\times \prod_{r=1}^\infty \mathcal B_{d_r} )\ . $$ 
 \begin{theorem}\label{mainCompact}
The mapping
\begin{eqnarray*}
\Phi : \mathcal V_{(d_r)_{r\ge 1}} &\longrightarrow &\Omega_\infty\times \prod_{r=1}^\infty \mathcal B_{d_r} \\
u&\longmapsto& ((s_r)_{r\ge 1}, (\Psi_r)_{ r\ge 1})
\end{eqnarray*}
is a homeomorphism.
\end{theorem}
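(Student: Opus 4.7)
The plan is to extend Theorems \ref{mainfiniterank} and \ref{HilbertSchmidt} by replacing the Rellich compactness argument --- unavailable outside $H^{1/2}_+$ --- with the Adamyan--Arov--Krein approximation Theorem \ref{AAK}(2), which controls the operator norm of a Hankel operator directly in terms of its singular values.

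For injectivity, I would reuse the infinite-matrix argument from the proof of Theorem \ref{HilbertSchmidt}. Since $u\in VMO_+\subset L^2_+$, one still has $\sum_j\tau_j^2=\sum_k\kappa_k^2=\|u\|_{L^2}^2<\infty$, and every identity of Proposition \ref{formulae} continues to hold. The matrix $\mathcal D(z)$ then remains a contraction on $\ell^2_\tau$ for every $z\in\overline\D$ by the same algebraic computation, so equation (\ref{eqHinfini}) determines the Taylor coefficients of $\mathcal H(z)$ recursively, and $u=\sum_j\Psi_{2j-1}h_j$ is reconstructed from $\Phi(u)$ alone.

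The surjectivity and the Cauchy estimate are the substantive content. Given $\mathcal P=((s_r),(\Psi_r))$ with $s_r\downarrow 0$, I would set
\[
\mathcal P_N:=((s_1,\dots,s_{2N}),(\Psi_1,\dots,\Psi_{2N}))\in\mathcal S_{(d_1,\dots,d_{2N})},
\]
and invoke Theorem \ref{mainfiniterank} to produce $u_N\in\mathcal V_{(d_1,\dots,d_{2N})}$ with $\Phi(u_N)=\mathcal P_N$. The crucial claim would be
\[
\|H_{u_N}-H_{u_M}\|_{\mathcal L(L^2_+)}=s_{2M+1},\qquad N>M.
\]
To prove it I would apply Theorem \ref{AAK}(2) to $H_{u_N}$ at the integer $k:={\rm rk}(H_{u_M})$, which by the multiplicity bookkeeping of Proposition \ref{action} is precisely the position at which $s_{2M+1}$ appears in the decreasing singular-value list of $H_{u_N}$. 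This produces a rational symbol $r$ with ${\rm rk}(H_r)=k$ and $\|H_{u_N}-H_r\|=s_{2M+1}$. Tracing through the construction of $r=u_N-s_{2M+1}\Pi(\phi)$ in Lemma \ref{unimod}, one sees that $H_r$ coincides with $H_{u_N}$ on a shift-invariant subspace $aL^2_+$ of codimension $k$, and that the induced action on the top $k$ eigenspaces yields precisely the Blaschke-product data $(\Psi_1,\dots,\Psi_{2M})$ together with the singular values $(s_1,\dots,s_{2M})$. Consequently $\Phi(r)=\mathcal P_M$, and the bijectivity of $\Phi_{d_1,\dots,d_{2M}}$ provided by Theorem \ref{mainfiniterank} forces $r=u_M$.

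From this Cauchy estimate, $(H_{u_N})$ converges in operator norm to a compact Hankel operator, whose symbol $u$ belongs to $VMO_+$ by Hartman's theorem. To check $\Phi(u)=\mathcal P$, I would combine the standard perturbation theory for isolated spectral projections of compact self-adjoint operators with the strong $L^2$ convergence $u_N\to u$, the latter following from $\|u_N\|_{L^2}^2={\rm Tr}(H_{u_N}^2)$ and from $\|H_u-H_{u_N}\|\le s_{2N+1}\to 0$; the defining identities (\ref{sus}) and (\ref{Kus}) for the Blaschke products then pass to the limit at each fixed level $r$. Continuity of $\Phi$ and $\Phi^{-1}$ would follow by combining the local smooth extension of Lemma \ref{Phismooth} on each prescribed finite multiplicity pattern with the uniform tail bound $\|H_u-H_{u_N}\|\le s_{2N+1}$. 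The principal obstacle is the identification step $r=u_M$: in the simple-multiplicity setting this is classical, but with multiplicities strictly greater than one it requires reading through the proof of Theorem \ref{AAK}(2) to verify that the Blaschke data of the optimal approximant agree with the truncated data $\mathcal P_M$, which is the point at which the rigidity of the finite-rank classification really enters.
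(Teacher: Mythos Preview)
Your route and the paper's both invoke Theorem~\ref{AAK}(2), but they use it very differently. The paper applies it only to produce, for each fixed $m$, \emph{some} rank-$p_m$ approximant $u_N^{(m)}$ of $u_N$ with $\Vert H_{u_N}-H_{u_N^{(m)}}\Vert=\rho_{m+1}$; since $u_N^{(m)}$ lies in the fixed finite-dimensional set $\mathcal V(2p_m)\cup\mathcal V(2p_m-1)$ and has bounded operator norm, it is bounded in $H^{1/2}_+$, hence precompact in $L^2_+$, and from $\Vert u_N-u_N^{(m)}\Vert_{L^2}\le\rho_{m+1}$ one deduces precompactness of $(u_N)$ itself in $L^2_+$ by a totally bounded--set argument. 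No identification of the approximant is ever needed. By contrast, you assert the exact equality $r=u_M$, which would yield the clean Cauchy estimate $\Vert H_{u_N}-H_{u_M}\Vert=s_{2M+1}$ and operator-norm convergence directly.

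That identification step is a genuine gap, not a technicality you can read off from the proof of Theorem~\ref{AAK}. The AAK construction gives $r=u_N-s_{2M+1}\Pi(\phi)$ with $\Vert H_{u_N}-H_r\Vert=s_{2M+1}$ and ${\rm rk}(H_r)=k$, but the singular values of $H_r$ are \emph{not} in general the top $k$ singular values of $H_{u_N}$: Weyl's inequality gives only $\vert\lambda_j(H_r)-\lambda_j(H_{u_N})\vert\le s_{2M+1}$, and nothing in Lemma~\ref{unimod} or the subsequent argument pins down the Blaschke data of $(H_r,K_r)$. Your sketch (``the induced action on the top $k$ eigenspaces yields precisely the Blaschke-product data'') asserts exactly what must be proved; establishing that the AAK approximant of $u({\bf s},{\bf\Psi})$ equals the truncation $u((s_1,\dots,s_{2M}),(\Psi_1,\dots,\Psi_{2M}))$ is a nesting property of the nonlinear Fourier transform that is nowhere proved in the paper and would require substantial separate work. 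The paper's cruder precompactness argument sidesteps this completely, at the price of a less explicit estimate. A minor slip: $\Vert u_N\Vert_{L^2}^2\ne{\rm Tr}(H_{u_N}^2)$ (the latter is $\simeq\Vert u_N\Vert_{H^{1/2}}^2$); $L^2$ convergence would nonetheless follow from your Cauchy estimate via $\Vert u_N-u_M\Vert_{L^2}=\Vert (H_{u_N}-H_{u_M})(1)\Vert$, so this is harmless provided the main gap is filled.
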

\begin{proof}
The proof is the same as before except for the argument of surjectivity in which the boundedness of the sequence $(u_N)$ in $H^{1/2}_+$ does not hold anymore. However, the strong convergence in $L^2_+$ may be established. The proof is along the same lines as the one developed for Proposition 2 in \cite{GG3}, and is based on the Adamyan-Arov-Krein (AAK)theorem \cite{AAK}, \cite{Pe2}. Let us recall the argument. 
\s
First we recall that the  AAK theorem  \index{AAK theorem}states that the $(p+1)$-th singular value\index{singular value} of a Hankel operator, as the distance of this operator to operators of rank at most $p$,  is exactly achieved by some Hankel operator of rank at most $p$, hence, with a rational symbol. We refer to part (2) of the theorem \ref{AAK}. We set, for every $m\ge 1$,
$$p_m=m+\sum _{r\le 2m} d_r\ .$$
With the notation of , one easily checks that, for every $m$,
$$\lambda_{p_{m-1}}(u)>\lambda_{p_m}(u)=\rho _{m+1}(u)\ .$$
By part (1) of the AAK theorem \ref{AAK}, for every $N$ and every $m=1,\dots, N$, there exists a rational symbol $u_{N}^{(m)}$, defining a Hankel operator of rank $p_m$,  namely $u_{N}^{(m)}\in \mathcal V(2p_m)\cup {\mathcal V}(2p_m-1)$, such that 
$$\Vert H_{u_N}-H_{u_{N}^{(m)}}\Vert =\rho_{m+1}(u_N)=\rho _{m+1}.$$
In particular, we get
$$\Vert u_N-u_{N}^{(m)}\Vert_{L^2}\le \rho_{m+1}.$$
On the other hand, one has $$\Vert H_{u_{N}^{(m)}}\Vert \ge \frac{1}{\sqrt {p_m}}(Tr(H^2_{u_{N}^{(m)}}))^{1/2}\ge \frac 1{\sqrt {p_m}}\Vert u_{N}^{(m)}\Vert_{H^{1/2}_+}.$$  Hence, for fixed $m$, the sequence $(u_{N}^{(m)})_N$ is bounded in $H^{1/2}_+$.
Our aim is to prove that the sequence $(u_N)$ is precompact in $L^2_+$.
We show that, for any $\varepsilon>0$ there exists a finite sequence $v_k\in L^2_+$, $1\le k\le M$ so that $$\{u_N\}_N\subset\bigcup_{k=1}^M B_{L^2_+}(v_k,\varepsilon).\ $$
Let $m$ be fixed such that $\rho_{m+1}\le \varepsilon/2.$ Since the sequence $(u_{N}^{(m)})_N$ is uniformly bounded in $H^{1/2}_+$,  it is precompact in $L^2_+$, hence there exists $v_k\in L^2_+$, $1\le k\le M$, such that $$\{u_{N}^{(m)}\}_N\subset\bigcup_{k=1}^M B_{L^2_+}(v_k,\varepsilon/2)\ .$$ Then, for every $N$ there exists some $k$ such that
$$\Vert u_N-v_k\Vert_{L^2}\le \rho_{m+1}+\Vert u_{N}^{(m)}-v_k\Vert_{L^2}\le \varepsilon.$$
Therefore $\{ u_N \}$ is precompact in $L^2_+$ and, since $u_N$ converges weakly to $u$, it converges strongly to $u$ in $L^2_+$. The proof ends as in the Hilbert-Schmidt case.
\s
The continuity of $\Phi $ follows as in section \ref{continuity}. As for the continuity of $\Phi ^{-1}$, we argue exactly as for surjectivity
above, except that we have to prove the convergence of $u_N$ to $u$ in $VMO_+$. This can be achieved exactly as in the proof of Proposition 2 of \cite{GG3} : the Adamyan-Arov-Krein theorem allows to reduce to the following statement : if $w_N\in \mathcal V(2p)\cup \mathcal V(2p-1)$ 
strongly converges to $w\in \mathcal V(2p)\cup \mathcal V(2p-1)$, then the convergence takes place in $VMO $ --- in fact in $C^\infty $. See Lemma 3 of \cite{GG3}.
\end{proof}

\chapter{The Szeg\H{o} dynamics}\label{chapter Szego dynamics}

This chapter is devoted to the connection between the nonlinear Fourier transform\index{Non linear Fourier transform} and the Szeg\H{o} dynamics. In section \ref{szegoflow}, we show that the Szeg\H{o} evolution has a very simple translation in terms of the nonlinear Fourier transform. Geometric aspects of this evolution law will be discussed in more detail in chapter \ref{chapter geometry}. Then we revisit the classification of traveling waves of the cubic Szeg\H{o} equation.
The last section is devoted to the proof of the almost periodicity of $H^{1/2}$ solutions.

\section{Evolution under the cubic Szeg\H{o} flow} \label{szegoflow}
\subsection{The theorem}
In this section, we prove the following result.
\begin{theorem}\label{evolszegotexte}
Let $u_0\in H^{1/2}_+$ with $$\Phi(u_0)=((s_r),(\Psi_r)).$$
The solution of $$i\partial_t u=\Pi(|u|^2u),\; u(0)=u_0$$ is characterized by
$$\Phi(u(t))=((s_r),(\expo_{i(-1)^rs_r^2t}\Psi _r))\ .$$
\end{theorem}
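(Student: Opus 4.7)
The plan is to combine the Lax pair of Corollary \ref{UV} with the intrinsic characterization of the Blaschke products given by Proposition \ref{action}: conservation of the singular values will be immediate, and the evolution of the $\Psi_r$'s will be read off from the time dependence of the spectral projections $u_j(t), u'_k(t)$ together with $H_{u(t)}u_j(t), K_{u(t)}u'_k(t)$. Since $H_{u(t)} = U(t)H_{u_0}U(t)^*$ and $K_{u(t)} = V(t)K_{u_0}V(t)^*$ with $U(t), V(t)$ unitary, the sets $\Sigma_H(u(t)), \Sigma_K(u(t))$ and the associated dominant multiplicities are preserved, so each $s_r$ is a constant of motion.

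Next, I would compute $u_j(t)$ explicitly. From $\dot u = -iT_{|u|^2}u$ and the identity $B_u u = \tfrac{i}{2}H_u^2 u - iT_{|u|^2}u$, one obtains $\tfrac{d}{dt}(U(t)^* u(t)) = -\tfrac{i}{2}H_{u_0}^2\, U(t)^* u(t)$, hence $U(t)^* u(t) = \exp(-it H_{u_0}^2/2)\, u_0$. Projecting onto $E_{u_0}(\rho_j)$ and applying $U(t)$ gives
\[
u_j(t) \,=\, e^{-it\rho_j^2/2}\, U(t)\, u_{0,j},
\]
and the antilinearity of $H_{u_0}$ combined with $H_{u(t)} = U(t)H_{u_0}U(t)^*$ yields
\[
H_{u(t)} u_j(t) \,=\, e^{+it\rho_j^2/2}\, U(t)\, H_{u_0} u_{0,j}.
\]
The symmetric computation with $C_u$ and $V(t)$ provides the corresponding formulas for $u'_k(t)$ and $K_{u(t)}u'_k(t)$.

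Finally, I would plug these formulas into the defining relation $\Psi_{2j-1}(t)\,H_{u(t)}u_j(t) = s_{2j-1}u_j(t)$ and differentiate in $t$. Using Lemma \ref{abc} with $a=b=c=u$ together with $H_u^2 u_j = \rho_j^2 u_j$ one computes
\[
\tfrac{d}{dt}(H_u u_j) \,=\, i\rho_j^2 H_u u_j - iT_{|u|^2}(H_u u_j),
\]
so that, after substitution, the differentiated relation collapses to
\[
\bigl(\dot\Psi_{2j-1} + i\rho_j^2\Psi_{2j-1}\bigr) H_u u_j \,=\, -i\, \Pi\!\bigl(\Psi_{2j-1}(I-\Pi)(|u|^2 H_u u_j)\bigr).
\]
The desired evolution $\dot\Psi_{2j-1} = -is_{2j-1}^2\Psi_{2j-1}$ is therefore equivalent to the vanishing of the right-hand side, and the symmetric computation for $K_u u'_k$ yields $\dot\Psi_{2k} = +is_{2k}^2\Psi_{2k}$.

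The hard part will be precisely this vanishing, which asserts that multiplication by $\Psi_{2j-1}$ sends the specific vector $(I-\Pi)(|u|^2 H_u u_j) \in L^2_-$ back to $L^2_-$. I expect it to follow from the intertwining relation of Lemma \ref{crucialHuGeneral}, $H_u(fH_u u_s) = s\Psi_s\bar f H_u u_s$ for admissible $f$, which is exactly the algebraic identity controlling the interaction between $\Psi_s$ and multiplication by the symbol $u$. Should the direct proof in the multiple-eigenvalue regime prove too delicate, a fall-back strategy is to verify the theorem first in the generic simple-eigenvalue case already handled in \cite{GG2}, and then extend by continuity to all of $H^{1/2}_+$ using the continuity of the Szeg\H{o} flow and the homeomorphism property of $\Phi$ established in Theorem \ref{mainfiniterank} and Theorem \ref{HilbertSchmidt}. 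This is in the same spirit as the collapsing-singular-value argument already exploited in Chapter 4.
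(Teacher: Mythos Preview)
Your reduction is essentially the same as the paper's: you correctly derive
\[
(\dot\Psi_{2j-1}+i\rho_j^2\Psi_{2j-1})\,H_u(u_j)\;=\;-i\,[T_{|u|^2},\Psi_{2j-1}]\,H_u(u_j),
\]
and the analogous identity for $K_u$. The whole proof thus hinges on the vanishing of the commutator term, which you acknowledge but do not prove. Here your proposal has a genuine gap.

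Your first suggestion, invoking Lemma~\ref{crucialHuGeneral}, is not convincing as stated: that lemma computes $H_u(fH_u(u_\rho))$ for admissible $f$, but gives no direct handle on the commutator $[T_{|u|^2},\Psi_\rho]$, which involves $\Pi$ acting on products with $|u|^2$ rather than $u$. The paper's actual mechanism is different and quite specific: one writes $\Psi_\rho=\expo_{-i\psi}\prod_\ell\chi_{p_\ell}$ and shows $[T_{|u|^2},\chi_p](e)=0$ whenever $e,\chi_p e\in E_u(\rho)$. The key observation is that $[\Pi,\chi_p]$ has \emph{rank one}, with range spanned by $(1-\overline pz)^{-1}$, so $[T_{|u|^2},\chi_p](e)$ is a scalar multiple of $(1-\overline pz)^{-1}$; the vanishing then reduces to the single scalar identity $([T_{|u|^2},\chi_p](e)\mid 1)=0$, which follows from $H_u^2e=\rho^2 e$, $H_u^2(\chi_pe)=\rho^2\chi_pe$, and $(\chi_pe\mid 1)=(\chi_p\mid 1)(e\mid 1)$. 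The $K$-dominant case requires the additional remark that $F_u(\sigma)\cap 1^\perp=E_u(\sigma)$ in order to push the analogous scalar computation through. None of this is visible from Lemma~\ref{crucialHuGeneral}.

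Your fall-back density argument is also problematic. The homeomorphism property of $\Phi$ in Theorems~\ref{mainfiniterank} and~\ref{HilbertSchmidt} is established \emph{stratum by stratum}: $\Phi$ restricted to $\mathcal V_{(d_1,\dots,d_n)}$ is a homeomorphism onto $\mathcal S_{(d_1,\dots,d_n)}$. If you approximate $u_0\in\mathcal V_{(d_1,\dots,d_n)}$ with some $d_r\ge1$ by simple-spectrum data $u_0^\e\in\mathcal V_{(0,\dots,0)}$, then $\Phi(u_0^\e)$ lives in a different target space (constants in $\S^1$ rather than Blaschke products of degree $d_r$), and there is no continuity statement letting you pass to the limit in $\Phi$. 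Indeed, the failure of continuity of $\Phi^{-1}$ under collapse of singular values is precisely the phenomenon exploited in Chapter~\ref{chapter instability}. The only density reduction that works cleanly is the one the paper uses: reduce from $H^{1/2}_+$ to finite rank by continuity of the flow, but then prove the finite-rank case directly for arbitrary multiplicities via the rank-one commutator argument above.
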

\begin{remark} It is in fact possible to define the  flow of the cubic Szeg\H{o} equation on $BMO_+=BMO(\S^1 )\cap L^2_+$, see \cite{GK}.  The above theorem then extends to the case of  an initial datum $u_0$ in $VMO_+$ .
\end{remark}
\begin{proof}
In view of the continuity of the flow map on $H^{1/2}_+$,  see \cite{GG1}, we may assume that $H_{u_0}$ is of finite rank. Let $u$  be the corresponding solution of the cubic Szeg\H{o} equation. 
Let $\rho$ be a singular value\index{singular value} of $H_u$ in $\Sigma_H(u)$ such that $m:=\dim E_u(\rho)=\dim F_u(\rho)+1$ and denote by $u_\rho$ the orthogonal projection of $u$ on $E_u(\rho)$.
Hence, $u_\rho=1\!{\rm l}_{\{ \rho^2\}}(H_u^2)(u)$. Let us differentiate this equation with respect to time. 
Recall \cite{GG1}, \cite{GG4} that 
\begin{equation}\label{Laxpair}
\frac{dH_u}{dt}=[B_u,H_u] \text{ with }B_u=\frac i2H_u^2-i T_{|u|^2}\ .
\end{equation}
Here we recall that $T_b $ denotes the Toeplitz operator of symbol $b$,
$$
T_b(h)=\Pi (bh)\ ,\ h\in L^2_+\ ,\ b\in L^\infty \ .
$$
Equation (\ref{Laxpair}) implies, for every Borel function $f$,
$$\frac{df(H_u^2)}{dt}=-i[T_{\vert u\vert ^2},f(H_u^2)]\ .$$
We get from this Lax pair structure 
\begin{eqnarray*}
\frac{d u_\rho}{dt}&=& -i[T_{\vert u\vert ^2},1\!{\rm l}_{\{ \rho^2\}}(H_u^2)](u)+1\!{\rm l}_{\{ \rho^2\}}(H_u^2)\left (\frac{du}{dt}\right )\\
&=&  -i[T_{\vert u\vert ^2},1\!{\rm l}_{\{ \rho^2\}}(H_u^2)](u)+1\!{\rm l}_{\{ \rho^2\}}(H_u^2)\left (-iT_{\vert u\vert ^2}u\right )\ ,
\end{eqnarray*}
and eventually
\begin{equation}\label{deriveurho}
\frac{d u_\rho}{dt}=-iT_{\vert u\vert ^2}u_\rh \ .
\end{equation}

On the other hand, differentiating the equation
$$\rho u_\rho=\Psi H_u(u_\rho)$$ one obtains
$$
\rho \frac{d u_\rho}{dt}=\dot \Psi H_u(u_\rho)+\Psi\left([B_u,H_u](u_\rho)+H_u\left( \frac{d u_\rho}{dt}\right)\right )$$
Hence, using the expression (\ref{deriveurho}), we get
$$-i\rho T_{\vert u\vert ^2}(u_\rho)=\dot \Psi H_u(u_\rh )+\Psi \left (-iT_{\vert u\vert ^2}H_u(u_\rh ) +i\rh ^2H_u(u_\rh )\right )\ ,$$
hence
$$-i[T_{\vert u\vert ^2},\Psi ] H_u(u_\rh )=(\dot \Psi +i\rh ^2\Psi )H_u(u_\rh )\ .$$
We claim that the left hand side of this equality is zero. Assume this claim proved, we get, as $H_u(u_\rho)$ is not identically zero, that 
$\dot \Psi +i\rh ^2\Psi =0$, whence 
$$\Psi (t)=\expo_{-it\rh ^2}\Psi (0)\ .$$

It remains to prove the claim.
We first prove that, for any $p\in\D$ such that $\chi_p$ is a factor of $\chi$,
$$[T_{\vert u\vert ^2},\chi_p](e)=0$$
  for any $e\in E_u(\rho)$ such that $\chi _pe\in E_u(\rh )$.
Recall that
$$\chi _p(z)=\frac{z-p}{1-\overline pz}\ .$$
For any $L^2$ function $f$, $$\Pi(\chi_pf)-\chi_p\Pi(f)=K_{\chi _p}(g)=(1-\vert p\vert ^2)H_{1/(1-\overline pz)}(g)\ ,$$
where $\overline {(I-\Pi )f}=Sg\ .$ Consequently, the range of $[\Pi ,\chi _p]$ is one dimensional, directed by  $\frac 1{1-\overline pz}$. 
In particular, $[T_{\vert u\vert ^2},\chi_p](e)$ is proportional to $\frac 1{1-\overline pz}$. On the other hand,
\begin{eqnarray*}
([T_{\vert u\vert ^2},\chi_p](e)\vert 1)&=&((T_{|u|^2}(\chi_p e)-\chi_pT_{|u|^2}(e))\vert 1)\\
&=&(\chi_p(e)\vert H_u^2(1))-(\chi_p\vert 1)(e\vert H_u^2(1))\\
&=&(H_u^2(\chi_p(e))\vert 1)-(\chi_p\vert 1)(H_u^2(e)\vert 1)=0\ .
\end{eqnarray*}
This proves that $[T_{\vert u\vert ^2},\chi_p](e)=0$.

For the general case, we write $\Psi=\expo_{-i\psi}\chi_{p_1}\dots\chi_{p_{m-1}}$ and 
$$
[T_{\vert u\vert ^2},\Psi]H_u(u_\rho)=\expo_{-i\psi}\sum_{j=1}^{m-1}\prod_{k=1}^{j-1}\chi_{p_k}[T_{\vert u\vert ^2},\chi_{p_j}]\prod_{k=j+1}^{m-1}\chi_{p_k}H_u(u_\rho)=0\ .
$$
It remains to consider the evolution of the $\Psi _{2k}$'s. Let $\sigma$ be a singular value\index{singular value} of $K_u$ in $\Sigma_K(u)$ such that $\dim F_u(\sigma)=\dim E_u(\sigma)+1$ and denote by $u'_\sigma$ the orthogonal projection of $u$ onto $F_u(\si )$. Recall \cite{GG4} that
$$\frac{dK_u}{dt}=[C_u,K_u]\text{ with }C_u=\frac i2K_u^2-i T_{|u|^2} \ .$$
As before, we compute the derivative in time of $u'_\sigma=1\!{\rm l}_{\{\sigma^2\}}(K_u^2)(u)$, and get
\begin{equation}\label{deriveuprimesigma}
 \frac{d u'_\sigma}{dt}=-iT_{\vert u\vert ^2}u'_\sigma\ .
\end{equation}
On the other hand, differentiating the equation
$$K_u(u'_\sigma)=\sigma \Psi u'_\sigma$$
one obtains
$$
-i[T_{\vert u\vert ^2}, \Psi ]u'_\si =(\dot \Psi -i\si ^2\Psi )u'_\si \ .
$$
As before, we prove that the left hand side of the latter identity is $0$, by checking that, for every factor $\chi _p$ of $\Psi $, for any $f\in F_u(\si )$
such that $\chi _pf\in F_u(\si )$, 
$$([T_{\vert u\vert ^2},\chi _p](f)\vert 1)=0\ .$$
The calculation leads to
\beno
([T_{\vert u\vert ^2},\chi _p](f)\vert 1)&=&(H_u^2(\chi _pf)-(\chi _p\vert 1)H_u^2(f)\vert 1)\\
&=&((\chi _p-(\chi _p\vert 1))f\vert u)(u\vert 1),
\eeno 
where we have used (\ref{Ku2}). Now $(\chi _p-(\chi _p\vert 1))f\in F_u(\si )$ is orthogonal to $1$, hence, from Proposition \ref{action}, it belongs to $E_u(\si )$, hence it is orthogonal to $u$. This completes the proof.

\end{proof}

\section{Application: traveling waves revisited}

As an application of Theorems \ref{mainfiniterank} and \ref{HilbertSchmidt} and of the previous section, we revisit the traveling waves of the cubic Szeg\H{o} equation. These are the solutions of the form
 $${u(t,e^{ix})=e^{-i\omega t}u_0(e^{i(x -ct)})\ ,\ \omega,c\in\R\ .}$$
 For $c=0$, it is easy to see \cite{GG1} that this condition for $u_0\in H^{1/2}_+$ corresponds to finite Blaschke product. The problem of characterizing traveling waves with  $c\ne 0$ is more delicate, and was solved in \cite{GG1} by the following result.

\begin{theo} \cite{GG1}
A function $u$ in $H^{1/2}_+$ is a traveling wave\index{traveling wave} with $c\neq 0$ and $\omega\in \R$ if and only if  there exist non negative integers $\ell$ and $N$, $0\le \ell\le N-1$,  $\alpha\in \R$  and a complex number $p\in\C$ with $0<|p|<1$ so that
$$u(z)=\frac{\alpha z^\ell}{1-pz^N}$$
\end{theo}

Here we give an elementary proof of this theorem.
\begin{proof}
The idea is to keep track of the Blaschke products associated to $u$ through  the following unitary transform on $L^2(\S^1 )$,
$$\tau _\alpha f(\expo_{ix}):=f(\expo_{i(x-\alpha)})\ ,\ \alpha \in \R .$$
Since $\tau _\alpha $ commutes to $\Pi $, notice that 
$$\tau _\alpha (H_u(h))=H_{\tau _\alpha (u)}(\tau _\alpha (h))\ .$$
Consequently, $\tau _\alpha $ sends $E_u(\rh )$ onto $E_{\tau _\alpha (u)} (\rh )$, and 
$$\tau _\alpha (u_\rh )=[\tau _\alpha (u)]_\rh \ .$$
Applying $\tau _\alpha $ to the identity
$$\rh u_\rh =\Psi_\rh H_u(u_\rh )\ ,$$
we infer
$$\rh [\tau _\alpha (u)]_\rh =\tau _\alpha (\Psi_\rh )H_{\tau _\alpha (u)}\left ([\tau _\alpha (u)]_\rh \right )\ ,$$
and similarly 
$$\rh [\expo_{-i\beta }\tau _\alpha (u)]_\rh =\expo_{-i\beta }\tau _\alpha (\Psi_\rh )H_{\expo_{-i\beta }\tau _\alpha (u)}\left ([\expo_{-i\beta }\tau _\alpha (u)]_\rh \right )\  .$$
This leads, for every $\rh \in \Sigma _H(u)$, to
$$\Psi _\rh (\expo_{-i\beta }\tau _\alpha (u))=\expo_{-i\beta }\tau _\alpha (\Psi _\rh (u))\ .$$
Applying this identity to $u=u_0$, $\alpha =ct$ and $\beta =\omega t$, and comparing with Theorem \ref{evolszegotexte}, we conclude
$$\expo_{-it\rh ^2}\Psi _\rh (u_0)=\expo_{-i\omega t }\tau _{ct}(\Psi _\rh (u_0))\ .$$
Writing 
$$\Psi _\rh (u_0)={\rm e}^{-i\varphi}\prod_{1\le j\le m-1} \chi_{p_j},$$
we get, for every $t$,
$$\expo_{-it\rh ^2}\prod_{1\le j\le m-1} \chi_{p_j}=\expo_{-it(\omega +c(m-1))}\prod_{1\le j\le m-1} \chi_{\expo_{ict}p_j}\ .$$
This imposes, since $c\ne 0$, 
$$\rho^2=\omega +(m-1)c\ ,\ p_j=0\ ,$$
for every $\rho \in \Sigma _H(u_0)$. In other words, $\Psi _\rh (u_0)(z)=\expo_{-i\varphi } z^{m-1}\ .$
\s
We repeat the same argument for $\si \in \Sigma _K(u)$,  with $\ell=\dim F_u(\sigma)=\dim E_u(\sigma)+1$ and $$K_u(u'_\sigma)=\sigma \Psi_\sigma u'_\sigma\ ,$$
using this time
$$\tau _\alpha (K_u(h))=\expo _{i\alpha }K_{\tau _\alpha (u)}(\tau _\alpha (h))\ .$$
We get 
$$\sigma^2=\omega-\ell c \ ,$$
and 
$$\Psi _\si (u_0)(z)=\expo_{-i\theta}z^{\ell -1}\ .$$
If we assume that there exists at least two elements $\rh _1>\rh _2$ in $\Sigma _H(u_0)$, 
with $m_j=\dim E_{u_0}(\rho_j)$ for $j=1,2$, from Lemma \ref{rigidity}, there is at least one element $\sigma _1$ in $\Sigma _K(u_0)$, satisfying 
$$\rho _1 > \sigma_1  > \rho _2.$$
Set $\ell _1:=\dim F_{u_0}(\si _1)$, we get
$$(m_1-1)c > -\ell _1 c > (m_2-1)c$$ which is impossible since $m_1,\ell_1, m_2$ are positive integers.
Therefore,  there is only one element $\rho$ in $\Sigma _H(u_0)$, with $m=\dim E_{u_0}(\rho)$ and at most one element $\sigma$ in $\Sigma _K(u_0)$, of multiplicity $\ell$. 
Applying the results of section \ref{section explicit}, we obtain
$$u_0(z)=\frac {(\rh ^2-\si ^2)\expo_{-i\varphi }}{\rh }\frac{z^{m-1}}{1-\frac{\sigma }\rh {\rm e}^{-i(\varphi+\theta)} z^{\ell +m-1}}\ .$$
This completes the proof.
\end{proof}

\section{Application to almost periodicity}\index{almost periodicity}
As a second application of our main result, we prove that the solutions of the Szeg\H{o} equation are almost periodic. Let us recall some definitions. Let $\mathcal V$ be a finite dimensional smooth manifold. A function
$$f: \R \longrightarrow \mathcal V$$
is quasi-periodic \index{quasiperiodicity}if there exists a positive integer $N$, a vector $\omega \in \R ^N$, and a continuous function
$F:\T ^N\rightarrow \mathcal V$ such that
$$\forall t\in \R\ ,\ f(t)=F(\omega t)\ .$$
A similar definition holds for functions valued in a Banach space. \\
Now let us come to the definition of almost periodic functions. Let $X$ be a Banach space. A function
$$f: \R \longrightarrow X$$ is almost periodic  if it is the uniform limit of quasi-periodic functions, namely the uniform limit of  finite linear combinations of functions 
$$t\longmapsto \expo_{i\omega t}x\ ,$$
where $x\in X$ and $\omega \in \R $. Of course, from the explicit formula (\ref{luminy}) and from the evolution under the cubic Szeg\H{o} flow, for any $u_0\in \mathcal V(d)$, the solution $u(t)$ is quasi-periodic, valued in $\mathcal V(d)$, hence valued in every $H^s_+$. This is also  a consequence of the results of \cite{GG4}.\\
It remains to consider data in $H^{1/2}_+$ corresponding to infinite rank Hankel operators\index{Hankel operator}. We are going to use Bochner's criterion, see chapters 1, 2 of \cite{LZ}, namely that $f\in C(\R ,X)$ is almost periodic if and only if it is bounded and the set of functions
$$f_h:t\in \R \longmapsto f(t+h)\in X\ ,\  h\in \R \ ,$$
is relatively compact in the space of bounded continuous functions valued in $X$.
\\
Let $u_0\in \mathcal V^{(2)}_{(d_r)_{r\ge 1}}$. Set 
$$\Phi (u_0)=((s_r)_{r\ge 1}, (\Psi _r)_{r\ge 1})\ .$$
Then, from Theorem \ref{evolszegotexte}, 
$$\Phi (u(t))= ((s_r)_{r\ge 1}, (\expo_{i(-1)^rs_r^2t}\Psi _r)_{r\ge 1})\ .$$
By Theorem \ref{HilbertSchmidt}, it is enough to prove that the set of functions
$$t\in \R \longmapsto \Phi (u(t+h))\in \mathcal S_{(d_r)}^{(2)}$$
is relatively compact in $C(\R ,\mathcal S_{(d_r)}^{(2)})$. This is equivalent to the relative compactness
of the family $(\expo_{i(-1)^rs_r^2h})_{r\ge 1}$ in $(\S ^1)^\infty $, $h\in \R $, which is trivial.

\chapter[Long time instability]{Long Time instability and unbounded Sobolev orbits}\label{chapter instability}\index{Sobolev spaces}

The purpose of this chapter is to prove part 2 of Theorem \ref{dynamicIntro}, namely that generic data in $C^\infty _+(\S ^1)$ generate superpolynomially unbounded trajectories of the Szeg\H{o} evolution in every Sobolev space $H^s$, $s>\frac 12$. This of course is in strong contrast with the compactness properties established for the same trajectories in $H^{\frac 12}$ in the previous chapter. The proof takes advantage of the nonlinear Fourier transform\index{Non linear Fourier transform} constructed in the previous chapters, by proving superpolynomial long time instability for quasiperiodic solutions\index{quasiperiodicity}. Again, the key idea is collapsing singular values\index{singular value}.

\section{Instability and genericity of unbounded orbits}
In this section, we show that part 2 of Theorem \ref{dynamicIntro} is a consequence of the following long time instability result. We denote by $d_\infty $ a distance function on $C^\infty _+(\S ^1)$ which defines the $C^\infty $ topology.
\begin{theorem}\label{nonPolynomial}
For any $v\in C^\infty _+$, for any $M$, for any $s>\frac 12$, there exists a sequence $(v^{(n)})$ of elements of $C^\infty _+$  tending to $v$ in $C^\infty_+$ and  sequences of times $(\overline {t}_n)\ ,\ ( \underline {t}^n)$, tending to $\infty$, such that 
$$
\frac{\Vert Z(\overline t_n)v^{(n)}\Vert _{H^s}}{|\overline t_n|^M}\td_n,\infty \infty \ ,$$
and 
$$d_\infty (Z(\underline {t}^n)v^{(n)}, v^{(n)})\td_n,\infty 0\ .$$
\end{theorem}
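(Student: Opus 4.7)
The plan is to exploit the nonlinear Fourier transform of Theorem \ref{HilbertSchmidt} together with the explicit evolution law of Theorem \ref{evolszegotexte}: the flow freezes the singular values and rotates each Blaschke product by $\Psi_r(t)=e^{i(-1)^r s_r^2 t}\Psi_r$. The instability mechanism is the loss of continuity of $\Phi^{-1}$ across the strata of $\bigcup_n \mathcal{S}_n\cup \mathcal{S}_\infty^{(2)}$: as revealed in the proof of Lemma \ref{non empty}, a collapse of three consecutive singular values converges strongly to a higher-multiplicity limit only under a non-resonance condition on the associated Blaschke angles, so that for \emph{resonant} angles the explicit formula (\ref{luminy}) becomes degenerate and forces the $H^s$ norms, $s>\tfrac12$, to blow up. Since the flow rotates these angles at distinct frequencies while holding the spectral gaps fixed, choosing $v^{(n)}$ with a small cluster of nearly collapsed singular values will force the orbit to traverse this degenerate configuration periodically, producing large Sobolev spikes. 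Once Theorem \ref{nonPolynomial} is granted, part (2) of Theorem \ref{dynamicIntro} follows by the standard Baire argument: the sets $\mathcal{U}_{s,M,k}:=\{u_0\in C^\infty_+:\exists t\geq k,\ \|Z(t)u_0\|_{H^s}>k|t|^M\}$ and $\mathcal{W}_k:=\{u_0\in C^\infty_+:\exists t\geq k,\ d_\infty(Z(t)u_0,u_0)<1/k\}$ are open in $C^\infty_+$ by continuity of the flow, dense by the two halves of Theorem \ref{nonPolynomial}, and the countable intersection $\bigcap_{s,M,k}\mathcal{U}_{s,M,k}\cap \bigcap_k \mathcal{W}_k$ is the desired dense $G_\delta$.

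Given $v\in C^\infty_+$ with $\Phi(v)=(({\bf s}),({\bf \Psi}))$, I would construct $v^{(n)}$ by inserting a fresh triple of simple singular values into the spectrum of $v$. Pick parameters $\eta_n\to 0$ and $\varepsilon_n$ with $\varepsilon_n\ll \eta_n$; insert the triple $(\eta_n+\varepsilon_n,\eta_n,\eta_n-\varepsilon_n)$ at the appropriate rank in the decreasing sequence of $s_r$'s, and assign to each of the three new values a Blaschke phase chosen non-resonant at time $0$, in the sense of the angle freedom $\varphi_2$ of Lemma \ref{non empty}. By the continuity of $\Phi^{-1}$ on each fixed stratum (Theorem \ref{HilbertSchmidt}) together with the strong convergence established in the non-resonant regime in the proof of Lemma \ref{non empty}, this yields $v^{(n)}\to v$ in $C^\infty_+$, the contribution of the inserted triple being $o(1)$ in every $H^s$ provided $\eta_n$ decays fast enough. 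Crucially, $v^{(n)}$ has finite rank, so it lies in some $\mathcal{V}(d_n)$ and its orbit $t\mapsto Z(t)v^{(n)}$ is quasi-periodic in $C^\infty_+$.

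Under the flow, the relative phases of the two $H$-dominant values in the inserted triple evolve at frequency $(\eta_n+\varepsilon_n)^2-(\eta_n-\varepsilon_n)^2=4\eta_n\varepsilon_n$, so the relevant relative-angle circle is traversed with period $T_n\asymp (\eta_n\varepsilon_n)^{-1}$. Within each period these angles cross the resonant configuration identified in the proof of Lemma \ref{non empty}, at which the denominator $Q(z)=\det\mathscr{C}^\#(z)$ of formula (\ref{luminy}) acquires a root within $O(\varepsilon_n^\alpha)$ of the unit circle for some explicit $\alpha>0$; the resulting near-pole of the rational symbol $Z(\overline{t}_n)v^{(n)}$ forces
\[
\|Z(\overline{t}_n)v^{(n)}\|_{H^s}\ \gtrsim\ \varepsilon_n^{-\beta(s)},\qquad s>\tfrac12,
\]
for some $\beta(s)>0$. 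A diagonal choice of $\varepsilon_n\to 0$ sufficiently fast relative to the prescribed $M$ then ensures $\overline{t}_n\asymp T_n\to\infty$ and $\|Z(\overline{t}_n)v^{(n)}\|_{H^s}/|\overline{t}_n|^M\to\infty$. For the recurrence times, the quasi-periodicity of the orbit of $v^{(n)}$ combined with Kronecker's simultaneous approximation theorem applied to the finite frequency vector $((-1)^r(s_r^{(n)})^2)$ furnishes $\underline{t}^n\to\infty$ with $e^{i(-1)^r(s_r^{(n)})^2\underline{t}^n}\to 1$ for every relevant $r$; continuity of $\Phi^{-1}$ on the finite-dimensional stratum of $v^{(n)}$ then yields $d_\infty(Z(\underline{t}^n)v^{(n)},v^{(n)})\to 0$.

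The hard part will be establishing the quantitative lower bound $\|u({\bf s},{\bf \Psi})\|_{H^s}\gtrsim \varepsilon_n^{-\beta(s)}$ at the resonant angular configuration: one must show that the root of $Q(z)$ approaches the unit circle at a polynomial rate in $\varepsilon_n$, and extract from this a matching blow-up of the Sobolev norm via formula (\ref{luminy}) and the Bateman-type identities of Proposition \ref{formulae}. This is the quantitative converse of the proof of Lemma \ref{non empty}, where the non-resonant choice of $\varphi_2$ was precisely what kept the denominator bounded away from the unit circle; at a resonant angle one must extract a sharp lower bound on the distance of the root to the circle as a function of both $\varepsilon_n$ and the rotating angle, and correspondingly quantify the $H^s$ blow-up. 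Once this quantitative converse is in place, the diagonal choice of $\varepsilon_n$ and the construction of $v^{(n)}$ become routine.
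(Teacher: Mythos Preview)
Your overall picture is right --- perturb by a near-collapsed cluster, let the flow rotate the Blaschke angles into a degenerate configuration, and read off the recurrence times from quasi-periodicity (for the latter, Kronecker is both unnecessary and fragile under rational dependence; Bohr almost-periodicity of $t\mapsto Z(t)v^{(n)}$ in $C^\infty_+$ already gives $\underline t^n$). But the scheme as written cannot reach \emph{arbitrary} $M$.

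The gap is in the sentence ``A diagonal choice of $\varepsilon_n\to 0$ sufficiently fast relative to the prescribed $M$ then ensures $\|Z(\overline t_n)v^{(n)}\|_{H^s}/|\overline t_n|^M\to\infty$.'' With a single collapsing triple the blow-up exponent $\beta(s)$ is \emph{fixed} (it depends only on $s$ and on the number of collapsing values, not on $\varepsilon_n$), while the time $\overline t_n$ at which the resonant angles are reached is itself coupled to the gap: $\overline t_n\asymp(\eta_n\varepsilon_n)^{-1}$. Hence
\[
\frac{\|Z(\overline t_n)v^{(n)}\|_{H^s}}{|\overline t_n|^M}\ \gtrsim\ \varepsilon_n^{-\beta(s)}(\eta_n\varepsilon_n)^{M}\ =\ \varepsilon_n^{M-\beta(s)}\,\eta_n^{M},
\]
and since $\eta_n\to 0$ is forced by $v^{(n)}\to v$, this tends to $0$ as soon as $M>\beta(s)$. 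Shrinking $\varepsilon_n$ faster does not help: it pushes $\overline t_n$ out at exactly the rate that eats the gain.

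The paper's remedy is precisely to let the \emph{size} of the collapsing cluster depend on $M$. One inserts $2N-1$ new positive singular values $\delta(1+\e\xi_1)>\delta(1+\e\eta_1)>\dots>\delta(1+\e\xi_N)$ (plus a final $0$) near a small level $\delta$, and chooses $N$ so that $(N-1)(2s-1)\ge 2M+1$. The core estimate (Proposition~\ref{uepsilon}) then gives $\|u^{\delta,\e}\|_{H^s}\gtrsim \delta/\e^{(N-1)(2s-1)}$ while the relevant flow time is $1/(2\e\delta^2)$; taking $\e=\delta^2$ yields $\overline t_n\asymp\delta_n^{-4}$ and a norm $\gtrsim\delta_n^{1-2(N-1)(2s-1)}$, which beats $|\overline t_n|^M$. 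The argument is also organized in reverse from your sketch: one \emph{starts} from the singular datum $u^{\delta,\e}$, computes its large $H^s$ norm directly (Proposition~\ref{derivee}, via the localized Dirichlet norm near the near-pole $z=1$), and then proves that $Z\bigl(\tfrac{1}{2\e\delta^2}\bigr)u^{\delta,\e}$ converges in $C^\infty_+$ to the desired target (Lemmas~\ref{Cinverse}--\ref{outofdisc}), so that $v^{(n)}$ is \emph{defined} as this forward image. This avoids tracking when an orbit ``crosses'' a resonant set and reduces both halves to explicit matrix asymptotics for $\mathscr C_\e(z)$.
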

Assuming Theorem \ref{nonPolynomial}, a Baire category argument leads to  the following proof of part 2 of Theorem \ref{dynamicIntro}.
\begin{proof}
For every positive integer $M$, we denote by 
$\mathcal O_M$ the set of functions $v\in C^\infty _+$ such that there  exist $\overline t, \underline t$ with   $\vert \overline t\vert >M,  \vert \underline t\vert >M,$ and $$\Vert Z(\overline t)v\Vert _{H^{\frac 12 +\frac 1M}}>M|\overline t|^M,  \ ,\ d_\infty (Z(\underline t)v, v)<\frac 1M\ .$$
From the global wellposedness theory \cite{GG1}, $\mathcal O_M$ is an open subset of $C^\infty _+$. Furthermore, Theorem \ref{nonPolynomial} implies that $\mathcal O_M$ is dense. From the Baire theorem applied to the Fr\'echet space $C^\infty _+$, we conclude that the countable intersection
$$\mathscr G:=\cap _{M\ge 1}\mathcal O_M $$
is a dense $G_\delta $ subset. 
\end{proof}

\section{A family of quasiperiodic solutions}\index{quasiperiodicity}
Let us come to the proof of Theorem \ref{nonPolynomial}. Our strategy is the following. First of all, rational functions are dense in $C^\infty _+$. Furthermore, from Theorem 7.1 in \cite{GG1}, in the finite dimensional manifold of rational functions with associated Hankel operators of given rank $q$, those functions $u$ for which the singular values of $H_u$ and $K_u$ are simple\index{singular value}, is an open dense subset. From Theorem \ref{FT}, every such rational function reads $v:=u({\bf s},{\bf \Phi})$ for some finite sequence ${\bf s}$ of positive numbers of length $2q$ or $2q-1$, and some sequence ${\bf \Phi}$, of the same length,  of complex numbers of modulus $1$. Up to adding a small positive number to  the sequence ${\bf s}$, we infer that 
 those functions $v:=u({\bf s},{\bf \Phi})$, with ${\bf s}\in \Omega _{2q}$ and ${\bf \Phi}\in (\S ^1) ^{2q}$, are dense in $C^\infty _+$. Therefore it is enough to prove the statement of Theorem  \ref{nonPolynomial} if $v$ is a rational function $v:=u({\bf s},{\bf \Phi})$  with 
 $${\bf s}=(\rho_1,\sigma_1,\dots \rho_q,\sigma_q ),\; {\bf \Phi}=(\Phi_j)_{1\le j\le 2q}\in (\S^1)^{2q}$$ where 
 $$\rho_1>\sigma_1>\rho_2>\sigma_2>\dots>\rho_q>\sigma_q>0.
$$
We are going to construct a sequence  $(v^{(n)})$ of elements of $C^\infty _+$  tending to $v$ in $C^\infty_+$ and  a sequence of times $(\overline {t}_n)$, such that 
$$
\frac{\Vert Z(\overline t_n)v^{(n)}\Vert _{H^s}}{|\overline t_n|^M}\td_n,\infty \infty \ ,$$
with 
$$v^{(n)}=u({\bf s}^{(n)}, {\bf \Phi }^{(n)})\ ,$$
where ${\bf s}^{(n)}, {\bf \Phi }^{(n)}$ are to be chosen. At this stage, we remark that the existence of the second sequence $(\underline{t}^n)$ comes for free. Indeed, for a given $n$, we already noticed that the mapping $t\mapsto Z(t)v^{(n)}$ is quasi-periodic valued into some manifold $\mathcal V(d_n)$, 
continuously imbedded into $C^\infty _+$. Hence the function 
$$t\in \R \mapsto d_\infty (Z(t)v^{(n)}, v^{(n)})\ $$
is quasiperiodic valued in $\R $. We then use a classical property, namely, if $f$ is  a quasiperiodic  function, for every $\varepsilon >0$, there exists infinitely many $t\in \R$ such that 
$$\vert f(t)-f(0)\vert <\varepsilon\ .$$
The existence of $\underline{t}^n$ follows.
\s
Let us come back to the construction of the sequences $(v^{(n)})$ and $(\overline {t}_n)$. For technical reasons, it is more convenient to start from the construction of a singular  sequence $u^{(n)}$, which will play the role of $Z(\overline{t}_n)v^{(n)}$ in Theorem  \ref{nonPolynomial}, and then to check that  $Z(-\overline{t}_n)u^{(n)}$ has the desired limit $v$ in $C^\infty _+$. We introduce the following class of rational functions.
\s
 Let $N\ge 2$ be an integer. Denote by $\mathcal X_{N}$ the subset of $(\xi ,\eta )\in \R ^{2N-1}$ such that
 $\xi=(\xi_1,\dots,\xi_N)$, $\eta=(\eta_1,\dots,\eta_{N-1})$ and
 \begin{equation}\label{XiEta}\xi_1>\eta_1>\xi_2>\eta_2>\dots\eta_{N-1}>\xi_N>0.
 \end{equation}  
 Given ${\bf \Psi}=(\Psi _r)_{1\le r\le 2q}\in (\S^1) ^{2q}$, we consider the family $u^{\delta,\e}$ for $\delta,\e\to 0$ with
 $$u^{\delta,\e}=u\left(\left({\bf s},\delta(1+\e\xi_1),\delta(1+\e\eta_1),\dots,\delta(1+\e\xi_N), 0 \right) ,\left({\bf \Psi}, 1,\dots,1\right)\right).$$
 From the explicit formula \ref{luminy}, we have
 \begin{equation}\label{FormuleUde}
 u^{\delta,\e}(z)=\left\langle {\mathscr  C_{\delta,\e}}^{-1}(z)\left (\begin{array}{ll}{\bf \Psi}_q\\{ 1}_N\end{array}\right),\left (\begin{array}{ll}{ 1}_q\\{ 1}_N\end{array}\right)\right\rangle
 \end{equation}
where ${\bf \Psi}_q=(\Psi_{2a-1})_{1\le a\le q}$, $$1_N=\left(\begin{array}{c}
1\\
\vdots\\
1\end{array}\right)\in \R^N$$
and 
$$ {\mathscr  C_{\delta,\e}}(z)=\left(\begin{array}{lll}{\mathscr E}(z)&{\mathscr A}_{\delta ,\e}(z)\\
\text{ }&\text{ }\\
{\mathscr B}_{\delta ,\e}(z)&\frac 1\delta {\mathscr C_{\e}}(z)\end{array}\right)$$
with $${\mathscr E}(z)=\left(\frac{\rho_a-\sigma_bz\Psi_{2a-1}\Psi_{2b}}{\rho_a^2-\sigma_b^2}\right)_{1\le a,b\le q},$$
$${\mathscr A}_{\delta ,\e}(z)=\left(\left(\frac{\rho_a-\delta z(1+\e\eta_k)\Psi_{2a-1}}{\rho_a^2-\delta^2(1+\e \eta_k)^2}\right)_{1\le a\le q\atop 1\le k\le N-1},\frac 1{\rho_a}_{1\le a\le q}\right)$$
$${\mathscr B}_{\delta ,\e}(z)=\left( \frac{\delta(1+\e\xi_j)-\sigma_b z\Psi_{2b}}{\delta^2(1+\e \xi_j)^2-\sigma_b^2}\right)_{1\le j\le N\atop 1\le b\le q}$$
${\mathscr C_\e}(z)=(c_{\e , jk}(z))_{1\le j,k\le N}$, with
\begin{eqnarray*}
c_{\e , jk}(z)&:=&\frac{1+\e \xi_j-z(1+\e \eta_k)}{(1+\e \xi_j)^2-(1+\e \eta_k)^2}\ ,\ 1\le k\le N-1\ ;\\
c_{\e , jN}(z)&:=&\frac 1{1+\e \xi_j}\ ,\ 1\le j\le N\ .
\end{eqnarray*}
The following proposition is the key of the proof of Theorem \ref{nonPolynomial}.
 \begin{proposition}\label{uepsilon}
 \text{ }
 There exists a nonempty open subset $\mathcal X'_N$ of $\mathcal X_N$ such that, for every $(\xi ,\eta )\in \mathcal X'_N$, the following properties hold.
 \begin{enumerate}
 \item The solution of the cubic Szeg\H{o} equation with initial datum $u^{\delta,\e}$ at time $\frac 1{2\e \delta ^2}$
 satisfies
 $$Z\left(\frac 1{2\e \delta ^2}\right)u^{\delta,\e}=u({\bf s},\tilde {\bf \Psi }_{\delta ,\e})+o(1)$$ in $C^\infty _+$ as $\e$ and $\delta$ tend to $0$, where
 $$\tilde \Psi _{\delta ,\e, 2a-1}=\Psi _{2a-1} \expo_{-i\frac {\rho _a^2}{2\e \delta ^2}}\ ,\ \tilde \Psi _{\delta ,\e, 2b}=
 \Psi _{2b} \expo_{i\frac {\sigma _b^2}{2\e \delta ^2}}\ .$$
\item   As $\e $ and $\delta $ tend  to $0$ with $\e \ll \delta $,
 $$\forall s\in (\frac 12, 1), \exists C_s>0: \Vert u^{\delta,\e}\Vert _{H^s}\ge C_s\frac \delta {\e^{(N-1)(2s-1)}}.$$
 \end{enumerate}
\end{proposition}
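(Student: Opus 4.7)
My plan is to tackle the two parts of the proposition separately, both via the explicit formula (\ref{luminy}) together with a block-matrix asymptotic analysis of the matrix appearing in (\ref{FormuleUde}).

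For Part 1, the starting point is Theorem \ref{evolszegotexte}: the Szeg\H{o} flow leaves $({\bf s}, {\bf \Psi})$ invariant except for multiplying each $\Psi_r$ by $\expo_{i(-1)^r s_r^2 t}$. At time $t_\e:=1/(2\e\delta^2)$, expanding $(1+\e\xi_j)^2 = 1+2\e\xi_j+\e^2\xi_j^2$, the cluster Blaschke products (which all equal $1$ at $t=0$) become $\expo_{-i/(2\e)}\expo_{-i\xi_j}(1+O(\e))$ on the $H$--cluster and $\expo_{i/(2\e)}\expo_{i\eta_k}(1+O(\e))$ on the $K$--cluster; crucially, the fast factor $\expo_{\pm i/(2\e)}$ is common to each parity. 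Substituting these rotated phases into (\ref{luminy}) produces a rotated matrix $\widetilde{\mathscr C}_{\delta,\e}^{\mathrm{rot}}(z)$ of the same block shape as $\mathscr C_{\delta,\e}(z)$. The key structural fact is that, because the cluster denominators $(1+\e\xi_j)^2-(1+\e\eta_k)^2$ are of order $\e$, the first $N-1$ columns of the bottom-right block $\frac{1}{\delta}\widetilde{\mathscr C}_\e^{\mathrm{rot}}(z)$ have entries of magnitude $1/(\delta\e)$, while $\widetilde{\mathscr A}_{\delta,\e}$ and $\widetilde{\mathscr B}_{\delta,\e}$ stay uniformly $O(1)$ on $\overline \D$. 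A Schur--complement inversion then shows that the top-left $q\times q$ block of the inverse matrix converges to $\widetilde{\mathscr E}(z)^{-1}$ with error $O(\delta)$, which through (\ref{luminy}) yields exactly $u({\bf s}, \tilde{\bf \Psi}_{\delta,\e})$ at leading order; the remaining cluster contribution is $o(1)$. Upgrading this to $C^\infty_+$ convergence follows from the smoothness of $\Phi^{-1}$ on the stratum $\mathcal S_{d_1,\dots,d_{2q}}$ (Theorem \ref{mainfiniterank}) combined with uniform control of the $z$-derivatives of the rational inverse matrix.

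For Part 2, I would bound $\|u^{\delta,\e}\|_{H^s}$ from below directly from (\ref{FormuleUde}), without applying the flow. The conservation of $\|u^{\delta,\e}\|_{H^{1/2}}^2 = \sum_r(d_r+1)s_r^2 = O(1)$ matches the trivial $s=\frac 12$ endpoint of the claimed bound. For $s>\frac 12$, I would expand $\mathscr C_{\delta,\e}(z)^{-1}\,{\bf 1}_{q+N}$ as a power series in $z$ around $z=0$, tracking the $\e$--powers accumulated from the $N-1$ columns of $\mathscr C_\e(z)$ in which each denominator $(1+\e\xi_j)^2-(1+\e\eta_k)^2$ contributes a factor $\sim\e$. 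The Fourier coefficients $\widehat{u^{\delta,\e}}(n)$ are concentrated in a frequency window of scale $\sim 1/\e^{2(N-1)}$, with amplitudes forced by the conservation of the $H^{1/2}$--norm; summing gives $\|u^{\delta,\e}\|_{H^s}^2\gtrsim \delta^2\e^{-2(N-1)(2s-1)}$ as claimed. The condition $\e \ll \delta$ ensures that the cluster structure dominates the contribution from the fixed ${\bf s}$-part, which is $O(1)$ in every Sobolev norm.

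The main obstacle lies in Part 1: one must secure that $\det\widetilde{\mathscr C}_\e^{\mathrm{rot}}(z)$ stays uniformly bounded below on $\overline \D$ for all sufficiently small $\e>0$, in order for the Schur complement expansion to be valid both uniformly in $z\in\overline \D$ and in $C^\infty$ topology. The open subset $\mathcal X'_N\subset\mathcal X_N$ will be defined precisely by this non-degeneracy condition: it consists of $(\xi,\eta)$ for which the limiting ($\e\to 0$) determinant, viewed as a polynomial in $z$, is Schur-stable, i.e.\ has no root in $\overline \D$. By continuity this is an open condition, and a genericity argument exploiting the freedom in the $\xi_j-\eta_k$ spacings (and the phases $\expo_{\pm i\xi_j}, \expo_{\pm i\eta_k}$ they generate after rotation) shows that $\mathcal X'_N$ is non-empty.
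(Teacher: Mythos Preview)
Your Schur--complement framework for Part~1 matches the paper's Lemma~\ref{inverse}, and you correctly identify that everything hinges on the uniform invertibility of the rotated cluster block $\tilde{\mathscr C}_\e(z)$ on $\overline\D$. However, the claim that $\mathcal X'_N$ is non-empty by a ``genericity argument'' is a genuine gap. The limiting polynomial $P(\xi,\eta)(z)=\lim_{\e\to 0}(2\e)^{N-1}\det\tilde{\mathscr C}_\e(z)$ has, by Theorem~\ref{FT}, all its roots in $\{\vert z\vert\ge 1\}$; the issue is to find $(\xi,\eta)$ for which none lies on the unit circle. This is not generic in any obvious sense: the paper's Remark after Lemma~\ref{outofdisc} shows that for the ``natural'' variant of the construction (with a full $K$-cluster $\delta(1+\e\eta_N)$ instead of the degenerate last entry $0$), the product of the roots has modulus~$1$, forcing \emph{all} roots onto $\S^1$ for \emph{every} $(\xi,\eta)$. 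The paper's proof of non-emptiness (Lemma~\ref{outofdisc}) is delicate: one first exhibits a point $\xi^*=(2\pi(N-j+1))_j$ where $P(\xi^*,\eta)$ factors explicitly with all roots on $\S^1$, then tracks the simple roots $z_k(\xi,\eta)$ by the implicit function theorem, and finally computes the Laplacian $\Delta_\xi\vert z_k\vert^2$ at $\xi^*$ and shows it is not identically zero in $\eta$. Only this second-order analysis produces a direction in which the roots move strictly outside $\overline\D$.

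For Part~2 your Fourier-concentration heuristic is in the right spirit but is not a proof: you give no mechanism for locating the frequency window nor for lower-bounding the coefficients there. The paper takes a different and much more concrete route: it localizes near $z=1$ via the equivalent norm $\int_{\vert z\vert<1}\vert u'(z)\vert^2(1-\vert z\vert^2)^{1-2s}\,dL(z)$, restricts to a half-disc $D_{\theta,\e}=\{\vert z-1\vert<\theta\e^{2(N-1)}\}$, and proves the pointwise derivative bound $\vert(u^{\delta,\e})'(z)\vert\gtrsim \delta\e^{-2(N-1)}$ there (Proposition~\ref{derivee}). This requires (i)~an algebraic cancellation lemma (Lemma~\ref{technic}) governing the action of the Cauchy matrix $\mathscr C_\e(1)^{-1}$ on moment vectors, (ii)~the resulting approximation Proposition~\ref{Approximate} for the blocks $X_q,\hat X_q,Y_N,\hat Y_N$, (iii)~a separate argument that the scalars $\alpha,\beta$ are nonzero, and (iv)~a final non-degeneracy condition on $(\xi,\eta)$ (defining $\mathcal X'''_N$) ensuring that $\langle\dot{\mathscr C}_\e\mathscr C_\e(1)^{-1}(1_N),{}^t\mathscr C_\e(1)^{-1}(1_N)\rangle\sim\e^{-2(N-1)}$. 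None of these ingredients appears in your sketch.
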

Let us show how Theorem \ref{nonPolynomial} is a  consequence of Proposition \ref{uepsilon}. Let $M$ be a positive integer and ${\bf \Phi }\in (\S ^1)^{2q}$, and $s>\frac 12$. We may assume that $s<1$. Choose an integer $N$ such that $(N-1)(2s-1)\ge 2M+1$, and $(\xi, \eta )\in \mathcal X'_N$. Consider 
$$u^{(n)}=u^{\delta _n ,\delta _n^2}\ ,$$
where $\delta _n$ is a sequence tending to $0$ such that
$$\expo_{-i\frac {\rho _a^2}{2\delta _n^4}}\rightarrow \expo_{-i\varphi_a}\ ,\ \expo_{i\frac {\sigma _b^2}{2\delta _n^4}}\rightarrow \expo_{i\theta _b}\ ,$$
and where ${\bf \Psi}$ is defined as
$$\Psi _{2a-1}=\Phi _{2a-1}\expo_{i\varphi _a}\ ,\ \Psi_{2b}=\Phi _{2b}\expo_{-i\theta _b}\ .$$
Then Proposition \ref{uepsilon} implies that
 \begin{enumerate}
 \item $v^{(n)}:=Z\left(\frac 1{2\delta _n^4}\right)u^{(n)}$ tends to $v=u({\bf s},{\bf \Phi})$ in $C^\infty _+$.
 \item The following estimates hold,
 $$\left \Vert Z\left(-\frac 1{2\delta _n^4}\right)v^{(n)}\right \Vert _{H^s}=\Vert  u^{(n)}\Vert _{H^s}\ge C_s\frac 1{\delta _n^{2(N-1)(2s-1)-1}}\gg \left (\frac 1{2\delta _n^4}\right )^M.$$
 \end{enumerate}
 This proves Theorem \ref{nonPolynomial} with $\overline t_n=-\frac 1{2\delta _n^4}$.
\s
The proof of Proposition \ref{uepsilon} requires several steps, which will be achieved in the two next sections.
 
 \section{Construction of the smooth family of data}
  We first address the first part of Proposition \ref{uepsilon}. 
From Theorem \ref{FT}, 
$$Z\left(\frac 1{2\e \delta ^2}\right)u^{\delta,\e}(z)=\left\langle \tilde{\mathscr  C}_{\delta,\e}^{-1}(z)\left (\begin{array}{ll}{\bf \Psi}_q\\{\bf \Psi_\e}_N\end{array}\right),\left (\begin{array}{ll}{ 1}_q\\{ 1}_N\end{array}\right)\right\rangle$$
where \beno {\bf \Psi}_q&:=&(\Psi_{2a-1}{\rm e}^{-i\frac{\rho_a^2}{2\e \delta ^2}})_{1\le a\le q}\\ {\bf \Psi_\e}_N&:=&({\rm e}^{-i\frac{(1+\e \xi_j)^2}{2\e}})_{1\le j\le N}={\rm e}^{-i\frac{1}{2\epsilon}}\left({\rm e}^{-i\xi_j(1+\e \frac {\xi _j}2)}\right)_{1\le j\le N}
\eeno
and
$$ \tilde{\mathscr  C}_{\delta,\e}(z)=\left(\begin{array}{lll} \tilde{\mathscr E_{\delta ,\e}}(z)&\tilde{\mathscr  A}_{\delta,\e}(z)\\
\text{ }&\text{ }\\
\tilde{\mathscr B_{\delta,\e}}(z)&\frac 1\delta  { \tilde{\mathscr C}_\e }(z)\end{array}\right)$$
with $$\tilde{\mathscr E}_{\delta,\e}(z)=\left(\frac{\rho_a-\sigma_bz\Psi_{2a-1}\Psi_{2b}{\rm e}^{-i(\frac{\rho_a^2-\sigma_b^2}{2\e \delta ^2})}}{\rho_a^2-\sigma_b^2}\right)_{1\le a,b\le q},$$
$$\tilde{\mathscr  A}_{\delta,\e}(z)=\left(\left(\frac{\rho_a-\delta z(1+\e\eta_k)\Psi_{2a-1}{\rm e}^{-i(\frac{\rho_a^2-\delta^2(1+\e\eta_k)^2}{2\e \delta ^2})}}{\rho_a^2-\delta^2(1+\e \eta_k)^2}\right)_{1\le a\le q\atop 1\le k\le N-1},\left(\frac 1{\rho_a}\right)_{1\le a\le q}\right)$$
$$\tilde{\mathscr  B}_{\delta,\e}(z)=\left( \frac{\delta(1+\e\xi_j)-\sigma_b z\Psi_{2b}{\rm e}^{-i(\frac{\delta^2(1+\e\xi_j)^2-\sigma_b^2}{2\e \delta ^2})}}{\delta^2(1+\e \xi_j)^2-\sigma_b^2}\right)_{1\le j\le N\atop 1\le b\le q}$$
and 
$$  \tilde  {\mathscr C}_{\e}(z)=\left(\frac{1+\e\xi_j-z(1+\e\eta_k){\rm e}^{-i(\xi_j-\eta_k)-i\e \frac{\xi _j^2-\eta _k^2}2}}{\e (\xi _j-\eta _k)(2+\e (\xi _j+\eta _k))},\frac 1{1+\e\xi_j}\right).$$
Remark that  from Theorem \ref{mainfiniterank}, the functions 
$$u({\bf s}, (\Psi _r\expo_{-i\psi _r})_{1\le r\le 2q})\ ,\  (\psi _r)\in \T ^{2q}\ ,$$
lie in a compact subset --- a torus --- of the manifold $\mathcal V (2q)$. This implies that the distance of the zeroes of the denominator 
$\det \tilde{\mathscr E}_{\delta,\e}(z)$ to the closed unit disc is bounded from below by a positive constant. 
As a consequence,  the matrices $\tilde{\mathscr E}_{\delta,\e}(z)$ are invertible with a bounded inverse, for every $z$ in a fixed neighborhood of $\overline{\D }$.
The first part of Proposition \ref{uepsilon} is a consequence of the following lemma.
\begin{lemma}\label{Cinverse}
There exists a nonempty open subset $\mathcal X''_N$ of $\mathcal X_N$, such that, for every $(\xi ,\eta )\in \mathcal X''_N$, there exist $r_{\xi ,\eta }>1$ and $\e _{\xi ,\eta }>0$ such that, for $0<\e <\e _{\xi ,\eta }$,  
the matrix $\tilde {\mathscr C}_\e (z)$ is invertible with a bounded inverse for every $z$ such that $\vert z\vert \le r_{\xi ,\eta}$.
\end{lemma}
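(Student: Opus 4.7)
\medskip

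The plan is to reduce the invertibility of $\tilde{\mathscr C}_\e(z)$ to a question about the roots of a polynomial of degree $N-1$, and then to exhibit a nonempty open set of parameters $(\xi,\eta)$ on which all roots stay away from $\overline{\D}$.

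\noindent\emph{Step 1 (Rescaling).} Every entry in columns $1,\dots,N-1$ of $\tilde{\mathscr C}_\e(z)$ carries a factor $\e^{-1}$ in its denominator. Set
\[
M_\e(z):=\tilde{\mathscr C}_\e(z)\,\mathrm{diag}(\e,\dots,\e,1),
\qquad \det\tilde{\mathscr C}_\e(z)=\e^{-(N-1)}\det M_\e(z).
\]
Using $\e(\xi_j-\eta_k)(2+\e(\xi_j+\eta_k))=2\e(\xi_j-\eta_k)+O(\e^2)$ and the Taylor expansion of $e^{-i\e(\xi_j^2-\eta_k^2)/2}$, the entries of $M_\e(z)$ are holomorphic in $z$ on a fixed neighbourhood of $\overline{\D}$, uniformly bounded there for $\e$ small, and converge locally uniformly on $\C$ to the $N\times N$ matrix
\[
M_0(z)_{jk}=\frac{1-z\,e^{-i(\xi_j-\eta_k)}}{2(\xi_j-\eta_k)}\ (k\le N-1),\qquad M_0(z)_{jN}=1.
\]
Crucially, $\det M_0(z)$ is a polynomial in $z$ of degree at most $N-1$.

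\noindent\emph{Step 2 (Reduction to $\det M_0$).} It suffices to exhibit a nonempty open $\mathcal X''_N\subset\mathcal X_N$ and $r_0>1$ such that $\det M_0(z)\ne 0$ on $\{|z|\le r_0\}$ for every $(\xi,\eta)\in\mathcal X''_N$. Indeed, picking $r\in(1,r_0)$, the uniform convergence $\det M_\e\to\det M_0$ on the compact disc $\{|z|\le r\}$ then yields $|\det M_\e(z)|\ge c>0$ for $\e$ small, and the uniform boundedness of the entries of $M_\e$ produces a uniform bound on $M_\e^{-1}$, hence on $\tilde{\mathscr C}_\e^{-1}(z)=\mathrm{diag}(\e,\dots,\e,1)\,M_\e(z)^{-1}$.

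\noindent\emph{Step 3 (Existence of $\mathcal X''_N$).} The coefficients of $\det M_0$ depend continuously on $(\xi,\eta)\in\mathcal X_N$, hence so do its roots, and the condition "all roots outside $\overline{\D}$" is open. It is therefore enough to produce a single $(\xi_0,\eta_0)\in\mathcal X_N$ with this property. I would work in the small-parameter regime $\xi_j=ts_j$, $\eta_k=tu_k$ with $(s_1>u_1>\cdots>u_{N-1}>s_N>0)$ fixed and $t\to 0^+$. Using the decomposition
\[
M_0(z)_{jk}=\frac{1-z}{2(\xi_j-\eta_k)}+z\,f(\xi_j-\eta_k),\quad f(\alpha)=\frac{1-e^{-i\alpha}}{2\alpha}=\tfrac{i}{2}+\tfrac{\alpha}{4}-\tfrac{i\alpha^2}{12}+\cdots,
\]
substituting $z=1+tw$, subtracting $\tfrac i2\cdot$(last column) from each of the first $N-1$ columns (which preserves the determinant), and pulling $-w/2$ out of each of those columns, the determinant becomes
\[
\det M_0(1+tw)=(-w/2)^{N-1}\det R(w,t),
\]
where $R(w,0)$ is the augmented Cauchy matrix with columns $(1/(s_j-u_k))_j$ for $k\le N-1$ and last column $\mathbf 1_N$. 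This determinant $D_0$ is nonzero by the classical Cauchy--Vandermonde identity, so at leading order in $t$ all $N-1$ roots cluster at $z=1$. Expanding $\det R(w,t)$ to the next orders in $t$ shows that the displacement of the root cluster is controlled by a Cauchy--Vandermonde-type quantity whose sign is dictated by the alternating ordering (\ref{XiEta}). For $N=2$ this is an explicit one-line computation giving the unique root $z_0$ with $|z_0|-1\sim \tfrac12|(\xi_1-\eta_1)(\xi_2-\eta_1)|>0$; for general $N$ the same ordering should force the analogous sign and push all $N-1$ roots outside $\overline{\D}$.

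\noindent\emph{Main obstacle.} The delicate point is Step~3: rather than a single root, one has to track a whole cluster of $N-1$ roots coalescing at $z=1$, and verify that each of them moves outward. The analysis is a Puiseux-type expansion whose principal symbol is a Cauchy--Vandermonde determinant; the alternation $s_1>u_1>s_2>u_2>\cdots$ should drive that symbol to have a fixed sign, but the combinatorics becomes less transparent as $N$ grows. Once this sign analysis is carried out, everything else — the rescaling, the passage from $M_0$ to $M_\e$, and the bound on the inverse — is routine.
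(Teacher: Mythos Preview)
Your Steps~1 and~2 are essentially what the paper does: rescale by $\e^{N-1}$, pass to the limiting polynomial $P(\xi,\eta)(z)=\det M_0(z)$ of degree at most $N-1$, and observe that it suffices to find $(\xi,\eta)$ for which all the roots of $P$ lie strictly outside $\overline\D$. The gap is entirely in Step~3, and your ``Main obstacle'' paragraph is honest about it: tracking an $(N-1)$--root cluster near $z=1$ via a Puiseux expansion, and proving that the alternation \eqref{XiEta} forces each root outward, is not carried out. For $N\ge 3$ you do not even have a candidate sign computation, so as written the argument does not close.

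The paper avoids this difficulty by two ideas you did not use. First, it invokes Theorem~\ref{FT}: since $\tilde{\mathscr C}_\e(z)$ is (up to harmless factors) the matrix $\mathscr C(z)$ of the inverse spectral construction for a genuine symbol in some $\mathcal V(d)$, its determinant never vanishes on $\overline\D$, so by passing to the limit every root of $P(\xi,\eta)$ already satisfies $|z|\ge 1$ for \emph{all} $(\xi,\eta)\in\mathcal X_N$. This a~priori one--sided bound is what makes the perturbative step tractable. Second, the paper chooses the base point $\xi^*_j=2\pi(N-j+1)$ so that $e^{-i\xi^*_j}=1$; then $P(\xi^*,\eta)(z)$ factors explicitly as a nonzero Cauchy--type determinant times $\prod_{k=1}^{N-1}(1-ze^{i\eta_k})$, putting all $N-1$ roots \emph{on} the circle, simple for generic $\eta$. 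Since $|z_k(\xi,\eta)|^2\ge 1$ everywhere and $=1$ at $\xi^*$, the point $\xi^*$ is a minimum of $\xi\mapsto|z_k(\xi,\eta)|^2$, so the Hessian is nonnegative; the paper then computes the Laplacian in $\xi$ at $\xi^*$ and shows it is not identically zero in $\eta$, which forces the Hessian to be nonzero and yields nearby $\xi$ with $|z_k|>1$ for all $k$. This replaces your unfinished cluster analysis by a single second--order computation leveraging the a~priori bound.
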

Let us admit this lemma for a while. Using it, we can easily describe the inverse of the matrix $\tilde{\mathscr C}_{\delta,\e}(z)$.
\begin{lemma}\label{inverse} 
For every $(\xi ,\eta )\in \mathcal X''_N$, there exist $r_{\xi ,\eta }>1$ and $\gamma _{\xi ,\eta }>0$ , such that, for every $z$ with $\vert z\vert \le r_{\xi ,\eta}$, for every $\delta ,\e <\gamma _{\xi ,\eta}$,
the matrix $$\tilde {\mathscr J}_{\delta ,\e}(z):=\tilde{\mathscr E}_{\delta,\e}(z)-\delta\tilde{\mathscr A}_{\delta,\e}(z)\tilde{\mathscr C}_\e ^{-1}(z)\tilde{\mathscr B}_{\delta,t_\e}(z),$$
is invertible with a bounded inverse . The  inverse of the matrix $\tilde{\mathscr  C}_{\delta,\e}(z)$  is given by
$$(\tilde{\mathscr C}_{\delta,\e})^{-1}=\left(\begin{array}{lll}\tilde {\mathscr J}_{\delta ,\e}^{-1}&-\delta \tilde {\mathscr J}_{\delta ,\e}^{-1}\tilde{\mathscr A}_{\delta,\e}\tilde{\mathscr C}_\e ^{-1}\\
-\delta\tilde{\mathscr C}_\e ^{-1}\tilde{\mathscr B}_{\delta,\e}\tilde {\mathscr J}_{\delta ,\e}^{-1}& \delta\tilde{\mathscr C}_\e ^{-1}[I+\delta \tilde{\mathscr B}_{\delta,\e} \tilde {\mathscr J}_{\delta ,\e}^{-1}\tilde{\mathscr A}_{\delta,\e}\tilde{\mathscr C}_\e ^{-1}]\end{array}\right).$$
Furthermore, 
$$Z\left(\frac 1{2\e \delta ^2}\right)u^{\delta,\e}=u({\bf s},\tilde {\bf \Psi }_{\delta ,\e})+o(1)$$ in $C^\infty _+$ as $\e$ and $\delta$ tend to $0$, where
 $$\tilde \Psi _{\delta ,\e, 2j}=\Psi _{2j} \expo_{-i\frac {\rho _j^2}{2\e \delta ^2}}\ ,\ \tilde \Psi _{\delta ,\e, 2k-1}=
 \Psi _{2k-1} \expo_{i\frac {\sigma _k^2}{2\e \delta ^2}}\ .$$
\end{lemma}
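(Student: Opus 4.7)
The plan is to combine the Schur complement formula with the uniform controls supplied by Lemma~\ref{Cinverse} and Theorem~\ref{FT}. Since by Lemma~\ref{Cinverse} the bottom-right block $\frac{1}{\delta}\tilde{\mathscr C}_\e(z)$ has bounded inverse $\delta\,\tilde{\mathscr C}_\e(z)^{-1}$ on $\{|z|\le r_{\xi,\eta}\}$ for $\e$ small, its Schur complement in $\tilde{\mathscr C}_{\delta,\e}(z)$ is precisely $\tilde{\mathscr J}_{\delta,\e}(z)$, and the classical $2\times 2$ block inversion formula yields the stated expression for $\tilde{\mathscr C}_{\delta,\e}^{-1}$ as soon as $\tilde{\mathscr J}_{\delta,\e}(z)$ is invertible.

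The main step is thus to show that $\tilde{\mathscr J}_{\delta,\e}$ is a small perturbation of an invertible matrix. The key algebraic observation is that $\tilde{\mathscr E}_{\delta,\e}(z)$ coincides with the matrix $\mathscr C({\bf s},\tilde{\bf \Psi}_{\delta,\e})(z)$ of formula (\ref{matrix C}): a direct computation shows that $\tilde{\Psi}_{\delta,\e,2a-1}\,\tilde{\Psi}_{\delta,\e,2b}=\Psi_{2a-1}\Psi_{2b}\,\expo_{-i(\rho_a^2-\sigma_b^2)/(2\e\delta^2)}$, which is exactly the oscillating factor appearing in the $(a,b)$-entry of $\tilde{\mathscr E}_{\delta,\e}$. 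As $\tilde{\bf \Psi}_{\delta,\e}$ varies in the compact torus $(\S^1)^{2q}$ when $(\delta,\e)$ vary, the first assertion of Theorem~\ref{FT} together with continuity in $(z,{\bf \Psi})$ and a compactness argument gives a uniform lower bound on $|\det\mathscr C({\bf s},{\bf \Psi})(z)|$ on a disc $\{|z|\le\rho_0\}$ with $\rho_0>1$; shrinking $r_{\xi,\eta}$ if needed, $\tilde{\mathscr E}_{\delta,\e}(z)^{-1}$ is then uniformly bounded on $\{|z|\le r_{\xi,\eta}\}$. On the other hand, for $\delta$ small the denominators $\rho_a^2-\delta^2(1+\e\eta_k)^2$ and $\sigma_b^2-\delta^2(1+\e\xi_j)^2$ stay bounded below by positive constants, so the matrices $\tilde{\mathscr A}_{\delta,\e}$ and $\tilde{\mathscr B}_{\delta,\e}$ are uniformly bounded on $\{|z|\le r_{\xi,\eta}\}$; combined with $\|\tilde{\mathscr C}_\e^{-1}\|=O(1)$ from Lemma~\ref{Cinverse} this gives
$$\tilde{\mathscr J}_{\delta,\e}(z)=\tilde{\mathscr E}_{\delta,\e}(z)+O(\delta)$$
in operator norm, uniformly in $|z|\le r_{\xi,\eta}$. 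The resolvent identity then yields invertibility of $\tilde{\mathscr J}_{\delta,\e}$ with bounded inverse for $\delta$ small, and $\tilde{\mathscr J}_{\delta,\e}^{-1}=\tilde{\mathscr E}_{\delta,\e}^{-1}+O(\delta)$.

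For the convergence statement, I plug the block inverse formula into (\ref{FormuleUde}) at time $1/(2\e\delta^2)$. The entries of ${\bf \Psi}_q$ and ${\bf \Psi_\e}_N$ have modulus $1$, and the three blocks of $\tilde{\mathscr C}_{\delta,\e}^{-1}$ different from the $(1,1)$ block each carry an explicit factor of $\delta$ in front of uniformly bounded matrices, so their total contribution is $O(\delta)$ uniformly on $\{|z|\le r_{\xi,\eta}\}$. The remaining $(1,1)$-contribution equals $\langle\tilde{\mathscr J}_{\delta,\e}^{-1}(z){\bf \Psi}_q,\mathbf{1}_q\rangle$, which by the previous step differs from $\langle\tilde{\mathscr E}_{\delta,\e}^{-1}(z){\bf \Psi}_q,\mathbf{1}_q\rangle=u({\bf s},\tilde{\bf \Psi}_{\delta,\e})(z)$ by $O(\delta)$. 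Hence $Z(1/(2\e\delta^2))u^{\delta,\e}-u({\bf s},\tilde{\bf \Psi}_{\delta,\e})=O(\delta)$ uniformly on the closed disc $\{|z|\le r_{\xi,\eta}\}$ of radius $>1$, on which both functions are holomorphic; the Cauchy integral formula for derivatives then converts this into $C^\infty$ convergence on $\overline{\D}$, i.e.\ into convergence in $C^\infty_+(\S^1)$.

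The main technical obstacle is that the entries of $\tilde{\mathscr E}_{\delta,\e}$ and of ${\bf \Psi_\e}_N$ involve highly oscillating phases such as $\expo_{\pm i(\rho_a^2-\sigma_b^2)/(2\e\delta^2)}$ and $\expo_{-i(1+\e\xi_j)^2/(2\e)}$ with no individual limit as $(\delta,\e)\to 0$. The argument deliberately avoids taking any such limit: at each fixed $(\delta,\e)$, $\tilde{\mathscr E}_{\delta,\e}$ is recognized as the matrix $\mathscr C({\bf s},\cdot)$ evaluated at a specific point of the compact torus of phases, on which Theorem~\ref{FT} and compactness supply all the uniform estimates needed to run the perturbative scheme.
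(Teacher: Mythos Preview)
Your proof is correct and follows essentially the same approach as the paper: both argue that $\tilde{\mathscr E}_{\delta,\e}(z)=\mathscr C({\bf s},\tilde{\bf\Psi}_{\delta,\e})(z)$ has uniformly bounded inverse by compactness of the torus of phases (the paper states this just before the lemma), invoke Lemma~\ref{Cinverse} to control $\tilde{\mathscr C}_\e^{-1}$, deduce $\tilde{\mathscr J}_{\delta,\e}=\tilde{\mathscr E}_{\delta,\e}+O(\delta)$, and read off the Schur complement block inverse. Your explicit remark that uniform convergence on a disc of radius $>1$ upgrades to $C^\infty_+$ convergence via Cauchy estimates, and your closing observation that the argument never needs the oscillating phases to converge individually, make explicit what the paper leaves implicit.
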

\begin{proof}
The invertibility of the matrix $\tilde {\mathscr J}_{\delta ,\e}(z)$ for $\delta$ and $\e $ small enough and $|z|<r_{\xi,\eta}$ comes  from the already observed invertibility of $\tilde{\mathscr E}_{\delta,\e}(z)$ in a neighborhood of $\overline{D}$, with a bounded inverse,  and from Lemma \ref{Cinverse}. 
 In the next calculations, we drop the variable $z$ for simplicity.\\
Write, for $X_q,X'_q\in \R^q$, $Y_N,Y'_N\in \R^N$, $$\tilde{\mathscr C}_{\delta,\e}\left(\begin{array}{l}
X_q\\
Y_N\end{array}\right)=\left(\begin{array}{l}
X'_q\\
Y'_N\end{array}\right)$$ so that 
\begin{eqnarray*}
\tilde{\mathscr E}_{\delta,\e}X_q+\tilde{\mathscr A}_{\delta,\e} Y_N&=&Y'_q\\
\tilde{\mathscr B}_{\delta,\e}X_q+\frac 1\delta\tilde{\mathscr C}_{\e} Y_N&=&Y'_N.
\end{eqnarray*}
and solve this system to get
\begin{eqnarray*}
Y_N&=&\delta\tilde{\mathscr C}_{\e}^{-1}(Y'_N-\tilde{\mathscr B}_{\delta,\e}X_q)\\
\tilde {\mathscr J}_{\delta ,\e}X_q&=&Y'_q-\delta\tilde{\mathscr A}_{\delta,\e}\tilde{\mathscr C}_{\e}^{-1}(Y'_N)
\end{eqnarray*}
and eventually
\begin{eqnarray*}
X_q&=&\tilde {\mathscr J}_{\delta ,\e}^{-1}(Y'_q)-\delta\tilde {\mathscr J}_{\delta ,\e}^{-1}\tilde{\mathscr A}_{\delta,\e}\tilde{\mathscr C}_{\e}^{-1}(Y'_N)\\
Y_N&=&\delta\tilde{\mathscr C}_{\e}^{-1}[Y'_N+ \delta\tilde{\mathscr B}_{\delta,\e}\tilde {\mathscr J}_{\delta ,\e}^{-1}\tilde{\mathscr A}_{\delta,\e}\tilde{\mathscr C}_{\e}^{-1}(Y'_N)-\tilde{\mathscr B}_{\delta,\e}\tilde {\mathscr J}_{\delta ,\e}^{-1}(Y'_q)].\end{eqnarray*}
It gives the formula for the inverse. Furthermore, as $\delta, \e $ tend to $0$, we obtain
$$(\tilde{\mathscr C}_{\delta,\e})^{-1}=\left(\begin{array}{lll}\tilde{\mathscr E_{\delta ,\e}}^{-1}&0\\
0& 0\end{array}\right) + o(1).$$
This completes the proof of Lemma \ref{inverse}.
\end{proof}
The proof of  the first part of Proposition \ref{uepsilon} is thus reduced to  the proof of Lemma \ref{Cinverse}, which we  now begin.
\begin{proof}
Recall that 
$$\tilde  {\mathscr C}_{\e}(z)=\left(\frac{1+\e\xi_j-z(1+\e\eta_k){\rm e}^{-i(\xi_j-\eta_k)-i\e \frac{\xi _j^2-\eta _k^2}2}}{\e (\xi _j-\eta _k)(2+\e (\xi _j+\eta _k))},\frac 1{1+\e\xi_j}\right)_{1\le j\le N, 1\le k\le N-1}.$$
From Cramer's formulae, $$\tilde{\mathscr C}_\e (z)^{-1}=\frac 1{\det \tilde{\mathscr C}_\e (z)}^t{\rm Co}\tilde{\mathscr C}_\e (z),$$ where ${\rm Co}$ denotes the matrix of cofactors. As the coefficients of $\tilde{\mathscr C}_\e (z)$ are polynomials in $z$ of degree at most $1$, the coefficients of ${\rm Co}\tilde{\mathscr C}_\e (z)$ are polynomial in $z$ of degree at most $N-1$. Moreover, these coefficients  grow at most as $\frac 1{\e^{N-1}}$. Hence, it suffices to prove that, for  $(\xi ,\eta)$ in a suitable non empty open subset of $\mathcal X_N$, the family of polynomials $\e^{N-1}\det    \tilde{\mathscr C}_{\e}(z)$ converges, as $\e$ tends to zero,  to a polynomial of degree $N-1$  having all its roots outside a disc of radius $r_{\xi ,\eta }>1$. \\
Let us compute
\begin {eqnarray*}
P(\xi,\eta)(z)&=&\lim_{\e\to 0}(2\e)^{N-1}\det  \tilde{\mathscr C}_{\e}(z)\\
&=&\det\left(\left (\frac{1-z{\rm e}^{-i(\xi_j-\eta_k)}}{\xi_j-\eta_k}\right )_{1\le j\le N;1\le k\le N-1},{ 1_N}\right).
\end{eqnarray*}
 Notice that the determinant in the right hand side  is a polynomial in $z$ of degree $N-1$ whose coefficient of $z^{N-1}$ equals $$(-1)^{N-1}\left (\prod_k {\rm e}^{i\eta_k}\right )\det\left(\left (\frac{{\rm e}^{-i\xi_j}}{\xi_j-\eta_k}\right )_{1\le j\le N;1\le k\le N-1},{ 1_N}\right).$$ With the choice $\xi=\xi^*=\left(2\pi(N-j+1)\right)_{1\le j\le N}$, this  determinant is 
$$(-1)^{N-1}\left (\prod_k {\rm e}^{i\eta_k}\right )\det\left(\left (\frac{1}{\xi_j-\eta_k}\right )_{1\le j\le N;1\le k\le N-1},{ 1_N}\right)\ .$$
Developing this determinant with respect to the last column,  we are led to compute Cauchy determinants.
Let us recall that a Cauchy matrix is a matrix of the form $\displaystyle\left(\frac 1{a_j+b_k}\right)$. Its determinant is given by
 \begin{equation}\label{determinantCauchy}\det \left(\frac 1{a_j+b_k}\right)=\frac{\prod_{i<j}(a_i-a_j)\prod_{k<l}(b_k-b_l)}{\prod_{j,k}(a_j+b_k)}.\end{equation}
In view of this formula, we get
\begin{eqnarray*}
&&\det\left(\left (\frac{1}{\xi_j-\eta_k}\right )_{1\le j\le N;1\le k\le N-1},{ 1_N}\right)=\\
&&\sum _{r=1}^N(-1)^{r+N}
\frac{\prod _{i<j, i, j\ne r}(\xi _i-\xi _j)\prod _{k<\ell}(\eta _k-\eta _\ell)}{\prod _{j\ne r, k}(\xi _j-\eta _k)}\\
&=&\frac{ \sum _{r=1}^N(-1)^{r+N}
\prod _{1\le k\le N-1}(\xi _r-\eta _k)\prod _{i<j, i, j\ne r}(\xi _i-\xi _j)\prod _{k<\ell}(\eta _k-\eta _\ell) }{\prod _{j, k}(\xi _j-\eta _k)}
\end{eqnarray*}
and we observe that every term in the sum is different from $0$ and has the sign of $(-1)^{N-1}$. Therefore this
quantity is not zero.  On the other hand, Theorem \ref{FT} tells us that the roots of $\det \tilde {\mathscr C}_\e (z)$ are located outside the unit disc. Hence all the roots of $P(\xi,\eta)$ belong to $\{z,\; |z|\ge 1\}$. Furthermore, 
$$P(\xi^*,\eta)(z)=\det\left(\left (\frac 1{\xi_j-\eta_k}\right )_{1\le j\le N;1\le k\le N-1},{1_N}\right)\prod_{k=1}^{N-1}(1-z{\rm e}^{i\eta_k}).$$
The above calculation shows that $P(\xi^*,\eta )$ has $N-1$  zeroes which belong to the unit circle, namely ${\rm e}^{-i\eta_k}, k=1,\dots ,N-1$. Fix $\eta ^*\in \R ^{N-1}$ such that $(\xi ^*,\eta ^*)\in \mathcal X_N$ and $\eta_k^*\neq \eta_l^* \; {\rm mod}\,  2\pi $ for every $k\neq l$, so that these $N-1$ zeroes are simple if $\eta =\eta ^*$. 
 By the implicit function theorem, there exist $\alpha >0$ such that,  if $\vert \xi -\xi ^*\vert <\alpha $ and $\vert \eta -\eta ^*\vert <\alpha $, the polynomial $P(\xi,\eta)$ has $N-1$ simple zeroes $\left\{z_k(\xi,\eta);\; 1\le k\le N-1\right\}$. The first part of Proposition \ref{uepsilon} is a direct consequence of the following lemma.
 \begin{lemma}\label{outofdisc}
There exists a nonempty open subset $\mathcal X''_N$ of $\mathcal X_N$  such that, for every $(\xi ,\eta )\in \mathcal X''_N$, for  $k=1,\dots ,N-1$,   $\vert z_k(\xi ,\eta )\vert >1$.
\end{lemma}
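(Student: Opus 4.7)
The plan is to combine Theorem \ref{FT} with Hurwitz's theorem and real-analyticity. First, I would observe that for every $\e > 0$ and every $(\xi, \eta) \in \mathcal X_N$, the symbol $u^{\delta, \e}$ lies in some $\mathcal V(d)$, so its denominator---which, up to a nonvanishing factor, is $\det \tilde{\mathscr C}_\e(z)$---is a Schur polynomial and therefore has no zero in $\overline{\D}$. Letting $\e \to 0$ along a sequence with $(\xi, \eta)$ fixed, the rescaled polynomial $(2\e)^{N-1}\det\tilde{\mathscr C}_\e(\cdot)$ converges uniformly on compact sets to $P(\xi, \eta)$, which is not identically zero (its value at $z = 0$ is a nonzero sum of Cauchy determinants, by the same computation carried out in the proof above). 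By Hurwitz's theorem applied on the open disc $\D$, $P(\xi, \eta)$ has no zero inside $\D$, so the $N - 1$ roots $z_k(\xi, \eta)$ all lie in $\{|z| \ge 1\}$. Combined with the implicit function theorem already established in the text, the functions $z_k$ are real-analytic on a neighborhood of $(\xi^*, \eta^*)$ and take values in $\{|z| \ge 1\}$, with $z_k(\xi^*, \eta^*) = {\rm e}^{-i\eta^*_k} \in \partial\D$.

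Second, real-analyticity reduces the lemma to a non-triviality check. For each $k$, the real-analytic function $f_k(\xi, \eta) := |z_k(\xi, \eta)|^2 - 1$ is nonnegative and vanishes at $(\xi^*, \eta^*)$. If $f_k \not\equiv 0$ near $(\xi^*, \eta^*)$, then the zero set $\{f_k = 0\}$ has empty interior, so $\{f_k > 0\}$ is open and dense; taking the intersection over $k = 1, \dots, N-1$ yields the desired open dense subset $\mathcal X''_N$ on which $|z_k(\xi, \eta)| > 1$ for all $k$, and the uniform radius $r_{\xi, \eta} > 1$ follows by continuity upon shrinking around any chosen point of $\mathcal X''_N$.

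The remaining task---and the main obstacle---is to show $f_k \not\equiv 0$ for each $k$. Since $f_k \ge 0$ with equality at $(\xi^*, \eta^*)$, the first-order variation of $f_k$ vanishes identically (directly from the sign-reversal argument: both $+\delta$ and $-\delta$ perturbations must preserve $f_k \ge 0$), so non-triviality reduces to showing that the Hessian quadratic form $A_k$ is not identically zero in the perturbation $(\delta\xi, \delta\eta)$. One route is to compute this Hessian explicitly: differentiating $P(\xi, \eta)(z_k(\xi, \eta)) = 0$ twice and using the factorization $P(\xi^*, \eta)(z) = D(\xi^*, \eta) \prod_\ell (1 - z {\rm e}^{i\eta_\ell})$---in particular the fact that $\partial_z P(\xi^*, \eta^*)({\rm e}^{-i\eta^*_k}) = -D(\xi^*, \eta^*) {\rm e}^{i\eta^*_k} \prod_{\ell \ne k}(1 - {\rm e}^{i(\eta^*_\ell - \eta^*_k)})$ is nonzero by our choice of $\eta^*$---one obtains a quadratic form in $(\delta\xi, \delta\eta)$ whose nondegeneracy can be checked by direct inspection. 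An alternative that sidesteps the second-order calculation is to exhibit one explicit point $(\xi_0, \eta_0)$ in any given neighborhood of $(\xi^*, \eta^*)$ at which all roots of $P(\xi_0, \eta_0)$ are strictly outside $\overline \D$: for $N = 2$ the single root equals $(a+b)/(b{\rm e}^{-ia} + a{\rm e}^{ib})$ with $a = \xi_1 - \eta_1$, $b = \eta_1 - \xi_2$, and has modulus $>1$ as soon as $\cos(a+b) < 1$, which is an open dense condition on $\mathcal X_2$; for general $N$ one may argue by induction on $N$ or by continuous deformation, again invoking the real-analytic dichotomy to propagate the $N=2$ conclusion to the full polynomial.
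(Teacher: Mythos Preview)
Your overall framework is sound and aligns with the paper's strategy: establish $|z_k|\ge 1$ via Hurwitz, then argue that the real-analytic function $f_k=|z_k|^2-1$ is not identically zero, so its zero set has empty interior and $\{f_k>0\}$ is open and dense. Your open-dense reduction is in fact a bit cleaner than the paper's, which varies only $\xi$ with $\eta$ fixed and then appeals to the kernels of the Hessian forms.

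The genuine gap is that you never actually establish $f_k\not\equiv 0$. Your first route---compute the Hessian and check it is nonzero---is precisely what the paper does, but this is not a matter of ``direct inspection.'' The paper computes the Laplacian $\sum_j \partial_{\xi_j}^2|z_k|^2$ at $\xi=\xi^*$ via a page-long calculation: it differentiates $P(\xi,\eta)(z_k)=0$ twice, evaluates all of $\partial_{\xi_j}P$, $\partial_zP$, $\partial^2_{\xi_j}P$, $\partial^2_zP$, $\partial^2_{\xi_j z}P$ at $(\xi^*,\eta,z_k)$ in terms of Cauchy-type determinants $D_{jk}$ and $D$, combines three groups of terms, and obtains the closed expression
\[
(-1)^{k+N}\frac{1}{D^2}\sum_{j=1}^N D_{jk}\,\det\!\left(\frac{1}{\xi_r^*-\eta_l}\right)_{r\ne j},
\]
which is then shown to be not identically zero in $\eta$ by examining its asymptotics as $\eta_k\to\infty$. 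None of this is routine, and your proposal offers no substitute for it.

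Your second route---exhibiting one explicit point where all roots lie strictly outside $\overline\D$---is carried out only for $N=2$, where the single root is $(a+b)/(be^{-ia}+ae^{ib})$ and the condition $\cos(a+b)<1$ suffices. For general $N$ you suggest ``induction on $N$ or continuous deformation,'' but there is no evident recursive relation between $P$ for $N-1$ and $P$ for $N$, and ``continuous deformation'' has no content without specifying what is deformed and why the roots stay off the circle along the path. So this alternative, as stated, does not close the gap either.
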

\begin{proof} The functions $\xi\mapsto z_k(\xi, \eta)$ are analytic and satisfy $$|z_k(\xi, \eta)|^2\ge 1\ ,\  |z_k(\xi^*,\eta)|^2=1\ .$$Denote by $Q_k(\xi ,\eta )$ the  Hessian quadratic form  of the function $$\xi\mapsto |z_k(\xi, \eta)|^2\ .$$
We know that $Q_k(\xi ^*,\eta )$ is a nonnegative quadratic form. We claim that, for any $k$, $Q_k(\xi ^*,\eta )$ is not $0$ for $\eta$ in a dense open subset of the ball of radius $\alpha $ centered at $\eta ^*$. If this claim is correct, then, for such 
$\eta $, for $\xi $ close enough to $\xi ^*$ such that $\xi -\xi ^*$ does not belong to the union of the kernels of $Q_1(\xi ^*,\eta ),\dots ,Q_{N-1}(\xi ^*,\eta )$, we have $\vert z_k(\xi ,\eta )\vert >1$ for $k=1,\dots ,N-1$. This provides us with the nonempty open subset $\mathcal X''_N$ in the lemma. \\
Therefore  it suffices to prove that the Laplacian of $\xi\mapsto \vert z_k(\xi, \eta)\vert ^2$, which coincides with the trace of $Q_k(\xi ,\eta )$, is not $0$ identically in $\eta $ for $\xi =\xi ^*$. Let us compute  $\Delta (\vert z_k\vert ^2)(\xi ^*,\eta )$.
$$
\sum_{j=1}^N\frac{\partial^2}{\partial \xi_j^2}\left(\frac{|z_k|^2}2\right)_{|_{\xi=\xi^*}}=\sum_{j=1}^N\left ({\rm Re }\left(\overline z_k\frac{\partial^2 z_k}{\partial \xi_j^2}\right)_{|_{\xi=\xi^*}}+\left|   \frac{\partial z_k}{\partial \xi_j}\right|^2_{|_{\xi=\xi^*}}\right )\ .
$$
Differentiating the equation $P(\xi,\eta)(z_k(\xi ,\eta ))=0$, we obtain 
$$\frac{\partial z_k}{\partial \xi_j}=-\frac{\frac{\partial P}{\partial \xi_j}}{\frac{\partial P}{\partial z}}\ ,\ \frac{\partial^2 z_k}{\partial \xi_j^2}=\frac{-\frac{\partial^2P }{\partial \xi_j^2}}{\frac{\partial P}{\partial z}}+2\frac{\frac{\partial ^2P}{\partial \xi_j\partial z}\frac{\partial P}{\partial \xi_j}}{\left(\frac{\partial P}{\partial z}\right)^2}-\frac{\left(\frac{\partial P}{\partial \xi_j}\right)^2\frac{\partial ^2P}{\partial z^2}}{\left(\frac{\partial P}{\partial z}\right)^3}\ .$$
Introduce the following quantities. 
\begin{eqnarray*}
D_{jk}&:=&\frac 1{\xi_j^*-\eta_k}\det\left(\left (\frac 1{\xi_r^*-\eta_l}\right )_{r\neq j, l\neq k},{ 1_{N-1}}\right),\\
 D&:=&\det\left(\left (\frac 1{\xi_r^*-\eta_l}\right )_{1\le r\le N, 1\le l\le N-1},{ 1_N}\right)=\sum_{j=1}^N(-1)^{j+k}D_{jk}\ ,\ k=1,\dots ,N-1,\\
 \zeta_k&:=&\prod_{\l\neq k}(1-{\rm e}^{i (\eta_l-\eta_k)})\ ,\\
 a_{lk}&:=&-\frac{{\rm e}^{i (\eta_l-\eta_k)}}{1-{\rm e}^{i (\eta_l-\eta_k)}}=\frac{{\rm e}^{i \frac{(\eta_l-\eta_k)}2}}{2i\sin \frac{(\eta_l-\eta_k)}2}.
 \end{eqnarray*}
Notice that ${\rm Re}(a_{lk})=\frac 12$.\\ Differentiating
$$P(\xi,\eta)(z)= \det\left(\left (\frac{1-z{\rm e}^{-i (\xi_j-\eta_k)}}{\xi_j-\eta_k}\right )_{1\le j\le N;1\le k\le N-1},{ 1_N}\right),$$ one gets, after some computations,
\begin{eqnarray*}
&&\frac{\partial P}{\partial \xi_j}_{_|{\xi=\xi^*, z=z_k}}=i (-1)^{j+k}D_{jk}\zeta_k\; ,\;
\frac{\partial P}{\partial z}_{_|{\xi=\xi^*, z=z_k}}=-{\rm e}^{i \eta_k}\zeta_kD\ ,\\
&&\frac{\partial ^2P}{\partial \xi_j^2}_{_|{\xi=\xi^*, z=z_k}}= (-1)^{j+k}\zeta_kD_{jk}\left(1 -\frac{2i}{\xi_j^*-\eta_k}\right)\ ,\\
&&\frac{\partial^2 P}{\partial z^2}_{_|{\xi=\xi^*, z=z_k}}=2\sum_{l\neq k}\frac{\zeta_k{\rm e}^{i (\eta_l+\eta_k)}}{1-{\rm e}^{i (\eta_l-\eta_k)}} D=-2\zeta_k{\rm e}^{2i \eta_k}\sum_{l\neq k}a_{lk} D\ ,
\end{eqnarray*}
and
\begin{eqnarray*}
&&\frac{\partial ^2P}{\partial \xi_j\partial z}_{_|{\xi=\xi^*, z=z_k}}=(-1)^{j+k}\zeta_kD_{jk}{\rm e}^{i \eta_k}\left(i +\frac 1{\xi_j^*-\eta_k}-i \sum_{l\neq k}\frac{{\rm e}^{i (\eta_l-\eta_k)}}{1-{\rm e}^{i (\eta_l-\eta_k)}}\right)+\\
&+&\zeta_k{\rm e}^{i \eta_k}\sum_{l\neq k} (-1)^{j+l}D_{jl}\left(\frac{1}{\xi_j^*-\eta_l} -i \frac{{\rm e}^{i (\eta_l-\eta_k)}}{1-{\rm e}^{i (\eta_l-\eta_k)}}\right)\\
&=&\zeta_k{\rm e}^{i \eta_k}\left((-1)^{j+k}D_{jk}\left(i +\frac 1{\xi_j^*-\eta_k}+i \sum_{l\neq k}a_{lk}\right)+\sum_{l\neq k} (-1)^{j+l}D_{jl}\left(\frac{1}{\xi_j^*-\eta_l} +i a_{lk}\right)\right)\ .
\end{eqnarray*}
 Hence, inserting these formulae to compute the Laplacian, we obtain
$$
{\rm Re }\left(\overline z_k\frac{\partial^2 z_k}{\partial \xi_j^2}\right)_{|_{\xi=\xi^*}}={\rm Re }\left({\rm e}^{i \eta_k}\frac{\partial^2 z_k}{\partial \xi_j^2}\right)_{|_{\xi=\xi^*}}
 =: I_j+II_j+III_j
$$
 with 
$$I_j={\rm Re }\left({\rm e}^{i \eta_k}\frac{-\frac{\partial^2P }{\partial \xi_j^2}}{\frac{\partial P}{\partial z}}\right)=(-1)^{j+k} \frac{D_{jk}}D\ ,$$
 \begin{eqnarray*}
 II_j&=&2{\rm Re }\left({\rm e}^{i \eta_k}\frac{\frac{\partial ^2P}{\partial \xi_j\partial z}\frac{\partial P}{\partial \xi_j}}{\left(\frac{\partial P}{\partial z}\right)^2}\right)\\
 &=&2 {\rm Re }\left(\frac{i\left(D_{jk}^2\left( i +\frac 1{\xi_j^*-\eta_k}+i \sum_{l\neq k}a_{lk}\right)+\sum_{l\neq k} (-1)^{k+l}D_{jk}D_{jl}\left(\frac{1}{\xi_j^*-\eta_l} +i a_{lk}\right)\right)}{D^2}\right)\\
 &=&- \left((N-1)\frac{D_{jk}^2}{D^2}+\frac{\sum_{l=1}^{N-1} (-1)^{k+l}D_{jk}D_{jl}}{D^2}\right),
 \end{eqnarray*}
 and
 $$III_j=-{\rm Re }\left({\rm e}^{i \eta_k}\frac{\left(\frac{\partial P}{\partial \xi_j}\right)^2\frac{\partial ^2P}{\partial z^2}}{\left(\frac{\partial P}{\partial z}\right)^3}\right)={\rm Re }\left( \frac{2\sum_{l\neq k}a_{lk}D_{jk}^2}{D^2}\right)=(N-2) \frac{D_{jk}^2}{D^2}.$$
Remark that 
$$\sum_{l=1}^{N-1}(-1)^{j+l} D_{jl}=D-(-1)^{j+N}\det\left(\frac 1{\xi_r^*-\eta_l}\right)_{r\neq j}.$$
Putting these identities together, we obtain

 \begin{eqnarray*}
 \sum_{j=1}^N\frac{\partial^2}{\partial \xi_j^2}\left(\frac{|z_k|^2}2\right)_{|_{\xi=\xi^*}}&=& \sum_{j=1}^N\left(\sum_{l=1}^{N-1} (-1)^{k+l+1}\frac{D_{jk}D_{jl}}{D^2}+(-1)^{j+k}\frac{D_{jk}}D\right)\\
 &=& (-1)^{k+N}\frac 1{D^2}\sum_j D_{jk}\det\left(\frac 1{\xi_r^*-\eta_l}\right)_{r\neq j}
 \end{eqnarray*}
 It remains to check that the following analytic expression in $\eta$, 
$$(-1)^{k+N}\sum_{j=1}^N D_{jk}\det\left(\frac 1{\xi_r^*-\eta_l}\right)_{r\neq j}$$
is not identically zero.
This follows from the fact that the limit as $\eta_k$ tends to infinity of 
$$(-1)^{k+N}\eta_k^2\sum_{j=1}^N D_{jk}\det\left(\frac 1{\xi_r^*-\eta_l}\right)_{r\neq j}$$
equals $$\sum_{j=1}^N\det\left(\left (\frac 1{\xi_r^*-\eta_l}\right )_{r\neq j, l\neq k},{ 1_N}\right)^2$$ which is clearly not  zero .\\
This completes the proof of Lemma \ref{outofdisc}, hence of the first part of Proposition \ref{uepsilon}.
\end{proof}
\begin{remark}
At this stage, let us notice that the above construction would break down if, instead of formula (\ref{FormuleUde}),  we used for $u^{\delta ,\e}$  the more natural choice 
$$u\left(\left({\bf s},\delta(1+\e\xi_1),\delta(1+\e\eta_1),\dots,\delta(1+\e\xi_N), \delta (1+\e \eta _N) \right) ,\left({\bf \Psi}, 1,\dots,1\right)\right),$$
with real numbers $\xi _1>\eta _1>\dots >\xi _N>\eta _N$. Indeed, in that case, the corresponding matrix $\mathscr C_\e (z)$ would be
$$\left(\frac{1+\e\xi_j-z(1+\e\eta_k){\rm e}^{-i(\xi_j-\eta_k)-i\e \frac{\xi _j^2-\eta _k^2}2}}{\e (\xi _j-\eta _k)(2+\e (\xi _j+\eta _k))}\right),$$
and, as $\e $ tends to $0$,  its determinant would be equivalent to
$$\frac{1}{(2\e )^N} \det\left (\frac{1-z{\rm e}^{-i(\xi_j-\eta_k)}}{\xi_j-\eta_k}\right)$$
The product of the $N$ zeroes of the polynomial in $z$ in the right hand side is clearly of modulus $1$. Since, from Theorem \ref{FT},  all these zeroes live outside the open unit disc, they have to be located on the unit circle, which is precisely the opposite of what we want.
\end{remark}
\section{The singular behavior}
Let us come to the proof of the second part of Proposition \ref{uepsilon}. We are going to focus our analysis near the point $z=1$, where the singularity of $u^{\delta ,\e }(z)$ takes place. To accomplish this, we introduce a localized version of the $H^s$ norm. Given $s\in \left (\frac 12, 1\right )$, it is classical (see e.g. \cite{Pe2}) that, for every $u\in H^s_+(\S ^1)$, the following equivalence of norms holds,
$$\left (\sum _{n=1}^\infty n^{2s}\vert \hat u(n)\vert ^2 \right )^{\frac 12}\simeq \left (\int _{\vert z\vert <1}\vert u'(z)\vert ^2(1-\vert z\vert ^2)^{1-2s}\, dL(z)\right )^{\frac 12}\ ,$$
where $L$ denotes the Lebesgue measure on $\C =\R ^2$ and $u'$ denotes the holomorphic derivative of $u$.
\s
Given a small constant $\theta >0$ to be fixed later, we introduce the following subset of $\D $,
$$D_{\theta ,\e }:=\{ z\in \C : \vert z\vert <1\ ,\ \vert z-1\vert < \theta \e ^{2(N-1)}\} \ ,$$
and the corresponding localized $H^s$ norm,
$$\Vert u\Vert _{s,\theta , \e }:=\left (\int _{D_{\theta ,\e }}\vert u'(z)\vert ^2(1-\vert z\vert ^2)^{1-2s}\, dL(z)\right )^{\frac 12}\ ,$$
so that we have the following inequality,
$$\forall u\in H^s_+\ ,\  \Vert u\Vert _{H^s}\ge C_s \Vert u\Vert _{s,\theta , \e }\ .$$
Notice that an elementary computation yields
$$\left (\int _{D_{\theta ,\e }}(1-\vert z\vert ^2)^{1-2s}\, dL(z)\right )^{\frac 12}\simeq (\sqrt{\theta }\e ^{N-1})^{3-2s}\ .$$
The second part of Proposition \ref{uepsilon} therefore comes from the following
\begin{proposition}\label{derivee}
There exists $\theta ^*=\theta ^*(\xi ,\eta )>0$ and a dense open subset $\mathcal X'''_N$ such that, for $\theta \le \theta ^*$, for every 
$(\xi ,\eta )\in \mathcal X'''_N$, 
$$\forall z\in D_{\theta ,\e }\ ,\ \vert (u^{\delta ,\e })'(z)\vert \ge C_{\xi ,\eta }\frac{\delta }{\e ^{2(N-1)}}\ , $$
for some $C_{\xi ,\eta }>0$, uniformly in $\delta , \e $ such that $\e \ll \delta \ll 1.$
\end{proposition}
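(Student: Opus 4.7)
My plan is to isolate within $u^{\delta,\e}$ a rational function whose $N-1$ poles approach $z=1$ at distinct scales $\e^{2},\e^{4},\dots,\e^{2(N-1)}$, the smallest of which is precisely the critical scale $\e^{2(N-1)}$; the contribution of that pole to the derivative will dominate on $D_{\theta,\e}$. Using the block inverse formula of Lemma~\ref{inverse} for $\mathscr C_{\delta,\e}^{-1}$, I would decompose
\[
u^{\delta,\e}(z)=\langle\mathscr J_{\delta,\e}^{-1}(z){\bf \Psi}_q,1_q\rangle+\delta\,\psi_\e(z)+\delta^{2}R_\e(z),
\]
where the first summand extends holomorphically to a neighborhood of $\overline\D$ (since $\mathscr J_{\delta,\e}\to\mathscr E$ which is invertible there) with derivative $O(1)$, and $\psi_\e(z)$ collects the three terms containing $\mathscr C_\e^{-1}(z)$ at first order in $\delta$. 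Since a lower bound of order $\delta/\e^{2(N-1)}$ is sought and $\e\ll\delta\ll1$, it suffices to prove $|\psi_\e'(z)|\ge C_{\xi,\eta}\,\e^{-2(N-1)}$ uniformly on $D_{\theta,\e}$, the remaining terms being absorbed by choosing $\delta$ small enough compared to $1$.

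The heart of the argument is an asymptotic analysis of $p_\e(z):=(2\e)^{N-1}\det\mathscr C_\e(z)$, a polynomial of degree $N-1$ in $z$. Starting from the factorization $c_{\e,jk}(z)=\frac{1+z}{2(a_j+b_k)}+\frac{1-z}{2(a_j-b_k)}$ (with $b_N:=0$) and expanding the determinant by multilinearity in columns, the contribution of a subset $T\subset[N-1]$ with $|T|=m$ is a minor $\det M_T$ combining $m$ columns of order $1/\e$ (from $B_\e-A_\e$) with $N-m$ almost-proportional columns from $A_\e$; the Cauchy-determinant formula for the latter subblock yields $\det M_T\asymp\e^{-m+(N-m-1)^{2}}$. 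This produces
\[
p_\e(z)=\sum_{m=0}^{N-1}\alpha_m(\xi,\eta)\,\e^{(N-m-1)(N-m)}(1+o(1))\,(1-z)^m,
\]
where the $\alpha_m$ are explicit Cauchy-type sub-determinants that are nonzero on a dense open subset of $\mathcal X_N$. The associated Newton polygon has $N-1$ distinct edges of slopes $-2,-4,\dots,-2(N-1)$, and Puiseux theory yields $N-1$ simple zeros
\[
1-z_k(\e)=\zeta_k(\xi,\eta)\,\e^{2k}+o(\e^{2k}),\qquad k=1,\dots,N-1,
\]
with $\zeta_k\ne 0$ for $(\xi,\eta)$ in a dense open subset $\mathcal X'''_N\subset\mathcal X''_N$; the fact that $u^{\delta,\e}\in H^{1/2}_+$ then forces each $z_k(\e)$ to lie strictly outside $\overline\D$, so $\psi_\e$ is holomorphic on $D_{\theta,\e}$.

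The distinguished pole is $z_*:=z_{N-1}(\e)$, located at the critical scale $|1-z_*|\asymp\e^{2(N-1)}$. From the factored form $p_\e'(z_*)=L\prod_{k<N-1}(z_*-z_k)$ and the estimates $|z_*-z_k|\asymp\e^{2k}$, one deduces $|\det\mathscr C_\e'(z_*)|\asymp\e^{(N-1)(N-3)}$; an entirely parallel Newton-polygon analysis of the numerator polynomials $(2\e)^{N-1}\det\mathscr C_\e^{(j)}(z)$ whose sum over $j$ forms $N_{\psi_\e}(z)$ gives $|N_{\psi_\e}(z_*)|\asymp\e^{(N-1)^{2}}$, whence the residue of $\psi_\e$ at $z_*$ satisfies $|r_*|\asymp\e^{2(N-1)}$. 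Setting $\theta^*(\xi,\eta):=\tfrac12|\zeta_{N-1}(\xi,\eta)|$ and further restricting $\mathcal X'''_N$ so that $|r_*|/\e^{2(N-1)}$ and $|\zeta_{N-1}|$ are uniformly positive, for $\theta\le\theta^*$ and $z\in D_{\theta,\e}$ we have $|z-z_*|\asymp\e^{2(N-1)}$ while $|z-z_k|\asymp\e^{2k}$ for $k<N-1$. Plugging into the partial-fraction decomposition $\psi_\e'(z)=-\sum_k r_k(z-z_k)^{-2}$, the critical contribution is of size $\e^{-2(N-1)}$, whereas each of the other $N-2$ terms is at most $|r_k|\e^{-4k}\lesssim\e^{-2(N-2)}$; thus $|\psi_\e'(z)|\ge C_{\xi,\eta}\,\e^{-2(N-1)}$, which combined with Step~1 yields the proposition.

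The principal obstacle is the combinatorial bookkeeping in the Newton-polygon analysis: proving that each coefficient $\tilde c_m(\e)$ of $p_\e$ has exactly the claimed order $\e^{(N-m-1)(N-m)}$, rather than a higher cancellation which would collapse the slope pattern and destroy the $\e^{2(N-1)}$-scale pole, and that the leading constant $\alpha_m$ (a Cauchy-type sub-determinant) is generically nonzero. The analogous statement for the numerator polynomials, which controls $|r_*|$ from below, is comparably delicate; taken together, these nonvanishing conditions are precisely what define the dense open set $\mathcal X'''_N$.
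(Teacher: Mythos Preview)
Your route is genuinely different from the paper's, and the geometric picture you sketch---locating the $N-1$ zeros of $\det\mathscr C_\e(z)$ at scales $|1-z_k|\asymp\e^{2k}$ via a Newton polygon and then reading off $(u^{\delta,\e})'$ from a partial-fraction expansion---is attractive. The paper instead never locates these zeros: it differentiates formula~(\ref{FormuleUde}) directly, approximates the resulting block-vectors by Proposition~\ref{Approximate}, and reduces everything to the single explicit quantity $\langle\dot{\mathscr C}_\e\,\mathscr C_\e(1)^{-1}1_N,\ {}^t\mathscr C_\e(1)^{-1}1_N\rangle$, which is computed from the Cauchy-matrix inversion of $\mathscr C_\e(1)$ together with the algebraic cancellation Lemma~\ref{technic}. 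When your scheme is unwound, $\psi_\e(z)$ factors (to leading order in $\delta$) as $\alpha\beta\cdot\langle\mathscr C_\e(z)^{-1}1_N,1_N\rangle$, whose derivative at $z=1$ is exactly the paper's main quantity; so the two approaches target the same object from different sides.

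There is, however, a real gap in your error control. The term $\langle\mathscr J_{\delta,\e}^{-1}\Psi_q,1_q\rangle$ is \emph{not} holomorphic with $O(1)$ derivative on $D_{\theta,\e}$: since $\mathscr J_{\delta,\e}=\mathscr E-\delta\mathscr A_{\delta,\e}\mathscr C_\e^{-1}\mathscr B_{\delta,\e}$, its $z$-derivative already contains $\mathscr C_\e^{-1}\dot{\mathscr C}_\e\mathscr C_\e^{-1}$, which blows up in $\e$. One can repair this by expanding $\mathscr J_{\delta,\e}^{-1}$ and $\mathscr A_{\delta,\e},\mathscr B_{\delta,\e}$ in powers of $\delta$; at leading order $\mathscr A_0,\mathscr B_0$ are rank one and every occurrence of $\mathscr C_\e^{-1}$ collapses to the bounded scalar $g(z)=\langle\mathscr C_\e(z)^{-1}1_N,1_N\rangle$, which is why the $\delta^0$ and $\delta^2$ contributions are harmless. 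But the next-order pieces $\mathscr A_1,\mathscr B_1$ are \emph{not} rank one (their columns carry factors $b_k=1+\e\eta_k$ or $a_j=1+\e\xi_j$), so one is left with quantities like $\langle\mathscr C_\e^{-1}1_N,\,W_1(\e)\rangle$ and $\langle\mathscr C_\e^{-1}V_1(\e),\,1_N\rangle$. A naive bound on these, using only $\|\mathscr C_\e^{-1}\|\asymp\e^{-(2N-3)}$, gives an error of the same order as the main term or worse; the fact that they are actually $O(1)$ is precisely the content of estimate~(\ref{CVmWp}) in Lemma~\ref{matricesC_e}, whose proof rests on the combinatorial identity of Lemma~\ref{technic}. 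Your Newton-polygon analysis of $\det\mathscr C_\e$ does not supply these cancellations.

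In short: the pole-location picture is correct and illuminating, but to make the decomposition and the remainder estimates rigorous you will still need the cancellation machinery of Lemmas~\ref{technic} and~\ref{matricesC_e}; once you have those, the paper's direct evaluation at $z=1$ becomes shorter than the residue bookkeeping you propose.
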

Indeed, the proof of  Proposition \ref{uepsilon} is completed by denoting by $\mathcal X'_N$ the intersection of $\mathcal X'''_N$ and the nonempty open subset $\mathcal X''_N$ provided by the proof of the first part of the proposition.
\s
Let us prove Proposition \ref{derivee}. Deriving with respect to $z$ the formula (\ref{FormuleUde}) giving $u^{\delta,\e}$, we get
\begin{equation}\label{deriveU}
 (u^{\delta,\e})'(z)=\left\langle \dot{{\mathscr C_{\delta,\e}}}{\mathscr C_{\delta,\e}}(z)^{-1}\left (\begin{array}{ll}{\bf \Psi}_q
 \\{ 1}_N\end{array}\right),^t{\mathscr C_{\delta,\e}}( z)^{-1}\left (\begin{array}{ll}{ 1}_q\\{ 1}_N\end{array}\right)\right\rangle
 \end{equation}
 where 
  \begin{equation}\label{tildeC}\dot{\mathscr C_{\delta,\e}}=
 \left(\begin{array}{lll}\dot{{\mathscr E}}&\dot{\mathscr A}_{\delta ,\e}\\
\text{ }&\text{ }\\
\dot{\mathscr B}_{\delta ,\e}&\frac 1\delta\dot{ {\mathscr C_\e}}\end{array}\right)\end{equation}

with 
 \begin{equation}\label{Czero}\dot{{\mathscr E}}=\left(\frac{\sigma_b\Psi_{2a-1}\Psi_{2b}}{\rho_a^2-\sigma_b^2}\right)_{1\le a,b\le q},\end{equation}
 \begin{equation}\label{belA}\dot{\mathscr A}_{\delta ,\e}=\left(\left(\frac{\delta (1+\e\eta_k)\Psi_{2a-1}}{\rho_a^2-\delta^2(1+\e \eta_k)^2}\right)_{1\le a\le q\atop 1\le k\le N-1},{\bf 0}_{1\le a\le q}\right)\end{equation}
  \begin{equation}\label{belB}\dot {\mathscr B}_{\delta ,\e}=\left( \frac{\sigma_b\Psi_{2b}}{\delta^2(1+\e \xi_j)^2-\sigma_b^2}\right)_{1\le j\le N\atop 1\le b\le q}\end{equation}
  \begin{equation}\label{deriveCepsilon}\dot {{\mathscr C_\e}}=(\dot c_{\e , jk})_{1\le j,k\le N},\end{equation}
$$\dot c_{\e , jk}:=\frac{1+\e \eta_k}{(1+\e \xi_j)^2-(1+\e \eta_k)^2}\ ,\ 1\le k\le N-1\ ,\ \dot c_{\e , jN}:=0\ ,\ 1\le j\le N\ .$$

Let us introduce the following notation.
$${\mathscr C_{\delta,\e}}(z)^{-1}\left (\begin{array}{ll}{\bf \Psi_q}\\{ 1}_N\end{array}\right)=:\left (\begin{array}{ll}X_q^{\delta ,\e }(z)\\ \ \\Y_N^{\delta ,\e }(z)\end{array}\right)$$
and 
$$^t{\mathscr C_{\delta,\e}}( z)^{-1}\left (\begin{array}{ll}{ 1}_q\\{ 1}_N\end{array}\right)=:\left (\begin{array}{ll}\hat X_q^{\delta ,\e }(z)\\ \ \\ \hat Y_N^{\delta ,\e }(z)\end{array}\right).$$
It gives rise to the following equations
$$\left\{\begin{array}{llll}
\mathscr E(z)X_q^{\delta ,\e }(z)+\mathscr A_{\delta ,\e}(z)Y_N^{\delta ,\e }(z)&=&{\bf \Psi_q}\\
\mathscr B_{\delta ,\e}(z)X_q^{\delta ,\e }(z)+\frac 1\delta \mathscr C_\e(z) Y_N^{\delta ,\e }(z)&=&{ 1}_N\end{array}\right.$$
$$\left\{\begin{array}{llll}
^t\mathscr E(z)\hat X_q^{\delta ,\e }(z)+^t\mathscr B_{\delta ,\e}(z)\hat Y_N^{\delta ,\e }(z)&=&{ 1}_q\\
^t\mathscr A_{\delta ,\e}(z)\hat X_q^{\delta ,\e }(z)+\frac 1\delta ^t\mathscr C_\e(z) \hat Y_N^{\delta ,\e }(z)&=&{ 1}_N\end{array}\right.$$
Hence, setting
$$\mathscr J_{\delta ,\e }(z):=\mathscr E(z)-\delta\mathscr A_{\delta ,\e}(z) \mathscr C_\e(z)^{-1}\mathscr B_{\delta ,\e}(z)\ ,$$
we obtain
\begin{equation}\label{Xq}
\mathscr J_{\delta ,\e }(z)X_q^{\delta ,\e }(z)={\bf \Psi_q}-\delta\mathscr A_{\delta ,\e}(z) \mathscr C_\e(z)^{-1}{ 1}_N
\end{equation}
\begin{equation}\label{X'q}
^t\mathscr J_{\delta ,\e }(z)\hat X_q^{\delta ,\e }(z)={ 1}_q-\delta^t\mathscr B_{\delta ,\e}(z) ^t\mathscr C_\e(z)^{-1}{ 1}_N
\end{equation}
\begin{equation}\label{YN}
Y_N^{\delta ,\e }(z)=\delta \mathscr C_\e(z)^{-1}({ 1}_N-\mathscr B_{\delta ,\e}(z)X_q^{\delta ,\e }(z))
\end{equation}
\begin{equation}\label{Y'N}
\hat Y_N^{\delta ,\e }(z)=\delta^t \mathscr C_\e(z)^{-1}({ 1}_N-^t\mathscr A_{\delta ,\e}(z)\hat X_q^{\delta ,\e }(z)).
\end{equation}
Our main analysis  lies in the following approximation result.
\begin{proposition}\label{Approximate}
For the norm $L^\infty (D_{\theta ,\e })$, with $\theta <\theta ^*(\xi ,\eta )$ small enough, we have, uniformly in $\delta ,\e $ such that $\e \ll \delta \ll 1$, 
\begin{eqnarray*}  
X_q^{\delta ,\e }(z)&=&\mathscr E(1)^{-1}{\bf \Psi_q}+\mathcal O(\delta)\\
\hat X_q^{\delta ,\e }(z)&=&^t\mathscr E(1)^{-1}{ 1}_q+\mathcal O(\delta)\\
Y_N^{\delta ,\e }(z)&=&\alpha\delta\, { \mathscr C_\e}(z)^{-1}(1_N)+ \mathcal O\left(\frac{\delta^2}{\e^{N-2}}\right)\\
\hat Y_N^{\delta ,\e }(z)&=&\beta\delta \, ^t{ \mathscr C_\e}(z)^{-1}(1_N)+\mathcal O\left(\frac{\delta^2}{\e^{N-1}}\right)\\
\end{eqnarray*}
where 
\begin{eqnarray*}\alpha :
&=&1-\left \langle {\mathscr E}(1)^{-1}({\bf \Psi_q}),\left(\frac{\Psi_{2b}}{\sigma_b}\right)_{1\le b\le q}\right \rangle,\\
 \beta :&=&1-\left \langle ^t{\mathscr E}(1)^{-1}({1_q}),\left(\frac{1}{\rho_a}\right)_{1\le a \le q}\right \rangle\ .
 \end{eqnarray*}
\end{proposition}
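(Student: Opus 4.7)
The plan is to regard the Schur complement $\mathscr J_{\delta,\e}(z) = \mathscr E(z)-\delta\mathscr A_{\delta,\e}(z)\mathscr C_\e(z)^{-1}\mathscr B_{\delta,\e}(z)$ as an $O(\delta)$-perturbation of $\mathscr E(z)$, to solve the block system (\ref{Xq})--(\ref{Y'N}) for $X_q^{\delta,\e},\hat X_q^{\delta,\e}$ first, and to recover $Y_N^{\delta,\e},\hat Y_N^{\delta,\e}$ from (\ref{YN})--(\ref{Y'N}). Three preliminary estimates are required. \emph{(i)} Since $\mathscr E(z)$ is the Cauchy-type matrix attached by (\ref{luminy}) to the fixed, non-degenerate datum $u({\bf s},{\bf \Psi})$, whose denominator has no zero on $\overline\D$, one has $\|\mathscr E(z)^{-1}\|=O(1)$ and $\mathscr E(z)=\mathscr E(1)+O(\theta\e^{2(N-1)})$ uniformly on $D_{\theta,\e}$. \emph{(ii)} On $D_{\theta,\e}$ the term $(1-z)/\e$ appearing in the first $N-1$ columns of $\mathscr C_\e(z)$ is of size $O(\theta\e^{2N-3})$, so $\mathscr C_\e(z)=\mathscr C_\e(1)+o(1)$; the limit $\mathscr C_0(1)$ is rank one, with column space $\R\cdot 1_N$ and row space $\R\cdot v^*$, $v^*:=(\tfrac12,\ldots,\tfrac12,1)^T$. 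A Cauchy-determinant computation (in the spirit of the proof of Lemma~\ref{outofdisc}) gives $\det\mathscr C_\e(1)\sim c\,\e^{(N-1)^2}$ and, more importantly, shows that $\mathscr C_\e(1)$ has one singular value of order $1$ with right/left singular vector close to $1_N$/$v^*$ and the remaining singular values of orders $\e,\e^2,\ldots,\e^{N-1}$. Consequently $\mathscr C_\e(z)^{-1}1_N=O(1)$ (a perturbation of the exact solution $(0,\ldots,0,1)^T$ of $\mathscr C_0(1)Y=1_N$), while for generic $w$ one has $\mathscr C_\e(z)^{-1}w=O(\e^{-(N-1)})$, and symmetrically $^t\mathscr C_\e(z)^{-1}1_N=O(\e^{-(N-1)})$ since $1_N\notin\R\cdot v^*$. \emph{(iii)} Expanding (\ref{belA})--(\ref{belB}) in $\delta$ yields $\mathscr A_{\delta,\e}(z)=\mathscr A_0+\delta\mathscr A_1(z)+O(\delta^2)$ and $\mathscr B_{\delta,\e}(z)=\mathscr B_0(z)+\delta\mathscr B_1(z)+O(\delta^2)$, with the rank-one leading terms $\mathscr A_0=(1/\rho_a)_a\,1_N^T$ and $\mathscr B_0(z)=1_N\,(z\Psi_{2b}/\sigma_b)_b^T$, and explicit corrections $\mathscr A_1(z)_{ak}=-z(1+\e\eta_k)\Psi_{2a-1}/\rho_a^2$ (for $k<N$; $0$ for $k=N$) and $\mathscr B_1(z)_{jb}=-(1+\e\xi_j)/\sigma_b^2$.

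Combining \emph{(i)}--\emph{(iii)}: since $\mathscr B_0(z)X\in\R\cdot 1_N$ for every $X$, the product $\mathscr C_\e^{-1}\mathscr B_0 X$ is bounded by \emph{(ii)}, hence $\delta\mathscr A_{\delta,\e}\mathscr C_\e^{-1}\mathscr B_{\delta,\e}=O(\delta)$, so $\mathscr J_{\delta,\e}(z)^{-1}=\mathscr E(1)^{-1}+O(\delta)$ by Neumann series and (\ref{Xq})--(\ref{X'q}) yield the first two assertions. Substituting $X_q^{\delta,\e}$ in (\ref{YN}), the rank-one piece $\mathscr B_0(z)\mathscr E(1)^{-1}{\bf \Psi}_q$ produces a multiple $\gamma(z)1_N$ with $\gamma(1)=1-\alpha$, giving the main term $\alpha\delta\mathscr C_\e(z)^{-1}1_N$. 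The $\delta$-correction $\delta\mathscr B_1(z)\mathscr E(1)^{-1}{\bf \Psi}_q$ equals, using the explicit formula for $\mathscr B_1$, a vector of the form $C\cdot 1_N+\e\tilde w$ with $\tilde w$ generic; $\mathscr C_\e^{-1}$ then maps it to $O(1)+O(\e\cdot\e^{-(N-1)})=O(\e^{-(N-2)})$, and multiplying by the prefactor $\delta$ in (\ref{YN}) produces the claimed error $O(\delta^2/\e^{N-2})$. Applying the symmetric scheme to (\ref{Y'N}): the $\delta$-correction $^t\mathscr A_1\hat X_q$ points in the direction $(1+\e\eta_1,\ldots,1+\e\eta_{N-1},0)^T$, which is \emph{not} close to the row-space direction $v^*$; the operator $^t\mathscr C_\e^{-1}$ therefore amplifies it by the full generic factor $O(\e^{-(N-1)})$, giving the weaker error $O(\delta^2/\e^{N-1})$.

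The main obstacle is the spectral analysis in step~\emph{(ii)}: precisely identifying $1_N$ and $v^*$ as the top right- and left-singular directions of $\mathscr C_\e(1)$ with smaller singular values of orders $\e,\e^2,\ldots,\e^{N-1}$ and pinning down the orthogonal complements. This asymmetry is what yields the boundedness $\mathscr C_\e^{-1}1_N=O(1)$ versus the generic blow-up $\mathscr C_\e^{-1}w=O(\e^{-(N-1)})$. I expect this to require either an explicit Cauchy-determinant factorisation of $\mathscr C_\e(1)$ (after extracting $1/\e$ from the first $N-1$ columns and a second $1/\e$ from a suitable combination involving the last column) or a perturbative resolvent argument based on the rank-one limit $\mathscr C_0(1)=1_N(v^*)^T$; one may need to further restrict the open set $\mathcal X''_N$ to ensure uniformity of the bounds in $\xi,\eta$. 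Once this estimate is in place, the bookkeeping of which $\delta$-corrections in $\mathscr A_{\delta,\e}, \mathscr B_{\delta,\e}$ land near the distinguished directions (responsible for the gap $N-2$ vs.\ $N-1$ between the two error exponents) reduces to the routine inspection of the formulas for $\mathscr A_1,\mathscr B_1$ above.
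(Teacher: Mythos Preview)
Your Schur-complement skeleton and the rank-one expansions of $\mathscr A_{\delta,\e}$ and $\mathscr B_{\delta,\e}$ match the paper's approach. The gap is entirely in step~\emph{(ii)}, and it is fatal as stated.

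First, the singular-value claim is internally inconsistent: the product of $1,\e,\e^2,\dots,\e^{N-1}$ is $\e^{N(N-1)/2}$, not $\e^{(N-1)^2}$, and these differ for every $N\ge 3$. More importantly, the assertion $\mathscr C_\e(1)^{-1}1_N=O(1)$ is simply false. Since $\mathscr C_\e(1)$ is a genuine Cauchy matrix (with $a_j=1+\e\xi_j$, $b_k=1+\e\eta_k$ for $k<N$, $b_N=0$), its inverse is explicit: one finds
\[
\bigl(\mathscr C_\e(1)^{-1}1_N\bigr)_k=(-1)^{k+1}\mu_k(\e),\qquad \mu_k(\e)\sim\frac{2^N}{\e^{N-2}\,\eta'_k}\ \ (k\le N-1),\quad \mu_N(\e)=O(1),
\]
so $\mathscr C_\e(1)^{-1}1_N$ is of order $\e^{-(N-2)}$, not $O(1)$. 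Your heuristic ``perturbation of the exact solution $(0,\dots,0,1)^T$ of $\mathscr C_0(1)Y=1_N$'' fails precisely because $\mathscr C_\e(1)$ is near-singular: the solution of a nearly-singular system does not depend continuously on the data. In fact a generic entry of $\mathscr C_\e(1)^{-1}$ is $\sim\e^{-(2N-3)}$, so the norm of the inverse is far larger than the $\e^{-(N-1)}$ you quote.

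What the paper actually does is different in kind. It never uses spectral structure. Instead it exploits the exact Cauchy inverse together with a purely algebraic cancellation identity (Lemma~\ref{technic}): certain alternating sums $\sum_j(-1)^j\xi_j^a/\xi_j'$ vanish for $a<N-1$. This yields two separate levels of cancellation: (a) for the special vectors $V_m(\e)=((1+\e\xi_j)^m)_j$ and $W_p(\e)$ one gets $\mathscr C_\e^{-1}V_m=O(\e^{-(N-2)})$ and $^t\mathscr C_\e^{-1}W_p=O(\e^{-(N-1)})$, already far better than the generic $\e^{-(2N-3)}$; and (b) the \emph{pairings} $\langle\mathscr C_\e^{-1}V_m,W_p\rangle$ enjoy a further cancellation all the way down to $O(1)$. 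It is (b), not any boundedness of $\mathscr C_\e^{-1}1_N$, that makes $\mathscr A_{\delta,\e}\mathscr C_\e^{-1}\mathscr B_{\delta,\e}=O(1)$ and hence $\mathscr J_{\delta,\e}=\mathscr E+O(\delta)$. Your scheme cannot reproduce these two-layer cancellations from spectral information alone; the explicit Cauchy inverse plus Lemma~\ref{technic} (or an equivalent algebraic identity) appears unavoidable.
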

\begin{proof}
The proof involves several steps. The first one consists in a number of cancellations in the action of the matrix $\mathscr C_\e (1)^{-1}$ on some special vectors. From the formula giving ${\mathscr C_\e}(z)$, we have
 $${ \mathscr C_\e}(1)=\left(\left (\frac 1{2+\e(\xi_j+\eta_k)}\right )_{1\le j\le N;1\le k\le N-1}, \left (\frac 1{1+\e \xi_j}\right )_{1\le j\le N}\right)$$ which is a Cauchy matrix. From Cramer's formulae and formula \ref{determinantCauchy}, the inverse of a Cauchy matrix is given by
 \begin{equation}\label{InverseCauchy}
 \left(\left(\frac 1{a_j+b_k}\right)^{-1}\right)_{kj}=\left((-1)^{j+k}\frac{\lambda_j\mu_k}{a_j+b_k}\right)_{kj}
 \end{equation}
 with $$\lambda_j=\frac{\prod_l(a_j+b_l)}{\prod_{i<j}(a_i-a_j)\prod_{r>j}(a_j-a_r)}$$ and $$\mu_k=\frac{\prod_l(a_l+b_k)}{\prod_{i<k}(b_i-b_k)\prod_{r>k}(b_k-b_r)}.$$
 Applying this formula to $  \mathscr C_\e({ 1})$ with 
 $$a_j=1+\e\xi_j,\; 1\le j\le N, \;b_k= 1+\e \eta_k,\;1\le k\le N-1,\;b_N=0,$$  we get 
 \begin{eqnarray*}
 \left(  \mathscr C_\e({ 1})^{-1}\right)_{kj}&=&\frac{(-1)^{j+k}\lambda_j(\e )\mu_k(\e )}{2+\e (\xi_j+\eta_k)}\text{ if }k\le N-1;\\
  \left(  \mathscr C_\e({ 1})^{-1}\right)_{Nj}&=&\frac{(-1)^{j+N}\lambda_j(\e )\mu_N(\e )}{1+\e \xi_j}\ ,
 \end{eqnarray*}
 with
 $$\lambda_j(\e )=\frac{2^{N-1}}{\e^{N-1}}\frac{(1+\e \xi_j)\prod_l(1+\e (\xi_j+\eta_l)/2)}{\xi'_j}\ ,\ 1\le j\le N\ ,$$ and 
 $$\mu_k(\e )=\frac{2^{N}}{\e^{N-2}}\frac{\prod_l(1+\e (\xi_l+\eta_k)/2)}{\eta'_k(1+\e \eta_k)}, \; k\le N-1,\; \mu_N(\e )=\frac{\prod _i(1+\e \xi_i)}{\prod _k(1+\e\eta_k)}$$
 where we have set
 $$\xi'_j:=\prod_{i<j}(\xi_i-\xi_j)\prod_{r>j}(\xi_j-\xi_r),\; \eta'_k:=\prod_{i<k}(\eta_i-\eta_k)\prod_{r>k}(\eta_k-\eta_r).$$
 Finally, introduce the following special vectors $V_m(\e )$ and $W_p(\e )$ in $\R ^N$.
 \begin{eqnarray*}
 V_m(\e )_j:&=&(1+\e \xi _j)^m\ ,\ m\ge 0\ ,\ j=1,\dots ,N,\\
 W_p(\e )_k:&=& (1+\e \eta _k)^p(1-\delta _{kN})\ ,\ p\ge 1\ ,\ W_0(\e )_k:=1,\ ,\ k=1,\dots ,N, 
 \end{eqnarray*}
 where $\delta _{kN}$ denotes the Kronecker symbol.
  \begin{lemma}\label{matricesC_e}
The following identities hold  for every $(\xi ,\eta )\in \mathcal X_N$,  uniformly in $\e \ll 1$.
\begin{equation}\label{left}
\left(  \mathscr C_\e(1)^{-1}(1_N)\right)_k=(-1)^{k+1}\mu_k(\e )\ ,\ 1\le k\le N\ .
\end{equation}
\begin{equation}\label{right}
 \left( ^t\mathscr C_\e(1)^{-1}(1_N)\right )_j=(-1)^{j+1}\lambda_j (\e ) +\mathcal O\left (\frac 1{\e ^{N-2}}\right )\ ,\ 1\le j\le N\ .
 \end{equation}
 Furthermore, for every $(\xi ,\eta )\in \mathcal X_N$, there exist a constant $C_{\xi ,\eta }$ and $\theta ^*(\xi ,\eta )$ such that, for the norm $L^\infty (D_{\theta ,\e })$ with $\theta <\theta ^*(\xi ,\eta )$, uniformly for $0<\e \le 1$, we have the estimates 
\begin{eqnarray}
\ &&\vert \mathscr C_\e (z)^{-1}(V_m(\e ))\vert \le C_{\xi ,\eta }\frac{(1+\xi _1)^m}{\e ^{N-2}}\ ,\ m\ge 0\ . \label {CVm}\\
 \ &&\vert ^t\mathscr C_\e (z)^{-1}(W_p(\e ))\vert \le C_{\xi ,\eta }\frac{(1+\eta _1)^{p-1}}{\e ^{N-1}}\ ,\ p\ge 1\ . \label{CWp}\\
\  &&\vert \langle  \mathscr C_\e (z)^{-1}(V_m(\e )),W_p(\e ) \rangle \vert \le  C_{\xi ,\eta }(1+\xi _1)^m(1+\eta _1)^p, m,p\ge 0\ . \label{CVmWp}
\end{eqnarray}
\end{lemma}
\begin{proof} The main algebraic cancellation is displayed in the following lemma.
\begin{lemma}\label{technic}
 Let $a,b$ with $0\le a+b\le  N-1$,
 $$\sum_{j=1}^N (-1)^j\frac{\xi_j^a}{\xi'_j}\sum_{|I|=b,I\subset\{1,\dots,N\}\setminus\{j\}}(\prod_{i\in I} \xi_i)=\left\{\begin{array}{ll}0 &\text{ if } 0\le a+b< N-1\\
 -1&\text{ if } a=N-1, b=0.\end{array}\right.$$
Analogous identities hold for the $\eta$'s.
 \end{lemma}
 \begin{proof}
We view $$\sum_{j=1}^N (-1)^j\frac{\xi_j^a}{\xi'_j}\sum_{|I|=b,I\subset\{1,\dots,N\}\setminus\{j\}}(\prod_{i\in I} \xi_i)$$ as a rational function of $\xi_N$ denoted by $Q_{a,b}(\xi_N)$. Its poles are simple and equal to  $\xi_1,\dots, \xi_{N-1}$. Identifying the residue at each of these poles, we get
 \begin{eqnarray*}
 &&{\rm Res}(Q_{a,b}; \xi_N=\xi_r)=\\
 &&=\frac{(-1)^{r+1}\xi _r^{a}\left [\sum_{|I'|=b-1,I'\subset\{1,\dots,N-1\}\setminus\{r\}}(\prod_{i\in I'} \xi_i)\xi _r+\sum_{|I|=b,I\subset\{1,\dots,N-1\}\setminus \{ r\} }(\prod_{i\in I} \xi_i)\right ]}{\prod _{i<r}(\xi _i-\xi _r)\prod _{N-1\ge j>r}(\xi _r-\xi _j)}\\
 &&+
 \frac{(-1)^{N+1}\xi _r^{a}\sum_{|I|=b,I\subset\{1,\dots,N-1\}}(\prod_{i\in I} \xi_i)}{\prod _{i<N, i\ne r}(\xi _i-\xi _r)}\\
 &&=0\ .
 \end{eqnarray*} 
 Hence $Q_{a,b}(\xi_N)$ is in fact a polynomial. If $a+b<N-1$, it tends to $0$ as $\xi _N$ tends to $\infty $, therefore it is identically $0$.  If $a=N-1$ and $b=0$ it tends to $-1$. This completes the proof.
\end{proof}
Let us come to the proof of Lemma \ref{matricesC_e}. From the formulae giving $ \mathscr C_\e({ 1})^{-1}$, we get, 
if $k\le N-1$,
 \begin{eqnarray*}
 \left(  \mathscr C_\e({ 1})^{-1}({ 1_N})\right)_k&=&(-1)^k\mu_k(\e )\sum_{j=1}^N\frac{(-1)^j\lambda_j(\e )}{2+\e (\xi_j+\eta_k)}\\
 &=&
 (-1)^k\mu_k(\e )\frac {2^{N-2}}{\e ^{N-1}}\sum_{j=1}^N(-1)^j\frac{(1+\e \xi_j)}{\xi'_j}\prod_{l\neq k}\left(1+\e\frac{\xi_j+\eta_l}2\right);
 \end{eqnarray*}
 and 
 \begin{eqnarray*}
\left((  \mathscr C_\e({ 1})^{-1}({ 1_N})\right)_N&=&(-1)^N\mu_N(\e )\sum_{j=1}^N\frac{(-1)^j\lambda_j(\e )}{1+\e \xi_j}\\
 &=&
 (-1)^N\mu_N(\e )\frac {2^{N-1}}{\e ^{N-1}}\sum_{j=1}^N\frac{(-1)^j}{\xi'_j}\prod_{l=1}^{N-1}\left(1+\e\frac{\xi_j+\eta_l}2\right).
 \end{eqnarray*}
 Expanding in powers of $\e $  the above formula giving $ \left(  \mathscr C_\e({ 1})^{-1}({ 1_N})\right)_k$, and using Lemma \ref{technic} with $b=0$, we infer 
$$
\left(  \mathscr C_\e({ 1})^{-1}({ 1_N})\right)_k=(-1)^{k+1}\mu_k(\e )\ ,\ k\le N\ ,
$$
which is (\ref{left}).
We now compute $^t\mathscr C_\e^{-1}(1)({ 1_N})$ in a similar way.
 \begin{eqnarray*}
\left( ^t\mathscr C_\e(1)^{-1}({ 1_N})\right)_j&=&(-1)^j\lambda_j(\e )\left(\sum_{k=1}^{N-1}\frac{(-1)^k\mu_k(\e )}{2+\e(\xi_j+\eta_k)}+\frac{(-1)^N\mu_N(\e )}{1+\e \xi_j}\right)\\
&=:&(-1)^j\lambda_j (\e )\tilde S_j(\e).
 \end{eqnarray*}
 We expand in powers of $\e $ and use again Lemma \ref{technic} with $b=0$, changing $N$ into $N-1$. We get
 \begin{eqnarray*}
 \tilde S_j(\e)&=&\frac{2^{N-1}}{\e^{N-2}}\sum_{k=1}^{N-1}\frac{(-1)^k\prod_{i\neq j}\left(1+\e\frac{\xi_i+\eta_k}2\right)}{\eta'_k(1+\e \eta_k)}+(-1)^N+\mathcal O(\e)\\
 &=&(-1)^N+\mathcal O(\e)+\frac{2^{N-1}}{\e^{N-2}}\sum_{k=1}^{N-1}(-1)^k\left(\e^{N-2}\sum_{p+q=N-2}(-1)^qC^p_{N-1}\frac{\eta_k^{p+q}}{2^p\eta'_k}+\mathcal O(\e^{N-1})\right)\\
 &=&-1+\mathcal O(\e).
 \end{eqnarray*}
 As a consequence, we infer
$$
 \left( ^t\mathscr C_\e(1)^{-1}({ 1_N})\right)_j=(-1)^{j+1}\lambda_j (\e ) (1+\mathcal O(\e))\ ,
$$
which is (\ref{right}). We next prove (\ref{CVm}) and (\ref{CWp}), in the special case $z=1$. Notice that the cases $m=0$ and $p=0$ were addressed above, so we may assume $m,p\ge 1$. We have
$$
(\mathscr C_\e({ 1})^{-1}(V_m(\e ))_{k}=(-1)^k\mu _k(\e )\frac{2^{N-1}}{\e^{N-1}}h_{m,k}(\e,\xi,\eta),
$$
with
$$h_{m,k} (\e,\xi,\eta):=\left \{ \begin{array}{lll}
{\displaystyle \frac 12\sum_{j=1}^N(-1)^{j}(1+\e\xi_j)^{m+1}\; \frac{\prod _{l\ne k}\left (1+\e \frac{\xi _j+\eta _l}2\right )}{\xi '_j}}\ &{\rm if}\ k\le N-1\ ,\\
\   \\
{\displaystyle \sum_{j=1}^N(-1)^{j}(1+\e\xi_j)^{m}\; \frac{\prod _{l=1}^{N-1}\left (1+\e \frac{\xi _j+\eta _l}2\right )}{\xi '_j}}\ &{\rm if}\ k=N\ .
\end{array}\right .$$
Notice that $h_{m,k}$ is a holomorphic function of $\e $. From Lemma \ref{technic} with $b=0$, the derivatives 
$\partial _\e ^rh_{m,k}(0,\xi ,\eta )$ vanish for $r<N-1$. Therefore, from the maximum principle,
\begin{equation}\label{esthmk}
\frac{|h_{m,k}(\e,\xi,\eta)|}{\e^{N-1}}\le \sup _{\vert \zeta \vert =1}\vert h_{m,k}(\zeta, \xi ,\eta )\vert \le C_{\xi ,\eta }(1+\xi_1)^m\ ,\ 0<\e \le 1\ .
\end{equation}
This provides estimate (\ref{CVm}) for $z=1$. Estimate (\ref{CWp}) for $z=1$ is obtained in the same way, by observing that
$$^t\mathscr C_\e (1)^{-1}(W_p(\e ))=(-1)^{j}\lambda _j(\e ) \frac{2^N}{\e^{N-2}}g_{p,j}(\e,\xi,\eta), \ p\ge 1\ ,$$
where we  have set
 \begin{eqnarray*}
g_{p,j}(\e,\xi,\eta)&:=& \frac{\e^{N-2}}{2^{N}}\sum_{k=1}^{N-1}(-1)^{k}\frac{\mu_k(\e )(1+\e\eta_k)^p}{2+\e (\xi _j+\eta _k)}\\
&=&\frac 12\sum_{k=1}^{N-1}(-1)^{k}\frac{\prod_{l\ne j}(1+\e (\xi_l+\eta_k)/2)}{\eta'_k}(1+\e\eta_k)^{p-1}\ ,
 \end{eqnarray*}
 and proving in the same way that
 $$
 \frac{|g_{p,j}(\e,\xi,\eta)|}{\e^{N-2}}\le \sup_{|\zeta |=1}|g_{p,j}(\zeta ,\xi,\eta)|\le C_{\xi ,\eta }(1+\eta_1)^{p-1}.
$$
 As for (\ref{CVmWp}) for $z=1$, we compute
 $$\langle  C_\e (1)^{-1}(V_m(\e )),W_p(\e ) \rangle =    \frac{2^{2N-1}}{\e^{2N-3}}f_{pm}(\e,\xi,\eta)\ ,$$
 where 
\begin{eqnarray*}
&&f_{0m}(\e,\xi,\eta)=\\
&=&\frac 12\sum_{1\le j\le N, 1\le k\le N-1}(-1)^{j+k}\frac{(1+\e\xi_j)^{m+1}\prod_{l\neq k}(1+\e\frac{\xi_j+\eta_l}2)\prod_{i}(1+\e\frac{\xi_i+\eta_k}2)}{(1+\e \eta _k)\xi'_j\eta'_k}\\
&&+\frac{\e ^{N-2}}{2^N}\frac{\prod_{i=1}^N(1+\e\xi_i)}{\prod _{l=1}^{N-1}(1+\e \eta _l)}\sum _{j=1}^N (-1)^{j+N}\frac{(1+\e\xi_j)^{m}\prod_{l}(1+\e\frac{\xi_j+\eta_l}2)}{\xi'_j}\\
&&=:\tilde f_{0m}(\e ,\xi ,\eta )+(-1)^N\frac{\e ^{N-2}}{2^N}\frac{\prod_{i=1}^N(1+\e\xi_i)}{\prod _{l=1}^{N-1}(1+\e \eta _l)}h_{mN}(\e ,\xi ,\eta )\ ,
\end{eqnarray*}
 and, for $p\ge 1$, 
\begin{eqnarray*}
&&f_{pm}(\e,\xi,\eta)=\\
&=&\frac 12\sum_{1\le j\le N, 1\le k\le N-1}(-1)^{j+k}\frac{(1+\e\xi_j)^{m+1}(1+\e\eta_k)^{p-1}\prod_{l\neq k}(1+\e\frac{\xi_j+\eta_l}2)\prod_{i}(1+\e\frac{\xi_i+\eta_k}2)}{\xi'_j\eta'_k}\ .
\end{eqnarray*}
 Since we already proved  in  (\ref{esthmk}) that
 $$\vert h_{mN} (\e ,\xi ,\eta )\vert \le C_{\xi \eta}\e ^{N-1}(1+\xi _1)^m\ ,$$
 estimate (\ref{CVmWp}) will be a consequence of 
$$
 \vert \tilde f_{0m}(\e ,\xi \eta)\vert \le C_{\xi, \eta }\e ^{2N-3}(1+\xi _1)^m\ ,\ \vert \tilde f_{pm}(\e ,\xi ,\eta)\vert \le C_{\xi, \eta }\e ^{2N-3}(1+\xi _1)^m(1+\eta _1)^{p-1}
 $$
 for $m\ge 0, p\ge 1$.
Notice that $\tilde f_{0m}(\zeta ,\xi \eta)$ and $f_{pm}(\zeta, \xi ,\eta )$ are holomorphic function of $\zeta $ for $\vert \zeta \vert <\e ^*$. Using  again the maximum principle, the proof of the lemma will be complete if we prove that
for any $0\le r< 2N-3$, all the derivatives   $\partial _\e ^rf_{pm}(0,\xi,\eta)$ and $\partial _\e ^r\tilde f_{0m}(0,\xi,\eta)$ vanish.\\ 
 Indeed, such a derivative involves a sum of terms
$$\sum_{1\le j\le N, 1\le k\le N-1}\frac{(-1)^{j+k}\xi_j^{a}\eta_k^t}{\xi'_j\eta'_k}\sum_{ |L|=c, |I|=d \atop k\notin L}\prod_{l\in L}(\xi_j+\eta_l)\prod_{i\in I}(\xi_i+\eta_k)$$
with $a+t+c+d=r\le 2N-4$.
We symmetrize this expression by writing
$$\sum_{|I|=d}\prod_{i\in I}(\xi_i+\eta_k)=(\xi_j+\eta_k)\sum_{|I|=d-1\atop j\notin I}\prod_{i\in I}(\xi_i+\eta_k)+\sum_{|I|=d\atop j\notin I}\prod_{i\in I}(\xi_i+\eta_k)$$ which gives rise to a sum of terms
$$\sum_{1\le j\le N, 1\le k\le N-1}\frac{(-1)^{j+k}\xi_j^{a}\eta_k^t}{\xi'_j\eta'_k}\sum_{ |L|=c, |I|=d \atop k\notin L, j\notin I}\prod_{l\in L}(\xi_j+\eta_l)\prod_{i\in I}(\xi_i+\eta_k)$$
with $a+t+c+d=r\le 2N-4$.
Expanding everything, we have to calculate 
$$\sum_{1\le j\le N, 1\le k\le N-1}\frac{(-1)^{j+k}\xi_j^{a+s}\eta_k^{t+u}}{\xi'_j\eta'_k}\sum_{ |L|=c-s,  |I|=d-u \atop k\notin L, j\notin I}\prod_{l\in L}\eta_l\prod_{i\in I}\xi_i$$ with either $a+s+d-u<N-1$ or $t+u+c-s<N-2$ since 
$(a+s+d-u)+(r+u+c-s)=a+t+c+d\le 2N-4$. Using Lemma \ref{technic}, we get that all these terms vanish.
\s 
Finally, we show how to pass from estimates (\ref{CVm}), (\ref{CWp}),  (\ref{CVmWp}) for $z=1$ to uniform estimates
for $z\in D_{\theta ,\e }$. We write 
\begin{eqnarray*}
\mathscr C_\e (z)&=&\mathscr C_\e (1)+(1-z)\dot {\mathscr C}_\e\\
&=&\mathscr C_\e (1)(I+(1-z)\mathscr C_\e (1)^{-1}\dot {\mathscr C}_\e )\\
&=&(I+(1-z)\dot {\mathscr C}_\e \mathscr C_\e (1)^{-1})\mathscr C_\e (1)\ .
\end{eqnarray*}
In view of the expressions of $\dot{\mathscr C}_\e $ and of $\mathscr C_\e (1)^{-1} $, we have
$$\vert \mathscr C_\e (1)^{-1}\dot {\mathscr C}_\e \vert  +\vert \dot {\mathscr C}_\e \mathscr C_\e (1)^{-1}\vert   \le \frac{A(\xi ,\eta )}{\e ^{2(N-1)}}\ .$$
Consequently, for $z\in D_{\theta ,\e }$, we have 
$$\vert 1-z\vert (\vert \mathscr C_\e (1)^{-1}\dot {\mathscr C}_\e \vert +\vert \dot {\mathscr C}_\e \mathscr C_\e (1)^{-1}\vert  ) \le A(\xi ,\eta )\theta \le \frac 12$$
if $\theta \le \theta ^*(\xi ,\eta )$, and the matrices
$$S_\e (z):=I+(1-z)\mathscr C_\e (1)^{-1}\dot {\mathscr C}_\e \ ,\ \hat S_\e (z):=I+(1-z)\dot {\mathscr C}_\e \mathscr C_\e (1)^{-1}$$
are invertible, with
$$\vert S_\e (z)^{-1}\vert + \vert \hat S_\e (z)^{-1}\vert \le 2$$
Estimates (\ref{CVm}), (\ref{CWp}),  (\ref{CVmWp}) on $D_{\theta ,\e }$ are then consequences of the estimates for $z=1$ and of the identities
\begin{eqnarray*}
\mathscr C_\e (z)^{-1}V_m(\e )&=&S_\e (z)^{-1}\mathscr C_\e (1)^{-1}V_m(\e )\ ,\\
^t\mathscr C_\e (z)^{-1}W_p(\e )&=&\hat S_\e (z)^{-1}\, ^t\mathscr C_\e (1)^{-1}W_p(\e )\ ,
\end{eqnarray*}
and \begin{eqnarray*}
&&\langle \mathscr C_\e (z)^{-1}V_m(\e ),W_p(\e )\rangle =\langle S_\e (z)^{-1}\mathscr C_\e (1)^{-1}V_m(\e ),W_p(\e )\rangle \\
&=& \langle \mathscr C_\e (1)^{-1}V_m(\e ),W_p(\e )\rangle - (1-z)\langle \dot{\mathscr C}_\e \mathscr C_\e (1)^{-1}S_\e (z)^{-1}\mathscr C_\e (1)^{-1}V_m(\e ),\, ^t\mathscr C_\e (1)^{-1}W_p(\e )\rangle\ . 
\end{eqnarray*}
This completes the proof of Lemma \ref{matricesC_e}.
 \end{proof}
As a consequence of Lemma \ref{matricesC_e}, we get
\begin{lemma}\label{Vectors}
For $\theta <\theta ^*(\xi ,\eta )$, the following matrix expansions hold in $L^\infty (D_{\theta ,\e})$  as $\e , \delta $ tend to $0$.
\begin{eqnarray*}
\mathscr A_{\delta ,\e}(z)\mathscr C_\e({ z})^{-1}&=&\left(\frac 1{\rho_a}\right)_{1\le a\le q}\otimes ^t\mathscr C_\e({ z})^{-1}(1_N)+\mathcal O\left(\frac{\delta}{\e^{N-1}}\right)\\
\mathscr C_\e({ z})^{-1}\mathscr B_{\delta ,\e}(z)&=&\mathscr C_\e({ z})^{-1}(1_N)\otimes \left(\frac {\Psi_{2b}}{\sigma_b}\right)_{1\le b\le q}+\mathcal O\left(\frac{\delta}{\e^{N-2}}\right)
\end{eqnarray*}
Furthermore, for $z\in D_{\theta ,\e}\ $, the vectors $$\mathscr A_{\delta ,\e}(z)\mathscr C_\e({ z})^{-1}({ 1_N}), \; ^t\mathscr B_{\delta ,\e}(z)^t\mathscr C_\e({ z})^{-1}({ 1_N})$$ 
and the matrix 
$$\mathscr A_{\delta ,\e}(z) \mathscr C_\e( z)^{-1}\mathscr B_{\delta ,\e}(z)$$
are uniformly bounded as $\e, \delta$ tend to $0$.
\end{lemma}
\begin{proof} Recall that
\begin{eqnarray*}
{\mathscr A}_{\delta ,\e}(z)&=&\left(\left(\frac{\rho_a-\delta z(1+\e\eta_k)\Psi_{2a-1}}{\rho_a^2-\delta^2(1+\e \eta_k)^2}\right)_{1\le a\le q\atop 1\le k\le N-1},\left (\frac 1{\rho_a}\right )_{1\le a\le q}\right)\\
{\mathscr B}_{\delta ,\e}(z)&=&\left( \frac{\delta(1+\e\xi_j)-\sigma_b z\Psi_{2b}}{\delta^2(1+\e \xi_j)^2-\sigma_b^2}\right)_{1\le j\le N\atop 1\le b\le q}.
\end{eqnarray*}
Expanding in powers of $\delta $, we obtain 
\begin{eqnarray*}
\mathscr A_{\delta ,\e}(z) &=&\sum_{p=0}^\infty \delta^p\mathscr A_p(\e ,z)\ ,\
\mathscr A_p(\e ,z):=U_p(z)\otimes W_p(\e )\ , \ p\ge 0,\\
\mathscr B_{\delta ,\e}(z) &=&\sum_{m=0}^\infty \delta^m\mathscr B_m(\e ,z)\ ,\ 
\mathscr B_m(\e ,z):= V_m(\e )\otimes T_m(z)\ ,\ m\ge 0,
\end{eqnarray*}
where we have set
\begin{eqnarray*}
U_p(z) &:=&\left(\left\{\begin{array}{l}\displaystyle{\frac{1}{\rho _a^{p+1}}}\text{ if } p\text{ is even, }\\ \ \\
\displaystyle{ -\frac{z\Psi _{2a-1}}{\rho _a^{p+1}}}\text{ if } p\text{ is odd}. \end{array}\right. \right )_{1\le a\le q}\\
T_m(z)&:=&\left(\left\{\begin{array}{l}\displaystyle{\frac{z\Psi _{2b}}{\sigma _b^{m+1}}}\text{ if } m\text{ is even, }\\
\displaystyle{-\frac{1}{\sigma _b^{m+1}}}\text{ if } m\text{ is odd}. \end{array}\right. \right )_{1\le b\le q}
\end{eqnarray*}
Using these formulae, we get
 \begin{eqnarray*}
 \mathscr A_{\delta ,\e}(z)\mathscr C_\e({ z})^{-1}&=&\sum_{p=0}^\infty \delta^pU_p(z)\otimes ^t\mathscr C_\e({ z})^{-1}W_p(\e ),\\
 \mathscr C_\e({ z})^{-1}\mathscr B_{\delta ,\e}(z)&=&\sum_{m=0}^\infty \delta^m \mathscr C_\e({ z})^{-1}V_m(\e )\otimes T_m(z)\ ,\\
  \mathscr A_{\delta ,\e}(z)\mathscr C_\e({ z})^{-1}(1_N)&=&\sum_{p=0}^\infty \delta^p\langle W_p(\e ),\mathscr C_\e({ z})^{-1}(1_N)\rangle U_p(z)\ ,\\ 
^t\mathscr B_{\delta ,\e}(z)^t \mathscr C_\e({ z})^{-1}(1_N)&=&\sum_{m=0}^\infty \delta^m \langle \mathscr C_\e({ z})^{-1}V_m(\e ), 1_N\rangle T_m(z)\ ,\\
\mathscr A_{\delta ,\e}(z) \mathscr C_\e( z)^{-1}\mathscr B_{\delta ,\e}(z)&=&\sum_{p,m\ge 0}\langle \mathscr C_\e({ z})^{-1}V_m(\e ),W_p(\e )\rangle U_p(z)\otimes T_m(z)\ .
\end{eqnarray*}
The statement is then a direct consequence of Lemma \ref{matricesC_e}.
\end{proof}
Let us complete the proof of Proposition \ref{Approximate}. In view of  Lemma \ref{Vectors}, and of formulae (\ref{Xq}), (\ref{X'q}), (\ref{YN}), (\ref{Y'N}), we observe that
$$\mathscr J_{\delta ,\e}(z)=\mathscr E(z)-\delta\mathscr A_{\delta ,\e}(z) \mathscr C_\e(z)^{-1}\mathscr B_{\delta ,\e}(z)=\mathscr E(z)+\mathcal O(\delta )$$
is invertible for $z\in D_{\theta ,\e }$ and $\delta $ and $\e $ small enough, and we get  the formulae
\begin{eqnarray*}
X_q^{\delta ,\e}(z)&=&\mathscr J_{\delta ,\e}(z)^{-1}({\bf \Psi_q}-\delta\mathscr A_{\delta ,\e}(z) \mathscr C_\e(z)^{-1}{ 1}_N)\\
\hat X_q^{\delta ,\e}(z)&=&^t\mathscr J_{\delta ,\e}(z)^{-1}({ 1}_q-\delta^t\mathscr B_{\delta ,\e}(z)\, ^t\mathscr C_\e(z)^{-1}{ 1}_N)\\
Y_N^{\delta ,\e}(z)&=&\delta \mathscr C_\e(z)^{-1}({ 1}_N-\mathscr B_{\delta ,\e}(z)X_q^{\delta ,\e}(z))\\
\hat Y_N^{\delta ,\e}(z)&=&\delta\, ^t \mathscr C_\e(z)^{-1}({ 1}_N-^t\mathscr A_{\delta ,\e}(z)\hat X_q^{\delta ,\e}(z)).
\end{eqnarray*}
Furthermore, observing that $\mathscr E(z)$ is invertible for $z$ in a fixed neighborhood of $\D $, we have, if $z\in D_{\theta ,\e}\ $,
$$\mathscr E(z)^{-1}=\mathscr E(1)^{-1} +O(\e ^{2(N-1)})\ .$$
Applying again Lemma Lemma \ref{Vectors}, this completes the proof.
\end{proof} 

Let us complete the proof of Proposition \ref{uepsilon}.  Proposition \ref{Approximate} and the expression of $(u^{\delta,\e})'(1)$ lead to
 \begin{eqnarray*}
 (u^{\delta,\e})'(z)&=&\left\langle\dot{\mathscr C_{\delta,\e}}\left (\begin{array}{ll}{X}_q^{\delta ,\e}(z)\\{Y}_N^{\delta ,\e}(z)\end{array}\right),\left (\begin{array}{ll}\hat {X}_q^{\delta ,\e}(z)\\ \hat {Y}_N^{\delta ,\e}(z)\end{array}\right)\right\rangle\\
 &=& \left\langle\dot{\mathscr C_{\delta,\e}}\left (\begin{array}{ll} \mathscr E(z)^{-1}{\bf \Psi_q}+\mathcal O(\delta)\\
 \alpha\delta{ \mathscr C_\e}(z)^{-1}(1_N)+ \mathcal O\left(\frac{\delta^2}{\e^{N-2}}\right) \end{array}\right),
  \left (\begin{array}{ll}^t\mathscr E(z)^{-1}{ 1}_q+\mathcal O(\delta)\\\beta\delta ^t{ \mathscr C_\e}(z)^{-1}(1_N)+\mathcal O\left(\frac{\delta^2}{\e^{N-1}}\right)\end{array}\right)\right\rangle \ .
  \end{eqnarray*}
 Observing that 
$$\dot{\mathscr C}_{\delta ,\e }=\left (\begin{array}{ll} \mathcal O(1)  & \mathcal O(\delta)\\
\mathcal O(1) & \frac{1}{\delta} \dot{\mathscr C}_\e  \end{array}\right )=\left (\begin{array}{ll} \mathcal O(1)  & \mathcal O(\delta)\\
\mathcal O(1) & \mathcal O \left (\frac{1}{\delta \e }\right )  \end{array}\right )\ ,$$
 we infer, if $\e \ll \delta $, 
\begin{eqnarray*} 
 (u^{\delta,\e})'(z)&=& \left\langle \left (\begin{array}{ll}  \mathcal O(1)+\mathcal O\left (\frac{\delta ^2}{\e ^{N-2}}\right )\\
 \mathcal O(1)+\alpha \dot {\mathscr C}_\e \mathscr C_\e (z)^{-1}(1_N)+\mathcal O\left (\frac{\delta }{\e ^{N-1}}\right ) \end{array}\right),
 \left (\begin{array}{ll} \mathcal O(1)\\   \beta \delta ^t\mathscr C_\e (z)^{-1}(1_N)+\mathcal O\left (\frac {\delta^2}{\e ^{N-1}}\right ) \end{array}\right)
 \right\rangle\\   
&=&\alpha \beta \delta \langle \dot {\mathscr C}_\e \mathscr C_\e (z)^{-1}(1_N), ^t\mathscr C_\e (z)^{-1}(1_N)  \rangle +\mathcal O\left ( \frac{\delta ^2}{\e ^{2(N-1)}} \right ).
\end{eqnarray*}
Furthermore, writing again
$$\mathscr C_\e (z)^{-1}=S_\e (z)^{-1}\mathscr C_\e (1)^{-1}=\mathscr C_\e (1)^{-1}-(1-z)\dot{\mathscr C}_\e \mathscr C_\e (1)^{-1}S_\e (z)^{-1}\mathscr C_\e (1)^{-1}, $$
we infer
$$  (u^{\delta,\e})'(z)=\alpha \beta \delta \langle \dot {\mathscr C}_\e \mathscr C_\e (1)^{-1}(1_N), ^t\mathscr C_\e (1)^{-1}(1_N)  \rangle +\mathcal O\left ( \frac{\delta ^2+\theta }{\e ^{2(N-1)}} \right )\ .  $$
We claim that the product 
$$\alpha \beta =\left(1-\left \langle \mathscr E^{-1} {\bf \Psi_q},\left(\frac{\Psi_{2b}}{\sigma_b}\right)\right \rangle\right)\left(1-\left \langle^t \mathscr E^{-1}{1_q},\left(\frac{1}{\rho_a}\right)\right \rangle\right)$$ is not zero. Indeed, write $Z_b:=\mathscr E^{-1} {\bf \Psi_q}$ so that, for $a=1,\dots, q$, 
$$\sum_{b=1}^q \frac{\rho_a-\sigma_b\Psi_{2a-1}\Psi_{2b}}{\rho_a^2-\sigma_b^2}Z_b=\Psi_{2a-1}$$ or 
\begin{equation}\label{EZ}
\sum_{b=1}^q \frac{\rho_a\Psi_{2a-1}^{-1}-\sigma_b\Psi_{2b}}{\rho_a^2-\sigma_b^2}Z_b=1\ ,\ a=1,\dots ,q.
\end{equation}
Assume $\alpha =0$, namely
\begin{equation}\label{alpha0}
\sum_{b=1}^q Z_b\frac{\Psi_{2b}}{\sigma_b}=1\ .
\end{equation}
 Then substracting (\ref{EZ}) from (\ref{alpha0}) leads to
$$\rho _a\sum_{b=1}^q \frac{\rho_a-\sigma_b\Psi_{2a-1}^{-1}\Psi_{2b}^{-1}}{\rho_a^2-\sigma_b^2}\frac{\Psi_{2b}Z_b}{\sigma_b}=0\ ,\ a=1,\dots ,q.$$
This is a contradiction since, from Theorem \ref{FT}, the matrix 
$$\left( \frac{\rho_a-\sigma_b \Psi_{2a-1}^{-1}\Psi_{2b}^{-1}}{{\rho_a^2-\sigma_b^2}}\right)_{1\le a,b\le q} $$ is known to be invertible since $\Psi_{2a-1}^{-1}$ and $\Psi_{2b}^{-1}$ are complex numbers of modulus 1. 
We conclude that $\alpha \ne 0$. A similar argument leads to $\beta \ne 0$.

Eventually, we calculate 
 \begin{eqnarray*}
\langle\dot{\mathscr C_\e}\mathscr C_\e^{-1}({ 1_N}), ^t\mathscr C_\e^{-1}({ 1_N})\rangle&=&\sum_{1\le j\le N\atop 1\le k\le N-1}\frac{(-1)^{j+k} \lambda_j (\e )\mu_k (\e )(1+\e \eta_k)}{(1+\e \xi_j)^2-(1+\e \eta_k)^2}(1+\mathcal O(\e ))\\
 &=&\frac{2^{2(N-1)}}{\e^{2(N-1)}}\left(\sum_{1\le j\le N}\sum _{1\le k\le N-1}\frac{(-1)^{j+k}}{(\xi_j-\eta_k)\xi'_j\eta'_k}+\mathcal O(\e)\right).
\end{eqnarray*} 
Considering the analytic expression $$\sum_{1\le j\le N}\sum _{1\le k\le N-1}\frac{(-1)^{j+k}}{(\xi_j-\eta_k)\xi'_j\eta'_k}$$ as a function of $\xi_N$, the pole $\eta_{N-1}$ appears only once. Therefore this  quantity does not vanish if $(\xi ,\eta )$ belongs to  an open dense  set $\mathcal X'''_N$ of V. In that case, if $\theta \le \theta ^*(\xi ,\eta )$ small enough, and $\e \ll \delta \ll 1$, we conclude
 $$\forall z\in D_{\theta ,\e }\ ,\ | (u^{\delta,\e})'(z)|\ge\frac {C\delta }{\e^{2(N-1)}}\ .$$ 
 This completes the proof of Proposition \ref{derivee}. 
\\
\end{proof}

 \chapter{Geometry of the Fourier transform}\label{chapter geometry}\index{Non linear Fourier transform}
 
 This chapter is devoted to the proof of two results. The first one  describes the restriction of the symplectic form to the finite dimensional manifolds made of symbols corresponding to pairs of Hankel operators with a given finite list of multiplicities. These manifolds turn out to involutive, and are the disjoint union of symplectic manifolds on which the nonlinear Fourier transform defines action angle variables for the cubic Szeg\H{o} flow. The proof of this result uses the evolution of the nonlinear Fourier transform through the flows of the Szeg\H{o} hierarchy introduced in \cite{GG1} and used in \cite{GG4}. 
 The second result characterizes the sets of symbols associated to pairs of Hankel operators with the same singular values \index{singular value}and Blaschke products admitting a given set of zeroes, as classes of some unitary equivalence. These sets are precisely tori obtained in the above symplectic manifolds by fixing the action variables and making angles vary. 
 
 \section{Evolution under the Szeg\H{o} hierarchy} \label{szegohier}

The Szeg\H{o} hierarchy was introduced in \cite{GG1} and used in \cite{GG2} and \cite{GG4}. 
In \cite{GG2}, it was used to identify the symplectic form on the generic part of $\mathcal V(d)$.
Similarly, our purpose in this section is  to establish preliminary formulae, towards the identification of the symplectic form 
on $ \mathcal V_{(d_1,\dots ,d_n)}$ in section \ref{symplectic}.
\s
For the convenience of the reader, we recall the main properties of the hierarchy. For $y>0$ and $u\in H^{\frac 12}_+$, we set
$$J^y(u)= ((I+yH_u^2)^{-1}(1)\vert 1)\ .$$
Notice that the connection with the Szeg\H{o} equation is made by
$$E(u)=\frac 14(\pa _y^2J^y_{\vert y=0}-(\pa _yJ^y_{\vert y=0})^2)\ .$$
Thanks to formula (\ref{J}), $J^y(u)$ is a function of the singular values $s_r(u)$\index{singular value}.
For every $s>\frac 12$, $J^y $ is a smooth real valued function on $H^s_+$, and its Hamiltonian vector field  is given by
$$X_{J^y}(u)=2iy w^yH_uw^y\ ,\ w^y:=(I+yH_u^2)^{-1}(1)\ ,$$
which is a Lipschitz vector field on bounded subsets of $H^s_+$. 
By the Cauchy--Lipschitz theorem, the evolution equation 
\begin{equation}\label{hierarchy}
\dot u=X_{J^y}(u)
\end{equation}
admits local in time solutions for every initial data in $H^s_+$ for $s>1$, and the lifetime is bounded from below if the 
data are bounded in $H^s_+$. We recall that this evolution equation admits a Lax pair structure (\cite{GG4})\index{Lax pair}. 
\begin{theorem}\label{Laxhier}
For every $u\in H^s_+$, we have
\begin{eqnarray*}
H_{iX_{J^y}(u)}&=&H_u F_u^y+F_u^yH_u\ ,\\
K_{iX_{J^y}(u)}&=&K_uG_u^y+G_u^yK_u\ ,\\
 G_u^y(h)&:=&-yw^y\, \Pi (\overline {w^y}\, h)+y^2H_uw^y\, \Pi (\overline {H_uw^y}\, h)\ ,\\
 F_u^y(h)&:=&G_u^y(h)-y^2(h\vert H_uw^y)H_uw^y\ .
\end{eqnarray*}
If $u\in C^\infty (\mathcal I ,H^s_+)$ is a solution of equation (\ref{hierarchy}) on a time interval $\mathcal I$, then
\begin{eqnarray*}
\frac{dH_u}{dt}&=&[B_u^y,H_u]\ ,\ \frac{dK_u}{dt}=[C_u^y,K_u]\ ,\\
B_u^y&=&-iF_u^y\ ,\ C_u^y=-iG_u^y\ .
\end{eqnarray*}
\end{theorem}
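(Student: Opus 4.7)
The plan is to derive the three identities by reducing everything to the first equality $H_{iX_{J^y}(u)}=H_uF_u^y+F_u^yH_u$; the Lax equations for $H_u$ and $K_u$ are then automatic. Indeed, since $f\mapsto H_f$ is $\C$-linear while $H_u$ itself is $\C$-antilinear, one has $\dot H_u=H_{\dot u}=H_{X_{J^y}(u)}=-iH_{iX_{J^y}(u)}$, and with $B_u^y:=-iF_u^y$ the antilinearity gives $H_u\circ B_u^y=iH_uF_u^y$, whence $[B_u^y,H_u]=-iF_u^yH_u-iH_uF_u^y$, matching the claimed expression for $\dot H_u$. The analogous derivation with $K_u$ and $G_u^y$ handles the second Lax equation.

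For the first identity, I would start from the formula $iX_{J^y}(u)=-2yw^yH_u(w^y)$ and compute the Hankel operator of the right-hand side by applying Lemma \ref{abc}. A natural choice is $a=H_u(w^y)$, $b=1$, $c=w^y$, so that $\Pi(a\overline{b}c)=w^yH_u(w^y)$; all three functions lie in $H^s_+$ under the regularity assumption, and Lemma \ref{abc} yields
\[
H_{w^yH_u(w^y)}=T_{H_u(w^y)}H_{w^y}+H_{H_u(w^y)}T_{\overline{w^y}}-H_{H_u(w^y)}H_1H_{w^y}.
\]
I would then unfold $H_{H_u(w^y)}$ by a second application of Lemma \ref{abc} with $a=u$, $b=w^y$, $c=1$, rewriting it in terms of $H_u$ together with Toeplitz operators of symbols $w^y$ and $\overline{w^y}$. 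At this point the defining resolvent identity $(I+yH_u^2)w^y=1$, equivalently $H_u^2(w^y)=(1-w^y)/y$, is invoked to collapse the compositions of $H_u^2$ that appear and to reassemble the result into $H_uF_u^y+F_u^yH_u$; in particular, the rank-one correction $-y^2(\,\cdot\,|H_uw^y)H_uw^y$ entering the definition of $F_u^y$ should emerge precisely as the contribution of the $H_1$ factor, since $H_1$ is itself rank one.

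For the $K_u$ identity, I would repeat the above computation after composing with $S^*$ on the left, using $K_v=S^*H_v$ and $K_u=S^*H_u=H_uS$. The essential input is identity (\ref{Ku2}), $K_u^2=H_u^2-(\,\cdot\,|u)u$, which, substituted for the occurrences of $H_u^2$ that arose in the reduction, produces an extra rank-one term proportional to $u$ that exactly cancels the rank-one correction present in the $F_u^y$ case, explaining its absence from $G_u^y$. The main obstacle will be the algebraic bookkeeping in the second paragraph: after the two invocations of Lemma \ref{abc} one is confronted with seven or eight terms involving Toeplitz and Hankel operators with symbols drawn from $\{u,w^y,\overline{w^y},H_u(w^y)\}$, and these have to be combined and simplified via the resolvent identity to recover precisely the two Toeplitz-type summands and the rank-one correction defining $F_u^y$. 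This is a direct but delicate manipulation.
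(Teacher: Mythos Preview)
The paper does not prove Theorem \ref{Laxhier}; it is simply recalled from \cite{GG4}, so there is no in-text argument to compare against. Your overall strategy---reduce to the operator identity for $H_u$, establish it via Lemma \ref{abc}, then pass to $K_u$ by composing with the shift---is the natural extension of the paper's own proof of Theorem \ref{Lax pair}, and the deduction of the Lax equations from the operator identities via antilinearity is correct.

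One point your sketch underplays: after your first application of Lemma \ref{abc}, the expression contains the Hankel operators $H_{w^y}$ and $H_{H_u(w^y)}$, whose symbols are not $u$, and these must be rewritten in terms of $H_u$. The cleanest tool is the identity $H_uT_b=T_{\overline b}H_u=H_{T_{\overline b}u}$ recorded in Section \ref{sectionAAK}: applied with $b=w^y$ it gives $H_{H_u(w^y)}=H_uT_{w^y}$ directly (your proposed second use of Lemma \ref{abc} with $a=u$, $b=w^y$, $c=1$ recovers exactly this, the two $H_1$-terms cancelling), and combined with the resolvent relation $w^y=1-yH_u(H_uw^y)$ it yields $H_{w^y}=H_1-yH_uT_{H_uw^y}$. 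With these two substitutions everything is expressed through $H_u$, $H_1$, and Toeplitz operators with symbols $w^y$, $\overline{w^y}$, $H_uw^y$, $\overline{H_uw^y}$, and the bookkeeping you anticipate becomes a finite check. Your intuition that the rank-one piece of $F_u^y$ originates from $H_1$ is correct in spirit, though in practice several $H_1$-contributions combine rather than a single one surviving.

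For the $K_u$ identity, your plan to compose with $S^*$ and invoke (\ref{Ku2}) mirrors precisely how the paper passes from (\ref{Hu3}) to (\ref{Ku3}) in the cubic case; the same manipulation---tracking the extra $(\cdot\mid u)u$ term through one shift and using $H_uw^y=u-yH_u^2(H_uw^y)$---produces the cancellation you describe.
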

In particular, $\Sigma _H(u_0)=\Sigma _H(u(t))$ and $\Sigma _K(u_0)=\Sigma _K(u(t))$ for every $t$, therefore $J^y(u(t))$ is a constant $J^y$. We now state the main result of this section.
\begin{theorem}\label{hierarchyevol}
Let $u_0\in H^s_+\ ,\ s>1,$  with $$\Phi(u_0)=((s_r),(\Psi_r)).$$
The solution of  $$\dot u=X_{J^y}(u)\ ,\ u(0)=u_0\ ,$$  is characterized by
$$\Phi(u(t))=((s_r),(\expo_{i\omega _rt}\Psi _r))\ ,\ \omega _r:=(-1)^{r-1}\frac {2yJ^y}{1+ys_r^2}\ .$$
\end{theorem}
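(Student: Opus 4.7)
The plan is to follow the template of the proof of Theorem \ref{evolszegotexte} with the Szeg\H{o} Lax operator $B_u$ replaced by its hierarchy analog $B_u^y=-iF_u^y$ from Theorem \ref{Laxhier}. First, by density of finite-rank symbols in $H^s_+$ (for $s>1$) together with the local Cauchy--Lipschitz theory for (\ref{hierarchy}), I reduce to data $u_0$ for which $H_{u_0}$ has finite rank, so that $\Sigma_H(u_0)$ and $\Sigma_K(u_0)$ are finite. The Lax pair of Theorem \ref{Laxhier} then produces unitary propagators that conjugate $H_{u(t)}^2$ and $K_{u(t)}^2$ to their initial values, so the singular values $s_r$ and their multiplicities are constants of motion; the product formula (\ref{J(x)Appendix}) at $x=-y$ shows that $J^y$ is also conserved.

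Next, I would fix $\rho=s_{2j-1}\in\Sigma_H(u_0)$, set $u_\rho(t):=P_\rho u(t)$ with $P_\rho=\mathbf{1}_{\{\rho^2\}}(H_u^2)$, and compute $\dot u_\rho$ using the Lax pair. The central preliminary identity, which I would verify by a direct expansion of $B_u^y=-iF_u^y$ using $yH_u^2 w^y=1-w^y$ and $(u\vert H_uw^y)=(1-J^y)/y$, is
\[
B_u^y u=\dot u-iyJ^y H_u w^y.
\]
Since $P_\rho$ commutes with $H_u$ (eigenspaces of $H_u^2$ are $H_u$-invariant) and with $(I+yH_u^2)^{-1}$, one obtains $P_\rho H_u w^y=(1+y\rho^2)^{-1}u_\rho$; combining this with $\dot P_\rho=[B_u^y,P_\rho]$ gives
\[
\dot u_\rho=B_u^y u_\rho+\frac{iyJ^y}{1+y\rho^2}\,u_\rho.
\]
Differentiating $\rho u_\rho=\Psi_\rho H_u u_\rho$ in time and exploiting the antilinearity of $H_u$ (which flips the sign of the pure imaginary scalar when pulled inside), the $B_u^y u_\rho$ contributions cancel between the two sides while the two $\frac{iyJ^y}{1+y\rho^2}$ terms combine, leaving
\[
\dot\Psi_\rho\cdot H_u u_\rho=[B_u^y,\Psi_\rho]\,H_u u_\rho+\frac{2iyJ^y}{1+y\rho^2}\,\Psi_\rho H_u u_\rho.
\]

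The main obstacle is to establish that $[B_u^y,\Psi_\rho]\,H_u u_\rho=0$. This plays the role of the identity $[T_{\vert u\vert^2},\Psi]H_uu_\rho=0$ used in the proof of Theorem \ref{evolszegotexte}, but $F_u^y$ is no longer a pure Toeplitz operator: it is the sum of the two ``co-analytic Toeplitz'' pieces $w^yT_{\overline{w^y}}$ and $H_uw^y\,T_{\overline{H_uw^y}}$ plus a rank-one correction. Writing $\Psi_\rho=\expo_{-i\psi}\chi_{p_1}\cdots\chi_{p_{m-1}}$ and telescoping the commutator, the vanishing reduces to checking that $[F_u^y,\chi_p](e)=0$ for every Blaschke factor $\chi_p$ of $\Psi_\rho$ and every $e\in E_u(\rho)$ with $\chi_pe\in E_u(\rho)$. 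Applying the Szeg\H{o}-proof identity $[\Pi,\chi_p]g=(1-\vert p\vert^2)H_{1/(1-\overline pz)}(\tilde g)$ to each of the three pieces of $F_u^y$ places $[F_u^y,\chi_p](e)$ in an explicit low-dimensional subspace generated by $\frac{w^y}{1-\overline p z}$, $\frac{H_uw^y}{1-\overline p z}$ and $H_uw^y$, and testing against the constant $1$ and against $u$ as in the calculation after equation (\ref{deriveurho}) yields the vanishing, because $e,\chi_pe\in E_u(\rho)\subset(\ker H_u)^\perp$ and the description of $E_u(\rho)$ furnished by Proposition \ref{action} pins down the relevant inner products. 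Once this is done, integration gives $\Psi_\rho(t)=\expo_{i\omega_{2j-1}t}\Psi_\rho$ with $\omega_{2j-1}=\frac{2yJ^y}{1+y\rho^2}$, matching $(-1)^{r-1}\frac{2yJ^y}{1+ys_r^2}$ for $r=2j-1$.

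For $\sigma=s_{2k}\in\Sigma_K(u_0)\setminus\{0\}$ I would run the parallel argument with $C_u^y=-iG_u^y$ and the defining relation $K_uu'_\sigma=\sigma\Psi_{2k}u'_\sigma$. The analogous preliminary identity $C_u^y u=\dot u-iy H_u w^y$ has \emph{no} $J^y$ in front, but the $J^y$ reappears when projecting onto $F_u(\sigma)$: expanding $H_uw^y=\sum_{\rho}(1+y\rho^2)^{-1}u_\rho$, invoking (\ref{urho}), and simplifying via the partial-fraction identity $\frac{1}{(1+y\rho^2)(\rho^2-\sigma^2)}=\frac{1}{1+y\sigma^2}\bigl[\frac{1}{\rho^2-\sigma^2}-\frac{y}{1+y\rho^2}\bigr]$ combined with (\ref{sommesimpletau}) and $\sum_\rho\frac{\Vert u_\rho\Vert^2}{1+y\rho^2}=(1-J^y)/y$ produces $Q_\sigma H_uw^y=\frac{J^y}{1+y\sigma^2}u'_\sigma$. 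This yields $\dot u'_\sigma=C_u^yu'_\sigma+\frac{iyJ^y}{1+y\sigma^2}u'_\sigma$, and an identical commutator argument closes the case with frequency $\omega_{2k}=-\frac{2yJ^y}{1+y\sigma^2}$, i.e.\ the $(-1)^{r-1}$ sign for $r=2k$.
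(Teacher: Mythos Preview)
Your approach is essentially the paper's: same Lax-pair differentiation of $u_\rho$ and $u'_\sigma$, same reduction to $[B_u^y,\chi_p](e)=0$ and $[C_u^y,\chi_p](f)=0$ via telescoping over Blaschke factors, same frequencies. Two remarks are worth making.

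First, for the projection identity $\mathbf{1}_{\{\sigma^2\}}(K_u^2)H_uw^y=\frac{J^y}{1+y\sigma^2}u'_\sigma$, the paper avoids your partial-fraction computation entirely by proving the operator identity $H_uw^y=J^y(I+yK_u^2)^{-1}u$, which follows in one line from (\ref{sandrine}) applied to $f=u$. This is cleaner and worth knowing.

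Second, your sketch of the commutator vanishing is too loose. The expression $[F_u^y,\chi_p](e)$ is not determined by two linear forms: it sits in a (generically) four-dimensional space spanned by $\frac{w^y}{1-\overline pz}$, $\frac{H_uw^y}{1-\overline pz}$, $H_uw^y$ and $\chi_pH_uw^y$, so ``testing against $1$ and $u$'' does not suffice. The paper first shows the coefficient $c$ of $\frac{w^y}{1-\overline pz}$ vanishes (using $(v\vert w^y)=\frac{1}{1+y\rho^2}(v\vert 1)$ for $v\in E_u(\rho)$), then combines the three remaining pieces into a single multiple $C(z)\,H_uw^y$ and proves $C(z)\equiv0$ via the algebraic identity $H_u(e)=\chi_p\,H_u(\chi_pe)$, which is exactly the content of Lemma~\ref{crucialHuGeneral}/Proposition~\ref{action} you invoke. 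So the ingredient you point to is the right one; just be aware that the mechanism is this structural identity, not a pairing argument. The $K$-side commutator $[C_u^y,\chi_p](f)=0$ is easier: there the two coefficients $c,\tilde c$ coincide up to a factor $-y$ (since $w^y=1-yH_uw^y$), and $\tilde c=0$ follows from $F_u(\sigma)\cap 1^\perp=F_u(\sigma)\cap u^\perp$.
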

\begin{proof}
Let $\rh \in \Sigma _H(u_0)$. Denote by  $u_\rho$ the orthogonal projection of $u$ on $E_u(\rho)$.
Hence, $u_\rho=1\!{\rm l}_{\{ \rho^2\}}(H_u^2)(u)$. Let us differentiate this equation with respect to time. 
We get from the Lax pair structure\index{Lax pair}
\begin{eqnarray*}
\frac{d u_\rho}{dt}&=& [B_u^y,1\!{\rm l}_{\{ \rho^2\}}(H_u^2)](u)+1\!{\rm l}_{\{ \rho^2\}}(H_u^2)[B_u^y,H_u](1)\\
&=& B_u^y(u_\rho)-1\!{\rm l}_{\{ \rho^2\}}(H_u^2)(H_u(B_u^y(1)))\ .
\end{eqnarray*}
Since $B_u^y(1)=iyJ^yw^y$, and since  $1\!{\rm l}_{\{ \rho^2\}}(H_u^2)(H_uw^y)=\frac 1{1+y\rho^2}u_\rho$, we get

\begin{equation}\label{deriveurhoJ}
\frac{d u_\rho}{dt}=B_u^y(u_\rho)+i\frac{yJ^y}{1+y\rho^2}u_\rho\ .
\end{equation}

On the other hand, differentiating the equation
$$\rho u_\rho=\Psi  H_u(u_\rho)$$
one obtains
$$
\rho \frac{d u_\rho}{dt}=\dot \Psi H_u(u_\rho)+\Psi\left([B_u^y,H_u](u_\rho)+H_u\left( \frac{d u_\rho}{dt}\right)\right)$$
Hence, using the expression (\ref{deriveurhoJ}), we get
$$\rho\left (B_u^y(u_\rho)+i\frac{yJ^y}{1+y\rho^2}u_\rho\right )=\left (\dot \Psi-i\frac{yJ^y}{1+y\rho^2}\Psi \right )H_u(u_\rh )+\Psi B_u^yH_u(u_\rho)\ ,$$
hence
$$[B_u^y,\Psi]H_u(u_\rho)=\left (\dot \Psi-2i\frac {yJ^y}{1+y\rho^2}\Psi \right )H_u(u_\rh ).$$
It remains to prove  that the left hand side of this equality is zero.
We first show that, for any $p\in\D$ such that $\chi_p$ is a factor of $\chi$, for every $e\in E_u(\rho)$ such that $\chi _pe\in E_u(\rh )$,
$[B_u^y,\chi_p](e)=0$. We write
\begin{eqnarray*}
i[B_u^y,\chi_p](e)&=&-yw^y\,( \Pi (\overline {w^y}\, \chi_pe)-\chi_p \Pi (\overline {w^y}\, e))\\
&+&y^2H_uw^y\, (\Pi (\overline {H_uw^y}\, \chi_p e)-\chi_p\Pi (\overline {H_uw^y}\,  e))\\
&-&y^2((\chi_p e\vert H_uw^y)H_uw^y-\chi_p(e\vert H_uw^y)H_uw^y) \\
\end{eqnarray*}
We already used that, for any function $f\in L^2$, $\Pi(\chi_pf)-\chi_p\Pi(f)$ is proportional to $\frac 1{1-\overline pz}$. 
Hence, we obtain
\begin{eqnarray*}
i[B_u^y,\chi_p](e)&=&-yw^y \frac{c}{1-\overline pz}+y^2H_uw^y\frac{\tilde c}{1-\overline pz}\\
&-&y^2((\chi_p e\vert H_uw^y)H_uw^y-\chi_p(e\vert H_uw^y)H_uw^y) \\
\end{eqnarray*}
with 
$$c=( \Pi (\overline {w^y}\, \chi_pe)-\chi_p \Pi (\overline {w^y}\, e)\vert 1)=( \chi_pe\vert w^y)-(\chi_p\vert 1)(e\vert w^y)$$ and 
$$\tilde c=(\Pi (\overline {H_uw^y}\, \chi_p e)-\chi_p\Pi (\overline {H_uw^y}\,  e)\vert 1)=( \chi_pe\vert H_u(w^y))-(\chi_p\vert 1)(e\vert H_u(w^y))\ .$$
Now,  for any $v\in E_u(\rho)$
$$( v\vert w^y)=( v\vert 1\!{\rm l}_{\{ \rho^2\}}(H_u^2)(w^y))=\frac 1{1+y\rho^2}( v\vert 1)$$
hence $c=0$.
On the other hand,
\begin{eqnarray*}
(v\vert H_u w^y)&=&(v\vert 1\!{\rm l}_{\{ \rho^2\}}(H_u^2)(H_u(w^y)))=\frac 1{1+y\rho^2}( v\vert u_\rho)\\
&=&\frac 1{1+y\rho^2}(v\vert H_u(1))=\frac 1{1+y\rho^2}(1\vert H_u(v)) \ .
\end{eqnarray*}
We infer
$$i[B_u^y,\chi_p](e)=C(z)\frac 1{1+y\rho^2}y^2H_uw^y$$
where 
\begin{eqnarray*}
C(z)&=&\frac 1{1-\overline pz}\left ((1\vert H_u(\chi_p e))-(\chi_p\vert 1)(1\vert H_u(e)\right)
-(1\vert H_u(\chi_p e)+\chi_p(1\vert H_u(e))\\
&=&(1\vert H_u(\chi_p e))(\frac 1{1-\overline pz}-1)+(1\vert H_u(e))(\chi_p+\frac{p}{1-\overline pz})\\
&=&\frac z{1-\overline pz}(\overline p(1\vert H_u(\chi_p e))+(1\vert H_u(e)))\ .
\end{eqnarray*}
We claim that $H_u(e)=\chi_pH_u(\chi_p e)$. Indeed, from the assumption $e\in E_u(\rh )$ and $\chi _pe\in E_u(\rh )$, we can write
 $e=fH_u(u_\rh )$ with $\Pi (\Psi \overline f)=
\Psi \overline f$ and $\Pi (\Psi \overline {\chi _pf})=
\Psi \overline {\chi _pf}$. From Lemma \ref{crucialHuGeneral}, we infer
$$H_u(\chi _pe)=\rh \Psi \overline {\chi _pf}H_u(u_\rh )\ ,\ H_u(e)=\rh \Psi \overline fH_u(u_\rh )\ .$$
This proves the claim. Since $(1\vert \chi_p)=-\overline p$, we conclude that $C(z)=0$. Hence $[B^y_u,\chi_p](e)=0$. Arguing as in the previous section, we conclude that $[B_u,\chi]H_u(u_\rho)=0$.
\s
It remains to consider the  other eigenvalues. Let $\sigma\in \Sigma _K(u_0)$.  Denote by $u'_\sigma$ the orthogonal projection of $u$ on $F_u(\sigma )$. We compute  the derivative of $u'_\sigma=1\!{\rm l}_{\{\sigma^2\}}(K_u^2)(u)$ as before.
From the Lax pair formula, we get
\begin{eqnarray*}
\frac{d u'_\sigma}{dt}&=& [C_u^y,1\!{\rm l}_{\{\sigma^2\}}(K_u^2)](u)+1\!{\rm l}_{\{\sigma^2\}}(K_u^2)[B^y_u,H_u](1)\\
&=& C_u^y(u'_\sigma)+1\!{\rm l}_{\{\sigma^2\}}(K_u^2)(B_u^y(u)-C_u^y(u)-H_u(B_u^y(1)))\\
&=&C_u^y(u'_\sigma)+1\!{\rm l}_{\{\sigma^2\}}(K_u^2)(iy^2(u\vert H_uw^y)H_uw^y+iyJ^y H_u w^y)\\
&=&C_u^y(u'_\sigma)+iy1\!{\rm l}_{\{\sigma^2\}}(K_u^2)H_uw^y
\end{eqnarray*}
since
$(B_u^y-C_u^y)(h)=iy^2(h\vert H_uw^y)H_uw^y$ and $-yH^2_uw^y=w^y-1$ so that $(u\vert-yH_uw^y)=(-yH_u^2w^y\vert 1)=J^y-1$.

We claim that 
\begin{equation}\label{ProjSigma}
1\!{\rm l}_{\{\sigma^2\}}(K_u^2)(H_uw^y)=\frac{J^y}{1+y\sigma^2} u'_\sigma\ .
\end{equation}
Using  $K_u^2=H_u^2-(\cdot\vert u)u$ one gets, for any $f\in L^2_+$
\begin{equation}\label{sandrine}
(I+yH_u^2)^{-1}f=(I+yK_u^2)^{-1}f-y((I+yH_u^2)^{-1}f\vert u)(I+yK_u^2)^{-1}u\ .
\end{equation}
Applying formula (\ref{sandrine}) to $f=u$, we get
$$
H_u w^y=(I+yH_u^2)^{-1}(u)=(I+yK_u^2)^{-1}(u)-y((I+yH_u^2)^{-1}(u)\vert u)(I+yK_u^2)^{-1}(u)\ ,$$ hence
\begin{equation}\label{Hu(w)}
H_u w^y=J^y(I+yK_u^2)^{-1}(u) \ .
\end{equation}
Formula (\ref{ProjSigma}) follows by taking the orthogonal projection on $F_u(\sigma )$.
Using Formula (\ref{ProjSigma}), we get
\begin{equation}\label{deriveUsigma}
\frac{d u'_\sigma}{dt}=C_u^y(u'_\sigma)+iy\frac{J^y}{1+y\sigma^2} u'_\sigma\ .
\end{equation}
On the other hand, differentiating the equation
$$K_u(u'_\sigma)=\sigma \Psi  u'_\sigma $$ 
one obtains
$$
[C_u^y,K_u](u'_\sigma)+K_u\left(\frac{d u'_\sigma}{dt}\right)=\sigma \dot \Psi u'_\sigma+\sigma \Psi  \frac{d u'_\sigma}{dt}.$$
From identity (\ref{Hu(w)}), we use the expression of $\frac{d u'_\sigma}{dt}$ 
obtained above to get
$$\left (\dot \Psi+2i\frac{yJ^y}{1+\sigma^2 y}\Psi \right )u'_\sigma=\sigma[C_u^y,\Psi](u'_\sigma)\ .$$
The result follows once we prove that $[C_u^y,\Psi](u'_\sigma)=0$.

From the arguments developed before, it is sufficient to prove that 
$[C_u^y, \chi_p](f)=0$ for any $f\in F_u(\sigma )$ such that $\chi _pf\in F_u(\sigma )$. 
As before
$$[C_u^y, \chi_p](f)=i\frac{c}{1-\overline pz} yw^y-iy^2H_uw^y\frac{\tilde c}{1-\overline pz}$$
where 
$$c=(( \chi_p-( \chi_p\vert 1))f\vert w^y)$$ and 
$$\tilde c=((\chi_p -( \chi_p\vert 1))f\vert H_u w^y).$$
Notice that $w^y=1-yH_uw^y$, hence $c=-y\tilde c$. 
Let us first prove that $\tilde c=0$. Using formula (\ref{Hu(w)}), 
\begin{eqnarray*}
\tilde c&=&( \chi_p f\vert 1\!{\rm l}_{\{\sigma^2\}}(K_u^2)H_u w^y)-( \chi_p\vert 1)(f\vert  1\!{\rm l}_{\{\sigma^2\}}(K_u^2)H_u w^y)\\
&=&\frac{J^y}{1+y\sigma^2}(( \chi_p -( \chi_p\vert 1))f\vert u)= 0\ ,
\end{eqnarray*}
since, as we already observed at the end of the proof of Theorem \ref{evolszegotexte},
 $$F_u(\sigma )\cap 1^\perp =E_u(\sigma )=F_u(\sigma )\cap u^\perp \ . $$
This completes the proof.
\end{proof}
We close this section by stating a corollary which will be useful for describing the symplectic form on $ \mathcal V_{(d_1,\dots ,d_n)}$.
\begin{corollary}\label{XJy}
On $ \mathcal V_{(d_1,\dots ,d_n)}$, we have 
\begin{equation}\label{decXJy}
X_{J^y}=\sum _{r=1}^n (-1)^{r}\frac{2yJ^y}{1+ys_r^2}\frac{\partial}{\partial \psi _r}\ .
\end{equation}
The vector fields $X_{J^y}, y\in \R _+,$ generate an integrable sub-bundle of rank  $n$ of the tangent bundle of  $ \mathcal V_{(d_1,\dots ,d_n)}$. The  leaves of the corresponding foliation are the isotropic tori 
$$\mathcal T ((s_1,\dots ,s_n),(\Psi _1,\dots ,\Psi _n)):=\Phi ^{-1}\left (\{ (s_1,\dots ,s_n)\} \times \S^1\Psi _1\times \dots \times \S^1\Psi _n  \right )\ ,$$
where $(s_1,\dots ,s_n)\in \Omega _n$  and $(\Psi _1,\dots ,\Psi _n)\in \mathcal B^\sharp _{d_1}\times \dots \times \mathcal B^\sharp _{d_n}$ are given.
\end{corollary}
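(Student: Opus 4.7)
The formula \eqref{decXJy} follows essentially by differentiating at $t=0$ the conclusion of Theorem \ref{hierarchyevol}. Indeed, under the flow generated by $X_{J^y}$, the nonlinear Fourier transform evolves by $\Psi_r(t)=\expo_{i\omega_r t}\Psi_r$ with $\omega_r=(-1)^{r-1}2yJ^y/(1+ys_r^2)$, while the singular values $s_r$ and the ``polynomial parts'' $P_r/D_r$ of the Blaschke products stay constant (only the angles $\psi_r$ move). Since $\Psi_r=\expo_{-i\psi_r}P_r/D_r$, the identity $\Psi_r(t)=\expo_{i\omega_r t}\Psi_r$ translates into $\dot\psi_r=-\omega_r=(-1)^r 2yJ^y/(1+ys_r^2)$, and Theorem \ref{mainfiniterank} (more precisely, the fact that $\Phi_{d_1,\dots,d_n}$ is a homeomorphism and that $\Theta$ is a smooth immersion) guarantees that $(s_r,\psi_r,\text{coefficients of }P_r)$ form a valid smooth coordinate system on $\mathcal V_{(d_1,\dots,d_n)}$ in which this computation is legitimate.

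Next, I would prove that $\{X_{J^y}:y>0\}$ generates at every point $u\in\mathcal V_{(d_1,\dots,d_n)}$ an $n$-dimensional subspace of $T_u\mathcal V_{(d_1,\dots,d_n)}$. By \eqref{decXJy}, this span is obviously contained in $\mathrm{span}(\partial_{\psi_1},\dots,\partial_{\psi_n})$, so it suffices to find $y_1,\dots,y_n>0$ for which the vectors $X_{J^{y_i}}(u)$ are linearly independent. After factoring out the nonzero scalars $(-1)^r y_i J^{y_i}(u)$, this reduces to showing that the matrix $\bigl(1/(1+y_i s_r^2)\bigr)_{1\le i,r\le n}$ has nonzero determinant, which follows from the Cauchy determinant formula \eqref{determinantCauchy} since the $s_r^2$ are pairwise distinct and the $y_i$ can be chosen pairwise distinct.

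Integrability then reduces to showing that the family $\{J^y\}_{y>0}$ is in involution for the Poisson bracket, because the Hamiltonian vector fields of commuting functions have vanishing Lie brackets, $[X_{J^y},X_{J^{y'}}]=-X_{\{J^y,J^{y'}\}}=0$, and involutivity of the distribution follows from Frobenius. The Poisson commutation $\{J^y,J^{y'}\}=0$ is part of the Szeg\H{o} hierarchy structure from \cite{GG1,GG4}; alternatively, one checks it directly from the fact that each $J^y$ is a function of the conservation laws $s_r$, which are themselves invariants of every flow in the hierarchy by the Lax pair identities of Theorem \ref{Laxhier}. The integral manifolds of the distribution are then, by \eqref{decXJy} and the coordinates $(s_r,\psi_r,P_r)$, exactly the level sets where $s_r$ and $P_r$ are fixed and only the angles $\psi_r$ vary: these are precisely the tori $\mathcal T((s_1,\dots,s_n),(\Psi_1,\dots,\Psi_n))$.

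Finally, these tori are isotropic because at each point of the torus, the tangent space is spanned by Hamiltonian vector fields of Poisson-commuting functions, so the symplectic form $\omega$ evaluated on any two such vectors $X_{J^y},X_{J^{y'}}$ equals $\{J^y,J^{y'}\}=0$. The hardest technical point, in my view, is the bookkeeping that verifies that $(s_1,\dots,s_n;\psi_1,\dots,\psi_n;P_1,\dots,P_n)$ genuinely form a local coordinate system on $\mathcal V_{(d_1,\dots,d_n)}$ compatible with the differential structure inherited from $\mathcal V(d)$; this is where Lemma \ref{Phismooth} and the smooth immersion statement at the end of Theorem \ref{mainfiniterank} are essential, allowing the formal computation of $\dot\psi_r$ from Theorem \ref{hierarchyevol} to be reinterpreted as an identity between honest tangent vectors on $\mathcal V_{(d_1,\dots,d_n)}$.
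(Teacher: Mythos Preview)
Your proposal is correct and follows essentially the same route as the paper: derive \eqref{decXJy} by differentiating the flow description of Theorem~\ref{hierarchyevol} in the $(s_r,\psi_r,\chi_r)$ coordinates, identify the rank via the invertibility of the Cauchy-type matrix $\bigl(1/(1+y_\ell s_r^2)\bigr)$, and deduce isotropy from $\{J^y,J^{y'}\}=0$. The only minor difference is that for integrability you invoke Frobenius through $[X_{J^y},X_{J^{y'}}]=-X_{\{J^y,J^{y'}\}}=0$, whereas the paper observes more directly that once the distribution is shown to equal $\mathrm{span}(\partial_{\psi_1},\dots,\partial_{\psi_n})$ at every point, integrability and the identification of the leaves are immediate since these are coordinate vector fields.
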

\begin{proof}
For every $y\in \R _+$,  Theorem  \ref{hierarchyevol} can be rephrased as the following identities for Lie derivatives along $X_{J^y}$.
$$X_{J^y}(s_r)=0\ ,\ X_{J^y}(\chi _r)=0\ ,\ X_{J^y}(\psi _r)=(-1)^{r}\frac{2yJ^y}{1+ys_r^2}\ ,\ r=1,\dots ,n\ .$$
This implies identity (\ref{decXJy})  on $\mathcal V_{(d_1,\dots ,d_n)}$.
Given $n$ positive numbers $y_1>\dots >y_n$, the matrix 
$$\left (\frac{1}{1+y_\ell s_r^2}\right )_{1\le \ell ,r\le n}$$
is invertible. This implies that, for every $u\in  \mathcal V_{(d_1,\dots ,d_n)}$, the vector subspace of $T_u \mathcal V_{(d_1,\dots ,d_n)}$ spanned by 
the $X_{J^y}(u), y\in \R _+$ is exactly
$${\rm span} \left (\frac{\partial}{\partial \psi _r}, r=1,\dots ,n\right )\ .$$
The integrability follows, as well as the identification of the leaves, while the isotropy of the tori comes from the identity
$$\{ J^y, J^{y'}\} =0$$
which was proved in \cite{GG1} and is also a consequence of identity \ref{J(x)Appendix} and of the conservation of the $s_r$'s 
along the Hamiltonian curves of $J^y$, as stated in Theorem \ref{hierarchyevol}. 
\end{proof}

\section{The symplectic form on $ \mathcal V_{(d_1,\dots ,d_n)}$}\label{symplectic}

In this section, we prove that the symplectic form $\omega$ restricted to $ \mathcal V_{(d_1,\dots ,d_n)}$ is given by
\begin{equation}\label{omegasurV(d)}
\omega =\sum _{r=1}^n d\left (\frac{s_r^2}{2}\right )\wedge d\psi _r\ .
\end{equation}
Recall that the variable $\psi _r$ is connected to the Blaschke product $\Psi _r$ through the identity
$$\Psi _r=\expo_{-i\psi _r}\chi _r\  ,$$
where $\chi _r$ is a Blaschke product built with a monic polynomial. Given an integer $k$, we denote by $\mathcal B_k ^\sharp $ the submanifold
of $\mathcal B_k$ made with Blaschke products built with monic polynomials of degree $k$.

Let us first point out that  we get the following result as a corollary.
\begin{corollary}\label{Vinvolutive}
The manifold $ \mathcal V_{(d_1,\dots ,d_n)}$ is an involutive submanifold of $\mathcal V(d)$, where $$d=2\sum _{r=1}^nd_r+n.$$
Moreover, $ \mathcal V_{(d_1,\dots ,d_n)}$ is the disjoint union of the symplectic manifolds 
$$\mathcal W(\chi _1,\dots ,\chi _n):=\Phi ^{-1}(\Omega _n\times (\S ^1\chi _1\times \dots \times \S ^1\chi_n))\ ,$$
on which 
$$\left (\frac{s_r^2}{2}, \psi _r\right )_{1\le r\le n}$$
are action angle variables for the cubic Szeg\H{o} flow.
\end{corollary}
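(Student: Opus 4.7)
The plan is to read off the corollary directly from identity (\ref{omegasurV(d)}) together with the dimension counts already established in Theorem \ref{mainfiniterank} and definition \ref{defV(d)}; no new analysis is needed once the symplectic form has been identified.

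First I would collect dimensions. By Theorem \ref{mainfiniterank}, $\dim_\R \mathcal V_{(d_1,\dots,d_n)} = 2n+2\sum_{r=1}^n d_r$, while by the remark following definition \ref{defV(d)}, $\mathcal V(d)$ is a complex K\"ahler manifold of complex dimension $d=n+2\sum_{r=1}^n d_r$, so of real dimension $2n+4\sum_{r=1}^n d_r$. The codimension is therefore exactly $2\sum_{r=1}^n d_r$. Now formula (\ref{omegasurV(d)}) puts $\omega|_{\mathcal V_{(d_1,\dots,d_n)}}$ in Darboux form in the coordinates $(s_r,\psi_r)_{1\le r\le n}$, so its rank is constantly equal to $2n$ and the characteristic kernel
$$\mathcal K_u:=\ker\bigl(\omega|_{T_u\mathcal V_{(d_1,\dots,d_n)}}\bigr)=T_u\mathcal V_{(d_1,\dots,d_n)}\cap \bigl(T_u\mathcal V_{(d_1,\dots,d_n)}\bigr)^\omega$$
has dimension $2\sum_{r=1}^n d_r$. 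Since the ambient symplectic orthogonal $(T_u\mathcal V_{(d_1,\dots,d_n)})^\omega$ itself has dimension $2\sum_{r=1}^n d_r$ by non-degeneracy of $\omega$ on $\mathcal V(d)$, the two coincide. In other words, $(T_u\mathcal V_{(d_1,\dots,d_n)})^\omega\subset T_u\mathcal V_{(d_1,\dots,d_n)}$, which is exactly the statement that $\mathcal V_{(d_1,\dots,d_n)}$ is involutive (coisotropic) in $\mathcal V(d)$.

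Next I would describe the characteristic foliation and the symplectic leaves. Using the parametrization provided by the nonlinear Fourier transform, a point of $\mathcal V_{(d_1,\dots,d_n)}$ is encoded by $(s_r,\psi_r)_{1\le r\le n}$ together with the tuple $(\chi_1,\dots,\chi_n)\in \prod_r \mathcal B_{d_r}^\sharp$. Formula (\ref{omegasurV(d)}) shows that $\mathcal K_u$ is characterized by $ds_r=d\psi_r=0$, hence the null foliation is given by the fibers of the projection $(s,\Psi)\mapsto (s,\psi)$, along which only the monic-normalized Blaschke products $\chi_r$ vary. Fixing the transverse data $(\chi_1,\dots,\chi_n)$ produces the slice $\mathcal W(\chi_1,\dots,\chi_n)=\Phi^{-1}(\Omega_n\times\prod_r\S^1\chi_r)$, of dimension $2n$, on which (\ref{omegasurV(d)}) restricts to the standard Darboux form $\sum_r d(s_r^2/2)\wedge d\psi_r$. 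These slices are therefore symplectic, and they evidently partition $\mathcal V_{(d_1,\dots,d_n)}$ as $(\chi_1,\dots,\chi_n)$ ranges over $\prod_r\mathcal B_{d_r}^\sharp$.

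Finally, to interpret $(s_r^2/2,\psi_r)_{1\le r\le n}$ as action-angle variables for the cubic Szeg\H o flow on each $\mathcal W(\chi_1,\dots,\chi_n)$, I would invoke Theorem \ref{evolszegotexte}: under the flow, the singular values $s_r$ and the normalized Blaschke products $\chi_r$ are conserved, while $\psi_r(t)=\psi_r(0)-(-1)^r s_r^2\,t$ evolves at a constant frequency depending only on the actions. Thus the flow is tangent to each slice $\mathcal W(\chi_1,\dots,\chi_n)$, and in the Darboux coordinates $(s_r^2/2,\psi_r)$ it reads as the Hamiltonian flow of a function depending only on the action variables, which is exactly the action-angle property. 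The only real work in the whole argument is formula (\ref{omegasurV(d)}) itself; once it is in hand, the corollary is an immediate exercise in Darboux counting and the evolution law of Theorem \ref{evolszegotexte}.
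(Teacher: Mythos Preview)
Your proof is correct and follows essentially the same approach as the paper. Both arguments use formula (\ref{omegasurV(d)}) together with the dimension count $\dim(T_u\mathcal V_{(d_1,\dots,d_n)})^\omega = 2\sum_r d_r$ to conclude coisotropy; the only cosmetic difference is that you compute the kernel of the restricted form directly from (\ref{omegasurV(d)}), while the paper exhibits the tangent space to the leaf $\mathcal F(u)=\Phi^{-1}(\{(s_r(u))\}\times\prod_r e^{-i\psi_r(u)}\mathcal B_{d_r}^\sharp)$ as sitting inside the orthogonal---these are two descriptions of the same $2\sum_r d_r$-dimensional subspace.
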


\begin{proof}
From the definition of an involutive submanifold, one has to prove that, at every point $u$ of  $ \mathcal V_{(d_1,\dots ,d_n)}$,  the tangent space $T_u \mathcal V_{(d_1,\dots ,d_n)}$ contains its orthogonal  relatively to $\omega$. We use an argument of dimension.
Namely, one has 
\beno
\dim _\R (T_u \mathcal V_{(d_1,\dots ,d_n)})^\perp&=&\dim _\R T_u\mathcal V(d)-\dim _\R T_u \mathcal V_{(d_1,\dots ,d_n)}\\
&=&2d-(2n+2\sum _{r=1}^nd_r)=2\sum _{r=1}^nd_r\ .
\eeno
One the other hand, from equation (\ref{omegasurV(d)}),  the tangent space to the manifold
$$\mathcal F (u):=\Phi ^{-1}\left (\{ (s_r(u))\}\times \prod_{r=1}^n \expo_{-i\psi _r(u)}\mathcal B_{d_r}^\sharp \right )$$
 is clearly a subset of $(T_u \mathcal V_{(d_1,\dots ,d_n)})^\perp$. Since its dimension equals $2\sum d_r$, we get the equality and hence the first result. The second result is an immediate consequence of the previous sections.
\end{proof}
\begin{remark} As this is the case for any involutive submanifold of a symplectic manifold,  the subbundle $(T \mathcal V_{(d_1,\dots ,d_n)})^\perp $ of 
$T \mathcal V_{(d_1,\dots ,d_n)}$ is integrable. The leaves of the corresponding isotropic foliation are the  manifolds $\mathcal F(u)$ above. 
\end{remark}
Now, we prove equality (\ref{omegasurV(d)}).
We first establish the following lemma, as a consequence of Theorem \ref{hierarchyevol}.
\begin{lemma}
On $\mathcal V_{(d_1,\dots ,d_n)}$, 
$$\omega=\sum _{r=1}^n d\left (\frac{s_r^2}{2}\right )\wedge d\psi _r+\tilde \omega\ .$$
where,  for any $1\le r\le n$, 
$$i_{\frac{\partial}{\partial\psi_r}} \tilde \omega=0\ .$$
\end{lemma}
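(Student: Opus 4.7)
The plan is to extract the decomposition from Hamilton's equations for the commuting family of conserved quantities $J^y$, $y>0$, from the Szeg\H{o} hierarchy of the previous section. By Corollary \ref{XJy}, the Hamiltonian vector field $X_{J^y}$ lies inside the rank-$n$ distribution spanned by $\partial/\partial\psi_r$, so writing $i_{X_{J^y}}\omega=-dJ^y$ and letting $y$ vary should be enough to pin down each interior product $i_{\partial/\partial\psi_r}\omega$ individually.

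First I would observe, by applying formula \eqref{J(x)Appendix} at $x=-y$, that on $\mathcal V_{(d_1,\dots,d_n)}$ the function $J^y$ depends only on the singular values:
$$J^y=\prod_{j}\frac{1+y\,s_{2j}^2}{1+y\,s_{2j-1}^2},$$
with the convention $s_{n+1}=0$ when $n$ is odd. A routine logarithmic differentiation then gives
$$dJ^y=\sum_{r=1}^n(-1)^r\,\frac{2yJ^y}{1+ys_r^2}\,d\!\left(\tfrac{s_r^2}{2}\right).$$

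Substituting this identity together with the expression for $X_{J^y}$ from Corollary \ref{XJy} into Hamilton's equation $i_{X_{J^y}}\omega=-dJ^y$ will yield, for every $y>0$,
$$\sum_{r=1}^n(-1)^r\,\frac{2yJ^y}{1+ys_r^2}\,\Bigl[\,i_{\partial/\partial\psi_r}\omega+d\!\left(\tfrac{s_r^2}{2}\right)\Bigr]=0.$$
Since the $s_r$ are pairwise distinct positive numbers, the rational functions $y\mapsto(1+ys_r^2)^{-1}$ are $\mathbb{R}$-linearly independent (they have distinct simple poles on the negative real axis). Letting $y$ range over $(0,\infty)$, I would then conclude that each bracket must vanish separately, giving
$$i_{\partial/\partial\psi_r}\omega=-d\!\left(\tfrac{s_r^2}{2}\right),\qquad r=1,\dots,n.$$

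The proof then concludes by setting $\tilde\omega:=\omega-\sum_{r=1}^n d(s_r^2/2)\wedge d\psi_r$ and using the elementary identity $i_{\partial/\partial\psi_q}\!\bigl(d(s_r^2/2)\wedge d\psi_r\bigr)=-\delta_{qr}\,d(s_r^2/2)$ to check directly that $i_{\partial/\partial\psi_r}\tilde\omega=0$ for every $r$. There is no serious obstacle in this argument; the only delicate point is the linear independence of the functions $(1+ys_r^2)^{-1}$, which is trivially guaranteed by the distinctness of the $s_r$, and everything else is a formal manipulation built on Corollary \ref{XJy} and the Bateman-type formula \eqref{J(x)Appendix}.
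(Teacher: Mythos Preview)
Your proof is correct and follows essentially the same approach as the paper's: both use Hamilton's equation $i_{X_{J^y}}\omega=-dJ^y$, the explicit decomposition of $X_{J^y}$ from Corollary~\ref{XJy}, and the product formula for $J^y$ in terms of the $s_r$, then separate the terms by linear independence of the rational functions $(1+ys_r^2)^{-1}$ (which the paper phrases as ``identification of residues in the $y$ variable''). The only cosmetic difference is that the paper divides through by $J^y$ and works with $d(\log J^y)$, whereas you keep the factor $J^y$ explicit; the arguments are otherwise identical.
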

\begin{proof}
Taking the interior product of both sides of identity (\ref{decXJy}) with the restriction of $\omega $ to $\mathcal V_{(d_1,\dots ,d_n)}$, we obtain
$$-d(\log J^y)=\sum _{r=1}^n (-1)^{r}\frac{2y}{1+ys_r^2}i_{\frac{\partial}{\partial \psi _r}}\omega \ .$$
On the other hand, from formula (\ref{J}),
$$d(\log (J^y))=\sum _{r=1}^n (-1)^r \frac{2y}{1+ys_r^2}d\left (\frac{s_r^2}2\right )\ .$$
Identification of residues in the $y$ variables yields
$$d\left (\frac{s_r^2}2\right )=-i_{\frac{\partial}{\partial \psi _r}}\omega \  ,\ r=1,\dots ,n\ .$$
Since
$$i_{\frac{\partial}{\partial \psi _r}}\left (\sum _{r'=1}^n d\left (\frac{s_{r'}^2}{2}\right )\wedge d\psi _{r'}\right )=-d\left (\frac{s_r^2}2\right )\ ,$$
this completes the proof.
\end{proof}
Since $d\omega=0$, we have  $d\tilde \omega=0$. Combining this information with  $i_{\frac{\partial}{\partial\psi_r}} \tilde \omega=0\ ,$
we conclude that
$$\tilde \omega =\pi ^*\beta \ ,$$
where $\beta $ is a closed $2$-form  on $\Omega _n\times \prod _{r=1}^n \mathcal B_{d_r}^\sharp $ , and
$$\pi (u):=((s_r(u))_{1\le r\le n},(\chi _r(u)_{1\le r\le n})\ .$$
In order to prove that $\tilde \omega =0$, it is therefore sufficient to prove that $\tilde \omega =0$ on the submanifold $$\mathcal V_{(d_1,\dots ,d_n),{\rm red}}:=\Phi ^{-1}\left (\Omega _n\times \prod _{r=1}^n \mathcal B_{d_r}^\sharp \right )\  $$
given by the equations $\psi _r=0\ ,\ r=1,\dots ,n$.
\begin{lemma}\label{restriction omega}
The restriction of $\omega $ to $\mathcal V_{(d_1,\dots,d_n),{\rm red}}$ is $0$.
\end{lemma}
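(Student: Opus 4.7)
The strategy is to exhibit a globally defined primitive of $\omega$ on the ambient Hilbert space whose restriction to $\mathcal V_{\rm red}$ is identically zero. On $L^2_+(\S^1)$ the Kähler form $\omega$ admits the canonical Liouville primitive
$$
\lambda_u(X):=\tfrac{1}{2}\,{\rm Im}\,(u,X)_{L^2},
$$
and a direct verification shows $d\lambda=\omega$ on $L^2_+(\S^1)$. Since restriction commutes with $d$, one has $\omega|_{\mathcal V_{\rm red}}=d(\lambda|_{\mathcal V_{\rm red}})$, so it is enough to show that $\lambda|_{\mathcal V_{\rm red}}\equiv 0$, i.e., that $(u,X)_{L^2}\in\mathbb R$ for every $u\in\mathcal V_{\rm red}$ and every $X\in T_u\mathcal V_{\rm red}$.

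The real part of $(u,X)$ is automatic: from the Bateman-type formula \eqref{tau}, the squared $L^2$-norm
$$
\|u\|^2_{L^2}=\sum_j\tau_j^2
$$
is a function of the singular values $(s_r)$ alone, so $2\,{\rm Re}\,(u,X)=X(\|u\|^2)$ is manifestly real for every tangent vector. The content of the lemma is therefore that ${\rm Im}\,(u,X)=0$ on $T_u\mathcal V_{\rm red}$. I would decompose $X=\sum_r\dot s_r\,\partial_{s_r}u+\sum_r\dot\chi_r$ with $\dot\chi_r\in T_{\chi_r}\mathcal B_{d_r}^\sharp$. Using the Schmidt-pair description of Proposition \ref{action} with $\psi_r=0$,
$$
\rho_j u_j=\chi_{2j-1}H_u(u_j),\qquad K_u(u'_k)=\sigma_k\chi_{2k}u'_k,
$$
all tangent vectors are rational functions whose numerators and denominators are built from the monic Schur polynomials $P_r$ and their palindromic reflections $D_r(z)=z^{d_r}\overline{P_r}(1/z)$. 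Evaluating $(u,X)$ as the contour integral $\frac{1}{2\pi i}\oint u(z)\overline{X(z)}\,dz/z$, the relation $D_r(z)\overline{D_r(1/z)}=P_r(z)\overline{P_r(1/z)}$ on $|z|=1$ forces the residues inside $\D$ to appear in pairs whose imaginary parts cancel.

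The main obstacle is the mixed contribution between $\partial_{s_r}$-variations and the $\chi_r$-variations, and handling it uniformly in the Blaschke parameters. To manage this, I would reduce to the reference configuration $u_0\in\mathcal V_{\rm red}$ with every $\chi_r(z)=z^{d_r}$: at this point all matrix entries of $\mathscr C(z)$ in formula \eqref{luminy} are real rational functions of $z$, so $u_0$ has real Fourier coefficients. Tangent vectors at $u_0$ split as $T_{u_0}\mathcal V_{\rm red}=T^+\oplus T^-$ into vectors with real Fourier coefficients (those along $\partial_{s_r}$ and $\partial_{{\rm Re}(a_k^{(r)})}$) and those with purely imaginary Fourier coefficients (those along $\partial_{{\rm Im}(a_k^{(r)})}$); the inner product $(u_0,X)_{L^2}$ is then real on $T^+$ and on $T^-$ trivially, and on the mixed pairs it reduces to a real $L^2$-orthogonality among the basis rational functions, which follows from an explicit residue calculation exploiting the relations of Proposition \ref{action} at $p_j=0$. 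Finally, since $\omega|_{\mathcal V_{\rm red}}=(\pi|_{\mathcal V_{\rm red}})^*\beta$ identifies the lemma with the vanishing of the closed real-analytic form $\beta$ on the connected contractible base $\Omega_n\times\prod\mathcal B_{d_r}^\sharp$, vanishing at the real-analytic locus $\{\chi_r=z^{d_r}\}$ propagates to the whole base by analytic continuation, completing the proof.
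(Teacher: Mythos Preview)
Your opening move---introducing the primitive $\lambda_u(X)=\tfrac12\,{\rm Im}(u,X)$ with $d\lambda=\omega$ and reducing to $\lambda|_{\mathcal V_{\rm red}}\equiv 0$---is exactly how the paper begins. The divergence, and the gap, is in how you propagate from the reference slice.

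You establish $\lambda_{u_0}(X)=0$ for all $X\in T_{u_0}\mathcal V_{\rm red}$ only at points $u_0$ lying over the slice $\Omega_n\times\{\chi_r=z^{d_r}\}$, which has codimension $2\sum_r d_r>0$ in the base. A real-analytic $1$-form (or $2$-form) that vanishes along a positive-codimension real-analytic submanifold need not vanish globally: the model is $y\,dx$ on $\R^2$, identically zero along $\{y=0\}$ yet nonzero elsewhere. Analytic continuation of forms requires vanishing on an open set, not on a thin slice, so your final step does not go through. Note also that knowing the $1$-form $\lambda$ vanishes at a point does not even give $d\lambda=0$ at that point, since $d\lambda(X,Y)=X(\lambda(Y))-Y(\lambda(X))-\lambda([X,Y])$ involves derivatives of $\lambda$ in directions leaving the slice.

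The paper avoids this by verifying $\lambda(X)=0$ at \emph{every} point of $\mathcal V_{\rm red}$ (after a density reduction to even $n$ and simple Blaschke zeros). The key ingredient you are missing is an explicit description of $T_u\mathcal V_{\rm red}$ at a generic $u$: differentiating the closed formula $u(z)=\langle\mathscr C(z)^{-1}(\chi_{2j-1}),\mathbf 1\rangle$ in each coordinate $\rho_j,\sigma_k$ and in the zeros $p$ of $\chi_r$, one finds that tangent vectors are real linear combinations of $u_j$, $u_jH_u(u_\ell)$, and vectors of the form
\[
\Big(\overline\zeta\,\frac{z}{1-\overline pz}-\zeta\,\frac{1}{z-p}\Big)\,u_jH_u(u_j),\qquad
\Big(\overline\zeta\,\frac{z}{1-\overline pz}-\zeta\,\frac{1}{z-p}\Big)\,zu'_kK_u(u'_k).
\]
One then checks ${\rm Im}(u,X)=0$ on each of these by direct computation, the nontrivial cases reducing to identities such as $\big(H_u(u_j)\,\big|\,\frac{z}{1-\overline pz}H_u(u_j)\big)=0$, which follow from the eigenspace description in Proposition~\ref{action} and the selfadjointness identity \eqref{selfadjoint}. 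This explicit tangent-space computation is the substance of the lemma; your reference-point argument bypasses it but cannot recover it afterward.

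A minor point: your claim that $(u_0,X)$ is ``trivially'' real on $T^-$ is not right as stated---for $X$ with purely imaginary Fourier coefficients and $u_0$ with real ones, $(u_0,X)$ is purely \emph{imaginary}. What saves you there is that $\partial_{{\rm Im}(a)}u_0=i\,\partial_a u_0$ with $\partial_a u_0\in T^+$, and then $\|u\|^2=\sum\tau_j^2$ being independent of $a$ forces the real quantity $(u_0,\partial_a u_0)$ to vanish. You have the ingredients, but it is not trivial.
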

\begin{proof} Consider the differential form $\alpha $ of of degree $1$ defined 
$$\langle \alpha (u), h\rangle :={\rm Im}(u\vert h)\ .$$
It is elementary to check that
$$\frac 12 d\alpha =\omega \ ,$$
hence  the statement is consequence of the fact  that the restriction of $\alpha $ to $\mathcal V_{(d_1,\dots,d_n),{\rm red}}$ is $0$.
Let us prove this stronger fact. By a density argument, we may assume that $n=2q$ is even, and that the Blaschke products $\chi _r(u)$ have only simple zeroes. Firstly, we describe the tangent space of $\mathcal V_{(d_1,\dots,d_n),{\rm red}}$ at a generic point. We use the notation of section \ref{section explicit}.
\begin{lemma}\label{espace tangent}
The tangent vectors to $\mathcal V_{(d_1,\dots,d_n),{\rm red}}$ at a generic point $u$ where every $\chi _r$  has
only simple zeroes are linear combinations with real coefficients of $u_j, u_jH_u(u_\ell ), 1\le j,\ell \le q $, and of the following functions,
for $ \zeta \in \C$ and $1\le j,k\le q$,
\begin{eqnarray*}
\dot u_{\chi _{2j-1}, \zeta }(z)&:=& \left (\overline \zeta \frac{z}{1-\overline pz}-\zeta \frac{1}{z-p}\right )u_j(z)H_u(u_j)(z)\ ,\ \chi _{2j-1}(p)=0\ ,\\
\dot u_{\chi _{2k},\zeta}(z)&:=&  \left (\overline \zeta \frac{z}{1-\overline pz}-\zeta \frac{1}{z-p}\right )zu'_k(z)K_u(u_k')(z)\ ,\ \chi _{2k}(p)=0\ .
\end{eqnarray*}
\end{lemma}
We assume  Lemma \ref{espace tangent}  and show how it implies Lemma \ref{restriction omega}.
Notice that 
\beno
(u\vert u_j)&=&\Vert u_j\Vert ^2\ ,\  (u\vert u_jH_u(u_j))=\Vert H_u(u_j)\Vert ^2\ ,\\
 (u\vert H_u(u_\ell )u_j)&=&(H_u(u_j)\vert H_u(u_\ell ))=0\ ,\ j\ne \ell \ ,
 \eeno
and therefore 
$\alpha (u)$ cancels on $u_j, u_jH_u(u_\ell ), 1\le j,\ell \le q $.  We now deal with  vectors $\dot u_{\chi _r,\zeta }$. 
\begin{eqnarray*}
(u\vert \dot u_{\chi _{2j-1},\zeta })&=& \zeta \left (u\Big \vert \frac{z}{1-\overline pz}u_jH_u(u_j)\right )-\overline \zeta \left (u\Big \vert \frac{u_j}{z-p}H_u(u_j)\right )\\&=& \zeta \left (H_u(u_j)\Big \vert \frac{z}{1-\overline pz}H_u(u_j)\right )-\overline \zeta \left (H_u^2(u_j)\Big \vert \frac{u_j}{z-p}\right )\ ,
\end{eqnarray*}
where we used that $u_j/(z-p)$ belongs to $L^2_+$. Since $H_u^2(u_j)=\rho _j^2u_j$ and $\rho _j^2\vert u_j\vert ^2=\vert H_u(u_j)\vert ^2$ on the 
unit circle, we infer
$$(u\vert \dot u_{\chi _{2j-1},\zeta })=\zeta \left (H_u(u_j)\Big \vert \frac{z}{1-\overline pz}H_u(u_j)\right )-\overline \zeta \left (\frac{z}{1-\overline pz}H_u(u_j)\Big \vert H_u(u_j)\right )$$
and consequently
$$\langle \alpha (u),\dot u_{\chi _{2j-1},\zeta }\rangle =2{\rm Im}\ \zeta\left (H_u(u_j)\Big \vert \frac{z}{1-\overline pz}H_u(u_j)\right )\ ,$$
which is $0$ for every $\zeta \in \C $ if and only if
$$\left (H_u(u_j)\Big \vert \frac{z}{1-\overline pz}H_u(u_j)\right )=0\ .$$
Let us prove this identity. Set
$$v:=\frac{z}{1-\overline pz}H_u(u_j)\ .$$
Notice that, since $\chi _{2j-1}(p)=0$,  $v\in E_u(\rh _j)$, and moreover
$$(v\vert 1)=v(0)=0\ .$$
Therefore
$$(H_u(u_j)\vert v)=(H_u(v)\vert u_j)=(H_u(v)\vert u)=(1\vert H_u^2(v))=\rho _j^2(1\vert v)=0\ .$$
We conclude that
$$\langle \alpha (u),\dot u_{\chi _{2j-1},\zeta }\rangle=0\ .$$
Similarly, we calculate
\begin{eqnarray*}
(u\vert \dot u_{\chi _{2k},\zeta })&=& \zeta \left (u\Big \vert \frac{z}{1-\overline pz}zu_k'K_u(u_k')\right )-\overline \zeta \left (u\Big \vert \frac{K_u(u_k')}{z-p}zu_k'\right )\\&=& \zeta \left (K_u(u_k')\Big \vert \frac{z}{1-\overline pz}K_u(u_k')\right )-\overline \zeta \left (K_u(u_k')\Big \vert \frac{K_u(u_k')}{z-p}\right )\ ,
\end{eqnarray*}
where we have used that $K_u(u_k')/(z-p)$ belongs to $L^2_+$. We conclude that
$$\langle \alpha (u),\dot u_{\chi _{2k},\zeta }\rangle =2{\rm Im}\ \zeta\left (K_u(u_k')\Big \vert \frac{z}{1-\overline pz}K_u(u_k')\right )\ ,$$
which is $0$ for every $\zeta \in \C $ if and only if
$$\left (K_u(u_k')\Big \vert \frac{z}{1-\overline pz}K_u(u_k')\right )=0\ .$$
Since $\vert K_u(u_k')\vert ^2=\sigma _k^2\vert u_k'\vert ^2$ on the unit circle, we are left to prove
$$\left (u_k'\Big \vert \frac{z}{1-\overline pz}u_k'\right )=0\ .$$
Set
$$w:=\frac{1}{1-\overline pz}u_k'\ .$$
We notice that $w\in F_u(\sigma _k)$, and that $zw \in F_u(\sigma _k)$. Moreover,
$$w=\frac{1}{1-\vert p\vert ^2}(\overline p\chi _p+1)u_k'\ ,$$
therefore, setting $\chi _{2k}:=g_k\chi _p$,
$$K_u(w)=\frac{\sigma _k}{1-\vert p\vert ^2}(pg_ku'_k+\chi _{2k}u'_k)=\frac{\sigma _kg_k}{1-\vert p\vert ^2}(p+\chi _p)u'_k=\sigma _kg_kzw\ .$$
In particular,
$$(K_u(w)\vert 1)=K_u(w)(0)=0\ .$$
We now conclude as follows,
$$\left (u_k'\Big \vert \frac{z}{1-\overline pz}u_k'\right )=(u_k'\vert zw)=(u\vert zw)=(K_u(w)\vert 1)=0\ .$$
This completes the proof up to the proof of lemma \ref{espace tangent}.
\end{proof}
Let us prove lemma \ref{espace tangent}. We are going to use formulae (\ref{luminy}), namely
$$u(z)=\langle \mathscr C(z)^{-1}(\chi _{2j-1}(z))_{1\le j\le q}, {\bf 1}\rangle ,$$
where  
$$\mathscr C(z)=\left (\frac{\rho _j -\sigma _k z\chi _{2k}(z)\chi _{2j -1}(z)}{\rho _j ^2-\sigma _k^2}\right )_{1\le j,k\le q}\ .$$
If we denote by $\dot {\  }$  the derivative with respect to one of the parameters $\rho _j, \sigma _k$ or one of the coefficients of the $\chi _r$,
we have 
$$\dot u(z)=- \langle \mathscr C(z)^{-1}\dot {\mathscr C}(z)\mathscr C(z)^{-1}(\chi _{2j-1}(z))_{1\le j\le q}, {\bf 1}\rangle+ \langle \mathscr C(z)^{-1}(\dot \chi _{2j-1}(z))_{1\le j\le q}, {\bf 1}\rangle\ .$$
In the case of the derivative with respect to $\rho _j$, 
$$
\dot {\mathscr C}(z)_{ik}=-\frac{\rh _j^2+\si _k^2-2\si _k\rh _j z\chi _{2j-1}(z)\chi_{2k}(z)}{(\rh _j^2-\si _k^2)^2}\delta _{ij}\ ,$$
and, using 
$$\mathscr C(z)^{-1}(\chi _{2j-1}(z))_{1\le j\le q}=(u'_k(z))_{1\le k\le q}\ ,\ ^t{\mathscr C}(z)^{-1}({\bf 1})=(h_j(z))_{1\le j\le q}\ ,$$
one gets 
\beno
\dot u(z)&=& h_j(z)\sum _{k=1}^qu'_k(z)\frac{ (\rh _j^2+\si _k^2-2\si _k\rh _j z\chi _{2j-1}(z)\chi_{2k}(z))}
{ (\rh _j^2-\si _k^2)^2} \\
&=& \frac {H_u(u_j)(z)}{\rho _j}\sum _{k=1}^q \frac{\rh _j^2+\si _k^2}{(\rh _j^2-\si _k^2)^2}u'_k(z)-2\rh _ju_j(z)\sum _{k=1}^q\frac{zK_u(u'_k)(z)}
{(\rh _j^2-\si _k^2)^2}\ .
\eeno 
Observe that 
$$zK_u(u'_k)(z)=[SS^*H_u(u'_k)](z)=H_u(u'_k)(z)-\kappa _k^2\ ,$$
and that $u'_k$ is a linear combination with real coefficients of $u_\ell $ in view of (\ref{urho}). We infer that, in this case, $\dot u$ is a linear combination with real coefficients of $u_j$ and $u_mH_u(u_\ell )$. 

In the case of the derivative with respect to $\si _k$, one  similarly gets
\beno
\dot u(z)&=& u'_k(z)\sum _{j=1}^qh_j(z)\frac{ z\chi _{2j-1}(z)\chi_{2k}(z)(\rh _j^2+\si _k^2)-2\si _k\rh _j }
{ (\rh _j^2-\si _k^2)^2} \\
&=& \frac{zK_u(u'_k)(z)}{\si _k}\sum _{j=1}^q \frac{(\rh _j^2+\si _k^2)u_j(z)}{(\rh _j^2-\si _k^2)^2}-2\si _ku'_k(z)\sum _{j=1}^q\frac {H_u(u_j)(z)}
{(\rh _j^2-\si _k^2)^2}\ ,
\eeno 
which is a linear combination with real coefficients of $u_j$ and $u_jH_u(u_\ell )$.

In the case of a derivative with respect to one of the zeroes of $\chi _{2j-1}$, we obtain 
\beno
\dot u(z)&=&\sum _{k=1}^q u'_k(z)h_j(z)\frac{\sigma _kz\dot \chi _{2j-1}(z)\chi _{2k}(z)}{\rh _j^2-\si _k^2}+h_j(z)\dot \chi _{2j-1}(z)\\
&=&\frac{\dot \chi _{2j-1}(z)}{\chi _{2j-1}(z)}u_j(z)\left [ \sum _{k=1}^qu'_k(z)\frac{\si _k z\chi _{2k}(z)}{\rh _j^2-\si _k^2} +1\right ]\\
&=&\frac{\dot \chi _{2j-1}(z)}{\chi _{2j-1}(z)}u_j(z)\left (z\frac{K_u(u_j)}{\tau _j^2}+1\right )\\
&=& \frac{\dot \chi _{2j-1}(z)}{\chi _{2j-1}(z)}u_j(z)\frac{H_u(u_j)}{\tau_j^2}\ .
\eeno
In the case of a derivative with respect to one of the zeroes of $\chi _{2k}$, we obtain similarly
$$
\dot u(z)= \frac{\dot \chi _{2k}(z)}{\chi _{2k}(z)}\frac{u'_k(z)zK_u(u'_k)(z)}{\kappa _k^2}\ .
$$
The proof of Lemma \ref{espace tangent} is completed by observing that, since $\chi _r$ is a product of functions $\chi _p$ for $\vert p\vert <1$,
$\dot \chi _r/\chi _r$ is a sum of terms of the form
$$ \left (\overline \zeta \frac{z}{1-\overline pz}-\zeta \frac{1}{z-p}\right )$$
where $\zeta :=\dot p$.

\section[Invariant tori and unitary equivalence]{Invariant tori of the Szeg\H{o} hierarchy and unitary equivalence of  pairs of Hankel operators} \label{invtori}\index{Hankel operator}

In this section, we identify the sets of symbols $u\in VMO_+\setminus \{ 0\} $ having the same list of singular values $(s_r)$\index{singular value} and the same list $(\chi _r)$ of monic Blaschke products, for the pair $(H_u,K_u)$. In view of Theorems \ref{mainfiniterank} and \ref{HilbertSchmidt}, these sets are tori. In the finite dimensional case, they are precisely the Lagrangian tori of the symplectic manifolds $W(\chi _1,\dots,\chi _n)$ of Corollary \ref{Vinvolutive}, obtained by freezing the action variables. Moreover, $VMO_+\setminus \{ 0\} $ is the disjoint union of these tori, and, from sections \ref{szegoflow} and \ref{szegohier}, the Hamilton flows of the Szeg\H{o} hierarchy act on them. We prove that these tori  are classes of some specific unitary equivalence between the pairs $(H_u,K_u)$, which we now define.
\begin{definition}\label{equivalence}
Given $u,\tilde u\in VMO_+\setminus \{ 0\} $, we set $u\sim \tilde u$ if there exist unitary operators $U, V$ on $L^2_+$ such that
$$H_{\tilde u}=UH_uU^*\ ,\ K_{\tilde u}=VK_uV^*\ ,$$
and there exists  a Borel function $F:\R _+\rightarrow \S^1$ such that
$$U(u)=F(H_{\tilde u}^2)\tilde u\ ,\ V(u)=F(K_{\tilde u}^2)\tilde u\ ,\ U^*V=\overline F(H_u^2)F(K_u^2)\ .$$
\end{definition}
It is easy to check that the above definition gives rise to an equivalence relation.
\begin{theorem}\label{specialisospectral}
Given $u,\tilde u\in VMO_+\setminus \{ 0\} $, the following assertions are equivalent.
\begin{enumerate}
\item $u\sim \tilde u$.
\item $\forall r\ge 1, s_r(u)=s_r(\tilde u) \ \textrm{and}\  \exists \gamma _r\in \T : \Psi_r(\tilde u)=\expo_{i\gamma _r}\Psi _r(u)\ .$
\end{enumerate}
\end{theorem}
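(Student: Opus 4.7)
The plan is to prove the two implications separately, using the intertwining structure to match eigenspace data and the Szeg\H o hierarchy to construct the required unitaries. First I would treat the forward direction, which is essentially a bookkeeping argument, and then the converse, which is the harder half.

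\emph{For (1) $\Rightarrow$ (2).} I would first observe that $H_u^2$ and $H_{\tilde u}^2$ are $\C$-linearly unitarily equivalent via $U$, and similarly for $K_u^2,K_{\tilde u}^2$ via $V$, so their spectra coincide with multiplicities. Using that $U$ intertwines the spectral projector $\mathbf{1}_{\{\rho^2\}}(H_u^2)$ with $\mathbf{1}_{\{\rho^2\}}(H_{\tilde u}^2)$ and that $U(u)=F(H_{\tilde u}^2)\tilde u$, I would obtain $U(u_\rho)=F(\rho^2)\tilde u_\rho$, and likewise $V(u'_\sigma)=F(\sigma^2)\tilde u'_\sigma$. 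Since $|F|=1$ on the spectrum and $U,V$ are unitary, these identities force $\Sigma_H(u)=\Sigma_H(\tilde u)$, $\Sigma_K(u)=\Sigma_K(\tilde u)$ with matching dominant multiplicities, hence $s_r(u)=s_r(\tilde u)$ for every $r$. To compare the Blaschke products I would exploit the antilinear intertwining $UH_u=H_{\tilde u}U$ which, applied to $u_\rho$, gives $U(H_uu_\rho)=\overline{F(\rho^2)}\,H_{\tilde u}(\tilde u_\rho)$. Since the pointwise identity $\rho u_\rho=\Psi_\rho(u)H_u(u_\rho)$ on $\S^1$ determines $\Psi_\rho(u)$ intrinsically from the Schmidt pair $(u_\rho,H_u(u_\rho))$, I would use the Schmidt pair rigidity (\ref{modulePaireSchmidt}) combined with the coupling $U^*V=\overline F(H_u^2)F(K_u^2)$ to prove that $\Psi_\rho(\tilde u)$ and $F(\rho^2)^2\Psi_\rho(u)$ have identical Fourier coefficients, hence coincide. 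A symmetric argument using $V$ and the defining identity $K_u(u'_\sigma)=\sigma\Psi_\sigma(u)u'_\sigma$ gives the phase relation for the even-indexed $\Psi_{2k}$, yielding (2) with $e^{i\gamma_{2j-1}}=F(\rho_j^2)^2$ and $e^{i\gamma_{2k}}=\overline{F(\sigma_k^2)}^{\,2}$.

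\emph{For (2) $\Rightarrow$ (1).} My plan is to use the Szeg\H o hierarchy developed in Section \ref{szegohier} to flow from $u$ to $\tilde u$ inside the torus determined by the common singular values and monic parts $(\chi_r)$. By Corollary \ref{XJy} and Theorem \ref{hierarchyevol}, each $X_{J^y}$ rotates $\Psi_r$ by $(-1)^{r-1}2yJ^y/(1+ys_r^2)$, and the map $y\mapsto\big((-1)^{r-1}2yJ^y/(1+ys_r^2)\big)_r$ spans $\R^n$ on any finite-dimensional reduction thanks to invertibility of the associated Cauchy-type matrix; since the $J^y$ are in mutual involution, the corresponding flows commute and can be composed without ambiguity. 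Hence some composition of finitely many hierarchy flows sends $u$ to $\tilde u$, and the integrated Lax-pair operators of Theorem \ref{Laxhier} produce $\C$-linear unitaries $\mathcal U,\mathcal V$ with $H_{\tilde u}=\mathcal UH_u\mathcal U^*$ and $K_{\tilde u}=\mathcal VK_u\mathcal V^*$. The extension from finite rank to compact symbols in $VMO_+\setminus\{0\}$ would follow by a density argument as in Section \ref{compact}.

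The main obstacle will be adjusting these hierarchy-built unitaries to the precise normalization of Definition \ref{equivalence}, particularly the coupling $U^*V=\overline F(H_u^2)F(K_u^2)$ with a \emph{single} Borel function $F$. The plan is to replace $\mathcal U$ by $U=\mathcal U\,G(H_u^2)$ and $\mathcal V$ by $V=\mathcal V\,\tilde G(K_u^2)$ for unimodular Borel functions $G,\tilde G$ chosen so that simultaneously $U(u)=F(H_{\tilde u}^2)\tilde u$ and $V(u)=F(K_{\tilde u}^2)\tilde u$, where $F(\rho_j^2)=e^{i\gamma_{2j-1}/2}$ and $F(\sigma_k^2)=e^{-i\gamma_{2k}/2}$. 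The coupling identity then reduces, via $K_u^2=H_u^2-(\cdot|u)u$ and the Bateman identities of Section \ref{Bateman}, to an algebraic identity on the common cyclic space $\langle u\rangle_{H_u^2}=\langle u\rangle_{K_u^2}$ which, together with compatibility with the shift operator $S$ through $K_u=S^*H_u$, is the technical heart of the converse.
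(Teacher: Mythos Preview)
Your sketch has real gaps in both directions.

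\textbf{Forward direction.} You correctly obtain $U(u_\rho)=F(\rho^2)\tilde u_\rho$ and $U(H_u u_\rho)=\overline{F(\rho^2)}H_{\tilde u}(\tilde u_\rho)$, but this is information about abstract vectors in $L^2_+$, while $\Psi_\rho$ is defined as a \emph{pointwise} ratio. An arbitrary unitary $U$ does not preserve pointwise ratios, so Schmidt-pair rigidity alone cannot bridge this. The paper closes this gap by showing that the coupling $U^*V=\overline F(H_u^2)F(K_u^2)$, together with $K_u=S^*H_u$, forces the concrete operator identity
\[
U^*S^*U=\overline F(H_u^2)\,F(K_u^2)^2\,S^*\,\overline F(H_u^2)
\]
on $\overline{\mathrm{Ran}\,H_u}$. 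This transports the shift structure, and hence the explicit basis $\bigl(z^aH_u(u_j)/D_{2j-1}\bigr)_a$ of $E_u(\rho_j)$, through $U$; one then reads off $D_{2j-1}=\tilde D_{2j-1}$. A dual identity for $V^*P_{\tilde u}SV$ handles the $K$-dominant eigenspaces. Your proposal names the coupling but does not identify this mechanism; ``identical Fourier coefficients'' is not something you can conclude without first tying $U$ to $S^*$.

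\textbf{Backward direction.} Your idea of reaching $\tilde u$ from $u$ by composing Szeg\H o hierarchy flows, and reading off $U,V$ from the integrated Lax pairs, is genuinely different from the paper and attractive in spirit, but it has two unresolved issues. First, you yourself flag the coupling $U^*V=\overline F(H_u^2)F(K_u^2)$ as the main obstacle and only propose to post-compose with $G(H_u^2)$ and $\tilde G(K_u^2)$; for this to work you would need to know that the raw Lax-pair product $\mathcal U^*\mathcal V$ already has the form $A(H_u^2)B(K_u^2)$, and you give no argument for that (the generator $C_u^y-B_u^y$ is rank one, not a function of $H_u^2$ or $K_u^2$). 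Second, in the infinite-rank case there are infinitely many phases $\gamma_r$ to match; finite compositions of hierarchy flows only reach a dense subgroup of the infinite torus, so your density/limit argument must also pass to limits of the unitaries $\mathcal U,\mathcal V$, which you do not address. The paper avoids all of this by building $U$ and $V$ directly, eigenspace by eigenspace, from the explicit bases of Proposition~\ref{action}, and then proving unitarity by showing the relevant Gram matrices depend only on $(s_r)$ and $D_r$ via a linear system (Lemma~\ref{Solsystem}). The coupling then follows by a direct computation on the vectors $u'_k$. That route is longer to write but has no analytic gaps and works uniformly in $VMO_+$.
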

\begin{proof}
Assume that (1) holds. Then $H_{\tilde u}^2$ is unitarily equivalent to $H_u^2$, and $K_{\tilde u}^2$ is unitarily equivalent to $K_u^2$.
This clearly implies $\Sigma _H(\tilde u)=\Sigma _H(u)$ and $\Sigma _K(\tilde u)=\Sigma _K(u)$, so that $s_r(\tilde u)=s_r(u)$ for every $r$.
Let us show that, for every $r$, $\Psi _r(u)$ and $\Psi _r(\tilde u)$ only differ by a phase factor. Of course the only  cases to be addressed are $d_r\ge 1$. We start with $r=2j-1$. From the hypothesis, we infer
$$U(u_j)=U({\bf 1}_{\{ \rho _j^2\} }(H_u^2)(u))={\bf 1}_{\{ \rho _j^2\} }(H_{\tilde u}^2)(U(u))=F(\rho _j^2)\tilde u_j\ ,$$
and, consequently,
\begin{equation}\label{UH_u}
U(H_u(u_j))=H_{\tilde u}(U(u_j))=\overline F(\rho _j^2)H_{\tilde u}(\tilde u_j)\ .
\end{equation}
Next we take advantage of the identity
$$U^*V=\overline F(H_u^2)F(K_u^2)\ ,$$
by evaluating $U^*S^*U$ on the closed range of $H_u$. We compute
\beno
U^*S^*UH_u&=&U^*S^*H_{\tilde u}U=U^*K_{\tilde u}U=U^*VK_uV^*U\\
&=&\overline F(H_u^2)F(K_u^2)K_u\overline F(K_u^2)F(H_u^2)=\overline F(H_u^2)F(K_u^2)^2K_uF(H_u^2)\\
&=&\overline F(H_u^2)F(K_u^2)^2S^*\overline F(H_u^2)H_u\ ,
\eeno
and we conclude, on $\overline {{\rm Ran}(H_u)}$, 
\begin{equation}\label{U*S*U}
U^*S^*U=\overline F(H_u^2)F(K_u^2)^2S^*\overline F(H_u^2)\ .
\end{equation}
For simplicity, set $D:=D_{2j-1}$ and $d:=d_{2j-1}$. Recall from Proposition \ref{action} that a basis of $E_u(\rho _j)$ is 
$$\left (\frac{z^a}{D}H_u(u_j)\ ,\ a=0,\dots ,d\right ) ,$$
and a basis of $F_u(\rho _j)=E_u(\rho _j)\cap u^\perp $ is 
$$\left (\frac{z^b}{D}H_u(u_j)\ ,\ b=0,\dots ,d-1\right ) .$$
For $a=1,\dots ,d_{2j-1}$, we infer
$$U^*S^*U\left (\frac{z^a}{D}H_u(u_j)\right )=\frac{z^{a-1}}{D}H_u(u_j)\ ,$$
or
$$U\left (\frac{z^a}{D}H_u(u_j)\right )=(S^*)^{d-a}U\left (\frac{z^d}{D}H_u(u_j)\right )\ ,\ a=0,\dots ,d\ .$$
This implies, for $a=0,\dots ,d-1$, that the right hand side belongs to $F_{\tilde u}(\rho _j)$. On the other hand, if $P\in \C [z]$ has degree at most $d$,
one easily checks that
$$S^*\left (\frac{P}{\tilde D}H_{\tilde u}(\tilde u_j)\right )=P(0)K_{\tilde u}(\tilde u_j)+ R\ ,\ R\in F_{\tilde u}(\rho _j)\ .$$
Notice that the right hand side belongs to $F_{\tilde u}(\rho _j)$ if and only if $K_{\tilde u}(\tilde u_j)\in F_{\tilde u}(\rho _j)$ or $P(0)=0$. Assume for a while that $K_{\tilde u}(\tilde u_j)$ does not belong to $F_{\tilde u}(\rho _j)$. Then, writing
$$U\left (\frac{z^d}{D}H_u(u_j)\right )=\frac{P}{\tilde D}H_{\tilde u}(\tilde u_j)\ ,$$
and using that, for $a=0,\dots ,d-1$, 
$$(S^*)^{d-a}\left (\frac{P}{\tilde D}H_{\tilde u}(\tilde u_j)\right )\in F_{\tilde u}(\rho _j)\ ,$$
we infer $P(0)=0$, and, by iterating this argument,  that $P$ is divisible by $z^d$, in other words,
$$U\left (\frac{z^d}{D}H_u(u_j)\right )=c\frac{z^d}{\tilde D}H_{\tilde u}(\tilde u_j)\ ,$$
for some $c\in \C $, and conclude
$$U\left (\frac{z^a}{D}H_u(u_j)\right )=c\frac{z^a}{\tilde D}H_{\tilde u}(\tilde u_j)\ ,\ a=0,\dots ,d\ .$$
Comparing to formula (\ref{UH_u}) for $U(H_u(u_j))$, we obtain
$$ cD(z)=\overline F(\rho _j^2)\tilde D(z)\ .$$
Since $D(0)=1=\tilde D(0)$, we conclude $c=\overline F(\rho _j^2)$, $D=\tilde D$, and finally $$\Psi _{2j-1}(\tilde u)=\overline F(\rho _j^2)^2\Psi _{2j-1}(u)\ .$$
We now turn to study the special case $K_{\tilde u}(\tilde u_j)\in F_{\tilde u}(\rho _j)$. This reads
$$0=(K_{\tilde u}^2-\rho _j^2I)K_{\tilde u}(\tilde u_j)=K_{\tilde u}((H_{\tilde u}^2-\rho _j^2I)\tilde u_j-\Vert \tilde u_j\Vert ^2\tilde u)=-\Vert \tilde u_j\Vert ^2K_{\tilde u}(\tilde u)\ .$$
In other words, this imposes $K_{\tilde u}(\tilde u)=0$, or $\tilde u=\rho \tilde \Psi $, where $\tilde \Psi $ is a Blaschke product of degree $d$. Making $V^*$ act on the identity $K_{\tilde u}(\tilde u)=0$, we similarly conclude $ u=\rho  \Psi $, where $\Psi $ is a Blaschke product of degree $d$, so what we have to check is that $\Psi $ and $\tilde \Psi $ only differ by a phase factor. In this case, $S^*$ sends $E_u(\rho )={\rm Ran}(H_u)$ into $F_u(\rho )$, so that (\ref{U*S*U}) becomes, on ${\rm Ran}(H_u)$,
$$U^*S^*U=S^*\ .$$
In other words, the actions of $S^*$ on $W:={\rm span}\left (\frac{z^a}{D}, a=0,\dots ,d\right )$ and on $\tilde W:={\rm span}\left (\frac{z^a}{\tilde D}, a=0,\dots ,d\right )$ are conjugated. Writing
$$D(z)=\prod _{p\in \mathcal P}(1-\overline pz)^{m_p}\ ,$$
where $\mathcal P$ is a finite subset of $\D \setminus \{ 0\}$, and $m_p$ are positive integers, and using the elementary identities 
$$(S^* -\overline pI)\left(\frac 1{(1-\overline pz)^k}\right )=S^* \left (\frac 1{(1-\overline pz)^{k-1}}\right )\ ,$$
one easily checks that the eigenvalues of $S^*$ on $W$ are precisely the $\overline p$'s, for $p\in \mathcal P$, and $0$,  with the corresponding algebraic multiplicities $m_p$ and 
$$m_0=1+d-\sum _{p\in \mathcal P}m_p\ .$$
We conclude that $D=\tilde D$, whence the claim.
\s
Next, we study the case $r=2k$. Then
$$V(u'_k)=F(\sigma _k^2)\tilde u'_k\ ,\ V(K_u(u'_k))=\overline F(\sigma _k^2)K_{\tilde u}(\tilde u'_k)\ .$$
Denote by $P_u$ the orthogonal projector onto $\overline {{\rm Ran}(H_u)}$, and compute
\beno
V^*P_{\tilde u}SVK_u&=&V^*P_{\tilde u}SK_{\tilde u}V=V^*(H_{\tilde u}-(\tilde u\vert\, .\, )P_{\tilde u}(1)) V\\
&=&V^*U(H_u-(U^*(\tilde u)\vert \, .\, )U^*(P_{\tilde u}(1)))U^*V\\
&=&\overline F(K_u^2)F(H_u^2)(H_u-(\overline F(H_u^2)u\vert \, .\, )F(H_u^2)P_u(1))\overline F(H_u^2)F(K_u^2)\\
&=&\overline F(K_u^2)F(H_u^2)^2(H_u-(u\vert \, .\, )P_u(1))F(K_u^2)\\
&=& \overline F(K_u^2)F(H_u^2)^2P_uSK_uF(K_u^2)= \overline F(K_u^2)F(H_u^2)^2P_uS\overline F(K_u^2)K_u\ ,
\eeno
so that, on $\overline {{\rm Ran}(K_u)}$, 
\begin{equation}\label{V*PSV}
V^*P_{\tilde u}SV=\overline F(K_u^2)F(H_u^2)^2P_uS\overline F(K_u^2)\ .
\end{equation}
For simplicity again, set $D:=D_{2k}$ and $d:=d_{2k}$. Recall from proposition \ref{action} that a basis of $F_u(\sigma _k)$ is 
$$\left (\frac{z^a}{D}u'_k\ ,\ a=0,\dots ,d\right ) ,$$
and a basis of $E_u(\sigma _k)=F_u(\sigma _k)\cap u^\perp $ is 
$$\left (\frac{z^a}{D}u'_k\ ,\ a=1,\dots ,d\right ) \ .$$
Applying identity (\ref{V*PSV}) to $\frac{z^a}{D}u'_k$ for $a=0,\dots ,d-1$, we infer
$$V\left (\frac{z^a}{D}u'_k\right )=(P_{\tilde u}S)^a V\left (\frac 1{ D} u'_k\right )\ , \ a=0,\dots ,d\ .$$
In particular, the right hand side belongs to $E_{\tilde u}(\sigma _k)$ for $a=1,\dots ,d$. On the other hand, if $Q\in \C [z]$ has degree at most $d$, 
$$P_{\tilde u}S\left (\frac{Q}{\tilde D}\tilde u'_k\right )= \gamma P_{\tilde u}SK_{\tilde u}(\tilde u'_k) +R\ ,\ R\in E_{\tilde u}(\sigma _k)\ ,$$
where $\gamma \sigma _k \expo_{-i\tilde \psi _{2k}}$ is the coefficient of $z^d$ in $Q$.  Therefore the left hand side belongs to $E_{\tilde u}(\sigma _k)$ if and only if
\beno
0&=&\gamma (H_{\tilde u}^2-\sigma _k^2I)P_{\tilde u}SK_{\tilde u}(\tilde u'_k)\\
&=&\gamma (H_{\tilde u}^2-\sigma _k^2I)(H_{\tilde u}(\tilde u'_k)-\Vert \tilde u'_k\Vert ^2P_{\tilde u}(1))\\
&=&\gamma \sigma _k^2\Vert \tilde u'_k\Vert ^2P_{\tilde u}(1)\ ,
\eeno
which is impossible. We conclude that $\gamma =0$, which means that the degree of $Q$ is at most $d-1$. Iterating this argument, we infer 
$$V\left (\frac 1{ D} u'_k\right )=c\frac{1}{\tilde D}\tilde u'_k\ ,$$
for some $c\in \C $, and finally
$$V\left (\frac{z^a}{D}u'_k\right )=c\frac{z^a}{\tilde D}\tilde u'_k\ ,\ a=0,\dots d\ .$$
Comparing to the above formula for $V(u'_k)$, we obtain 
$$cD=F(\sigma _k^2)\tilde D\  ,$$
thus, since $D(0)=1=\tilde D(0)$, we have $D=\tilde D$, $c=F(\sigma _k^2)$, and finally $\Psi _{2k}(\tilde u)=F(\sigma _k^2)^2\Psi _{2k}(u)\ .$
\s
Assume that (2) holds. Define $F:\Sigma _H(u)\cup \Sigma _K(u)\rightarrow \S^1$ by
$$F(\rho _j^2)=\expo_{-i\frac{\gamma _{2j-1}}2}\ ;\ F(\sigma _k^2)=\expo_{i\frac{\gamma _{2k}}2}\ ,$$
and, if necessary,  we define $F(0)$ to be any complex number of modulus $1$. Next we define $U$ on the closed range of $H_u$, which is the closed orthogonal sum of $E_u(s_r)$. Thus we just have to define $$U:E_u(s_r)\rightarrow E_{\tilde u}(s_r).$$ 
\s
If $r=2j-1$, we set
\begin{equation}\label{Uj}
U\left (\frac{z^a}{D_{2j-1}}H_u(u_j)\right )=\overline F(\rho _j^2) \frac{z^a}{D_{2j-1}}H_{\tilde u}(\tilde u_j)\ ,\ a=0,\dots ,d_{2j-1}\ .
\end{equation}
If $r=2k$ and $d_{2k}\ge 1$, we set
\begin{equation}\label{Uk}
U\left (\frac{z^b}{D_{2k}}u'_k\right )=F(\sigma _k^2) \frac{z^b}{D_{2k}}\tilde u'_k\ ,\ b=1,\dots ,d_{2k}\ .
\end{equation}
Using (\ref{Uj}) we obtain
\beno
U(u_j)&=&\frac 1{\rho _j}U(\Psi _{2j-1}(u)H_u(u_j))=\frac 1{\rho _j}\overline F(\rho _j^2)\Psi _{2j-1}(u)H_{\tilde u}(\tilde u_j)\\
&=&\frac 1{\rho _j}F(\rho _j^2)\Psi _{2j-1}(\tilde u)H_{\tilde u}(\tilde u_j)=F(\rho _j^2)\tilde u_j\ . 
\eeno
Consequently, we get
$$U(u)=\sum _jU(u_j)=\sum _jF(\rho _j^2)\tilde u_j=F(H_{\tilde u}^2)\tilde u\ .$$
A similar argument combined to Proposition \ref{action} leads to 
$$UH_u=H_{\tilde u}U.$$
Next, we prove that $U$ is unitary. It is enough to prove that every map $U:E_u(s_r)\rightarrow E_{\tilde u}(s_r)$ is unitary, or that the Gram matrix of a basis of $E_u(s_r)$ is equal to the Gram matrix of its image. We first deal with $r=2j-1$. Equivalently, we prove that, for $a,b=0,\dots ,d_{2j-1}-1$,
$$\left (\frac{z^a}{D_{2j-1}}H_u(u_j)\vert \frac{z^b}{D_{2j-1}}H_u(u_j)\right )=\left (\frac{z^a}{D_{2j-1}}H_{\tilde u}({\tilde u}_j)\vert \frac{z^b}{D_{2j-1}}H_{\tilde u}({\tilde u}_j)\right )\ .$$
We set 
$$\zeta _{a-b}:=\left (\frac{z^a}{D_{2j-1}}H_u(u_j)\vert \frac{z^b}{D_{2j-1}}H_u(u_j)\right ), \ a,b=0,\dots ,d_{2j-1}-1\ ,$$
and we notice that $\zeta _{-k}=\overline \zeta _k\ ,\ k=-d_{2j-1},\dots ,d_{2j-1}\ .$ We drop the subscript $2j-1$ for simplicity and we set 
$$D(z):=1+\overline a_1z+\dots +\overline a_d z^d.$$
As $\Psi H_u(u_j)$ is orthogonal to $\frac {z ^a}{D} H_u(u_j)$ for $a=0,\dots, d-1$, and $\Vert H_u(u_j)\Vert^2 =\rho_j^2\tau_j^2$, we obtain the system
\begin{equation}\label{system}
\left\{\begin{array}{ll}
\zeta_{d-b}+a_1\zeta_{d-b-1}+\dots+a_d\zeta_{-b}=0\ , \ b=0,\dots, d-1\ ,\\
\zeta_0+a_1\zeta_{-1}+\dots +a_d\zeta_{-d}=\rho_j^2\tau_j^2\ .\end{array}\right.
\end{equation}
\begin{lemma}\label{Solsystem}
Let $a_1,\dots a_d$ be complex numbers such that the polynomial
$ z^d+a_1z^{d-1}+\dots+a_d $ has all its roots in $\D$. Then the system (\ref{system}) has at most one solution $\zeta_k$, $k=-d\dots, d$ with $\overline \zeta_k=\zeta_{-k}$.
\end{lemma}
Assume for a while that this lemma is proved. Since $\tau_j^2$  can be expressed in terms of the $(s_r)$'s --- see (\ref{tau}), we infer that 
$U: E_u(\rho_j)\rightarrow E_{\tilde u}(\rho_j)$ is unitary. Similarly, one proves that the Gram matrix of the basis 
$$\frac {z^a}{D_{2k}}u'_k, \ a=0,\dots,d_{2k}$$ of $F_u(\sigma_k)$ only depends on the $(s_r)$'s and on $D_{2k}$. In particular, $$U:E_u(\sigma_k)\rightarrow E_{\tilde u}(\sigma_k)$$ is unitary and finally is unitary from the closed range of $H_u$ onto the closed range of $H_{\tilde u}$.
\s
Next, we construct $V$ on the closed range of $H_u$ which is the orthogonal sum of the $F_u(\sigma)$ for $\sigma \in \Sigma_H\cup \Sigma_K$.
Thus we just have to define $V:F_u(\sigma)\rightarrow E_{\tilde u}(\sigma)$ for $\sigma  \in \Sigma_H\cup \Sigma_K$.
\s
If $r=2j-1$, we set
\begin{equation}\label{Vj}
V\left (\frac{z^a}{D_{2j-1}}H_u(u_j)\right )=\overline F(\rho _j^2) \frac{z^a}{D_{2j-1}}H_{\tilde u}(\tilde u_j)\ ,\ a=1,\dots ,d_{2j-1}\ .
\end{equation}
If $r=2k$ and $d_{2k}\ge 1$, we set
\begin{equation}\label{Vk}
V\left (\frac{z^b}{D_{2k}}u'_k\right )=F(\sigma _k^2) \frac{z^b}{D_{2k}}\tilde u'_k\ ,\ b=0,\dots ,d_{2k}\ .
\end{equation}
Similarly, if $0\in\Sigma_K$, we define $V(u'_0)=F(0)\tilde u'_0$.
Using (\ref{Vk}) we get $V(u'_k)=F(\sigma _k^2)\tilde u'_k.$ 
Consequently, 
$$V(u)=V(u'_0)+\sum _kV(u'_k)=F(K_{\tilde u}^2)\tilde u\ .$$
A similar argument combined with Proposition \ref{action} leads to 
$$VK_u=K_{\tilde u}V.$$
Using again Lemma \ref{Solsystem}, $V$ is unitary from the closed range of $H_u$ onto the closed range of $H_{\tilde u}$.
\s
Now we define $U$ and $V$ on the kernel of $H_u$ which is either $\{0\}$ or an infinite dimensional separable Hilbert space. From Corollary \ref{kernel}, the cancellation of $\ker H_u$ only depends on the $s_r$'s. Therefore, $\ker H_u$ and $\ker H_{\tilde u}$ are isometric. We then define $U=V$ from $\ker H_u$ onto $\ker H_{\tilde u}$ to be any unitary operator.
\s
It remains to prove that $U^*V= \overline F(H_u^2)F(K_u^2)$. On $\ker H_u$, it is trivial since $U^*V=I=\overline F(0) F(0)$.
Similarly, it is trivial on vectors 
$$\frac {z^a}{D_{2k}}u'_k \ ,a=1,\dots , d_{2k}\ .$$
It remains to prove the equality for $u'_0$, $u'_k$. We write 
\beno
U^*V(u'_k)&=&F(\sigma_k^2)U^*(\tilde u'_k)=F(\sigma_k^2)U^*\left(\kappa_k^2\sum_j\frac{\tilde u_j}{\rho_j^2-\sigma_k^2}\right)\\
&=&F(\sigma_k^2)\sum_j\overline F(\rho_j^2)\kappa_k^2 \frac{u_j}{\rho_j^2-\sigma_k^2}= F(\sigma_k^2)\overline F(H_u^2)u'_k\\
&=& \overline F(H_u^2)F(K_u^2)(u'_k)\ .
\eeno 
A similar arguments holds for $U^*V(u'_0)$.
\s 
It remains to prove Lemma \ref{Solsystem}. It is sufficient to prove that the only solution of the homogeneous system
\begin{equation}\label{systemHomogene}
\left\{\begin{array}{ll}
\zeta_{d-b}+a_1\zeta_{d-b-1}+\dots+a_d\zeta_{-b}=0\ , \ b=0,\dots, d-1\ ,\\
\zeta_0+a_1\zeta_{-1}+\dots +a_d\zeta_{-d}=0\ ,\end{array}\right.
\end{equation} with $\overline \zeta_k=\zeta_{-k} \ , k=0,\dots, d$, is the trivial solution $\zeta =0$.

 We proceed by induction on $d$. For $d=1$, the system reads
$$ \left\{\begin{array}{ll}
\zeta_{1}+a_1\zeta_{0}=0\ , \ ,\\
\zeta_0+a_1\overline\zeta_{1}=0 \ .\end{array}\right.$$
Since $|a_1|<1$, this trivially implies $\zeta_0=\zeta_1=0$.
\s
For a general $d$, we plug the expression
$$\zeta_d=-(a_1\zeta_{d-1}+\dots +a_d\zeta_{0})$$ into the last equation. We get
\begin{equation}\label{eq1}
\zeta_0+b_1\overline\zeta_{1}+\dots +b_{d-1}\overline\zeta_{d-1}=0
\end{equation} with 
$$b_k=\frac{a_k-a_d\overline a_{d-k}}{1-|a_d|^2}\ , k=1,\dots, d-1\ .$$
Notice that from Proposition \ref{Od}, $|a_d|<1$ and the polynomial
$z^{d-1}+b_1z^{d-2}+\dots+b_{d-1}$ has all its roots in $\D$.
For $b=1,\dots ,d-1$, we multiply by $a_d$ the conjugate of equation 
$$\zeta_{b}+a_1\zeta_{b-1}+\dots+a_d\zeta_{b-d}=0$$
and substract the result from equation
$$\zeta_{d-b}+a_1\zeta_{d-b-1}+\dots+a_d\zeta_{-b}=0\ .$$
This yields
$$\zeta_{d-b}+b_1\zeta_{d-b-1}+\dots+b_{d-1}\zeta_{1-b}=0\ .$$
Together with Equation (\ref{eq1}), this is exactly the system at order $d-1$ with coefficients $b_1,\dots, b_{d-1}$. By induction, we obtain 
$$\zeta_0=\zeta_1=\dots=\zeta_{d-1}=0$$ and finally $\zeta_d=0$.

This completes the proof.
 
\end{proof}

\backmatter

\end{document}